\newtheorem{theorem}{Theorem}[section]
\newtheorem{lemma}[theorem]{Lemma}
\newtheorem{corollary}[theorem]{Corollary}
\newtheorem{proposition}[theorem]{Proposition}
\newtheorem{assumption}[theorem]{Assumption}
\numberwithin{equation}{section}
\newcommand{\Sp}{\mathrm{Span}}
\theoremstyle{definition}
\newtheorem{definition}[theorem]{Definition}
\newtheorem{definition-lemma}[theorem]{Definition-Lemma}
\newtheorem{definition-theorem}[theorem]{Definition-Theorem}
\newtheorem{remark}[theorem]{Remark}
\newtheorem*{ack}{Acknowledgements}
\newcommand{\ac}{\textup{!`}}
\def\ldb{\mathopen{\{\!\!\{}}
\def\rdb{\mathclose{\}\!\!\}}}
\def\ldbg{\mathopen{\bigl\{\!\!\bigl\{}}
\def\rdbg{\mathclose{\bigr\}\!\!\bigr\}}}
\newcommand{\hdot}{\;\raisebox{3.2pt}{\text{\circle*{2.5}}}}
\title[Koszul Calabi-Yau algebras]{The derived non-commutative Poisson bracket on\\  Koszul Calabi-Yau algebras}
\author{Xiaojun Chen}
\address{Department of Mathematics, Sichuan University, Chengdu, Sichuan Province 610064 P. R. China}
\email{xjchen@scu.edu.cn}
\author{Alimjon Eshmatov}
\address{Department of Mathematics, University of Western Ontario, London, Ontario N6A5B7 Canada}
\email{aeshmato@uwo.ca}
\author{Farkhod Eshmatov}
\address{Department of Mathematics, Sichuan University, Chengdu, Sichuan Province 610064 P. R. China}
\email{olimjon55@hotmail.com}
\author{Song Yang}
\address{Department of Mathematics, Sichuan University, Chengdu, Sichuan Province 610064 P. R. China}
\email{syang.math@gmail.com}
\date{}
\begin{document}

\begin{abstract}
Let $A$ be a Koszul (or more generally, $N$-Koszul) Calabi-Yau algebra.
Inspired by the works of Kontsevich, Ginzburg and Van den Bergh,
we show that there is
a derived non-commutative Poisson structure on $A$,
which induces a
graded Lie algebra
structure on the cyclic homology
of $A$; moreover, we show that
the Hochschild homology of $A$
is a Lie module over the cyclic homology and
the Connes long exact sequence is
in fact a sequence of Lie modules.
Finally,
we show that the Leibniz-Loday bracket associated to the
derived non-commutative Poisson
structure on $A$ is naturally mapped to
the Gerstenhaber bracket on the Hochschild cohomology
of its Koszul dual algebra and hence on that of $A$ itself.
Relations with some other brackets in literature
are also discussed and several
examples are given in detail.
\end{abstract}

\subjclass[2010]{14A22; 16S38}

\keywords{Noncommutative Poisson structure, Calabi-Yau algebra, cyclic homology}

\thanks{Corresponding author: Xiaojun Chen}

\maketitle

\setcounter{tocdepth}{1}
\tableofcontents


\section{Introduction}\label{Sect_Intro}

The notion of Calabi-Yau algebras is introduced by Ginzburg
(\cite{Ginzburg}), and has been intensively studied in recent years.
They are associative algebras with some additional properties,
and may be viewed as non-commutative generalization of affine Calabi-Yau varieties.
It turns out that they are related to
representation theory, non-commutative symplectic/algebraic
geometry, mirror symmetry, and much more.
For more details, see, for example, \cite{Bocklandt,BS,CBEG,Ginzburg,Keller} and references therein.

In this paper, we study the {\it derived non-commutative Poisson structure} on
Koszul (or more generally, $N$-Koszul in the sense of Berger \cite{Berger})
Calabi-Yau algebras, continuing the work of
Berest, Chen, Eshmatov and Ramadoss \cite{BCER}.
Let us start with some backgrounds.

\subsection{Derived non-commutative Poisson structures}

Let $k$ be an algebraically closed field of characteristic zero.
In 2005 Crawley-Boevey (\cite{CB}) introduced for associative algebras
the notion of $\mathrm H_0$-Poisson structure.
Suppose $A$ is an associative algebra over $k$, then an $\mathrm H_0$-Poisson structure on
$A$ is a Lie bracket on $A/[A,A]$ such that
$$[\overline a, -]: A/[A,A]\to A/[A,A]$$
is introduced
by a derivation $d_a: A\to A$, for all $\bar a\in A/[A,A]$.
Such a notion perfectly fits a principle raised by Kontsevich and Rosenberg (\cite{KR})
in the study of non-commutative geometry, that is, any non-commutative geometric
structure (such as the non-commutative symplectic, non-commutative Poisson, etc.)
on a non-commutative space (here we mean an associative algebra) should induce
its classical counterpart on the moduli space of its representations, {\it i.e.} on its representation
scheme.
Recall that for an associative algebra $A$,
$A/[A,A]$ is always considered as the space of functions on $A$, and there
is the canonical {\it trace map}
\begin{equation}\label{trace_map}
\begin{array}{cccl}
\mathrm{Tr}:&A/[A, A]&\longrightarrow& k[\mathrm{Rep}_V(A)]\\
&\bar a&\longmapsto&\{\rho\mapsto \mathrm{trace}(\rho(a))\}
\end{array}
\end{equation}
from the functions on $A$ to the
functions on the representation scheme of $A$ in a $k$-vector space
$V$.
The trace map is $\mathrm{GL}(V)$-invariant,
and
Crawley-Boevey showed that if $A$ admits an $\mathrm H_0$-Poisson structure,
then it naturally induces via the trace map a unique Poisson structure
on $\mathrm{Rep}_V(A)/\!/\mathrm{GL}(V)$ for all $n\in\mathbb N$ such that $\mathrm{Tr}$ is a map of Lie algebras.
The notation ``$\mathrm H_0$"
means the {\it zero-th homology}, since $A/[A, A]$ is the zero-th
Hochschild/cyclic homology of $A$.

In 2012 the $\mathrm H_0$-Poisson structure was generalized
to the higher degree case in \cite{BCER}, where all cyclic homology groups are taken into account.
The starting point is that
the trace map (\ref{trace_map}) is not perfect in the sense that
in very rare cases $\mathrm{Rep}_V(A)$ is a smooth variety (cf. \cite{Cuntz_Quillen1}
for further studies),
and
there are obstructions for $\mathrm{Rep}_V(A)$ to have the desired geometric property.
The work \cite{BKR} shows that instead one has to consider the homotopy category (in the sense of Quillen)
of DG associative algebras.
The main idea is to replace the associative algebra $A$ by its cofibrant resolution $QA$,
and then consider the DG representations of $QA$. It turns out that: (i) there is a surjective map
from the cyclic homology $\mathrm{HC}_\bullet(A)$ to the
homology of the commutator quotient space
$QA/[QA, QA]$; (ii) the
$n$-dimensional DG representation scheme of $QA$, which up to homotopy is denoted by $\mathrm{DRep}_V(A)$,
is smooth in the differential graded sense.
By passing to the homotopy category one obtains a natural
map (called the {\it derived trace map})
\begin{equation}\label{DTM}
\mathrm{HC}_\bullet(A)\longrightarrow \mathrm H_\bullet(\mathrm{DRep}_V(A))
\end{equation}
from the cyclic homology of $A$ to
the homology of the {\it derived representation scheme} of $A$.
For more
details of $\mathrm{DRep}_V(A)$, one may refer to \cite{BFR,BKR}.

The work \cite{BCER} may be viewed as an application of the general result of \cite{BFR,BKR}. In
that paper, an algebra $A$ is called to admit a {\it derived non-commutative Poisson} structure
if there is a DG non-commutative Poisson structure in the sense of Crawley-Boevey
on its cofibrant resolution. It is proved in \cite{BCER} that if $A$ admits
a derived non-commutative Poisson structure,
then (\ref{DTM}) induces a unique graded Poisson structure on the derived representation schemes.

As an important example, it is shown in \cite{BCER}
that the cobar construction $\mathbf \Omega(C)$ of a cyclic coalgebra $C$ (``cyclic" here means
the dual space of $C$ is a cyclic associative algebra)
admits a derived non-commutative Poisson structure.
Since $\mathbf \Omega(C)$ is always a quasi-free (and hence cofibrant) DG algebra,
from the above argument one obtains that
any algebra $A$ which is quasi-isomorphic to $\mathbf \Omega(C)$
admits a derived non-commutative Poisson structure as well.
It is exactly at this point that Koszul Calabi-Yau algebras come in.

\subsection{Koszul Calabi-Yau algebras}
According to Ginzburg \cite{Ginzburg},
an associative algebra $A$ is called
{\it Calabi-Yau of dimension $d$} (or {\it $d$-Calabi-Yau} for short)
if
\begin{itemize}
\item[$-$] $A$ is homologically smooth, that is, it has a finite resolution of
finitely generated projective $A\otimes A^{\mathrm{op}}$ modules;
\item[$-$] there exists an isomorphism
$$
\mathrm{RHom}_{A\otimes A^{\mathrm{op}}}(A, A\otimes A)\simeq A[-d]
$$
in the derived category of $A\otimes A^{\mathrm{op}}$ modules.
\end{itemize}
Ginzburg in {\it loc. cit.} also showed that if a Calabi-Yau algebra $A$ is Koszul,
then its Koszul dual algebra, denoted by $A^!$, is cyclic. Dually, the Koszul dual coalgebra
of $A$, denoted by $A^{\ac}$, is a cyclic coalgebra. From Koszul duality theory there is a quasi-isomorphism
$$\mathbf\Omega(A^{\ac})  \stackrel{\simeq}{\twoheadrightarrow} A,$$
and since $ \mathbf\Omega(A^{\ac})$ is cofibrant, by the work \cite{BCER} sketched above,
we thus obtain a derived non-commutative Poisson structure on $A$.

In fact, a slightly more general class of Calabi-Yau algebras has the above property.
In \cite{Berger} Berger introduced the notion of $N$-Koszul algebras, where $2$-Koszul is Koszul in the usual sense.
If a Calabi-Yau algebra $A$ is $N$-Koszul, then
is Koszul dual coalgebra $A^{\ac}$ is a cyclic $A_\infty$ coalgebra. Denote the cobar construction of
$A^{\ac}$ by $\mathbf\Omega_{\infty}(A^{\ac})$; we still
have $A\simeq\mathbf\Omega_{\infty}(A^{\ac})$.
There are many examples of $N$-Koszul Calabi-Yau algebras, such as
the Sklyanin algebras,
universal enveloping algebra of semi-simple Lie algebras, and Yang-Mills algebras, etc.
The theorem below studies the derived non-commutative Poisson structure on $N$-Koszul Calabi-Yau algebras
in this general setting:

\begin{theorem}
\label{maintheorem1}
Let $A$ be an $N$-Koszul $d$-Calabi-Yau algebra. Then
\begin{enumerate}
\item[(1)] there is a degree $2-d$ derived non-commutative Poisson structure
 on $A$, which induces a degree $2-d$ graded Lie algebra structure on the cyclic homology
 $\mathrm{HC}_{\bullet}(A)$ of $A$; and
\item[(2)] there is a degree $2-d$ Lie module structure on
 the Hochschild homology $\mathrm{HH}_\bullet(A)$
 over $\mathrm{HC}_{\bullet}(A)$.
\end{enumerate}
\end{theorem}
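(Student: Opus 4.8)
The plan is to move the problem to the Koszul dual side, where the Calabi-Yau condition becomes a cyclic structure, and to build the bracket there by the Kontsevich--Ginzburg--Van den Bergh recipe. Since $A$ is $N$-Koszul and $d$-Calabi-Yau, its Koszul dual coalgebra $A^{\ac}$ is a cyclic $A_\infty$ coalgebra carrying a non-degenerate invariant pairing $\langle-,-\rangle$ of the degree dictated by $d$, and the generalized cobar construction gives a quasi-isomorphism $\mathbf\Omega_\infty(A^{\ac})\xrightarrow{\simeq}A$ from a quasi-free, hence cofibrant, DG algebra. Writing $R:=\mathbf\Omega_\infty(A^{\ac})$, it therefore suffices to equip $R$ with a DG non-commutative Poisson structure: by the result of \cite{BCER} recalled above, this produces the derived non-commutative Poisson structure on $A$, and descending the induced DG Lie bracket on the commutator quotient $R/[R,R]$ to homology gives part~(1), since for a free DG algebra $H_\bullet(R/[R,R])$ computes the (reduced) cyclic homology of $A$. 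In the strictly Koszul case $A^{\ac}$ is an honest cyclic coalgebra and this is precisely \cite{BCER}; the genuinely new content is the $A_\infty$ extension.

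To construct the bracket I would use that $R=T(s^{-1}\bar{A^{\ac}})$ is free on the desuspended generators, so the pairing $\langle-,-\rangle$ determines a copairing on the generators. The necklace bracket $\{\bar a,\bar b\}$ is then defined by taking cyclic derivatives of the two cyclic words, contracting one slot of each against the copairing, and re-splicing the remainders into a single cyclic word---exactly the bracket attached to a quiver equipped with a symplectic form. A degree count through the two desuspensions shows the bracket has degree $2-d$. Three properties must be verified: graded antisymmetry and the Jacobi identity, which hold for any necklace bracket built from a non-degenerate (graded) symmetric form; that each operator $\{\bar a,-\}$ is induced by a derivation of $R$, which is immediate from the cyclic-derivative formula and supplies the Crawley-Boevey condition; and compatibility with the cobar differential.

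This last point is the main obstacle. The differential on $R=\mathbf\Omega_\infty(A^{\ac})$ is assembled from the whole tower of $A_\infty$ cooperations $\Delta_n$, not merely the binary one, so proving that $b$ acts as a derivation of the necklace bracket---the Leibniz identity $b\{\bar a,\bar b\}=\{b\bar a,\bar b\}\pm\{\bar a,b\bar b\}$---forces one to use the cyclic invariance of \emph{every} $\Delta_n$ relative to $\langle-,-\rangle$. The strategy is to record the cyclic-symmetry relations of the cyclic $A_\infty$ coalgebra as the single statement that the copairing is annihilated by the differential on $R\otimes R$, and then to expand the Leibniz defect and cancel it term by term against these relations; this is where the hypotheses are actually consumed and where the sign bookkeeping over the higher cooperations is heaviest. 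For part~(2), the derivations $d_a$ attached to cyclic classes $\bar a$ act on the Hochschild chain complex of $R$ by Lie derivatives $\mathcal L_{d_a}$ in the sense of noncommutative Cartan calculus. Because on homology the derivations satisfy $[d_a,d_b]=d_{\{a,b\}}$, the assignment $\bar a\mapsto\mathcal L_{d_a}$ obeys $[\mathcal L_{d_a},\mathcal L_{d_b}]=\mathcal L_{d_{\{a,b\}}}$ and exhibits $\mathrm{HH}_\bullet(A)$ as a degree $2-d$ Lie module over $\mathrm{HC}_\bullet(A)$; quasi-isomorphism invariance of Hochschild homology then transports the action along $R\simeq A$.
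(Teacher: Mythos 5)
Your proposal is correct, and for part (1) it is essentially the paper's own argument: by Theorem \ref{Thm_VdB} the Koszul dual $A^{\ac}$ is a cyclic $A_\infty$ coalgebra of degree $-d$ with $\mathbf{\Omega}_\infty(A^{\ac})$ a cofibrant resolution of $A$; the paper writes your necklace bracket on $R=\mathbf{\Omega}_\infty(\tilde A^{\ac})$ in Van den Bergh's double-bracket form (Lemma \ref{NCP_Coalg}), so antisymmetry, the Loday identity and the Crawley-Boevey derivation condition come from the general double-Poisson formalism \cite{VdB2}, and the one computation it carries out is exactly the step you single out as the crux, namely cancelling the Leibniz defect against the cyclic invariance of all the cooperations; descent to $\mathrm{HC}_\bullet(A)$ is Feigin--Tsygan (Proposition \ref{propresol1}, Corollary \ref{corimp1}), the only point you gloss being that one must take the coaugmented $\tilde A^{\ac}$ to capture the full rather than the reduced cyclic homology.

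For part (2) you take a genuinely different route, and it works. The paper does not act by Lie derivatives on $\mathrm{CH}_\bullet(R)$; instead it extends the double bracket to a double Poisson bimodule structure on the noncommutative one-forms $\Omega^1_R$ (Corollary \ref{cor22}, Theorem \ref{bracksmain2}) and invokes Quillen's identification of $\Omega^1_{R,\natural}$ with the Hochschild complex of $A^{\ac}$ (Theorem \ref{cobhoch}, Corollary \ref{corimp2}), so the Lie module structure is carried by a small explicit complex. Your Cartan-calculus argument is sound, with two remarks: the identity $[d_a,d_b]=d_{\{a,b\}}$ holds already at the chain level by the left Loday identity (it is not merely an identity ``on homology''), and one should note that exact classes of $\bar R_\natural$ act null-homotopically, which follows from $\{da',-\}=[d,\{a',-\}]$ extended to Hochschild chains, so that the action of $\mathrm{H}_\bullet(\bar R_\natural)=\mathrm{HC}_\bullet(A)$ on $\mathrm{HH}_\bullet(R)\cong\mathrm{HH}_\bullet(A)$ is well defined. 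What the paper's heavier construction buys is Theorem \ref{maintheorem2}: the cyclic bicomplex of $A^{\ac}$ is literally glued from $\bar R$ and $\Omega^1_{R,\natural}$ by the maps $\beta$ and $\bar\partial$ (Proposition \ref{propbicomp}), so once these are shown to be Lie module morphisms (Theorem \ref{Liemorph1}), the Connes maps $B$, $I$, $S$ are morphisms of Lie modules with no further work; with your action defined on the big complex $\mathrm{CH}_\bullet(R)$, one would still need to compare it to the bracket on $\bar R_\natural$ along Quillen's maps before reaching that statement.
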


The first statement in the theorem may be viewed as an application of \cite[Lemma 11]{BCER}
to the Koszul Calabi-Yau case, and it also answers a question raised in the last paragraph in {\it loc. cit.} \S5.4;
the second statement is new.

\subsection{The Connes long exact sequence of Lie modules}

The key ingredient in the above theorem is the structure of a differential graded version of the
{\it double Poisson bracket}
in the sense of Van den Bergh \cite{VdB2} on $\mathbf\Omega_{\infty}(\tilde A^{\ac})$,
where $\tilde A^{\ac}:=k\oplus A^{\ac}$ is the co-augmentation of $A^{\ac}$.
According to Van den Bergh,
a double Poisson bracket on an associative algebra, say $R$, is a bilinear map
$$\ldb-,-\rdb: R\times R\to R\otimes R$$
satisfying some additional conditions.
If $R$ admits a double Poisson structure,
then the commutator quotient space $R_{\natural}=R/[R,R]$
naturally admits a Lie algebra structure satisfying the criterion
of Crawley-Boevey.
From the famous result of Feigin and Tsygan \cite{FT},
for a Koszul algebra $A$, the homology of
$\mathbf{\Omega}_\infty(\tilde A^{\ac})_{\natural}$
is exactly $\mathrm{HC}_\bullet(A)$ and Theorem \ref{maintheorem1} part (1) follows.

Moreover,
we recall that for an associative algebra $A$ there is a well-known long exact sequence
due to Connes, relating the cyclic and Hochschild homologies:
$$
\cdots\stackrel{B}\longrightarrow\mathrm{HH}_\bullet(A)\stackrel{I}\longrightarrow
\mathrm{HC}_\bullet(A)\stackrel{S}\longrightarrow\mathrm{HC}_{\bullet-2}(A)\stackrel{B}\longrightarrow
\mathrm{HH}_{\bullet-1}(A)\stackrel{I}\longrightarrow\cdots
$$
By using an elegant interpretation of the Hochschild and cyclic homology of an algebra
in terms of its bar construction, which is due to Quillen \cite{Quillen1},
and by Theorem~\ref{maintheorem1} part (1),
we in fact obtain that $\mathrm{HH}_\bullet(A)$ and $\mathrm{HC}_{\bullet}(A)$
are Lie modules over the Lie algebra $ \mathrm{HC}_{\bullet}(A)$, from which
Theorem \ref{maintheorem1} part (2) follows, and we now claim
that
\begin{theorem}
\label{maintheorem2}
The maps $B,I$ and $S$ are morphisms of Lie modules of degree $2-d$.
\end{theorem}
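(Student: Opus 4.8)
The plan is to realize all three maps $B$, $I$, $S$ at the chain level on a single model where the Lie action is manifest, namely Quillen's mixed complex computing Hochschild and cyclic homology from the bar construction. Recall that the Lie algebra structure on $\mathrm{HC}_\bullet(A)$ and the Lie module structure on $\mathrm{HH}_\bullet(A)$ both arise from the same double Poisson bracket on $\mathbf\Omega_\infty(\tilde A^{\ac})$ via the Quillen description of the cyclic and Hochschild theories. Concretely, Hochschild homology is the homology of the total complex of the mixed complex $(C_\bullet, b, B_{\mathrm{conn}})$, and cyclic homology is the homology of the associated bicomplex; the normalized reduced bar complex carries both the Connes differential $B_{\mathrm{conn}}$ and the commutator-quotient that produces cyclic chains. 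The first step is therefore to write down explicit chain-level representatives for $B$, $I$, $S$ in this model: $I$ is the natural projection from Hochschild chains to cyclic chains, $S$ is the periodicity/shift operator lowering degree by two, and $B$ is induced by Connes' operator.

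With these representatives fixed, the core of the argument is that the Lie action descends from the double bracket on $\mathbf\Omega_\infty(\tilde A^{\ac})_\natural$ acting by derivations; since the action is implemented by derivations, it is compatible with the differentials $b$ and $B_{\mathrm{conn}}$ up to chain homotopy. First I would check that the bracket operator $[\xi,-]$, for a cyclic cycle $\xi$, commutes with $B_{\mathrm{conn}}$ on the nose at the chain level — this is where Quillen's identification is essential, because the Connes operator and the Lie action both come from manipulations of the bar differential, and one must verify they intertwine rather than merely commute up to homotopy. Granting this, the three assertions reduce to naturality statements: that $I$ intertwines the module actions because it is the canonical map of the underlying complexes, that $B$ does so because the action commutes with $B_{\mathrm{conn}}$, and that $S$ does so because the periodicity operator is compatible with the grading shift of the bracket (degree $2-d$). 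Each claim becomes a diagram-chase once the chain-level compatibility of $[\xi,-]$ with the relevant structure map is in hand.

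The main obstacle I expect is verifying at the chain level that the Lie action commutes with Connes' operator $B_{\mathrm{conn}}$, rather than merely up to an uncontrolled homotopy. The double bracket produces a derivation of the tensor algebra, but $B_{\mathrm{conn}}$ involves the cyclic symmetrization and the extra-degeneracy contraction, so one must show that applying the derivation and then cyclically symmetrizing agrees with symmetrizing first and then acting — including the signs coming from the degree-$(2-d)$ shift. The cleanest route is to exploit the cyclic symmetry of the double Poisson bracket on $\tilde A^{\ac}$ (its cyclic-coalgebra structure), which should force the derivation induced on bar chains to be equivariant for the cyclic group action and hence to commute with both the norm map and the relevant components of $B_{\mathrm{conn}}$. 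If a strict commutation fails, the fallback is to produce an explicit contracting homotopy and show it is $I$- and $S$-equivariant, so that the induced maps on homology still intertwine the actions; this is more laborious but does not change the conclusion. Once this compatibility is established, the fact that $B$, $I$, $S$ are morphisms of Lie modules of degree $2-d$ follows formally from the exactness of the Connes sequence together with the module structures already produced in Theorem~\ref{maintheorem1}.
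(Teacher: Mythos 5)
Your proposal is correct and takes essentially the same approach as the paper: the paper likewise works in Quillen's model built from $R=\mathbf\Omega_\infty(\tilde A^{\ac})$, proves strict chain-level commutation of the Lie action with the structure maps $\bar{\partial}=N$ and $\beta=\pm(1-T)$ (Theorem~\ref{Liemorph1}), and then deduces the statement by a diagram chase from the resulting four-term exact sequence of Lie module morphisms, identified with the Connes sequence for $A$ via Koszul duality. The chain-level commutation you single out as the main obstacle is exactly the paper's key lemma and it holds on the nose, so your fallback homotopy argument is not needed.
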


\subsection{The Poisson structure on derived representation schemes}

The significance of the above two theorems is that the Lie module morphism naturally
induces a Lie module morphism on the (derived) representation scheme of the Calabi-Yau algebra.
As we mentioned above, there is a derived trace map
$$\mathrm{Tr}: \mathrm{HC}_\bullet(A)\to\mathrm H_\bullet(\mathrm{DRep}_V(A)).$$
The images are $\mathrm{GL}(V)$-invariant, and in \cite[Theorem 5.2]{BKR}
this map is extended to Hochschild homology and there is in fact a commutative diagram
\begin{equation}\label{liemod_diag}
\xymatrixcolsep{4pc}
\xymatrix{
\mathrm{HC}_\bullet(A)\ar[r]^B \ar[d]^{\mathrm{Tr}}&\mathrm{HH}_{\bullet+1}(A)\ar[d]^{\mathrm{Tr}}\\
\mathrm{H}_\bullet(\mathrm{DRep}_V(A)^{\mathrm{GL}(V)})\ar[r]^{B_V}&\mathrm H_\bullet(\Omega^1(\mathrm{DRep}_V(A)^{\mathrm{GL}(V)})),
}
\end{equation}
where $B$ in the upper line is the Connes differential, $B_V$ in the bottom line
is the de Rham differential and $(-)^{\mathrm{GL}(V)}$ means the $\mathrm{GL}(V)$-invariant space.

Now recall that in classical Poisson geometry, if a manifold $M$ has a Poisson structure,
then the space of differential forms $\Omega^*(M)$ on $M$ is a Lie module over the space of functions
$\Omega^0(M)$, where the Lie action is given as follows:
for $f\in\Omega^0(M)$, $\omega\in\Omega^*(M)$,
$$[f, \omega]:=\mathscr L_{X_f}\omega,$$
where $\mathscr L_{X_f}$ is the Lie derivative of the vector field $X_f$ associated to $f$.
In derived non-commutative geometry, we have similar results, and claim that

\begin{theorem}[to appear in \cite{CEE}]
Let $A$ be a Koszul Calabi-Yau algebra.
Then the diagram \eqref{liemod_diag}
is a commutative diagram of Lie module morphisms.
\end{theorem}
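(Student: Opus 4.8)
The plan is to lift the entire diagram \eqref{liemod_diag} to the cofibrant resolution $R=\mathbf\Omega(A^{\ac})$ and to exploit Van den Bergh's representation functor, which turns the double Poisson bracket on $R$ into honest Poisson-geometric data on $\mathrm{Rep}_V(R)$. By Koszul duality $A\simeq R$, and $R$ carries the double bracket $\ldb-,-\rdb$ underlying Theorem~\ref{maintheorem1}; all four corners of \eqref{liemod_diag} are then computed from $R$, namely the cyclic homology from $R_\natural$, the Hochschild homology from the noncommutative $1$-forms $(\Omega^1_R)_\natural$ via Quillen's interpretation \cite{Quillen1}, and the derived representation scheme from $\mathcal O:=k[\mathrm{Rep}_V(R)]$. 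Since the commutativity of the square is already \cite[Theorem~5.2]{BKR}, the content to be proved is precisely that all four arrows respect the Lie algebra and Lie module structures.

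First I would recall Van den Bergh's functoriality \cite{VdB2}: a double Poisson bracket on $R$ induces a graded Poisson bracket on $\mathcal O$ restricting to a Poisson structure on the invariants $\mathcal O^{\mathrm{GL}(V)}$, and the trace map $R_\natural\to\mathcal O^{\mathrm{GL}(V)}$ is a morphism of Lie algebras. Passing to homology this recovers the Lie-algebra property of the left vertical arrow, which is essentially the content of \cite{BCER}. The new work concerns the module structures, and the key observation is that the double bracket also equips $R$ with an action of $R_\natural$: for $\bar h\in R_\natural$ the single bracket $\{h,-\}:=\mu\circ\ldb h,-\rdb$ is a graded derivation of $R$, and under the representation functor this derivation is precisely the Hamiltonian vector field $X_{\mathrm{Tr}(\bar h)}$ on $\mathrm{Rep}_V(R)$. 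I would verify this identification by a direct computation on the matrix-coordinate generators, following Van den Bergh's calculation of the bracket on trace functions.

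Next I would establish that the right vertical arrow is a morphism of Lie modules. Having matched $\{h,-\}$ with $X_{\mathrm{Tr}(\bar h)}$, the algebraic action of $\bar h$ on $(\Omega^1_R)_\natural$ must be identified with the geometric Lie derivative $\mathscr L_{X_{\mathrm{Tr}(\bar h)}}$ on $\Omega^1(\mathcal O)^{\mathrm{GL}(V)}$. The essential tool is Cartan's formula $\mathscr L_X=\iota_X d+d\,\iota_X$, which expresses the Lie derivative through contraction and the de Rham differential, both of which translate transparently under the representation functor and are compatible with the trace; passing to homology then shows that $\mathrm{Tr}\colon\mathrm{HH}_{\bullet+1}(A)\to\mathrm H_\bullet(\Omega^1(\mathrm{DRep}_V(A)^{\mathrm{GL}(V)}))$ is a Lie module morphism. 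Finally, both horizontal arrows are module morphisms: $B$ by Theorem~\ref{maintheorem2}, and $B_V$ because the de Rham differential commutes with the Lie derivative, $\mathscr L_X\,d=d\,\mathscr L_X$, which is again immediate from Cartan's formula (indeed $d\{f,g\}=\mathscr L_{X_f}\,dg$). Combined with the commutativity of \cite[Theorem~5.2]{BKR} and the vertical properties just established, this exhibits \eqref{liemod_diag} as a commutative diagram of Lie module morphisms.

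The hard part will be Steps concerning the single bracket: translating it into the Hamiltonian vector field and the induced module action into the Lie derivative \emph{at the chain level}, while correctly tracking signs and degrees in the differential graded setting and confirming that the identifications — in particular $\mathrm{GL}(V)$-invariance and independence from the chosen cofibrant resolution — descend to homology. The careful bookkeeping of the noncommutative Cartan calculus and its compatibility with the representation functor is where the real effort will be concentrated.
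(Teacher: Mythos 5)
A preliminary remark: the paper does not actually prove this statement. It is labelled ``to appear in \cite{CEE}'', and the authors explicitly defer the proof on the grounds that it requires developing the Lie derivative on derived representation schemes, which they call ``very much involved''; the paper only supplies background in \S\ref{Sect_DRep}. So there is no proof in the paper to compare yours against; the only meaningful comparison is between your outline and what the paper indicates the deferred proof would have to do.

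Measured against that, your plan follows exactly the intended route: compute all four corners of \eqref{liemod_diag} from the cofibrant resolution $R=\mathbf{\Omega}(A^{\ac})$; obtain the left vertical arrow as a map of Lie algebras from \cite{BCER} and \cite{VdB2}; obtain commutativity of the square from \cite[Theorem 5.2]{BKR}; obtain that the top arrow $B$ is a Lie module map from Theorem~\ref{maintheorem2}; and reduce everything else to identifying the $R_\natural$-action on $\Omega^1_{R,\natural}$ (the one of Theorem~\ref{bracksmain2}) with the Lie derivative $\mathscr L_{X_{\mathrm{Tr}(\bar h)}}$ on the one-forms of the derived representation scheme. The difficulty is that your text is a roadmap, not a proof: the chain-level identification of $\{h,-\}$ with a Hamiltonian vector field, and of the induced module action with the geometric Lie derivative --- in the differential graded, $\mathrm{GL}(V)$-equivariant setting, with the degree shift $2-d$ and all signs tracked, and with independence of the chosen resolution verified --- is precisely the step you yourself flag as ``where the real effort will be concentrated,'' and you do not carry it out. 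That is the same step the authors judged substantial enough to warrant a separate paper, so deferring it leaves the proposal with a genuine gap at its central point. One concrete wrinkle your sketch glosses over: the paper's module structure on $\Omega^1_{R,\natural}$ is not given by a derivation formula directly, but through the embedding $\Omega^1_R\hookrightarrow R\otimes R$ and the bracket $\ldb-,-\rdb_{R\otimes R}$ of Corollary~\ref{cor22}; to match it with a Lie derivative you would have to unwind identities of the type in Lemma~\ref{bracketidentononeform} under the representation functor, and there is no noncommutative Cartan formula available for free --- it has to be constructed, which is exactly the missing content.
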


To prove this theorem, we shall have to discuss the Lie derivative on
the derived representation schemes, which is very much involved. We decide to
give a complete proof in a separate paper; however, for reader's convenience
we give enough backgrounds
in \S\ref{Sect_DRep}.

\subsection{Relations to the Gerstenhaber and the de V\"{o}lcsey-Van den Bergh brackets}

The construction of the double Poisson bracket on $\mathbf \Omega_{\infty}(\tilde A^{\ac})$ is
also
inspired by the Kontsevich bracket in non-commutative symplectic geometry
(see Kontsevich \cite{Kontsevich} as well as Ginzburg \cite{Ginzburg0} and Van den Bergh \cite{VdB2}
for further discussions).
It is direct to check that
if $\ldb-,-\rdb$ is a double Poisson bracket on $R$
then
$$\{-,-\}=\mu\circ\ldb-,-\rdb: R\times R\to R$$
defines a Leibniz-Loday bracket on $R$,
where $\mu$ is the multiplication. For more details
of the Leibniz-Loday bracket, see \S\ref{Sect_double_poisson_alg}.
It has been interesting for a long time
to explore the relationships among the brackets such as
the Gerstenhaber bracket, the Leibniz-Loday bracket and
the non-commutative Poisson bracket of Kontsevich and Van den Bergh, etc.
In this paper we also study this problem in the case of Koszul
Calabi-Yau algebras with some detail.

First, we observe that there is a natural quasi-isomorphism
(a version of non-commutative Poincar\'e duality originally due to Tradler \cite{Tradler})
$$\Phi: \mathrm{CH}_\bullet(A^{\ac})\longrightarrow\mathrm{CH}^\bullet(A^!)$$
from the Hochschild chain complex of $A^{\ac}$ to the Hochschild cochain complex of
$A^!$. Second, we observe that
$\mathbf \Omega(A^{\ac})$ naturally embeds into $\mathrm{CH}_\bullet(A^{\ac})$ as chain complexes.
By combining these two observations, we obtain the following theorem:

\begin{theorem}
\label{maintheorem_DNCP}
Let $A$ be an $N$-Koszul $d$-Calabi-Yau algebra.
Denote by $A^{\ac}$ its Koszul dual coalgebra.
Denote by $\{-,-\}_{\mathrm{DNCP}}$  the Leibniz-Loday bracket
associated to the derived non-commutative Poisson structure on $A$
and by $\{-,-\}_\mathrm{G}$ the Gerstenhaber bracket on $\mathrm{CH}^\bullet(A^!)$, respectively.
Then we have
$$\Phi\circ B\{u,v\}_{\mathrm{DNCP}}= \{\Phi\circ B(u), \Phi\circ B(v)\}_\mathrm{G},$$
for any $u, v\in\mathbf\Omega_{\infty}(A^{\ac})$, where $B$ is the Connes cyclic operator.
\end{theorem}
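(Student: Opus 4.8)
The plan is to prove the identity at the chain level by making all three maps explicit and then matching both sides term by term on monomials; conceptually this amounts to showing that $\Phi\circ B$ is a morphism of brackets from $(\mathbf\Omega_\infty(A^{\ac}),\{-,-\}_{\mathrm{DNCP}})$ to $(\mathrm{CH}^\bullet(A^!),\{-,-\}_\mathrm{G})$. First I would write out the double Poisson bracket $\ldb-,-\rdb$ on $\mathbf\Omega_\infty(A^{\ac})$ explicitly. It is determined by the cyclic structure on $A^{\ac}$, equivalently by the non-degenerate invariant pairing $\langle-,-\rangle$ on the Koszul dual algebra $A^!$ coming from the Calabi-Yau condition; on generators it contracts two letters via this pairing, returning a scalar multiple of $1\otimes 1$, and is extended to words by the double Leibniz rules. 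Hence on two cobar monomials $u=a_1\cdots a_m$ and $v=b_1\cdots b_n$ (words in $A^{\ac}[-1]$), the Leibniz-Loday bracket $\{u,v\}_{\mathrm{DNCP}}=\mu\circ\ldb u,v\rdb$ is a signed sum, over pairs of positions $(i,j)$, of the single word obtained by deleting the matched letters $a_i,b_j$, weighted by their pairing, and splicing the two remaining strings together at that spot.

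Second, I would make the remaining maps explicit. The embedding $\mathbf\Omega(A^{\ac})\hookrightarrow\mathrm{CH}_\bullet(A^{\ac})$ realizes a cobar word as a (normalized) Hochschild chain of the coalgebra $A^{\ac}$; the Connes operator $B$ cyclically symmetrizes such a chain, producing the sum of its rotations equipped with a distinguished zeroth slot. The duality $\Phi$ dualizes a Hochschild chain of the coalgebra $A^{\ac}$ into a multilinear form on the dual algebra $A^!=(A^{\ac})^*$ and then uses the cyclic pairing to convert one tensor slot into the output, yielding a genuine Hochschild cochain of $A^!$. The upshot is that $\Phi\circ B$ sends a cobar word of length $r$ to a representative of a Hochschild cochain of $A^!$ of arity $r-1$, and the cyclic sum supplied by $B$ records exactly the choices of where the output slot is placed.

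Third, with both sides now living in $\mathrm{CH}^\bullet(A^!)$, I would match them. The Gerstenhaber bracket $\{\Phi B(u),\Phi B(v)\}_\mathrm{G}$ is the antisymmetrization of the brace product $\Phi B(u)\bullet\Phi B(v)$, which is a signed sum, over all insertion slots, of the cochain $\Phi B(v)$ plugged into $\Phi B(u)$. The key observation is that, under $\Phi$, inserting one cochain into a slot of the other is precisely the operation of contracting a letter of $v$ against a letter of $u$ through $\langle-,-\rangle$ and splicing the two words, that is, exactly the building block of $\mu\circ\ldb-,-\rdb$; meanwhile the cyclic sums supplied by $B$ account for the sum over all positions at which the splicing, respectively the insertion, can occur. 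Carrying out this identification position by position shows that the two signed sums coincide.

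The step I expect to be the main obstacle is this last combinatorial and sign bookkeeping: reconciling the cyclic sum produced by $B$ with the slot-by-slot sum in the Gerstenhaber brace, and tracking the Koszul signs arising from the shift $A^{\ac}[-1]$, the cyclic pairing, and the antisymmetrization in both brackets. A secondary difficulty is the passage from the quadratic ($N=2$) case to the general $N$-Koszul case: there $\mathbf\Omega_\infty(A^{\ac})$ carries higher cobar differentials and $A^!$ is only a cyclic $A_\infty$-algebra, so both $\ldb-,-\rdb$ and $\Phi$ acquire higher components, and the Gerstenhaber bracket must be understood through the full brace hierarchy of the $A_\infty$-algebra $A^!$. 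I expect that once the quadratic case is settled, the $A_\infty$ case follows by the same position-by-position matching, with the higher structure maps of $A^!$ playing the role previously played by the pairing alone.
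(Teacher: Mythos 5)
Your plan is essentially the paper's own proof: both make the double bracket $\ldb-,-\rdb$, the embedding $\mathbf\Omega_\infty(A^{\ac})\hookrightarrow\mathrm{CH}_\bullet(A^{\ac})$, the Connes operator $B$, and Tradler's duality $\Phi$ explicit on monomials, and both reduce the identity to matching the contract-and-splice terms of $\mu\circ\ldb-,-\rdb$ against the insertion (brace) terms of $\{-,-\}_{\mathrm G}$, with the cyclic sums from $B$ accounting for the choice of output slot. The one tactical difference occurs exactly at the step you flag as the main obstacle: instead of verifying signs position by position, the paper takes $u=[u_1|\cdots|u_n]$, $v=[v_1|\cdots|v_m]$ with generically distinct letters, checks that each Gerstenhaber summand $\{(u_1,\ldots,u_{n-1})\otimes\bar u_n,\,(v_1,\ldots,v_{m-1})\otimes\bar v_m\}_{\mathrm G}$ occurs, with its sign, among the summands of $\Phi\circ B\{u,v\}_{\mathrm{DNCP}}$, observes that under genericity no two summands on the Gerstenhaber side can cancel, and then counts: both sides consist of $mn(m+n-2)$ summands, so containment plus equal cardinality forces equality (the general case follows by multilinearity). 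This counting argument is precisely what spares the full sign bookkeeping you anticipate; your direct matching would also work but is longer. Finally, your ``secondary difficulty'' about the $N$-Koszul case is moot: the identity is a chain-level statement, and none of the four ingredients --- the pairing in $\ldb-,-\rdb$, dualization in $\Phi$, cyclic permutation in $B$, and insertion in $\{-,-\}_{\mathrm G}$ --- involves the higher operations $\triangle_N$ or $m_N$, which enter only through the differentials; so the quadratic computation applies verbatim for all $N$, and no brace-hierarchy corrections are needed.
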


A classical result of Keller \cite[Theorem 3.5]{Keller0} says that if $A$ is Koszul, then
the Hochschild cohomology of $A$
and of $A^!$ are isomorphic as Gerstenhaber algebras.
His theorem holds for $N$-Koszul algebras, too.
Therefore, Theorem \ref{maintheorem_DNCP} implies that
$\Phi\circ B$, composed with the isomorphism of Keller, maps
the Leibniz-Loday bracket of $A$ to the Gerstenhaber
bracket on the Hochschild cohomology of $A$ itself.

The above result allows us to  give an explicit formula for
the Lie bracket on the cyclic homology and the Lie
module structure on the Hochschild homology.
First, we recall that $(\mathrm{HH}^{\bullet}(A), \cup) $
is a graded commutative algebra and  $(\mathrm{HH}_{\bullet}(A), \cap) $
is a graded module
$$ \cap: \mathrm{HH}_{m}(A)\otimes \mathrm{HH}^n(A) \to \mathrm{HH}_{m-n}(A)\, ,
\, \alpha  \otimes f \mapsto \alpha  \cap f $$
and we denote $\iota_f(\alpha):=\alpha \cap f$.
Now if $A$ is $d$-Calabi-Yau, then there in fact exists
an element $\omega\in\mathrm{HH}_d(A)$ such that
the cap product with $\omega$:
\begin{eqnarray*}
\Psi: \mathrm{HH}^\bullet(A)&\longrightarrow&\mathrm{HH}_{d-\bullet}(A)
\\
f&\longmapsto&   \iota_{f}\omega
\end{eqnarray*}
is an isomorphism, which is called the
{\it non-commutative Van den Bergh-Poincar\'e duality} for $A$.
We can show
\begin{corollary}
\label{corexpbracket}
The Lie bracket of Theorem~\ref{maintheorem1} on $ \mathrm{HC}_{\bullet}(A) $ is given by
\begin{equation}
\label{mainformula}
\{\alpha, \beta\}_{\mathrm{DNCP}}= (-1)^{(d-|\alpha|-1)} \iota_{\Psi^{-1}(B(\alpha))} B(\beta)
\end{equation}
for $\alpha ,\beta \in \mathrm{HC}_{\bullet}(A) $, where
$B: \mathrm{HC}_{\bullet}(A) \to \mathrm{HH}_{\bullet+1}(A)$ is the Connes operator.
The Lie module structure on $\mathrm{HH}_{\bullet}(A)$ is also
given by the bracket \eqref{mainformula} for $\beta \in \mathrm{HH}_{\bullet}(A)$.
\end{corollary}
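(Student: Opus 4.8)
The plan is to deduce the explicit formula \eqref{mainformula} from Theorem~\ref{maintheorem_DNCP} by transporting the Gerstenhaber bracket to the Hochschild homology of $A$ itself, using Keller's Gerstenhaber isomorphism together with the Van den Bergh--Poincar\'e duality $\Psi$. Concretely, for cyclic classes $\alpha,\beta$ represented by $u,v\in\mathbf\Omega_{\infty}(A^{\ac})$, Theorem~\ref{maintheorem_DNCP} computes $\Phi B\{u,v\}_{\mathrm{DNCP}}$ as the Gerstenhaber bracket $\{\Phi B u,\Phi B v\}_{\mathrm G}$ in $\mathrm{CH}^\bullet(A^!)$. First I would identify, under Keller's isomorphism $\mathrm{HH}^\bullet(A^!)\cong\mathrm{HH}^\bullet(A)$ and the Tradler-type duality $\Phi$, the cohomology classes $f:=\Phi Bu$ and $g:=\Phi Bv$ with $\Psi^{-1}(B\alpha)$ and $\Psi^{-1}(B\beta)$, so that the right-hand side of Theorem~\ref{maintheorem_DNCP} becomes $\Psi(\{f,g\}_{\mathrm G})\in\mathrm{HH}_\bullet(A)$ with $\Psi f=B\alpha$ and $\Psi g=B\beta$.

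The key computation is to rewrite $\Psi(\{f,g\}_{\mathrm G})$ using the Batalin--Vilkovisky structure on $\mathrm{HH}^\bullet(A)$. Since $A$ is $d$-Calabi--Yau, $\Psi$ transports the Connes operator to the BV operator $\Delta=\Psi^{-1}\circ B\circ\Psi$, and the underlying Gerstenhaber bracket is recovered through the standard BV identity
\begin{equation*}
\{f,g\}_{\mathrm G}=(-1)^{|f|}\bigl(\Delta(f\cup g)-\Delta f\cup g-(-1)^{|f|}f\cup\Delta g\bigr).
\end{equation*}
Applying $\Psi$ and using that it intertwines cup and cap products (so that $\Psi(f\cup g)$ is a single cap product of one factor with $\Psi$ of the other, via $\iota_{f\cup g}=\iota_g\iota_f$) turns the three summands into $B$ of a cap product plus two correction terms. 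The crucial observation is that both correction terms carry a factor $B\Psi f=B(B\alpha)$ or $B\Psi g=B(B\beta)$, which vanish because $B^2=0$; hence only the first summand survives and one is left with $\pm B\,\iota_{\Psi^{-1}(B\alpha)}B\beta$. Combining this with Theorem~\ref{maintheorem_DNCP} yields $B\{\alpha,\beta\}_{\mathrm{DNCP}}=\pm B\bigl(\iota_{\Psi^{-1}(B\alpha)}B\beta\bigr)$, which is \eqref{mainformula} read through the Connes operator. Tracking the grading of $f=\Psi^{-1}(B\alpha)$, namely $|f|=d-|\alpha|-1$, produces exactly the sign $(-1)^{d-|\alpha|-1}$.

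I would then treat the second statement, the Lie module structure on $\mathrm{HH}_\bullet(A)$, by running the identical computation with $\beta\in\mathrm{HH}_\bullet(A)$ and $B$ the Connes operator on the Hochschild complex; here $B^2=0$ again kills the same correction terms, so no new argument is needed. The main obstacle I anticipate is not the vanishing---which is clean---but the compatibility of the various dualities together with the sign bookkeeping: I must verify that Tradler's duality $\Phi$, Keller's Gerstenhaber isomorphism, and Van den Bergh's $\Psi$ are mutually compatible, so that $\Phi\circ B$ genuinely corresponds to $\Psi^{-1}\circ B$ under the Feigin--Tsygan/Quillen identification of $\mathbf\Omega_{\infty}(\tilde A^{\ac})_{\natural}$ with the cyclic complex of $A$. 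I would also need the BV identity, equivalently the noncommutative Cartan homotopy formula $L_f=B\iota_f-(-1)^{|f|}\iota_f B$, to hold in the $N$-Koszul setting, citing Tamarkin--Tsygan and the Calabi--Yau BV structure, and then carefully propagate the Koszul signs through $\Phi$, $\Psi$ and the associativity $\iota_{f\cup g}=\iota_g\iota_f$; the graded antisymmetry of the bracket accounts for any residual swap between the $\alpha$- and $\beta$-slots.
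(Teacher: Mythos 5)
Your computation up to the identity $B\{\alpha,\beta\}_{\mathrm{DNCP}}=\pm\, B\bigl(\iota_{\Psi^{-1}(B\alpha)}B\beta\bigr)$ is essentially the paper's own argument: the BV identity on $\mathrm{HH}^\bullet(A)$ with $\Delta=\Psi^{-1}\circ B\circ\Psi$, the vanishing of the two correction terms because $\Delta\bigl(\Psi^{-1}(B\alpha)\bigr)=\Psi^{-1}\bigl(B(B\alpha)\bigr)=0$, the mutual compatibility $\Psi\circ\Xi\circ\Phi=\mathrm{id}$ of Tradler's, Keller's and Van den Bergh's maps (this is exactly the paper's diagram \eqref{diag_BViso}, quoted from \cite{CYZ}), the contraction identity $\iota_{f\cup g}\,\omega=\iota_f\iota_g\,\omega$ together with $\iota_{\Psi^{-1}(B\beta)}\omega=B\beta$, and finally Theorem~\ref{maintheorem_DNCP}. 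The paper packages this same chain of equalities as \eqref{eqmth4} inside the proof of Theorem~\ref{maintheorem_dVV} (routing it through the de V\"{o}lcsey--Van den Bergh bracket, which you bypass), and your sign $(-1)^{d-|\alpha|-1}=(-1)^{|\alpha|+d+1}$ agrees with theirs.

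The genuine gap is the very last step. Having established the identity only after applying $B$, you conclude that this ``is \eqref{mainformula} read through the Connes operator'' --- but an equality of the form $Bx=By$ does not give $x=y$: by exactness of the SBI sequence, $\ker\bigl(B:\mathrm{HC}_\bullet(A)\to\mathrm{HH}_{\bullet+1}(A)\bigr)=\mathrm{im}\,S$, which is nonzero for a general algebra, so your argument pins down $\{\alpha,\beta\}_{\mathrm{DNCP}}$ only modulo $\ker B$. The missing ingredient --- and it is precisely the one thing the paper's proof of the corollary adds beyond \eqref{eqmth4} --- is that $A$, being $N$-Koszul, is a connected graded algebra, and for graded algebras the Connes operator $B$ is injective (equivalently, $S$ vanishes on reduced cyclic homology; see \cite[Theorem 9.9.1]{Weibel}, as cited in the paper). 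This is a nontrivial theorem special to the graded setting, not a formal consequence of anything you invoked, and without it the proof is incomplete. Once it is invoked, cancelling $B$ yields \eqref{mainformula}, and the same cancellation handles the Lie module case $\beta\in\mathrm{HH}_\bullet(A)$ as you describe.
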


Another consequence of Theorem~\ref{maintheorem_DNCP} is that it relates
the Lie bracket $\{-,-\}_{\mathrm{DNCP}}$
of Theorem~\ref{maintheorem1}
on $\mathrm{HC}_{\bullet}(A)$
with the Lie bracket $\{-,-\}_{\mathrm{dVV}}$  of de V\"{o}lcsey-Van den Bergh  on
the negative cyclic homology $\mathrm{HC}_{\bullet}^{-}(A)$
introduced in \cite{dTdVVdB}. Let us briefly recall their construction.
Let $\mathrm{CC}_{\bullet}^{-}(A)$ be the negative cyclic complex
 and let $ \pi: \mathrm{CC}_{\bullet}^{-}(A) \to \mathrm{CH}_{\bullet}(A)$
 be the natural projection in to the last two columns (see e.g. \cite[Section 5.1.4.1]{Loday}).
Next, let $\cup$ be the cup product on $\mathrm{HH}^{\bullet}(A)$. Then
\begin{equation}\label{bracket_V}
\begin{array}{cccl}
 \{-,-\}_{\mathrm{dVV}}: &\mathrm{HC}_{n}^{-}(A) \times \mathrm{HC}_{m}^{-}(A)
 &\longrightarrow&
  \mathrm{HC}_{n+m-d+1}^{-}(A)   \\[0.2cm]
& (\eta_1,\eta_2)& \longmapsto &
(-1)^{|\eta_1|+d}B \circ \Psi\big( \Psi^{-1} (\pi(\eta_1)) \cup \Psi^{-1} (\pi(\eta_2))\big)\;\;\;\;\;
\end{array}
\end{equation}
where $B:\mathrm{HH}_q(A) \to \mathrm{HC}_{q+1}^{-}(A)$ is
the map induced from the Connes operator and $\Psi$ is the non-commutative Poincar\'e duality.
From the following commutative diagram (see \cite[Proposition 5.1.5]{Loday})
\begin{equation}
\label{comdiag1}
\xymatrixcolsep{4pc}
  \xymatrix{ \mathrm{HC}_{\bullet} (A)  \ar[r]^{B}\ar[d]^{id} &  \mathrm{HC}_{\bullet+1}^{-}(A) \ar[d]^{\pi} \\
\mathrm{HC}_{\bullet} (A)  \ar[r]^{B}  &  \mathrm{HH}_{\bullet+1}(A) ,}
\end{equation}
we have
\begin{theorem}
\label{maintheorem_dVV}
Let $A$ be an $N$-Koszul $d$-Calabi-Yau algebra. Then
$$  B\{\alpha, \beta\}_{\mathrm{DNCP}} = \{B(\alpha),B(\beta)\}_{\mathrm{dVV}}$$
for any $\alpha, \beta \in  \mathrm{HC}_{\bullet} (A)$.
\end{theorem}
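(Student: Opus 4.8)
The plan is to prove the identity purely computationally, by reducing both the left- and the right-hand side to the common core expression $B\circ\Psi\bigl(\Psi^{-1}(B\alpha)\cup\Psi^{-1}(B\beta)\bigr)$ and then checking that the accompanying prefactors agree. The three inputs I would use are the explicit formula \eqref{mainformula} of Corollary~\ref{corexpbracket} for $\{-,-\}_{\mathrm{DNCP}}$, the defining formula \eqref{bracket_V} for $\{-,-\}_{\mathrm{dVV}}$, and the compatibility square \eqref{comdiag1} relating the Connes operator into negative cyclic homology with its projection to Hochschild homology.

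First I would expand the right-hand side. Setting $\eta_1=B(\alpha)$ and $\eta_2=B(\beta)$ in \eqref{bracket_V}, where here $B\colon\mathrm{HC}_\bullet(A)\to\mathrm{HC}^-_{\bullet+1}(A)$ is the top horizontal arrow of \eqref{comdiag1}, the commutativity of \eqref{comdiag1} lets me rewrite the projections as $\pi(\eta_1)=\pi\circ B(\alpha)=B(\alpha)$ and $\pi(\eta_2)=B(\beta)$, now understood as the Hochschild classes produced by the Connes operator $B\colon\mathrm{HC}_\bullet(A)\to\mathrm{HH}_{\bullet+1}(A)$. This turns the dVV bracket into $\{B\alpha,B\beta\}_{\mathrm{dVV}}=(-1)^{|\alpha|+1+d}\,B\circ\Psi\bigl(\Psi^{-1}(B\alpha)\cup\Psi^{-1}(B\beta)\bigr)$, with $B(\alpha),B(\beta)\in\mathrm{HH}_\bullet(A)$ inside.

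The algebraic bridge between the two sides is the interaction of the Van den Bergh--Poincar\'e duality $\Psi(f)=\iota_f\omega=\omega\cap f$ with the cap--cup module structure on $\mathrm{HH}_\bullet(A)$. Writing $f=\Psi^{-1}(B\alpha)$ and $g=\Psi^{-1}(B\beta)$, the module axiom $(\gamma\cap f)\cap g=\gamma\cap(f\cup g)$ together with graded commutativity of $\cup$ gives $\Psi(g)\cap f=(\omega\cap g)\cap f=\omega\cap(g\cup f)=(-1)^{|f||g|}\,\Psi(f\cup g)$; that is, the contraction $\iota_f$ is intertwined by $\Psi$ with cup-multiplication by $f$. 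Applying this with $B\beta=\Psi(g)$ to the formula \eqref{mainformula} turns the left-hand side into $\{\alpha,\beta\}_{\mathrm{DNCP}}=(-1)^{d-|\alpha|-1}\,\iota_f(B\beta)=(-1)^{d-|\alpha|-1+|f||g|}\,\Psi\bigl(\Psi^{-1}(B\alpha)\cup\Psi^{-1}(B\beta)\bigr)$, so that after applying $B$ it again produces the core term $B\Psi(\cdots)$. Matching the two sides thus amounts to comparing the exponents $d-|\alpha|-1+|f||g|$ and $|\alpha|+1+d$, where $|f|=d-|\alpha|-1$ and $|g|=d-|\beta|-1$ come from the degree shift of $\Psi$, and $|\eta_1|=|\alpha|+1$ from the degree shift of $B$.

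The hard part will be the sign bookkeeping, which I expect to be the only genuine obstacle. Two points require care. First, the precise homological home of each expression: $\{\alpha,\beta\}_{\mathrm{DNCP}}$ a priori lives in $\mathrm{HC}_\bullet(A)$, yet \eqref{mainformula} presents it through a Hochschild contraction, so applying the negative-cyclic operator $B$ to it has to be justified through the Connes long exact sequence and the square \eqref{comdiag1} (this is precisely where Theorem~\ref{maintheorem2} and the compatibility of $B$ with $\pi$ enter). Second, the three signs $(-1)^{d-|\alpha|-1}$ from \eqref{mainformula}, $(-1)^{|\alpha|+1+d}$ from \eqref{bracket_V}, and the Koszul sign $(-1)^{|f||g|}$ from reordering the cup product must combine to an equality; reconciling them will likely force a careful, convention-fixed recomputation of the cap--cup module signs and of the degree conventions for $\Psi$ and $B$, rather than the naive cancellation. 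Once these signs are pinned down, the theorem follows formally from the two explicit bracket formulas, the cap--cup compatibility, and the commutative diagram \eqref{comdiag1}.
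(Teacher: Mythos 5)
Your reduction of the right-hand side is fine and matches the paper's own first step: by \eqref{comdiag1} one has $\pi(B\alpha)=B\alpha$, so \eqref{bracket_V} gives $\{B\alpha,B\beta\}_{\mathrm{dVV}}=(-1)^{|\alpha|+1+d}\,B\Psi\bigl(\Psi^{-1}(B\alpha)\cup\Psi^{-1}(B\beta)\bigr)$. The genuine gap is on the left-hand side: you take the explicit formula \eqref{mainformula} of Corollary~\ref{corexpbracket} as an input, but in this paper that corollary is not an independent fact --- its proof begins ``It follows from \eqref{eqmth4} that\dots'', and \eqref{eqmth4} is precisely the chain of equalities constituting the paper's proof of Theorem~\ref{maintheorem_dVV}. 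So your argument is circular: the formula you feed in already encodes the theorem (given \eqref{mainformula}, the theorem is a two-line consequence, and conversely the theorem plus injectivity of $B$ for graded algebras yields \eqref{mainformula}). All the actual mathematical content --- Theorem~\ref{maintheorem_DNCP} relating $\{-,-\}_{\mathrm{DNCP}}$ to the Gerstenhaber bracket via $\Phi\circ B$, Keller's isomorphism $\Xi$ (Theorem~\ref{thm_keller}), the Batalin--Vilkovisky compatibility diagram \eqref{diag_BViso}, and Ginzburg's Theorem~\ref{BV_CY}, which together with $\Delta^2=0$ gives $\Delta(\Delta a\cup\Delta b)=\pm\{\Delta a,\Delta b\}_{\mathrm G}$ --- is hidden inside the assumed formula and never appears in your outline. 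To make the proposal non-circular you would have to prove \eqref{mainformula} first, and the only available route for that is essentially the computation \eqref{eqmth4} you are trying to bypass.

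A secondary remark on the sign issue you flag as ``the hard part'': the Koszul sign $(-1)^{|f||g|}$ you worry about is an artifact of your chosen cap--cup convention $(\gamma\cap f)\cap g=\gamma\cap(f\cup g)$ followed by graded-commuting the cup product. The paper's own usage (see the proof of Corollary~\ref{corexpbracket}) is $\iota_f\iota_g\,\omega=\iota_{f\cup g}\,\omega$, so that $\iota_{\Psi^{-1}(B\alpha)}B\beta=\Psi\bigl(\Psi^{-1}(B\alpha)\cup\Psi^{-1}(B\beta)\bigr)$ with no reordering sign, and then $(-1)^{d-|\alpha|-1}=(-1)^{|\alpha|+d+1}$ makes the two prefactors agree on the nose. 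So the sign bookkeeping is not the real obstacle; the missing, independent derivation of \eqref{mainformula} is.
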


The rest of the paper is devoted to the proof of the above theorems.
It is organized as follows:
in \S\ref{Sect_H_C_of_coalg} we recollect several basic notions such as the Hochschild and cyclic homology
of algebras and coalgebras;
in \S\ref{Sect_double_poisson_alg} we recall the definition of double Poisson algebras in the sense of Van den Bergh
and their bimodules;
in \S\ref{Sect_double_poisson_on_cobar} we show that for a class of coalgebras there is a
double Poisson bracket on their
cobar construction;
in \S\ref{Sect_bimod_of_1_form} we continue to show that, for the cobar construction, when viewing it as
a DG algebra, its non-commutative differential 1-forms admit a double Poisson bimodule structure;
in \S\ref{Sect_N_Kosuzl_CY} we show $N$-Koszul Calabi-Yau algebras have a derived
non-commutative Poisson structure;
in \S\ref{Sect_DRep} we give a brief introduction to derived representation schemes and
derived non-commutative Poisson structures;
in \S\ref{Sect_proof_of_main_thm} we prove the main theorems listed in \S\ref{Sect_Intro};
and in \S\ref{Sect_example_poly_alg} we give by explicit formulas several brackets on
the space of polynomials of several variables.

\begin{ack}
We would like to thank Yuri Berest for helpful communications and
NSFC (No. 11271269) for partial support.
\end{ack}

\section{Hochschild and cyclic homologies of coalgebras}\label{Sect_H_C_of_coalg}

\subsection{Bar and cobar constructions}

Let $k$ be a field of characteristic zero. An associative
$k$-algebra $A$
 is said to be \textit{augmented} if there is
an algebra homomorphism $\epsilon: A \to k$, called
 the {\it augmentation map}. In particular,
$A$ is  canonically isomorphic, as a vector space, to
$k \oplus \bar{A}$, where $\bar{A}=\mathrm{Ker}(\epsilon)$.

The {\it bar construction} of an augmented algebra $A$,
denoted by $\mathbf{B}(A)$,  is a DG coalgebra defined as follows.
First, recall that the {\it suspension}
of a graded vector space $V$ is  the graded vector
space $sV$ such that $(sV)_i=V_{i-1}$. Similarly,
the {\it desuspension} is $s^{-1}V$ such that
$(s^{-1}V)_i=V_{i+1}$.  As a coalgebra  $\mathbf{B}(A)$ is
the tensor coalgebra $T(s\bar{A})$ of the underlying vector
space of $\bar{A}$ located in degree one. The coproduct is
$$
\triangle( a_1, \cdots , a_n) =
\sum_{i=0}^{n} ( a_1,\cdots, a_i) \otimes (a_{i+1},\cdots , a_n)
$$
while the differential is
$$ b'( a_1, \cdots , a_n) = \sum_{i=1}^{n-1}(-1)^{i-1} (a_1, \cdots, a_i \cdot a_{i+1},
\cdots, a_n) \, . $$
The counit $\eta:\mathbf B(A) \to k$ is the projection onto $A^{\otimes 0}=k$ and
we will denote by $\bar{\mathbf {B}}(A)$  the $\mathrm{Ker}(\eta)$.

A coassociative coalgebra $(C,\triangle)$ is a {\it coaugmented} coalgebra
if there is coalgebra homomorphism $u: k \to C$. Then $ C$
can be identified as a vector space to $k \oplus \bar{C}$, where $\bar{C}$
is the $\mathrm{Coker}(u)$. Define the {\it reduced  coproduct}
$\bar{\triangle}: \bar{C} \to \bar{C} \otimes \bar{C}$ as the composite
$$
\bar{C}\stackrel{i}{\longrightarrow}
C\stackrel{\triangle}{\longrightarrow} C \otimes C \stackrel{\pi \otimes \pi}{\longrightarrow}\bar{C} \otimes \bar{C}
$$
where $i$ and $\pi$ are canonical inclusion and projection.
Then {\it cobar construction} of $C$  is a DG algebra $\mathbf{\Omega}(C)$
\footnote{In this paper the cobar construction is denoted by the bold face $\mathbf \Omega$ while the standard
$\Omega$ means the non-commutative differential forms.}
defined as follows. As an algebra
it is  the tensor algebra $T(s^{-1}\bar{C})$ and the differential is
$$  b'(c_1, \cdots, c_n) = \sum_{i=1}^{n}\sum_{(c_i)}
(-1)^{i-1}(c_1,\cdots ,  c_i', c_i'',\cdots, c_n)   $$
where $\bar{\triangle}(c_i)=\sum_{(c_i)} (c_i',c_i'')$.
The unit map $\epsilon : k \to \mathbf{\Omega}(C)$
is the inclusion into $\bar{C}^{\otimes 0}=k$,
and we will write $\overline{\mathbf{\Omega}}(C)$ for $\mathrm{Coker}(\epsilon)$,
which is the {\it reduced cobar construction} of $C$.

\subsection{The cyclic bicomplex }

Let $C$ be a coalgebra over $k$.
We write $\triangle(c)=\sum_{(c)} c'\otimes c'' $ for the coproduct in $C$.
Then we consider the following double complex which is obtained by
reversing the arrows in the standard (Tsygan) double complex of an algebra:
$$
\xymatrixcolsep{4pc}
\xymatrix{
&&&&\\
\ar[r]^{N}
&C^{\otimes 3} \ar[r]^{1-T}\ar[u]^{b}
&C^{\otimes 3} \ar[u]^{b'}\ar[r]^{N}&C^{\otimes 3}
\ar[u]^b\ar[r]^{1-T} &\\
\ar[r]^{N}
&C^{\otimes 2} \ar[r]^{1-T}\ar[u]^{b}
&C^{\otimes 2} \ar[u]^{b'}\ar[r]^{N}&C^{\otimes 2}
\ar[u]^b\ar[r]^{1-T}& \\
\ar[r]^{N}
&C \ar[u]^{b}\ar[r]^{1-T}
&C \ar[u]^{b'} \ar[r]^{N}&C
 \ar[u]^{b} \ar[r]^{1-T}&\\
& \ar[u] 0 &  \ar[u] 0& \ar[u] 0&}
$$
This double complex is $2$-periodic in horizontal direction, with operators
$b', b, T, N$ given by
\begin{eqnarray}
b' ( c_1,\cdots,  c_n)&:=&
\sum_{i=1}^{n}\sum_{(c_i)}
(-1)^{i-1}(c_1,\cdots ,  c_i', c_i'',\cdots, c_n),\nonumber\\[0.1cm]
b ( c_1,\cdots,  c_n)&:= & b' ( c_1,\cdots,  c_n) +   \sum (-1)^n (c_1'',c_2, \cdots, c_n, c_1')\, , \nonumber \\[0.1cm]
T (c_1,\cdots, c_n)&:=& (-1)^{n-1} (c_2, \cdots , c_n, c_1), \nonumber \\[0.1cm]
N &:= & \sum_{i=0}^{n-1} T^{i}.   \nonumber
\end{eqnarray}
The $b$-column is called the Hochschild chain complex $\mathrm{CH}_{\bullet}(C)$
of $C$: it defines the {\it Hochschild homology} $\mathrm{HH}_{\bullet}(C)$.
The $b'$-column is the reduced cobar construction of co-augmented coalgebra
$\tilde{C}=k\oplus C$ shifted by degree one; here $\tilde C$ is the co-augmentation of $C$,
which, as a vector space, is $k\oplus C$, with $k$ being the co-unit.
The kernel of $1-T$ from the $b$-complex to the $b'$-complex is called the cyclic
complex $\mathrm{CC}_{\bullet}(C)$: by definition, its homology is the {\it cyclic homology}
$\mathrm{HC}_{\bullet}$ of $C$.

In practice, to compute the Hochschild and cyclic homologies,
one usually considers the {\it normalized} Hochschild complex.
For a co-augmented coalgebra $C$, the normalized Hochschild complex
$$
\overline{\mathrm{CH}}_n(C):=C\otimes (\bar C)^{\otimes n}
$$
with the differential induced from $b$.
Similarly to the algebra case,
$\overline{\mathrm{CH}}_\bullet(C)$ and $\mathrm{CH}_\bullet(C)$
are quasi-isomorphic; however,
$\overline{\mathrm{CH}}_\bullet(C)$ may be viewed
as a tensor product
$C\otimes\mathbf{\Omega}(C)$
with a {\it twisted} differential given by $id\otimes b'+\tau_{\mathrm L}+\tau_{\mathrm R}$,
where $b'$ is the differential in the cobar construction,
and $\tau_{\mathrm L}$ and $\tau_{\mathrm R}$ are given by
\begin{eqnarray*}
\tau_{\mathrm L}(c_0,c_1,\cdots, c_n)&:=& \sum_{(c_0)}(c_0', c_0'',c_1,\cdots, c_n),\\
\tau_{\mathrm R}(c_0,c_1,\cdots, c_n)&:=&\sum_{(c_0)}(-1)^n(c_0'',c_1,\cdots,c_n,c_0').
\end{eqnarray*}
Under this identification, one sees that the cobar construction embeds
into the Hochschild complex as complexes
\begin{equation}\label{CobartoHoch}
\begin{array}{ccl}
\mathbf{\Omega}(C) &\longrightarrow& \mathrm{CH}_\bullet(C)\\
(c_1,\cdots,c_n ) &\longmapsto& 1\otimes (c_1,\cdots, c_n).
\end{array}
\end{equation}

We now recall some facts about the cyclic bicomplex from Quillen \cite[Section $1.3$]{Quillen1}.
Let $A$ be an associative algebra.
The commutator subspace of $A$ is $[A,A]$
which is the image of $\mu - \mu \sigma: A \otimes A \to A$ where $\mu$
is the product  in $A$ and $\sigma$ is the switching operator, and the commutator quotient space is
$$
A_{\natural}:= A/[A,A] = \mathrm{Coker} ( \mu - \mu \sigma).
$$
Dually, for a coassociative coalgebra $C$ the cocommutator subspace of $C$ is
$$
C^{\natural} := \mathrm{Ker} \{ \triangle - \sigma \triangle : C \to C \otimes C \}.
$$

Let $\tilde A=k \oplus A$ be the augmentation of $A$ (recall
that the augmentation of a not-necessarily unital algebra $A$ is the algebra
$\tilde A=k\oplus A$ where $k$ plays the role of the unit).
Let  $\bar{\mathbf B}$ be the \textit{reduced bar construction} of $\tilde A$.
The following lemma is \cite[Lemma 1.2]{Quillen1}:

\begin{lemma}\label{Hochs_algebra}
The space $\bar{\mathbf{B}}^{\natural}_n$ is the kernel of $1-T$ acting on $A^{\otimes n}$.
Hence, we have the isomorphism of complexes $\mathrm{CC}_{\bullet-1}(A)  \cong \bar{\mathbf{B}}^{\natural}. $
\end{lemma}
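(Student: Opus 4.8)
The plan is to prove the lemma in two stages: first the degreewise identity of vector spaces $\bar{\mathbf{B}}^{\natural}_n=\mathrm{Ker}(1-T)\cap A^{\otimes n}$, and then its promotion to an isomorphism of complexes. Throughout I identify the reduced bar construction in degree $n$, namely $(s\,\overline{\tilde A})^{\otimes n}=(sA)^{\otimes n}$ (recall $\overline{\tilde A}=\mathrm{Ker}(\epsilon)=A$), with $A^{\otimes n}$, writing a generator as $[a_1|\cdots|a_n]$ with each letter in internal degree one. Since degree $n$ in $\bar{\mathbf{B}}$ uses $n$ letters, this degree matches cyclic degree $n-1$, which accounts for the shift in $\mathrm{CC}_{\bullet-1}(A)$.

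First I would make the cocommutator subspace explicit. On $\bar{\mathbf{B}}$ the coproduct is reduced deconcatenation, $\bar\triangle[a_1|\cdots|a_n]=\sum_{i=1}^{n-1}[a_1|\cdots|a_i]\otimes[a_{i+1}|\cdots|a_n]$, and since the flip $\sigma$ on the tensor square of graded spaces carries the Koszul sign, the $i$-th summand of $\sigma\bar\triangle$ acquires the factor $(-1)^{i(n-i)}$. I would then decompose $\bar\triangle-\sigma\bar\triangle$ according to the length $p$ of the left tensor factor: its component in $\bar{\mathbf{B}}_p\otimes\bar{\mathbf{B}}_{n-p}$ receives the $i=p$ term of $\bar\triangle$ and the $i=n-p$ term of $\sigma\bar\triangle$. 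Concatenating $\bar{\mathbf{B}}_p\otimes\bar{\mathbf{B}}_{n-p}$ back into $A^{\otimes n}$, this $p$-th component of $(\bar\triangle-\sigma\bar\triangle)[a_1|\cdots|a_n]$ becomes
$$[a_1|\cdots|a_n]-(-1)^{p(n-p)}[a_{n-p+1}|\cdots|a_n|a_1|\cdots|a_{n-p}].$$

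The heart of the proof is to recognize the displaced term as a power of the signed cyclic operator. From $T[a_1|\cdots|a_n]=(-1)^{n-1}[a_2|\cdots|a_n|a_1]$ one computes $T^{-1}[a_1|\cdots|a_n]=(-1)^{n-1}[a_n|a_1|\cdots|a_{n-1}]$, so that $(T^{-1})^{p}$ moves the last $p$ letters to the front with sign $(-1)^{p(n-1)}$. The decisive and only delicate point is the sign identity $(-1)^{p(n-p)}=(-1)^{p(n-1)}$, which holds because $p(n-p)-p(n-1)=-p(p-1)$ is always even; granting it, the $p$-th component above is exactly $\big(1-(T^{-1})^{p}\big)[a_1|\cdots|a_n]$. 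Hence $x\in\bar{\mathbf{B}}^{\natural}_n$ if and only if $(1-(T^{-1})^{p})x=0$ for every $1\le p\le n-1$. The case $p=1$ already forces $T^{-1}x=x$, while conversely $T^{-1}x=x$ makes all higher relations automatic; since $T$ has finite order, $\mathrm{Ker}(1-T^{-1})=\mathrm{Ker}(1-T)$, and we conclude $\bar{\mathbf{B}}^{\natural}_n=\mathrm{Ker}(1-T)$.

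Finally I would promote this to an isomorphism of complexes. That $\bar{\mathbf{B}}^{\natural}$ is a subcomplex for the bar differential $b'$ is immediate once one notes that $b'$ is a coderivation and $\sigma$ is a chain map on the tensor square, so $(\bar\triangle-\sigma\bar\triangle)\circ b'=d_{\otimes}\circ(\bar\triangle-\sigma\bar\triangle)$ with $d_{\otimes}$ the differential on $\bar{\mathbf{B}}\otimes\bar{\mathbf{B}}$; thus $b'$ preserves $\mathrm{Ker}(\bar\triangle-\sigma\bar\triangle)$. Under the degreewise identification just obtained, the restricted bar differential is precisely the differential carried by $\mathrm{CC}_{\bullet-1}(A)$, so the two complexes coincide. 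I expect the genuine obstacle to be exactly the sign bookkeeping of the third step: it is the coincidence $(-1)^{p(n-p)}=(-1)^{p(n-1)}$ that converts the Koszul signs of the coproduct into the signs of the cyclic operator, and without it the graded components of $\bar\triangle-\sigma\bar\triangle$ would not collapse to clean powers of $T^{-1}$. Everything else --- the deconcatenation formula, the reduction to $p=1$, and the coderivation argument --- is routine once this matching is in place.
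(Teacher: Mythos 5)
Your computation of $\bar{\mathbf{B}}^{\natural}_n$ is correct and complete: the bidegree decomposition of $\bar\triangle-\sigma\bar\triangle$, the sign identity $(-1)^{p(n-p)}=(-1)^{p(n-1)}$, and the reduction to the $p=1$ relation are exactly the content of the first assertion (the paper gives no proof of its own here, simply citing Quillen's Lemma 1.2, which rests on this same computation). The gap is in your last step, the promotion to an isomorphism of complexes. In this paper's conventions (see the identification \eqref{KerCoker} stated right after the lemma), $\mathrm{CC}_{\bullet-1}(A)$ is Connes' complex $\mathrm{Coker}\bigl(1-T\colon A^{\otimes n}\to A^{\otimes n}\bigr)$ carrying the full Hochschild boundary $b$, i.e.\ $b'$ plus the wrap-around term $(-1)^{n-1}(a_na_1,a_2,\ldots,a_{n-1})$. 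By contrast, $\bar{\mathbf{B}}^{\natural}=\mathrm{Ker}(1-T)$ sits inside the $b'$-column: the differential it inherits from the bar construction is $b'$, with no wrap-around term (and indeed $b$ does not even preserve $\mathrm{Ker}(1-T)$, while $b'$ does, as your coderivation argument shows). So the sentence ``the restricted bar differential is precisely the differential carried by $\mathrm{CC}_{\bullet-1}(A)$, so the two complexes coincide'' identifies a subcomplex of $(A^{\otimes\bullet},b')$ with a quotient complex of $(A^{\otimes\bullet},b)$ with no argument; these are different operators related by no tautology.

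What is missing is the norm-map step, which is where the actual content of ``Hence'' lies. In characteristic zero one has $A^{\otimes n}=\mathrm{Ker}(1-T)\oplus\mathrm{Im}(1-T)$, and $N=\sum_{i=0}^{n-1}T^i$ induces a linear isomorphism $\mathrm{Coker}(1-T)\xrightarrow{\ \sim\ }\mathrm{Ker}(1-T)$; the bicomplex identity $b'N=Nb$ (equivalently, $bx\equiv b'x \bmod \mathrm{Im}(1-T)$ for $x\in\mathrm{Ker}(1-T)$) is then exactly what makes $N$ a chain isomorphism $\bigl(\mathrm{Coker}(1-T),b\bigr)\cong\bigl(\mathrm{Ker}(1-T),b'\bigr)=\bar{\mathbf{B}}^{\natural}$. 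That identity is a genuine computation with the face maps and the signed cyclic operator (it is how the extra term of $b$ gets absorbed), and nothing in your proof supplies it; note also that this is the one place where characteristic zero is used, and the statement fails without it. Your argument that $b'$ preserves $\bar{\mathbf{B}}^{\natural}$ is correct and necessary, but it does not connect $b'$ on the invariants to $b$ on the coinvariants. Adding the norm-map argument (or invoking the standard identities $b(1-T)=(1-T)b'$ and $b'N=Nb$, e.g.\ from Loday's book) closes the gap.
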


\begin{remark}
{\bf For the convenience of later discussions,
from now on we shall shift the degrees of $\mathrm{CC}_\bullet(A)$ up by one, and just write
the above identity and alike as
$\mathrm{CC}_\bullet(A)\cong\bar{\mathbf{B}}^\natural$.}
\end{remark}

Dually, the space $(\mathbf{\Omega}(\tilde C)_{\natural})_n$  is the cokernel of $1-T$
acting on $C^{\otimes n}$. Thus, by the isomorphisms
\begin{equation}
\label{KerCoker}
  \mathrm{CC}_{\bullet}(A)= \mathrm{Coker}(1-T) \cong \mathrm{Ker}(1-T) \, , \,
 \mathrm{CC}_{\bullet}(C) =   \mathrm{Ker}(1-T)  \cong \mathrm{Coker}(1-T),
 \end{equation}
we have the following lemma

\begin{lemma}\label{lemmaident}
As complexes of $k$-vector spaces
$\overline{\mathbf{\Omega}}(\tilde C)_{\natural} \cong \mathrm{CC}_{\bullet}(C)$.
Hence,
$$
\mathrm{HC}_{\bullet}(C) \cong \mathrm{H}_{\bullet}[\overline{\mathbf{\Omega}}(\tilde C)_{\natural}] .
$$
\end{lemma}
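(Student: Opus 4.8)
The statement is precisely dual to Quillen's Lemma~\ref{Hochs_algebra}, so the plan is to mirror its proof with all arrows reversed. The paragraph preceding the lemma already records the degreewise identification of the commutator quotient of the (reduced) cobar algebra with the cyclic coinvariants $\mathrm{Coker}(1-T)$ of $C^{\otimes n}$; dually, $\mathrm{CC}_\bullet(C)=\mathrm{Ker}(1-T)$ sits inside the $b$-column as the cyclic invariants. Thus everything reduces to producing a single map that is simultaneously a degreewise isomorphism $\mathrm{Coker}(1-T)\xrightarrow{\sim}\mathrm{Ker}(1-T)$ and a chain map for the two \emph{different} differentials these complexes carry, namely the induced cobar differential $\bar{b'}$ on the source and the Hochschild differential $b$ on the target. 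I would take that map to be the norm operator $N=\sum_{i=0}^{n-1}T^i$.

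First I would check that $N$ descends to a well-defined map $\bar N\colon\mathrm{Coker}(1-T)\to\mathrm{Ker}(1-T)$. From the formula $T(c_1,\dots,c_n)=(-1)^{n-1}(c_2,\dots,c_n,c_1)$ one computes $T^n=\mathrm{id}$, since the accumulated sign $(-1)^{n(n-1)}$ is always $+1$; hence $T$ generates an action of the cyclic group $\mathbb{Z}/n$, and the identities $NT=N=TN$ give $N(1-T)=0=(1-T)N$, so $N$ factors through $\mathrm{Coker}(1-T)$ and lands in $\mathrm{Ker}(1-T)$. Because $\mathrm{char}\,k=0$, the group algebra of $\mathbb{Z}/n$ is semisimple and $\tfrac1n N$ is the idempotent projecting $C^{\otimes n}$ onto its $\mathbb{Z}/n$-invariants; this identifies coinvariants with invariants and makes $\bar N$ a degreewise isomorphism, which is the content of the isomorphisms~\eqref{KerCoker}. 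I would emphasize that one must use $N$ itself, and \emph{not} the averaged idempotent $\tfrac1n N$, as the comparison map, since the normalising factor depends on the tensor degree $n$ and would therefore fail to commute with a differential that changes $n$.

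It remains to match the differentials, and this is where the real work lies. I would verify the two commutation relations
\begin{equation*}
(1-T)\,b = b'\,(1-T), \qquad b\,N = N\,b',
\end{equation*}
which are exactly the assertions that the horizontal maps $1-T$ and $N$ of the displayed cyclic bicomplex are chain maps between the alternating $b$- and $b'$-columns (the arrow-reversed analogue of Loday's identities in \cite[Lemma~2.1.1]{Loday}). The first relation shows that $b'$ preserves $\mathrm{im}(1-T)$ and $b$ preserves $\mathrm{Ker}(1-T)$, so $\bar{b'}$ on $\mathrm{Coker}(1-T)$ and $b$ on $\mathrm{Ker}(1-T)$ are both well defined; the second shows that $\bar N$ intertwines them, $b\,\bar N=\bar N\,\bar{b'}$. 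Combined with the degreewise bijectivity of $\bar N$ this gives the asserted isomorphism of complexes $\overline{\mathbf{\Omega}}(\tilde C)_{\natural}\cong\mathrm{CC}_\bullet(C)$, and passing to homology yields $\mathrm{HC}_\bullet(C)\cong\mathrm H_\bullet[\overline{\mathbf{\Omega}}(\tilde C)_{\natural}]$. The main obstacle is purely bookkeeping: checking the two relations above from the explicit formulas for $b,b',T,N$ requires careful tracking of the Koszul signs produced by the desuspension $s^{-1}$ in the cobar construction—which is what turns the naive cyclic rotation into the signed operator $T$—and of the sign $(-1)^n$ in the last summand separating $b$ from $b'$.
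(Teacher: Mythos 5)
Your proof is correct and takes essentially the same route as the paper: the paper's own argument consists of identifying $\overline{\mathbf{\Omega}}(\tilde C)_{\natural}$ degreewise with $\mathrm{Coker}(1-T)$ and then invoking the characteristic-zero isomorphism $\mathrm{Coker}(1-T)\cong\mathrm{Ker}(1-T)$ of \eqref{KerCoker}, which is realized by the norm map $N$ exactly as you describe. Your explicit checks---that $N$ descends and lands in the invariants, that one must use $N$ rather than $\tfrac1n N$, and the intertwining identities $(1-T)b=b'(1-T)$ and $bN=Nb'$---simply fill in what the paper leaves implicit in the commutativity of its displayed cyclic bicomplex.
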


There is an efficient way to compute the cyclic homology. Let us recall that (cf. \cite{Loday}) for
a unital and augmented algebra $A=k\oplus \bar A$, its reduced cyclic chain complex
$$\overline{\mathrm{CC}}_\bullet(A):=\mathrm{Coker}\{1-T:\bar C^{\otimes n}\to \bar C^{\otimes n}\}.$$
The associated homology is the reduced cyclic homology of $A$, and is denoted by $
\overline{\mathrm{HC}}_\bullet(A)$,
and there is in fact a decomposition
$$\mathrm{HC}_\bullet(A)\cong\mathrm{HC}_\bullet(k)\oplus\overline{\mathrm{HC}}_\bullet(A).$$
The following is originally due to Feigin and Tsygan, and is now well-known (see e.g. \cite[Proposition 4.2]{BKR}):

\begin{proposition}
\label{propresol1}
Let $R \stackrel{\simeq}{\twoheadrightarrow} A$ be a quasi-free resolution of an algebra $A$.
Then there is quasi-isomorphism $\overline{\mathrm{CC}}_\bullet(A) \to \bar{R}_{\natural}$
inducing isomorphism of homologies $\overline{\mathrm{HC}}_{\bullet}(A) \simeq
\mathrm{H}_\bullet(\bar{R}_{\natural})$, where $\overline{\mathrm{CC}}_\bullet(-)$ and
$\overline{\mathrm{HC}}_\bullet(-)$ are the reduced cyclic chain complex and reduced cyclic homology
respectively.
\end{proposition}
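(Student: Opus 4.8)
The plan is to combine two facts: reduced cyclic homology of (DG) algebras is invariant under quasi-isomorphisms, and for a quasi-free DG algebra the reduced cyclic complex collapses onto its commutator quotient. Since $R\xrightarrow{\simeq}A$ is a quasi-isomorphism of DG algebras and $\overline{\mathrm{CC}}_\bullet(-)$ is functorial, it induces a chain map $\overline{\mathrm{CC}}_\bullet(R)\to\overline{\mathrm{CC}}_\bullet(A)$, which I claim is a quasi-isomorphism. To see this I would filter the reduced cyclic bicomplex of $R$ (the Tsygan double complex) by columns, so that its $E^1$-page is the reduced Hochschild homology $\overline{\mathrm{HH}}_\bullet$ equipped with the induced Connes differential. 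A quasi-isomorphism of DG algebras induces an isomorphism on Hochschild homology (standard, since $\mathrm{HH}_\bullet$ is computed by a derived tensor product), hence an isomorphism on $E^1$, and the comparison theorem for the (exhaustive, degreewise bounded below, hence convergent) spectral sequences upgrades this to the desired isomorphism on $\overline{\mathrm{HC}}_\bullet$. Because we work over a field, every quasi-isomorphism of complexes is a homotopy equivalence, so this map may be inverted up to homotopy; composing the inverse with the collapse map below produces an honest chain map $\overline{\mathrm{CC}}_\bullet(A)\to\bar R_\natural$ in the asserted direction.

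The heart of the argument is the collapse for quasi-free $R$: the projection onto the commutator quotient $\bar R_\natural=R/[R,R]$, placed in cyclic-degree zero and carrying only the internal differential of $R$, should be a quasi-isomorphism $\overline{\mathrm{CC}}_\bullet(R)\to\bar R_\natural$. By Lemma~\ref{Hochs_algebra} in its reduced form the cyclic-degree-zero piece of $\overline{\mathrm{CC}}_\bullet(R)$ is precisely $\bar R$, and $\bar R_\natural=\mathrm H_0$ of the cyclic complex. Here I would use that a quasi-free algebra $R=T_k(W)$ is formally smooth: its bimodule of noncommutative $1$-forms $\Omega^1 R\cong R\otimes W\otimes R$ is free, yielding the length-one bimodule resolution $0\to\Omega^1 R\to R\otimes R\to R\to 0$. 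Consequently the normalized Hochschild complex of $R$ is quasi-isomorphic to a two-term complex concentrated in Hochschild degrees $0$ and $1$, and the entire reduced cyclic bicomplex contracts to the small mixed complex on $(\bar R_\natural,\ (\Omega^1 R)_\natural)$ whose Connes operator is the universal derivation $\bar d$.

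Feeding this small mixed complex into the cyclic machinery identifies $\overline{\mathrm{HC}}_\bullet(R)$ with the reduced noncommutative de Rham cohomology of $R$. The decisive input is the noncommutative Poincar\'e lemma: for a quasi-free algebra the reduced de Rham complex is acyclic in positive degrees, so only the degree-zero part $\bar R_\natural$ survives, giving $\overline{\mathrm{HC}}_\bullet(R)\cong\mathrm H_\bullet(\bar R_\natural)$ with the internal differential. In the situation actually needed in this paper, where $R=\mathbf\Omega_\infty(A^{\ac})$ is a cobar construction, the collapse can alternatively be read off from Lemma~\ref{lemmaident}, which already identifies $\overline{\mathbf\Omega}(\tilde C)_\natural$ with the cyclic complex of the coalgebra $C$; the two viewpoints agree.

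I expect the main obstacle to be exactly this collapse, because of the interaction between the cyclic (Connes) differential and the internal DG differential $d$ of the resolution $R$. My plan to control it is to filter $R=T_k(W)$ by tensor-word length; for the cobar-type resolutions relevant here $d$ strictly raises word length, so its associated graded vanishes and the collapse reduces to the purely algebraic statement for an \emph{undifferentiated} free tensor algebra, where both the short bimodule resolution and the Poincar\'e lemma are classical. A convergence argument for the resulting spectral sequence---legitimate because the word-length filtration is exhaustive and bounded below in each total degree---then transfers the conclusion back to $R$. A secondary point needing care is the exactness of passing to the cocommutator/commutator quotients used throughout; this is where characteristic zero enters, guaranteeing that the cyclic summand splits off and that these quotients preserve quasi-isomorphisms.
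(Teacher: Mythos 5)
First, a point of comparison: the paper does not actually prove this proposition --- it is quoted as a known theorem of Feigin--Tsygan, with a pointer to \cite{BKR}, Proposition 4.2. So your proposal must be measured against the standard argument in that literature, and your two-step strategy (invariance of $\overline{\mathrm{HC}}_\bullet$ under quasi-isomorphism of DG algebras, plus collapse of the cyclic complex of a quasi-free algebra onto its commutator quotient via the length-one bimodule resolution, the two-term $X$-complex, and the characteristic-zero Poincar\'e lemma) is indeed that route. Your step 1 is fine (the tensor-degree filtration is bounded in each total degree, since each bar factor $s\bar R$ contributes at least $1$), as is the computation for an undifferentiated tensor algebra. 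One slip worth flagging: the small mixed complex must be built on $(\bar R,\ \Omega^1_{R,\natural})$, not on $(\bar R_\natural,\ \Omega^1_{R,\natural})$; the image of $\beta$ consists of commutators, so with $\bar R_\natural$ in the even spots the map $\beta$ would be identically zero, and $\bar R_\natural$ only appears as $H_0$ of the periodic sequence \eqref{bicompbij}.

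The genuine gap is the convergence of your word-length spectral sequence. A quasi-free resolution of an ordinary algebra $A$ necessarily has generators in homological degree $0$ (we need $R_0=T(W_0)$ to surject onto $A$), and in the cases this paper cares about $R=\mathbf{\Omega}(A^{\ac})$ has $R_0\cong T(V)$. Hence in every fixed total degree of $\overline{\mathrm{CC}}_\bullet(R)$, and of $\bar R_\natural$, there are elements of arbitrarily large word length: the decreasing word-length filtration is exhaustive and Hausdorff but it is \emph{not} bounded below in each total degree (contrary to your claim), and it is not complete either, because these complexes are direct sums rather than products. For such filtrations an isomorphism on $E_1$ does not by itself yield an isomorphism on homology --- this is precisely the situation in which a spectral sequence computes the homology of the completion rather than of the complex. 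Closing this gap requires an ingredient your proposal does not supply: a Boardman-type complete-convergence argument, or the auxiliary weight (Adams) grading available in the Koszul setting (each weight component of $\mathbf{\Omega}(A^{\ac})$ has bounded word length, and all complexes split as direct sums over weights), or a reduction to the bar side, where every tensor factor has degree at least $1$ and filtrations are bounded. A secondary gap: even granting convergence, your filtration step treats only differentials that strictly raise word length, so it establishes the collapse for cobar-type resolutions but not for an arbitrary quasi-free resolution as the proposition asserts; for general $(T(W),d)$ the differential of a generator may have word-length-one or constant components, so the associated graded differential need not vanish and the filtration need not even be preserved.
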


Thus, combining the above two results, one obtains

\begin{corollary}
\label{corimp1}
Let $C$ be a coaugmented coalgebra such that
$\mathbf{\Omega}(C) \stackrel{\simeq}{\twoheadrightarrow} A$
 is a quasi-free resolution of an algebra $A$. Then $\overline{\mathrm{HC}}_{\bullet}(A)
 \cong \overline{\mathrm{HC}}_{\bullet}(C)$. Moreover, if $A$ is an augmented algebra then
 $\mathrm{HC}_{\bullet}(A) \cong \mathrm{HC}_{\bullet}(C)$.
\end{corollary}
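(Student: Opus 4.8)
The plan is to chain the two homological identifications obtained above, reading Proposition~\ref{propresol1} on the algebra side and Lemma~\ref{lemmaident} on the coalgebra side. First I would note that the cobar construction $\mathbf{\Omega}(C)=T(s^{-1}\bar C)$ is the tensor DG algebra on $s^{-1}\bar C$, hence quasi-free, and by hypothesis $\mathbf{\Omega}(C)\stackrel{\simeq}{\twoheadrightarrow}A$ is a quasi-free resolution of $A$. Applying Proposition~\ref{propresol1} with $R=\mathbf{\Omega}(C)$ therefore produces a quasi-isomorphism $\overline{\mathrm{CC}}_\bullet(A)\to\overline{\mathbf{\Omega}}(C)_\natural$ and hence the isomorphism $\overline{\mathrm{HC}}_\bullet(A)\cong\mathrm H_\bullet(\overline{\mathbf{\Omega}}(C)_\natural)$.

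Next I would feed the right-hand side into Lemma~\ref{lemmaident}. Since $C$ is coaugmented, $C=k\oplus\bar C$ is precisely the co-augmentation of its reduced coalgebra $\bar C$, so that $\overline{\mathbf{\Omega}}(C)_\natural=\overline{\mathbf{\Omega}}(\tilde{\bar C})_\natural$, and Lemma~\ref{lemmaident}, applied to the coalgebra $\bar C$, identifies this complex with $\mathrm{CC}_\bullet(\bar C)$. Taking homology gives $\mathrm H_\bullet(\overline{\mathbf{\Omega}}(C)_\natural)\cong\mathrm{HC}_\bullet(\bar C)=\overline{\mathrm{HC}}_\bullet(C)$, where the last identification is the coalgebra dual of the statement that the reduced cyclic homology of an augmented algebra is the cyclic homology supported on the augmentation ideal. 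Composing the two displays yields $\overline{\mathrm{HC}}_\bullet(A)\cong\overline{\mathrm{HC}}_\bullet(C)$, which is the first assertion.

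For the ``moreover'' part I would combine this with the orthogonal splittings. On the algebra side, when $A$ is augmented one has the decomposition $\mathrm{HC}_\bullet(A)\cong\mathrm{HC}_\bullet(k)\oplus\overline{\mathrm{HC}}_\bullet(A)$ recalled just before Proposition~\ref{propresol1}; dually, for the coaugmented coalgebra $C$ one has $\mathrm{HC}_\bullet(C)\cong\mathrm{HC}_\bullet(k)\oplus\overline{\mathrm{HC}}_\bullet(C)$. Since the scalar summands $\mathrm{HC}_\bullet(k)$ agree and the reduced summands have just been identified, the full cyclic homologies match and $\mathrm{HC}_\bullet(A)\cong\mathrm{HC}_\bullet(C)$ follows.

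I expect the only real friction to be bookkeeping rather than mathematics: one must keep the degree-shift convention of the Remark following Lemma~\ref{Hochs_algebra} consistent on both sides, so that the homological gradings of $\overline{\mathbf{\Omega}}(C)_\natural$, $\mathrm{CC}_\bullet(\bar C)$ and $\overline{\mathrm{CC}}_\bullet(A)$ line up, and one must check that the reduced/coaugmented normalizations used in Proposition~\ref{propresol1} and in Lemma~\ref{lemmaident} are literally the same complex before splicing the two isomorphisms together. Once these normalizations are pinned down the argument is purely formal, which is why the statement is phrased as a corollary of the two preceding results.
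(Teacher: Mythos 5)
Your proposal is correct and follows essentially the same route as the paper's own proof: Proposition~\ref{propresol1} applied to $R=\mathbf{\Omega}(C)$, then Lemma~\ref{lemmaident} applied to $\bar C$ (using $C=\tilde{\bar C}$) together with the dual of Loday's identification $\mathrm{HC}_\bullet(\bar C)\cong\overline{\mathrm{HC}}_\bullet(C)$, and finally the splittings $\mathrm{HC}_\bullet(-)\cong\mathrm{HC}_\bullet(k)\oplus\overline{\mathrm{HC}}_\bullet(-)$ on both sides for the augmented case. The only cosmetic difference is that the paper cites \cite[Proposition 2.2.16]{Loday} for the reduced/coaugmented identification where you invoke its coalgebra dual informally.
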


\begin{proof}
First, by Proposition~\ref{propresol1},
$\overline{\mathrm{HC}}_{\bullet}(A) \simeq \mathrm{H}_{\bullet}[\overline{\mathbf{\Omega}}(C)_{\natural}]$.
The later by definition is isomorphic to $\mathrm{HC}_{\bullet}(\bar C)$. Since $C$ is coaugmented,
using arguments similar to that in \cite[Proposition 2.2.16]{Loday}, we obtain
$\mathrm{HC}_{\bullet}(\bar C) \cong  \overline{\mathrm{HC}}_{\bullet}(C)$.

The second part follows from the first one, since in this case
$\mathrm{HC}_{\bullet}(A) = \mathrm{HC}_{\bullet}(k) \oplus \overline{\mathrm{HC}}_{\bullet}(A)$
and $\mathrm{HC}_{\bullet}(C) =  \mathrm{HC}_{\bullet}(k) \oplus \overline{\mathrm{HC}}_{\bullet}(C)$.
\end{proof}

\subsection{Non-commutative differential forms on algebras and coalgebras}

The most of the material in this section is either taken from  \cite{Quillen1,Cuntz_Quillen2}
or merely stating the dual version of those results.

For a DG algebra $R$,
we denote by $\Omega^1_R$ the kernel of the multiplication map $R\otimes R\to R$.
It is a DG bimodule induced from outer bimodule structure on $R\otimes R$
and it represents $\underline{\mathrm{Der}}(R,-)$, the complex of $k$-linear graded derivations.
Thus, for any $R$-bimodule $M$, we have
$$
\underline{\mathrm{Der}}(R,M) \, \cong \, \underline{\mathrm{Hom}}_{R^e}(\Omega^1_R, M) \, .
$$
If $M=\Omega^1_R$, the derivation $\partial : R \to \Omega^1_R$ then corresponding to the identity map
under this isomorphism,  is a universal derivation.

 Let $V$ be a $k$-linear vector space and $R$ be the free algebra $\oplus_{n\ge 0} V^{\otimes n}$.  Then
 $\Omega^1_R$ can be identified with $ R \otimes V \otimes R $. Indeed,
 there is a bijection $\mathrm{I} : R \otimes V \otimes R \to \Omega^1_R$ (see \cite[Example $3.10$]{Quillen1})
 \begin{eqnarray}
&& \mathrm{I }  \{ (v_1\, \cdots \,v_{p-1}) \otimes v_p \otimes (v_{p+1}\,  \cdots\, v_m) \}\nonumber\\
 &=& (v_1\, \cdots\, v_p) \otimes (v_{p+1}\, \cdots \,v_m) - (v_1\, \cdots\, v_{p-1})
 \otimes (v_p\, \cdots\, v_m) \label{mapI}
 \end{eqnarray}
  and the universal derivation
 $\partial : R \to  \Omega^1_R$  is given by
  $$  \partial(v_1\,v_2\,\cdots\,v_m)\ = \ \sum_{i=1}^m (v_1\,\cdots\,v_{i-1})
   \otimes v_i \otimes (v_{i+1}\,\cdots\,v_m) \, .$$
  A simple computation shows
\begin{eqnarray}
\mathrm{I} ( \partial(v_1\,v_2\,\cdots\,v_m))
& =& (v_1\,v_2\,\cdots\,v_m) \otimes 1- 1 \otimes (v_1\,v_2\,\cdots\,v_m).\label{Ipartial}
\end{eqnarray}

Next, for an $R$-bimodule $M$,  we define $M_{\natural}:=M/[R,M]$. Then $\Omega^1_{R,\natural}$ is isomorphic
to $R\otimes V$ and the map $\bar{\partial}: R \to \Omega^1_{R,\natural}$ induced by $\partial$ is defined as
\begin{equation}
\label{barpartial}
\bar{\partial}(v_1\,v_2\,\cdots\,v_m) \ = \ \sum_{i=1}^m  (-1)^{(|v_1|+\cdots+|v_i|)(|v_{i+1}|+\cdots+|v_m|)}
(v_{i+1}\,...\,v_m\, v_1\,\cdots\,v_{i-1})  \otimes  v_i.
\end{equation}
One can also define a graded map $\beta: \Omega^1(R)_{\natural} \to R$ given by
\begin{equation}\label{betadef}
  \beta \big( (v_1 \, \cdots\, v_m) \otimes v_{m+1}\big) \
  =\ (v_1\, \cdots\, v_m\,v_{m+1}) - (-1)^{|v_{m+1}|(|v_1|+\cdots+|v_m|)}(v_{m+1}\, v_1\, \cdots\, v_m) \, .
\end{equation}
It is easy to check that   $\beta \bar{\partial} = \bar{\partial} \beta =0$ (see \cite[Proposition 3.8]{Quillen1}).

The above maps give  a commutative diagram
\begin{equation}\label{comdiag}
\xymatrixcolsep{4pc}
\xymatrix{\Omega^1_R\ar@{^(->}[r]\ar[d]_{\natural }& R\otimes  R\ar[d]^{\mu \circ \sigma}\\
\Omega^1_{R,\natural} \ar[r]^{\beta}&R}
\end{equation}
where $\sigma:R \otimes R \to R \otimes R,  r\otimes q \mapsto (-1)^{|r||q|} q \otimes r$ the graded switching operator.

 Now let $(C,\triangle)$ be a coalgebra. Then $M$ is a \textit{bicomodule}
 over $C$ is a vector space equipped with left and right coproducts
 $\triangle_l: M \to C \otimes M,\, \triangle_r : M \to M \otimes C$ defining
 left and right comodule structures which commute:
 $(\triangle_l \otimes 1)\circ \triangle_r= (1\otimes \triangle_r)\circ \triangle_l$.
 The cocommutator subspace of $M$ is
 $$  M^{\natural} \, := \, \mathrm{Ker}\{ \triangle_l - \sigma \triangle_r : M \to C \otimes M\} \, .$$

 We let $\Omega^{C}$, the \textit{non-commutative differentials one-forms} on $C$,
 be the bicomodule $\mathrm{Coker}(\triangle)$. The cocommutator subspace we denote by
 $\Omega^{C,\natural}$.
We can introduce the maps $\beta: C \to \Omega^{C,\natural}$ and
$\bar{\partial}: \Omega^{C,\natural} \to C$.

Recall that $\mathbf{B}$ is the bar construction of $\tilde{A}$. The following
is proved in \cite[Theorem 4]{Quillen1}:

\begin{theorem}\label{HH_algebra}
The complex $\Omega^{\mathbf{B}, \natural}$ is canonically isomorphic
to $\mathrm{CH}_{\bullet}(A)$,  the Hochschild complex of $A$.
Under this identification $\beta = 1-T$ and $\bar{\partial}=N$.
\end{theorem}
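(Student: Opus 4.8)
The plan is to deduce Theorem~\ref{HH_algebra} by dualizing the free-algebra computations of \eqref{mapI}--\eqref{betadef}, applied to $\mathbf B=\mathbf B(\tilde A)$. Since the augmentation ideal of $\tilde A$ is $A$, as a graded coalgebra $\mathbf B$ is the cofree (tensor) coalgebra $T(sA)$, which is the exact dual of the tensor algebra $R=T(V)$ with $V=sA$. First I would transport the description of $\Omega^1_R$: dualizing the bijection $\mathrm I$ of \eqref{mapI} yields a canonical isomorphism $\Omega^{\mathbf B}\cong\mathbf B\otimes sA\otimes\mathbf B$ of DG bicomodules, where $\Omega^{\mathbf B}=\mathrm{Coker}(\triangle)$, and dualizing \eqref{Ipartial} describes the universal coderivation accordingly. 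Passing to the cocommutator subspace and dualizing the identity $\Omega^1_{R,\natural}\cong R\otimes V$ then gives $\Omega^{\mathbf B,\natural}\cong\mathbf B\otimes sA$, whose degree-$n$ piece has underlying space $A^{\otimes n}$; after the degree shift this matches $\mathrm{CH}_\bullet(A)$ term by term.

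Next I would check that this is an isomorphism of complexes. The differential on $\Omega^{\mathbf B,\natural}$ is induced from the bar differential $b'$ of $\mathbf B$, which only contracts adjacent pairs $a_i\cdot a_{i+1}$ and carries no cyclic term. The claim is that, transported through the isomorphism above, it coincides with the full Hochschild boundary $b$. The internal face maps of $b$ come directly from $b'$ acting on the two tensor factors together with the contraction of the marked middle factor against its two neighbours. The essential point is that the remaining wrap-around term of $b$ is created by the passage to the cocommutator subspace: the cyclic identification built into $\mathrm{Ker}(\triangle_l-\sigma\triangle_r)$ allows the marked factor to meet the letter that has been rotated to the opposite end, and this contraction is exactly the last summand of $b$.

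Finally I would identify the two operators. Applying the dualized definitions of $\beta$ and $\bar\partial$ (the analogues of \eqref{betadef} and \eqref{barpartial}) to $C=\mathbf B$ and unwinding through $\Omega^{\mathbf B,\natural}\cong\mathbf B\otimes sA\cong\mathrm{CH}_\bullet(A)$, one reads off that $\beta$ sends $(a_1,\dots,a_n)$ to the difference between it and its signed cyclic rotation, so that $\beta=1-T$, while $\bar\partial$ sends a chain to the sum of all its signed cyclic rotations, so that $\bar\partial=N=\sum_{i=0}^{n-1}T^i$, with $T$ and $N$ the operators of the cyclic bicomplex. Comparing these formulas with the bicomplex finishes the proof.

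The main obstacle is the second step. Verifying with correct signs that the induced differential on $\Omega^{\mathbf B,\natural}$ reproduces precisely the wrap-around term of $b$ (and nothing more) requires tracking the Koszul signs introduced by the suspension $sA$ in the coproduct of $\mathbf B$, in the isomorphism $\mathrm I$, and in the cocommutator condition $\triangle_l-\sigma\triangle_r$; only once these are handled consistently does the cyclic face appear with the sign dictated by $b$. The purely internal terms, and the formulas for $\beta$ and $\bar\partial$, are then routine dualizations of the free-algebra case in \eqref{mapI}--\eqref{betadef}.
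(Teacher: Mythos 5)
The paper does not actually prove this statement: it is quoted verbatim from Quillen and cited as \cite[Theorem 4]{Quillen1}, just as its dual, Theorem \ref{cobhoch}, is introduced with the single word ``dually''. So your proposal cannot be compared with a proof in the paper; what it does is reconstruct, correctly, the argument behind the cited result. All three of your structural steps are right: since $\mathbf B(\tilde A)=T(sA)$ is the tensor (cofree) coalgebra, $\mathrm{Coker}(\triangle)\cong \mathbf B\otimes sA\otimes\mathbf B$; the cocommutator subspace is parametrized by any one of its components, giving $\Omega^{\mathbf B,\natural}\cong\mathbf B\otimes sA$, i.e. $A^{\otimes n}$ in word length $n$; and the only genuinely non-formal point, which you isolate accurately, is that the wrap-around face of $b$, as well as the operators $1-T$ and $N$, are produced by the cyclic rotation forced on elements of $\mathrm{Ker}(\triangle_l-\sigma\triangle_r)$ --- the differential of the rotated representative whose left word ends in $sa_n$ contracts $a_n$ against the marked letter $a_0$, and projecting back to the distinguished component yields exactly the last summand of $b$. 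One caveat: ``dualizing'' \eqref{mapI}--\eqref{betadef} should be treated as a heuristic, not as a licit proof step. $T(sA)$ is the graded linear dual of a tensor algebra only when $A$ is finite dimensional (or locally finite graded), so for a general algebra $A$ you cannot transport those identities by literal duality; you must verify their coalgebra analogues directly --- that the dual of $\mathrm{I}$ is well defined on $\mathrm{Coker}(\triangle)$ and is an isomorphism, the computation of the cocommutator subspace, and the formulas for $\beta$ and $\bar{\partial}$. Since these are finite, word-by-word identities, they do go through verbatim, and with that substitution (plus the Koszul-sign bookkeeping you already flag as the remaining work) your sketch amounts to a complete and self-contained proof, which is more than the paper itself offers for this statement.
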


Dually we can show

\begin{theorem}
\label{cobhoch}
Let $C$ be a coalgebra and let $R=\mathbf{\Omega}(\tilde C)$, where $\tilde{C}:=k \oplus C$.
Then the complex $\Omega^{1}_{R, \natural}$ is isomorphic to $\mathrm{CH}_{\bullet}(C)$.
Under this identification $\beta = 1-T$ and $\bar{\partial}=N$.
\end{theorem}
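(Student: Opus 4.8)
The plan is to dualize the proof of Theorem~\ref{HH_algebra}, exploiting the fact that $R=\mathbf{\Omega}(\tilde C)$ is, as a graded algebra, the free (tensor) algebra $T(V)$ on $V:=s^{-1}C$. Consequently every explicit description recalled above for non-commutative forms on a free algebra applies verbatim to $R$, and the only genuinely new input is that the differential of $R$ is the cobar differential $b'$, which is built from the reduced coproduct $\bar{\triangle}$.

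First I would set up the underlying identification of graded vector spaces. Since $R=T(V)$ with $V=s^{-1}C$, the discussion preceding \eqref{barpartial} gives $\Omega^1_{R,\natural}\cong R\otimes V=\bigoplus_{m\ge 0}(s^{-1}C)^{\otimes(m+1)}$; desuspending the tensor factors identifies this, as a graded vector space, with $\mathrm{CH}_\bullet(C)=\bigoplus_{m\ge 0}C^{\otimes(m+1)}$. Guided by the formula \eqref{betadef} for $\beta$, whose second term moves the distinguished generator to the front, and by the shape of the cyclic operator $T$, I would take the identification to rotate the $V$-factor into the leading slot, namely
$$(c_1\,\cdots\,c_m)\otimes c_{m+1}\ \longmapsto\ (c_{m+1},c_1,\ldots,c_m),$$
where the $c_i$ on the left denote the desuspended generators. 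With this choice a direct substitution into \eqref{betadef} and \eqref{barpartial} should yield $\beta=1-T$ and $\bar{\partial}=N$ once the Koszul signs coming from the (de)suspension are collected, which is the content of the final sentence of the theorem.

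The main work, and the step I expect to be the real obstacle, is to check that under this identification the differential on $\Omega^1_{R,\natural}$ induced by $b'$ agrees with the Hochschild differential $b$ of the coalgebra $C$. The key observation is that the cobar differential $b'$ is a \emph{derivation} of $R=T(V)$, determined on the generators $V=s^{-1}C$ by $\bar{\triangle}$; it therefore acts on $\Omega^1_R$ by the Leibniz rule, so on a class $[r\,\partial v]\in\Omega^1_{R,\natural}$ one finds $d[r\,\partial v]=[(b'r)\,\partial v]\pm[r\,\partial(b'v)]$, and since $b'v=\sum v'\,v''\in V\otimes V$ the universal derivation splits $\partial(b'v)=\sum\big(\partial v'\cdot v''\pm v'\,\partial v''\big)$. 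The summand $\partial v'\cdot v''$ carries the form on its right, so after moving $v''$ across the defining relation of $\Omega^1_{R,\natural}=R\otimes V$ it contributes the in-place coproduct on the leading slot; the summand $v'\,\partial v''$ needs no such move and, because the identification has placed the $V$-factor into the leading position, contributes exactly the cyclic wrap-around summand $(c_{m+1}'',c_1,\ldots,c_m,c_{m+1}')$ that distinguishes $b$ from $b'$. Together with the term $[(b'r)\,\partial v]$, which supplies the coproducts on the remaining slots, these reassemble precisely $b(c_{m+1},c_1,\ldots,c_m)$, as required.

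The only delicate point throughout is sign coherence: the desuspension $s^{-1}$ shifts degrees, so each transposition and each application of $\partial$ or of the commutator identification carries a Koszul sign, and one must verify that these conspire to reproduce the signs $(-1)^{i-1}$ and $(-1)^n$ appearing in $b$, $b'$, $T$ and $N$, as well as the reindexing matching the homological degrees of $\mathrm{CH}_\bullet(C)$. I would handle this by fixing the degree conventions once (generators of $C$ sitting in a single degree inside $R$) and checking the signs slot by slot, exactly as in Quillen's treatment of the algebra case in Theorem~\ref{HH_algebra}, of which the present statement is the formal dual.
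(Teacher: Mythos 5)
Your proposal is correct and is essentially the paper's own argument: the paper gives no details beyond ``Dually we can show,'' i.e.\ it invokes the dual of Quillen's proof of Theorem~\ref{HH_algebra}, which is exactly what you carry out via the free-algebra description $\Omega^1_{R,\natural}\cong R\otimes V$ (with $V=s^{-1}C$) and the identification that rotates the distinguished $V$-slot to the leading position, so that the two terms of $\partial(b'v)$ produce the in-place coproduct on the leading slot and the wrap-around term of $b$, respectively. The one caveat, which you already flag as the sign-coherence issue, is that the unsigned identification $(c_1\cdots c_m)\otimes c_{m+1}\mapsto(c_{m+1},c_1,\ldots,c_m)$ yields $\beta=(-1)^{m+1}(1-T)$ rather than $1-T$, so a sign $(-1)^{m+1}$ (or an equivalent convention) must be inserted into the identification; this is the routine slot-by-slot adjustment you describe and does not affect the validity of the argument.
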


It has been pointed out in \cite[Remark 5.14]{Quillen1} that the cyclic bicomplex for the algebra $A$
can be identified with the periodic sequence of complexes
\begin{equation}
\xymatrix{\ar[r]^{\bar{\partial}}& \bar{\mathbf B}  \ar[r]^{-\beta}& \Omega^{\mathbf B, \natural} \ar[r]^{\bar{\partial}}
&
\bar{\mathbf B}  \ar[r]^{-\beta}& \, }
\end{equation}
Similarly, in view of Lemma~\ref{lemmaident} and
Theorem~\ref{cobhoch} one has

\begin{proposition}
\label{propbicomp}
The cyclic bicomplex for the coalgebra $C$ can be identified with
\begin{equation}
\label{bicompbij}
\xymatrix{  \ar[r]^{\bar{\partial}}& \Omega^1_{R,\natural}  \ar[r]^{-\beta}&
 \overline{\mathbf{\Omega}}(\tilde C) \ar[r]^{\bar{\partial}}&
 \Omega^1_{R,\natural}  \ar[r]^{-\beta}& } \,
\end{equation}
\end{proposition}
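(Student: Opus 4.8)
The plan is to obtain this identification as the coalgebra-dual of the periodic sequence $\cdots\xrightarrow{\bar{\partial}}\bar{\mathbf B}\xrightarrow{-\beta}\Omega^{\mathbf B,\natural}\xrightarrow{\bar{\partial}}\bar{\mathbf B}\xrightarrow{-\beta}\cdots$ for the algebra $A$ recalled just above (\cite[Remark 5.14]{Quillen1}). First I would unroll the horizontally $2$-periodic cyclic bicomplex of $C$ displayed at the start of this subsection into a single periodic sequence of complexes. Since the bicomplex is $2$-periodic, this unrolling is unambiguous: its $b$-columns and $b'$-columns appear alternately, joined by the operator $1-T$ (from a $b$-column to the next $b'$-column) and by $N$ (from a $b'$-column to the next $b$-column), exactly mirroring the algebra layout.

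Next I would identify the two families of columns. By Theorem~\ref{cobhoch}, taking $R=\mathbf{\Omega}(\tilde C)$, every $b$-column — that is, the Hochschild complex $\mathrm{CH}_{\bullet}(C)$ — is canonically isomorphic to $\Omega^1_{R,\natural}$, and under this isomorphism the horizontal operators become precisely $1-T=\beta$ and $N=\bar{\partial}$. On the other hand, as recorded in the description of the cyclic bicomplex, each $b'$-column is by construction the reduced cobar construction $\overline{\mathbf{\Omega}}(\tilde C)$ (the augmentation ideal $\overline{R}$ of $R$, shifted up by one degree), and its reading as a complex of the required shape is furnished by Lemma~\ref{lemmaident}, which gives $\overline{\mathbf{\Omega}}(\tilde C)_{\natural}\cong\mathrm{CC}_{\bullet}(C)$; note in particular that $\beta$ indeed takes values in $\overline{R}=\overline{\mathbf{\Omega}}(\tilde C)$. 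Substituting these two identifications into the unrolled sequence turns it into $\cdots\xrightarrow{\bar{\partial}}\Omega^1_{R,\natural}\xrightarrow{-\beta}\overline{\mathbf{\Omega}}(\tilde C)\xrightarrow{\bar{\partial}}\Omega^1_{R,\natural}\xrightarrow{-\beta}\cdots$, which is the claim.

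The one genuinely delicate point, and the step I expect to cost the most care, is the bookkeeping of signs and of the degree shift. The factor $-\beta$ (rather than $\beta$) on the $b$-to-$b'$ arrows must be produced by the same sign convention as in the algebra case; I would pin it down by comparing the unrolled horizontal differential with the signs built into the definitions of $\beta$ and $\bar{\partial}$ and with the Koszul sign coming from the desuspension $s^{-1}$ inside $\mathbf{\Omega}(\tilde C)$. Similarly, the degree-one shift attached to the $b'$-columns must be transported consistently through the identification so that the two arrows meet on the nose. Once these signs are matched against the algebra statement, what remains is a purely formal transport of structure along the isomorphisms of Theorem~\ref{cobhoch} and Lemma~\ref{lemmaident}, dual at every step to Quillen's argument for $\bar{\mathbf B}$ and $\Omega^{\mathbf B,\natural}$.
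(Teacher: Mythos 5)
Your proposal is correct and takes essentially the same route as the paper, which states this proposition without a separate proof, as an immediate consequence of Theorem~\ref{cobhoch} (identifying the $b$-columns $\mathrm{CH}_\bullet(C)$ with $\Omega^1_{R,\natural}$ so that $\beta=1-T$ and $\bar{\partial}=N$) together with the fact that the $b'$-columns are, by construction, the shifted reduced cobar construction $\overline{\mathbf{\Omega}}(\tilde C)$, all dual to Quillen's Remark~5.14. Your unrolling of the $2$-periodic bicomplex simply spells out these same identifications in more detail (your appeal to Lemma~\ref{lemmaident} for the $b'$-column is not really needed there, since that lemma concerns the commutator quotient $\overline{\mathbf{\Omega}}(\tilde C)_{\natural}$, but this does not affect the argument).
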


The following result is essentially established in \cite{JM} (see also \cite{CYZ}):

\begin{lemma}
\label{lemmaident1}
Let $C$ be a coaugmented coalgebra such that $\mathbf{\Omega}(C) \stackrel{\simeq}{\twoheadrightarrow} A$
 is a quasi-free resolution of an algebra $A$. Then $\mathrm{HH}_{\bullet}(A) \cong \mathrm{HH}_{\bullet}(C)$.
\end{lemma}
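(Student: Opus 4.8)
The plan is to follow the same three-step pattern as the proof of Corollary~\ref{corimp1}, replacing each cyclic ingredient by its Hochschild counterpart. Write $R=\mathbf{\Omega}(C)$, which by hypothesis is a quasi-free resolution of $A$. The single fact that does the real work is a Hochschild analogue of Proposition~\ref{propresol1}: for such a resolution one has
$$\mathrm{HH}_\bullet(A)\cong H_\bullet(\Omega^1_{R,\natural}),$$
which is essentially the content of \cite{JM}. First I would establish this in two sub-steps. (i) Since $R$ is quasi-free, hence cofibrant, and $R\xrightarrow{\simeq}A$ is a quasi-isomorphism of DG algebras, Hochschild homology is a quasi-isomorphism invariant and $\mathrm{HH}_\bullet(A)\cong\mathrm{HH}_\bullet(R)$. (ii) Because $R$ is quasi-free, the defining short exact sequence $0\to\Omega^1_R\to R\otimes R\xrightarrow{\mu}R\to 0$ of bimodules is a length-one projective bimodule resolution of $R$; tensoring it over $R^e$ with $R$ collapses the full bar complex computing $\mathrm{HH}_\bullet(R)$ onto the small two-term Cuntz--Quillen model $\Omega^1_{R,\natural}\xrightarrow{\beta}R$, exactly as in the cyclic case where Proposition~\ref{propresol1} collapses onto $\bar R_\natural$.

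Next I would feed this into Theorem~\ref{cobhoch}. That theorem identifies the complex $\Omega^1_{R,\natural}$ (with the differential inherited from the cobar differential on $R$) with the Hochschild complex $\mathrm{CH}_\bullet(\bar C)$ of the coalgebra, under which $\beta=1-T$ and $\bar\partial=N$; here I use that $C$ coaugmented means $R=\mathbf{\Omega}(C)=\mathbf{\Omega}(\widetilde{\bar C})$, so the theorem applies with its coalgebra taken to be $\bar C$. Consequently $H_\bullet(\Omega^1_{R,\natural})\cong\mathrm{HH}_\bullet(\bar C)$, and combining with the displayed isomorphism gives $\mathrm{HH}_\bullet(A)\cong\mathrm{HH}_\bullet(\bar C)$.

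Finally, since $C$ is coaugmented I would split off the counit: exactly as in Corollary~\ref{corimp1} (which runs the analogous argument of \cite[Proposition~2.2.16]{Loday} in the cyclic setting), the decomposition $C=k\oplus\bar C$ identifies $\mathrm{HH}_\bullet(\bar C)$ with $\mathrm{HH}_\bullet(C)$. Chaining the isomorphisms then yields $\mathrm{HH}_\bullet(A)\cong\mathrm{HH}_\bullet(C)$.

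I expect the main obstacle to be sub-step (ii) together with the passage through Theorem~\ref{cobhoch}: the two-term model carries an extra $R$-column linked to $\Omega^1_{R,\natural}$ by $\beta$, and one must check that, after accounting for the suspension shifts built into $R=\mathbf{\Omega}(C)=T(s^{-1}\bar C)$, this extra column contributes trivially in the relevant degrees, so that the total homology is genuinely $H_\bullet(\Omega^1_{R,\natural})$ in matching degrees rather than a shifted or mixed version of it. Tracking the differentials $id\otimes b'+\tau_{\mathrm L}+\tau_{\mathrm R}$ through this identification is precisely the bookkeeping carried out in \cite{JM} (and \cite{CYZ}), and it is what I would lean on to make the comparison rigorous.
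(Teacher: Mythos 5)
First, a point of comparison: the paper does not actually prove this lemma; it quotes it from Jones--McCleary \cite{JM} (see also \cite{CYZ}), and in the paper's logical order Corollary \ref{corimp2} is \emph{deduced} from this lemma together with Theorem \ref{cobhoch}. Your proposal runs the implication backwards, which would be legitimate if your independent argument for the key step were correct --- but it is not. Step (ii) correctly produces the Cuntz--Quillen model: for quasi-free $R=\mathbf{\Omega}(C)$ one gets $\mathrm{HH}_\bullet(A)\cong\mathrm{HH}_\bullet(R)\cong H_\bullet\bigl(\mathrm{Tot}(\Omega^1_{R,\natural}\xrightarrow{\beta}R)\bigr)$. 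You then discard the $R$-column and assert $\mathrm{HH}_\bullet(A)\cong H_\bullet(\Omega^1_{R,\natural})$. By the long exact sequence of the mapping cone of $\beta$, dropping that column is only harmless when $H_\bullet(R)=0$; here $H_\bullet(R)\cong A$, precisely because $R$ resolves $A$, and this column is responsible for $\mathrm{HH}_0(A)\cong A/[A,A]$ and part of $\mathrm{HH}_1(A)$ (besides a degree shift elsewhere). Concretely, take $A=k[x]$ and $C=A^{\ac}=k\oplus k\,sx$: then $R=\mathbf{\Omega}(C)\cong k[x]$ with zero differential, $\Omega^1_{R,\natural}\cong k[x]\,dx$ with zero differential, and $\beta=0$ by commutativity, so $H_\bullet(\Omega^1_{R,\natural})\cong k[x]\,dx$, whereas $\mathrm{HH}_\bullet(k[x])\cong k[x]\oplus k[x]\,dx$: your claimed isomorphism loses all of $\mathrm{HH}_0$. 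The same example refutes your final step, since $\mathrm{HH}_\bullet(\bar C)\cong k[x]\,dx\not\cong\mathrm{HH}_\bullet(C)$. Unlike the cyclic case in Corollary \ref{corimp1}, where passing from $\bar C$ to $C$ costs only the explicit summand $\mathrm{HC}_\bullet(k)$, here the one-column complex of $\bar C$ is missing the entire contribution of the $b'$-column (whose homology is essentially $\bar A$), not merely a copy of $\mathrm{HH}_\bullet(k)$; so the two defects in your argument do not cancel, and neither can be delegated to \cite{JM}, because the statements you would ask \cite{JM} to justify are false as stated.

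The root of both errors is a conflation of $\mathbf{\Omega}(C)$ with $\mathbf{\Omega}(\tilde C)$. In Theorem \ref{cobhoch} and Corollary \ref{corimp2} the algebra whose one-forms give $\mathrm{CH}_\bullet(C)$ is $R=\mathbf{\Omega}(\tilde C)$ with $\tilde C=k\oplus C$: one coaugments the \emph{already counital} coalgebra $C$, and the resulting extra generator $s^{-1}1_C$ is exactly what encodes the column you dropped. Applying Theorem \ref{cobhoch} to $\bar C$, as you do, can only ever produce $\mathrm{CH}_\bullet(\bar C)$. A correct proof along your lines must keep both columns: identify $\mathrm{Tot}(\Omega^1_{R,\natural}\xrightarrow{\beta}R)$, via Theorem \ref{cobhoch} for $\bar C$ and Proposition \ref{propbicomp}, with $k$ plus the total complex of the first two columns of the cyclic bicomplex of $\bar C$, and then prove the coalgebra analogue of the unital-versus-nonunital comparison (the dual of Loday's treatment of augmentation ideals) identifying this total complex, up to the summand $\mathrm{HH}_\bullet(k)$, with $\mathrm{CH}_\bullet(C)$. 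That comparison is the genuine content of \cite{JM}, and it is exactly the step your outline leaves unproved.
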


Combining the above lemma with Theorem
\ref{cobhoch} one obtains

\begin{corollary}
\label{corimp2}
If $\mathbf{\Omega}(C) \stackrel{\simeq}{\twoheadrightarrow} A$ is a quasi-free resolution of $A$
and $R=\mathbf{\Omega}(\tilde C)$
then
\begin{equation}
\mathrm{HH}_{\bullet}(A) \cong \mathrm{H}_{\bullet}(\Omega^{1}_{R, \natural}).
\end{equation}
\end{corollary}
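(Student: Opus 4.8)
The plan is to chain together the two results that immediately precede the statement, since the corollary is a formal consequence of both. First I would invoke Lemma~\ref{lemmaident1}: under the standing hypothesis that $\mathbf{\Omega}(C) \stackrel{\simeq}{\twoheadrightarrow} A$ is a quasi-free resolution, it provides an isomorphism $\mathrm{HH}_{\bullet}(A) \cong \mathrm{HH}_{\bullet}(C)$ on the level of Hochschild homology. This reduces the problem to identifying $\mathrm{HH}_{\bullet}(C)$ --- the homology of the Hochschild complex $\mathrm{CH}_{\bullet}(C)$ of the coalgebra $C$ --- with $\mathrm{H}_{\bullet}(\Omega^1_{R,\natural})$.

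For this second identification I would appeal to Theorem~\ref{cobhoch}, which with $R=\mathbf{\Omega}(\tilde C)$ gives an isomorphism of complexes $\Omega^1_{R,\natural} \cong \mathrm{CH}_{\bullet}(C)$. Since this is an isomorphism already at the chain level, and not merely a quasi-isomorphism, passing to homology is immediate and yields $\mathrm{H}_{\bullet}(\Omega^1_{R,\natural}) \cong \mathrm{HH}_{\bullet}(C)$. Composing the two isomorphisms then produces the desired $\mathrm{HH}_{\bullet}(A) \cong \mathrm{H}_{\bullet}(\Omega^1_{R,\natural})$.

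The argument is essentially formal, so there is no serious obstacle in the combination step itself; all the analytic content has already been absorbed into the two cited results, the former packaging the Koszul-type comparison of \cite{JM,CYZ} and the latter the dual of Quillen's identification of the Hochschild complex with the cocommutator one-forms. The only point requiring a little care is the bookkeeping of gradings: one should check that the degree conventions --- in particular the degree-one shift built into the cobar construction $\mathbf{\Omega}(\tilde C)$ and the normalization implicit in $\Omega^1_{R,\natural}$ --- are consistent across Lemma~\ref{lemmaident1} and Theorem~\ref{cobhoch}, so that the homological gradings match on the nose under the composite isomorphism.
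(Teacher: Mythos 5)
Your proposal is correct and is exactly the paper's own argument: the paper derives Corollary~\ref{corimp2} by combining Lemma~\ref{lemmaident1} ($\mathrm{HH}_{\bullet}(A)\cong\mathrm{HH}_{\bullet}(C)$) with the chain-level identification $\Omega^1_{R,\natural}\cong\mathrm{CH}_{\bullet}(C)$ of Theorem~\ref{cobhoch}, just as you do. Your added remark on checking the degree conventions is a reasonable piece of care but introduces nothing beyond the paper's one-line combination of the two cited results.
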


\subsection{The $A_\infty$ algebra and coalgebra case}

The advantage of rephrasing the Hochschild and cyclic homology groups in the proceeding subsections
is that they can be easily generalized to the $A_\infty$ algebra and coalgebra case.

\begin{definition}[$A_\infty$ algebra]
Let $A$ be a graded vector space.
An {\it $A_\infty$ algebra structure} on $A$ is a sequence of linear operators
\begin{equation}\label{Ainfty_1}
m_n: A^{\otimes n}\longrightarrow A, \quad n=1,2,\cdots
\end{equation}
of degree $n-2$ such that
\begin{equation}\label{Ainfty_2}
\sum_{r+s+t=n} (-1)^{r+st}m_{r+1+t}( {id}^{\otimes r}\otimes m_{s}\otimes  id^{\otimes t})=0.
\end{equation}
An $A_\infty$ algebra $A$ is called {\it unital},
if there exists a map $k\to A$ which maps $1$ to $\mathbf 1$, such that
$$
\mu_1(\mathbf 1)=0,\;\;\mu_2(a,\mathbf 1)=\mu_2(\mathbf 1,a)=a,\;\;\mbox{and}\;\;
\mu_n(a_1,\cdots, a_{i-1}, \mathbf 1,a_{i+1}, a_n)=0, \;\;\mbox{for all}\;\; n\ge 3,
$$
where $\mu_1,\mu_2,\cdots$ are the $A_\infty$ operators.
It is called {\it unital and augmented} if
furthermore there is a map
$A\to k$ such that
the composition
$$k \to A\to k $$
is the identity. In this case,
$A$ is decomposed into direct sum
$k\cdot\mathbf 1\oplus\overline A$ of $A_\infty$ algebras.
\end{definition}

\begin{definition}[$A_\infty$ coalgebras]
Let $C$ be a graded vector space.
An {\it $A_\infty$ coalgebra structure} on $C$ is a sequence of linear operators
\begin{equation}
\triangle_n: C \longrightarrow C^{\otimes n}, \quad n=1,2,\cdots
\end{equation}
of degree $n-2$ such that
\begin{equation}
\sum_{r+s+t=n} (-1)^{r+st}( {id}^{\otimes r}\otimes\triangle_{s}\otimes id^{\otimes t})\triangle_{r+1+t}=0.
\end{equation}
An $A_\infty$ coalgebra $C$ is said to be {\it co-unital} if there is a map
$\eta: C\to k$ such that for all $c\in C$,
\begin{eqnarray*}
\eta\circ \triangle_1(c)&=&0,\\
(\eta\otimes id+id\otimes \eta)\circ \triangle_2(c)&=&1\otimes c+c\otimes 1,\quad\mbox{and}\\
\Big(\sum_{j+k+1=n}id^{\otimes j}\otimes \eta\otimes id^{\otimes k}\Big)\circ \triangle_n(c)
&=&0, \quad \mbox{for all}\; n\ge 3.
\end{eqnarray*}
It is called {\it co-unital and co-augmented} if furthermore
there is a map
$k\to C$ such that
the composition
$k\to C\to k$ is the identity.
\end{definition}

\begin{assumption}\label{Assumpt_coalg}
From now on we shall assume\footnote{This assumption is also used by Prout\'e in his definition of
$A_\infty$ coalgebras; see \cite[D\'efinition 3.2]{Proute}.} that for an $A_\infty$-coalgebra $C$,
in each grading $C_i$ is finite dimensional (in literature $C$ is called
{\it locally finite dimensional}), and that
all but finitely many $\triangle_n(v)$ vanish, for each $v\in C$.
In particular, the Koszul dual $A_\infty$ coalgebra $A^{\ac}$ of an $N$-Koszul algebra (to be
studied later), which only has $\triangle_2$ and $\triangle_N$, satisfies this assumption.
\end{assumption}

\begin{definition}[Bar and cobar constructions]
Suppose $A$ is a unital and augmented $A_\infty$ algebra, and
$C$ is a co-unital and co-augmented $A_\infty$ coalgebra (satisfying Assumption \ref{Assumpt_coalg} above).
The {\it bar construction} of $A$, denoted by $\mathbf B_\infty(A)$,
is the quasi-free graded coalgebra $T(s \overline A)$ generated by $ s\overline A$, with differential $d=d_1+d_2+\cdots$,
where $d_n$ is
defined on the co-generators by
\begin{eqnarray*}
d_n: (s\overline A)^{\otimes n}\quad &\longrightarrow&\quad s\overline A\subset T(s\overline A)\\
(s\overline a_1,\cdots, s\overline a_n)&\longmapsto&(-1)^{(n-1)|a_1|+(n-2)|a_1|+\cdots+|a_{n-1}|}s\circ\pi\circ
\mu_n(a_1,\cdots,  a_n)
\end{eqnarray*}
and is extended to $T(s\overline A)$ by derivation, where $\pi: A\to \overline A$ is the projection.

Similarly, the {\it cobar construction} of $C$, denoted by $\mathbf\Omega_\infty(C)$,
is the quasi-free graded algebra $T(s^{-1}\overline C)$ generated by $s^{-1}\overline C$
with differential $d$ defined on the generators by
\begin{eqnarray*}
s^{-1}\overline C&\longrightarrow& T(s^{-1}\overline C)\\
s^{-1}\overline c&\longmapsto&(s^{-1}\circ \pi\circ
\triangle_1+(s^{-1})^{\otimes 2}\circ(\pi^{\otimes 2})\circ \triangle_2+\cdots)(c),
\end{eqnarray*}
which extends to $T(s^{-1}(A))$ by derivation, where in the above expression, $\pi: C\to\overline C$ is the projection.
\end{definition}

\begin{definition}[Hochschild and cyclic homology of $A_\infty$ algebras and coalgebras]
(1) For an $A_\infty$ algebra $A$,
define its {\it Hochschild homology} $\mathrm{HH}_\bullet(A)$ to be
$\mathrm{H}_\bullet(\Omega^{\mathbf B_\infty(\tilde A),\natural})$
as in Theorem
\ref{HH_algebra},
and its {\it cyclic homology} $\mathrm{HC}_\bullet(A)$ to be
$\mathrm{H}_\bullet(\bar{\mathbf B}_\infty(\tilde A)^\natural)$ as in Lemma \ref{Hochs_algebra}.

(2) For an $A_\infty$ coalgebra $C$,
define its {\it Hochschild homology} $\mathrm{HH}_\bullet(C)$ to be
$\mathrm H_\bullet(\Omega_{\mathbf{\Omega}_\infty(\tilde C),\natural}^1)$
as in Theorem \ref{cobhoch},
and its {\it cyclic homology} $\mathrm{HC}_\bullet(C)$ to be
$\mathrm H_\bullet(\overline{\mathbf\Omega}_{\infty}(\tilde C)_\natural)$
as in Lemma \ref{lemmaident}.
\end{definition}

In the above definition, $\tilde A$ and $\tilde C$ are the augmentation of $A$
and the co-augmentation of $C$, respectively, which are defined to be the same
as for algebras and coalgebras.

\begin{remark}
As has been shown above,
for DG algebras and coalgebras,
the definitions given above coincide with the standard ones (cf. Loday \cite{Loday}).
\end{remark}

\begin{proposition}Throughout
Lemma \ref{Hochs_algebra}-Corollary \ref{corimp2},
the statements remain true when the
algebra $A$ is replaced by an $A_\infty$ algebra $A$ and respectively the
 coalgebra $C$ is replaced by an $A_\infty$ coalgebra $C$.
\end{proposition}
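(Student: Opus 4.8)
The guiding observation is that the bar construction $\mathbf B_\infty(\tilde A)$ of a unital augmented $A_\infty$ algebra is a genuine coaugmented DG coalgebra, and dually that the cobar construction $\mathbf\Omega_\infty(\tilde C)$ of a co-unital co-augmented $A_\infty$ coalgebra is a genuine augmented DG algebra; the higher operations $m_n$ (resp.\ $\triangle_n$) are entirely absorbed into the bar (resp.\ cobar) differential $d=\sum_n d_n$, whose terms $d_1,d_2$ already occur in the strict DG case and whose terms $d_n$ for $n\ge 3$ carry the new $A_\infty$ information. Every complex appearing in Lemma~\ref{Hochs_algebra} through Corollary~\ref{corimp2}---the (co)invariants $\bar{\mathbf B}_\infty(\tilde A)^\natural$ and $\overline{\mathbf\Omega}_\infty(\tilde C)_\natural$, the non-commutative one-forms $\Omega^1_{\mathbf\Omega_\infty(\tilde C),\natural}$, and the various reduced complexes---is therefore built solely from the underlying DG (co)algebra structure of these bar/cobar constructions, and the $A_\infty$ Hochschild and cyclic homologies were defined precisely as their homology. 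The plan is to separate the purely combinatorial identifications from the homological comparison results and to treat each over the enlarged differential.

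First I would dispatch the combinatorial statements, namely Lemma~\ref{Hochs_algebra}, Theorem~\ref{HH_algebra}, Theorem~\ref{cobhoch}, Lemma~\ref{lemmaident}, and Proposition~\ref{propbicomp}. The graded-vector-space isomorphisms and the identifications $\beta=1-T$, $\bar\partial=N$ involve only the suspension/desuspension, the tensor (co)algebra structure, and the cyclic operator $T$; none of this data changes in passing from a strict to an $A_\infty$ structure, so the underlying graded identifications hold verbatim. What remains is to verify that the full differential $d=\sum_n d_n$ is compatible: that it descends to the (co)commutator (co)invariants, and that the relations $\beta d=\pm d\beta$ and $\bar\partial d=\pm d\bar\partial$ hold alongside Quillen's combinatorial identity $\beta\bar\partial=\bar\partial\beta=0$ of \cite[Proposition 3.8]{Quillen1}. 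Since each $d_n$ is a graded (co)derivation assembled cyclically from $m_n$ (resp.\ $\triangle_n$), it commutes with $T$ and $N$ up to the Koszul sign exactly as $d_2$ does in the strict case; granting this, the periodic sequence \eqref{bicompbij} remains identified with the cyclic bicomplex and every stated map is an isomorphism of complexes.

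Next I would upgrade the homological comparison results---Proposition~\ref{propresol1} (Feigin--Tsygan), Corollary~\ref{corimp1}, Lemma~\ref{lemmaident1} (Jones--McCleary), and Corollary~\ref{corimp2}. Here the key point is that $\mathbf\Omega_\infty(\tilde C)$ is itself a \emph{quasi-free} DG algebra, its underlying graded algebra being the tensor algebra $T(s^{-1}\bar C)$; hence whenever $\mathbf\Omega_\infty(C)\twoheadrightarrow A$ is a quasi-free resolution it is an admissible input to the \emph{strict} Proposition~\ref{propresol1}, which concerns an arbitrary quasi-free DG resolution $R\to A$. Taking $R=\mathbf\Omega_\infty(C)$ yields $\overline{\mathrm{HC}}_\bullet(A)\simeq\mathrm H_\bullet(\bar R_\natural)=\mathrm H_\bullet(\overline{\mathbf\Omega}_\infty(C)_\natural)$, which equals $\mathrm{HC}_\bullet(C)$ by the $A_\infty$ definition; the proof of Corollary~\ref{corimp1} then copies over verbatim, and Lemma~\ref{lemmaident1} together with Corollary~\ref{corimp2} follow identically from \cite{JM,CYZ} applied to the same quasi-free resolution $R=\mathbf\Omega_\infty(C)$. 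In other words, the $A_\infty$ enrichment never enters these arguments beyond having been recorded in the differential of the still quasi-free DG algebra $\mathbf\Omega_\infty(\tilde C)$.

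The crux is the compatibility check of the second paragraph: one must confirm, with the correct Koszul signs, that every higher term $d_n$ ($n\ge 3$) of the bar/cobar differential commutes with the cyclic operator $T$ and the norm operator $N$ and descends to the (co)commutator (co)invariants, so that the identifications stay chain maps and \eqref{bicompbij} stays a bicomplex. This is routine but sign-intensive, and it is the only place where the higher operations genuinely intervene; the comparisons of the third paragraph are then formal consequences of the standard DG homological machinery applied to the quasi-free DG models $\mathbf B_\infty(\tilde A)$ and $\mathbf\Omega_\infty(\tilde C)$.
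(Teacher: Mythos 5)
Your proposal is correct and takes essentially the same approach as the paper: the paper's entire proof is the single observation that the only fact used throughout Lemma~\ref{Hochs_algebra}--Corollary~\ref{corimp2} is that $\mathbf{B}(\tilde A)$ is a quasi-free DG coalgebra and $\mathbf{\Omega}(\tilde C)$ is a quasi-free DG algebra, which remains true for $\mathbf{B}_\infty(\tilde A)$ and $\mathbf{\Omega}_\infty(\tilde C)$ since the higher operations are absorbed into the differential. The sign and compatibility checks you flag as the ``crux'' are in fact automatic here, because the paper \emph{defines} the $A_\infty$ Hochschild and cyclic homologies via these quasi-free models and Quillen's maps $\beta$, $\bar{\partial}$ are natural constructions commuting with any differential coming from a derivation of the underlying tensor algebra.
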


\begin{proof}
In the proofs of these statements, the only fact that is used is that
$\mathbf B(\tilde A)$ is a quasi-free DG coalgebra and
$\mathbf\Omega(\tilde C)$ is a quasi-free DG algebra, which is also true for
$\mathbf B_\infty(\tilde A)$ and $\mathbf\Omega_\infty(\tilde C)$ respectively.
\end{proof}

Finally, we show that for unital and augmented $A_\infty$ algebras and coalgebras,
their Hochschild chain complex is
quasi-isomorphic to their {\it normalized Hochschild chain complex}.

\begin{proposition}
Suppose $A$ is a unital and augmented $A_\infty$ algebra and $C$ is a co-unital and co-augmented
$A_\infty$ coalgebra. The the following are quasi-isomorphic
$$
\mathrm{CH}_\bullet(A)\simeq\overline{\mathrm{CH}}_\bullet(A),
\quad
\mathrm{CH}_\bullet(C)\simeq\overline{\mathrm{CH}}_\bullet(C).$$
\end{proposition}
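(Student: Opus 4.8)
The plan is to treat the two quasi-isomorphisms as linear duals of one another, to prove the statement for the algebra $A$, and then to deduce the coalgebra statement by dualizing. By the $A_\infty$ analogue of Theorem~\ref{HH_algebra} the complex $\mathrm{CH}_\bullet(A)=\Omega^{\mathbf B_\infty(\tilde A),\natural}$ is identified with $A\otimes A^{\otimes\bullet}$ carrying the Hochschild differential $b=\sum_{j\ge 1}b^{(j)}$ assembled from the $A_\infty$ operations $\mu_j$, while $\overline{\mathrm{CH}}_\bullet(A)=A\otimes\overline A^{\otimes\bullet}$ is its quotient by the graded subspace $D_\bullet$ spanned by those tensors that contain the strict unit $\mathbf 1$ of $A$ in at least one of the entries $1,\dots,n$. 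Thus the entire statement reduces to showing that $D_\bullet$ is an acyclic subcomplex.

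First I would verify that $D_\bullet$ is a subcomplex, which is precisely where strict unitality enters. On a tensor containing a unit, the internal term $b^{(1)}$ leaves every unit untouched because $\mu_1(\mathbf 1)=0$; every higher term $b^{(j)}$ with $j\ge 3$ that meets a unit vanishes because $\mu_j(\dots,\mathbf 1,\dots)=0$, and those avoiding all units preserve them. The only terms able to remove a unit are the two $\mu_2$-terms absorbing it from the left and from the right, namely $\mu_2(-,\mathbf 1)$ and $\mu_2(\mathbf 1,-)$; since $|\mathbf 1|=0$, these yield the \emph{same} tensor, the given one with the unit deleted, carrying opposite Koszul signs, so they cancel. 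Hence $b(D_\bullet)\subseteq D_\bullet$, which is exactly the $A_\infty$ incarnation of the classical fact that the boundary of a degenerate chain is degenerate.

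The crux is the acyclicity of $D_\bullet$, and here I would argue via the spectral sequence of the increasing filtration by word length (the number of tensor factors), using that $b^{(j)}$ lowers the word length by $j-1$. On the associated graded the differential is the internal part $b^{(1)}$, so $E^1$ is computed by the $\mu_1$-homology; writing $H=(\mathrm H_\bullet(A,\mu_1),\overline\mu_2)$ for the induced graded associative unital algebra (associativity of $\overline\mu_2$ holds because the correcting terms in the relevant $A_\infty$ relation all involve $\mu_1$ and hence die on homology), one identifies $E^1_\bullet$ with the degenerate subcomplex of the ordinary Hochschild complex $\mathrm{CH}_\bullet(H)$, the $d_1$-differential being the Hochschild boundary of $(H,\overline\mu_2)$. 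By the classical normalization theorem for the unital associative algebra $H$ this degenerate subcomplex is acyclic, so $E^2_\bullet(D)=0$; as the filtration is exhaustive and bounded below, and bounded in each total degree under the standing grading and finiteness hypotheses of Assumption~\ref{Assumpt_coalg}, the spectral sequence converges and $D_\bullet$ is acyclic. I expect this two-step passage to be the main obstacle: one cannot contract $D_\bullet$ directly by the classical unit-insertion homotopy, since the higher operations $\mu_{\ge 3}$ that miss the inserted unit produce error terms, so the argument must genuinely first pass to $\mu_1$-homology and only then invoke the simplicial normalization.

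Finally, the coalgebra statement follows by duality. Under Assumption~\ref{Assumpt_coalg} the graded dual $A=C^{\ast}$ is a unital augmented $A_\infty$ algebra with $m_n=\triangle_n^{\ast}$, and degreewise $\mathrm{CH}_\bullet(C)\cong\mathrm{CH}_\bullet(A)^{\ast}$ and $\overline{\mathrm{CH}}_\bullet(C)\cong\overline{\mathrm{CH}}_\bullet(A)^{\ast}$, the co-unitality axioms for $C$ dualizing exactly to the strict unitality axioms for $A$. Since local finite-dimensionality makes graded duality exact, it carries the quasi-isomorphism $\mathrm{CH}_\bullet(A)\simeq\overline{\mathrm{CH}}_\bullet(A)$ just established to the desired quasi-isomorphism $\mathrm{CH}_\bullet(C)\simeq\overline{\mathrm{CH}}_\bullet(C)$.
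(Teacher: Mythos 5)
Your proposal is correct in substance, but it follows a genuinely different route from the paper, whose entire proof is one sentence: the authors simply assert that the classical normalization argument for unital associative algebras (Loday \cite[Proposition 1.6.5]{Loday}) ``remains to hold'' in the $A_\infty$ case. You instead give an actual argument: you isolate the degenerate subcomplex $D_\bullet$, verify it is closed under the $A_\infty$ Hochschild boundary (strict unitality kills every $\mu_{\ge 3}$-term meeting a unit, and the two $\mu_2$-terms absorbing the unit cancel), and then kill its homology by the word-length spectral sequence: on the associated graded only $b^{(1)}$ survives, the $E^1$-page is the degenerate subcomplex of the ordinary Hochschild complex of the graded unital associative algebra $H=(\mathrm H_\bullet(A,\mu_1),\overline\mu_2)$, and the classical normalization theorem gives $E^2=0$, whence acyclicity since the filtration is exhaustive and bounded below. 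This is the standard rigorous way to extend normalization to strictly unital $A_\infty$ algebras, and your observation that the classical unit-insertion homotopy acquires error terms from $\mu_{\ge 3}$ identifies exactly what the paper's citation glosses over: the $A_\infty$ Hochschild complex is not a simplicial module, so Loday's simplicial identities are unavailable and some filtration argument of this kind is genuinely needed. What the paper's approach buys is brevity; what yours buys is a proof.

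One caveat concerns your coalgebra half. The dualization step needs the degreewise identifications $\mathrm{CH}_\bullet(C)\cong\mathrm{CH}_\bullet(C^{*})^{*}$ and $\overline{\mathrm{CH}}_\bullet(C)\cong\overline{\mathrm{CH}}_\bullet(C^{*})^{*}$, and local finite-dimensionality of $C$ (Assumption \ref{Assumpt_coalg}) alone does not give $(C^{*})^{\otimes n}\cong(C^{\otimes n})^{*}$: if the grading of $C$ is unbounded, a fixed graded piece of $C^{\otimes n}$ may be infinite-dimensional, and then $\mathrm{CH}_\bullet(C)$ sits inside $\mathrm{CH}_\bullet(C^{*})^{*}$ only as a proper subcomplex, so the dualized quasi-isomorphism does not automatically restrict to it. This is harmless for the coalgebras the paper actually uses (Koszul dual coalgebras are non-negatively graded and locally finite, so all graded duals behave), but in the stated generality the safer repair is to run your word-length filtration directly on the coalgebra side, applied to the quotient $\mathrm{CH}_\bullet(C)/\overline{\mathrm{CH}}_\bullet(C)$, where the same reduction to the classical coalgebra normalization goes through; presumably that parallel argument is what the authors intend by ``the same argument''.
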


\begin{proof}
The same argument for algebras (see Loday \cite[Proposition 1.6.5]{Loday})
remains to hold for the $A_\infty$ case.
\end{proof}

As an application, we see that
$$
\overline{\mathrm{CH}}_\bullet(A)\simeq A\otimes \mathbf B_{\infty}(A),\quad
\overline{\mathrm{CH}}_\bullet(C)\simeq C\otimes\mathbf \Omega_{\infty}(C),
$$
with the differential properly defined. In particular,
there is an embedding
\begin{equation}\label{Cobar_to_Hoch_Ainfty}
\begin{array}{cccl}
 \mathbf\Omega_{\infty}(C)&\longrightarrow&\overline{\mathrm{CH}}_{\bullet}(C)\\
(a_1,\cdots, a_n)&\longmapsto&(1,a_1,\cdots, a_n)
\end{array}
\end{equation}
of chain complexes similar to the coalgebra case (compare with \eqref{CobartoHoch}).

\section{Double Poisson algebras and bimodules}\label{Sect_double_poisson_alg}

In this section we remind the definition of a double Poisson algebra $A$,
and introduce the notion of a double Poisson bimodule $M$.
After that, we discuss the construction of $n$-Poisson structures.

\subsection{Double Poisson algebras}
\begin{definition}
Suppose $A$ is a unital, associative algebra over a field $k$.
A \textit{double bracket} on $A$ is a bilinear map
$\ldb-,-\rdb:A\times A\to A\otimes A $
which  satisfies
\begin{eqnarray}
\label{db}
&& \ldb a,b\rdb=-\ldb b,a\rdb^\circ,  \\
\label{db2}
&& \ldb a,bc \rdb = b \ldb a,c \rdb + \ldb a,b \rdb c \, ,
\end{eqnarray}
where $(u\otimes v)^\circ =v\otimes u$. Here the action of $b$ and $c$ is
given via outer bimodule structure on $A\otimes A$, that is, $b(a_1\otimes a_2)c:=
ba_1\otimes a_2c$.
We recall that the {\it inner} bimodule structure on $A\otimes A$ is given by
\begin{equation}\label{inner_bimoduleact}
b\,  \ast   (a_1 \otimes a_2)  \ast \, c:=  a_1c \otimes b a_2.
\end{equation}
The formulas  \eqref{db} and \eqref{db2} imply that $\ldb-,-\rdb$
is  a derivation on its first argument for the inner bimodule structure
\begin{equation}
\ldb ab , c \rdb = a \ast \ldb b,c \rdb + \ldb a, c \rdb \ast b.
\end{equation}

Suppose that $\ldb-,-\rdb$ is a double bracket on $A$.
For $a,b_1,...,b_n\in A$, let
$$\ldb a, b_1 \otimes \cdots \otimes b_n\rdb_L \,:=\, \ldb a,b_1 \rdb \otimes b_2 \otimes \cdots \otimes b_n,$$
and let
$$\sigma_s(b_1 \otimes \cdots \otimes b_n):=b_{s^{-1}(1)} \otimes \cdots \otimes b_{s^{-1}(n)},$$
where $s$ is a permutation of $\{1,2,\cdots, n\}$.
If furthermore $A$ satisfies the following
\textit{double Jacobi identity}
\begin{equation}\label{dJ}
\ldbg a , \ldb b,c \rdb \rdbg_L + \sigma_{(123)}\ldbg b,\ldb c,a\rdb \rdbg_L + \sigma_{(132)}
\ldbg c,\ldb a,b\rdb \rdbg_L =0,
\end{equation}
then $A$ is called a \textit{double Poisson algebra}.
\end{definition}

Let $\mu:A \otimes A \to A$ denote the multiplication on $A$, and let
$\{-,-\}:=\mu \circ \ldb-,-\rdb: A\otimes A \to A$. Then $\{-,-\}$ induces
 well-defined maps $A_{\natural} \times A \to A$ and
 $A_{\natural} \times A_{\natural} \to A_{\natural} $ (see  \cite[Lemma 2.4.1]{VdB2}).
Futhermore, the latter bracket is anti-symmetric.

\begin{definition}
A left {\it Leibniz-Loday algebra}\footnote{This algebraic structure is introduced by Loday,
which he calls {\it Leibniz algebra},
while some other authors, for example, Van den Bergh \cite{VdB2}, call it {\it Loday algebra};
we here combine these two terminologies together.}
is a vector space $L$ with a bilinear operation $[-,-]$ such that it satisfies
$$ [a, [b,c]]=[[a,b],c]+[b,[a,c]] \, . $$
\end{definition}

From the definition one immediately sees that

\begin{lemma}\label{lemma:LL}
$(A,\{-,-\})$ is a left Leibniz-Loday algebra.
\end{lemma}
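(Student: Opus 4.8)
The plan is to deduce the left Leibniz identity
$$\{a,\{b,c\}\}=\{\{a,b\},c\}+\{b,\{a,c\}\}$$
from the double Jacobi identity \eqref{dJ} by composing the latter with the total multiplication map. Following Van den Bergh, I would fix Sweedler-type notation $\ldb a,b\rdb=\ldb a,b\rdb'\otimes\ldb a,b\rdb''$, so that by definition $\{a,b\}=\ldb a,b\rdb'\,\ldb a,b\rdb''$, and then expand each of the three terms of the Jacobiator explicitly in these components.

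First I would expand $\{a,\{b,c\}\}$. Since $\{b,c\}=\ldb b,c\rdb'\,\ldb b,c\rdb''$ is a product, the derivation rule \eqref{db2} in the second argument gives
$$\ldb a,\{b,c\}\rdb=\ldb b,c\rdb'\,\ldb a,\ldb b,c\rdb''\rdb+\ldb a,\ldb b,c\rdb'\rdb\,\ldb b,c\rdb'',$$
and applying $\mu$ produces two cubic monomials in the Sweedler components. Treating $\{\{a,b\},c\}$ with the derivation rule for the first argument (the inner bimodule structure \eqref{inner_bimoduleact}) and $\{b,\{a,c\}\}$ in the same manner yields six monomials in total. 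One of them, the image of $\ldb a,\ldb b,c\rdb'\rdb\,\ldb b,c\rdb''$ under $\mu$, is immediately $m\big(\ldb a,\ldb b,c\rdb\rdb_L\big)$, where $m\colon A^{\otimes 3}\to A,\ x\otimes y\otimes z\mapsto xyz$ is the total multiplication and $\ldb a,\ldb b,c\rdb\rdb_L$ is the first term of \eqref{dJ}.

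The remaining monomials involve a double bracket taken against a \emph{second} Sweedler component (such as $\ldb a,\ldb b,c\rdb''\rdb$), whereas the terms of \eqref{dJ} involve only brackets against \emph{first} components. The key step is therefore to apply the cyclic antisymmetry \eqref{db}, $\ldb b,c\rdb=-\ldb c,b\rdb^\circ$, to rewrite each such monomial in terms of $\ldb a,\ldb c,b\rdb'\rdb$, and hence as the image under $m$ of one of the $\sigma$-permuted terms $\sigma_{(123)}\ldb b,\ldb c,a\rdb\rdb_L$ or $\sigma_{(132)}\ldb c,\ldb a,b\rdb\rdb_L$ occurring in \eqref{dJ}. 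Once each of the six monomials has been matched in this way, the entire Jacobiator is seen to equal $m$ applied to the left-hand side of \eqref{dJ}, which vanishes.

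I expect the main obstacle to be precisely this final matching: one must check that after invoking \eqref{db} the six expanded monomials pair up correctly with the three $_L$-brackets of \eqref{dJ}, and in particular that the cyclic permutations $\sigma_{(123)},\sigma_{(132)}$ reorder the three tensor factors in exactly the way required for the resulting products to coincide in $A$ itself—not merely in the commutator quotient $A_\natural$, where cyclic reorderings are automatically identified. To streamline this bookkeeping I would invoke the descent statement that $\{-,-\}$ is well defined on $A_\natural\times A$ and antisymmetric on $A_\natural\times A_\natural$, which follows from \cite[Lemma 2.4.1]{VdB2}.
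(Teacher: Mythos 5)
Your expansion of the three Leibniz--Loday terms is correct: with $\{x,y\}=\ldb x,y\rdb'\ldb x,y\rdb''$, each of $\{a,\{b,c\}\}$, $\{b,\{a,c\}\}$, $\{\{a,b\},c\}$ yields two cubic monomials via \eqref{db2} and \eqref{inner_bimoduleact}, and the monomial coming from $\ldb a,\ldb b,c\rdb'\rdb\,\ldb b,c\rdb''$ is indeed $m\bigl(\ldbg a,\ldb b,c\rdb\rdbg_L\bigr)$. The gap is exactly at your final matching step, and counting alone exposes it: the Leibniz Jacobiator consists of six distinct monomial types, whereas $m$ applied to the left-hand side of \eqref{dJ} produces only three, so the Jacobiator cannot equal $m$ of a \emph{single} instance of \eqref{dJ}. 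What is true is the following. Applying $m$ to \eqref{dJ} with arguments $(a,b,c)$ and using \eqref{db} once on the second term and once on the third gives
\[
\{a,\ldb b,c\rdb'\}\,\ldb b,c\rdb''
-\ldb a,c\rdb'\,\{b,\ldb a,c\rdb''\}
-\ldb\ldb a,b\rdb',c\rdb'\,\ldb a,b\rdb''\,\ldb\ldb a,b\rdb',c\rdb''=0,
\]
which accounts for one monomial from each of the three Leibniz terms. The three remaining monomials, namely $\ldb b,c\rdb'\,\{a,\ldb b,c\rdb''\}$, $\{b,\ldb a,c\rdb'\}\,\ldb a,c\rdb''$ and $\ldb\ldb a,b\rdb'',c\rdb'\,\ldb a,b\rdb'\,\ldb\ldb a,b\rdb'',c\rdb''$, cannot be matched to \eqref{dJ} for the arguments $(a,b,c)$; they are precisely what one obtains by applying $m$ to the double Jacobi identity for the \emph{transposed} triple $(b,a,c)$, that is to
$\ldbg b,\ldb a,c\rdb\rdbg_L+\sigma_{(123)}\ldbg a,\ldb c,b\rdb\rdbg_L+\sigma_{(132)}\ldbg c,\ldb b,a\rdb\rdbg_L=0$,
and then using \eqref{db} (once on the second term, twice on the third). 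In summary,
\[
\{a,\{b,c\}\}-\{\{a,b\},c\}-\{b,\{a,c\}\}
= m\bigl(\mathrm{J}(a,b,c)\bigr)-m\bigl(\mathrm{J}(b,a,c)\bigr),
\]
where $\mathrm{J}(x,y,z)$ denotes the left-hand side of \eqref{dJ} for the triple $(x,y,z)$; since the double Jacobi identity is an axiom holding for all triples, both summands vanish and the lemma follows. So your strategy (which is Van den Bergh's computation) is right in outline, but the step you yourself flagged as the main obstacle does fail as you describe it: one must invoke \eqref{dJ} for two different orderings of the arguments, not just once.

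Two smaller remarks. First, your closing appeal to the descent statement ($\{-,-\}$ being well defined on $A_\natural\times A$) cannot streamline this matching: the Leibniz identity must hold in $A$ itself, as you correctly note earlier, and passing to $A_\natural$ would only yield the weaker conclusion of Corollary \ref{cor_VdB}. Second, for comparison with the paper: the paper gives no computation here at all, asserting the lemma as immediate and deferring the Lie-algebra consequences to Van den Bergh \cite{VdB2}; so once repaired as above, your argument would be a complete self-contained proof rather than a variant of anything in the text.
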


As a consequence of Lemma \ref{lemma:LL} and
\eqref{db}, we have

\begin{corollary} \label{cor_VdB}
If $A$ is a double Poisson algebra, then $\{-,-\}$ makes $A_\natural=A/[A,A]$ into a Lie algebra
and $A$ into a Lie module over $A_\natural$.
\end{corollary}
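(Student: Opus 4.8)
The plan is to deduce both assertions from Lemma~\ref{lemma:LL} together with the well-definedness and antisymmetry statements recorded just before the definition of a Leibniz-Loday algebra. Recall that those statements (following \cite[Lemma 2.4.1]{VdB2}) already provide that $\{-,-\}$ descends to well-defined bilinear maps $A_\natural \times A \to A$ and $A_\natural \times A_\natural \to A_\natural$, and that the induced bracket on $A_\natural$ is antisymmetric. Thus for the Lie algebra claim only the Jacobi identity remains, and for the Lie module claim only the module axiom remains.

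For the Lie algebra structure on $A_\natural$, I would exploit the elementary fact that a left Leibniz-Loday algebra whose bracket is antisymmetric is automatically a Lie algebra. Concretely, writing the Leibniz-Loday identity from Lemma~\ref{lemma:LL} as
$$\{a,\{b,c\}\} = \{\{a,b\},c\} + \{b,\{a,c\}\},$$
I would rearrange it to $\{\{a,b\},c\} = \{a,\{b,c\}\} - \{b,\{a,c\}\}$ and then invoke antisymmetry in $A_\natural$, namely $\{\{a,b\},c\} = -\{c,\{a,b\}\}$ and $\{b,\{a,c\}\} = -\{b,\{c,a\}\}$. Reading all terms as elements of $A_\natural$, this yields
$$\{a,\{b,c\}\} + \{b,\{c,a\}\} + \{c,\{a,b\}\} = 0,$$
which is the Jacobi identity on $A_\natural$. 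One must be careful that this cancellation takes place only after projecting to $A_\natural$, since on $A$ itself the bracket need not be antisymmetric.

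For the Lie module structure, the action of $A_\natural$ on $A$ is $(\bar a, m) \mapsto \{a,m\}$, which is well defined by the map $A_\natural \times A \to A$ recalled above. The module axiom $\{\{\bar a,\bar b\}, m\} = \{\bar a, \{\bar b, m\}\} - \{\bar b,\{\bar a, m\}\}$ is nothing but the rearranged Leibniz-Loday identity $\{\{a,b\},m\} = \{a,\{b,m\}\} - \{b,\{a,m\}\}$ of Lemma~\ref{lemma:LL}, read with the first two slots projected into $A_\natural$. Since the left-hand side depends on $\{a,b\}$ only through its class in $A_\natural$ and the action is well defined, the identity descends verbatim.

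The only genuine obstacle is bookkeeping: one must verify at each step that the identities valid in $A$ are compatible with the projection $A \to A_\natural$ in precisely the slots where antisymmetry or well-definedness is invoked, so that the passage to the quotient is legitimate. Once the well-definedness results cited from \cite{VdB2} are in hand, the remaining content is the purely formal equivalence \emph{Leibniz plus antisymmetry implies Jacobi}, and the observation that the module axiom is a direct transcription of the Leibniz-Loday identity.
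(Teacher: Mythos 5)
Your proposal is correct, and it is worth noting how it sits relative to the paper: the paper's own ``proof'' of this corollary is a bare citation to Van den Bergh \cite[Lemmas 2.4.2 and 2.6.2]{VdB2}, so you have in effect supplied the argument that citation hides. Your two ingredients are exactly the ones the paper sets up for this purpose: the well-definedness of the induced maps $A_\natural \times A \to A$ and $A_\natural \times A_\natural \to A_\natural$ together with antisymmetry of the latter (recorded just before the definition of a Leibniz-Loday algebra, following \cite[Lemma 2.4.1]{VdB2}), and Lemma~\ref{lemma:LL}. The formal step ``left Leibniz identity plus antisymmetry implies Jacobi'' is carried out correctly: from $\{\{a,b\},c\} = \{a,\{b,c\}\} - \{b,\{a,c\}\}$ and antisymmetry on $A_\natural$ one gets the cyclic form of Jacobi, and you rightly flag that this only makes sense after projecting, since the bracket on $A$ itself is not antisymmetric; the descent of the Leibniz identity to $A_\natural$ is legitimate because the induced bracket satisfies $\{\bar a,\bar b\} = \overline{\{a,b\}}$, so identities in $A$ push forward along $\natural$. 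Likewise, the module axiom $\{\{\bar a,\bar b\},m\} = \{\bar a,\{\bar b,m\}\} - \{\bar b,\{\bar a,m\}\}$ is, as you say, the Loday identity read with the first slot interpreted via the well-defined action of $A_\natural$ on $A$. What your route buys is a self-contained, purely formal verification from the structures already established in the paper; what the paper's citation buys is brevity, deferring to Van den Bergh's original lemmas, whose content is essentially the argument you wrote out.
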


\begin{proof}
See Van den Bergh \cite[Lemmas 2.4.2 and 2.6.2]{VdB2}.
\end{proof}

\subsection{Double Poisson bimodules}

\begin{definition}
\label{Defofdouble}
Let $A$ be a  double Poisson algebra with the double bracket $\ldb-,-\rdb$
and let $M$ be an $A$-bimodule.
Then  a  {\it double Poisson bracket} on $M$ is
a bilinear product $\ldb-,-\rdb_M : A \times M \to  (A\otimes M) \oplus (M \otimes A)$ such that the following axioms
hold for all $a,b \in A$ and all $m,n \in M$:
\begin{enumerate}
\item[$(i)$]
$ \ldb a,bm \rdb_M \ = \ \ldb a,b\rdb \, m + b \, \ldb a,m\rdb_M $ \, , \,
$ \ldb a,m b \rdb_M \ = \ \ldb a,m\rdb_M \, b + m \, \ldb a,b\rdb_M $;
\item[$(ii)$]
$\ldb ab,m\rdb_M \ = \ a \ast \ldb b,m \rdb_M +  \ldb a,m\rdb_M \ast b$.
\end{enumerate}
\end{definition}

\begin{remark}
Expressions $\ldb a,b\rdb \, m$ and $m \, \ldb a,b\rdb_M$ should be understood as follows.
If $\ldb a,b\rdb = \ldb a,b\rdb ' \otimes \ldb a,b\rdb''$,
then $\ldb a,b\rdb \, m \in A\otimes M$ via the left action of $\ldb a,b\rdb''$ on
$m$ and $m \, \ldb a,b\rdb \in M\otimes  A$ via
the right action of $\ldb a,b\rdb ' $ on $m$.
Also, $\ast$ in $\ a \ast \ldb b,m \rdb_M$ and $\ldb a,m\rdb_M \ast b$
is the action of $A$ on $A\otimes M$ and $M\otimes A$
as inner bimodules
(compare to \eqref{inner_bimoduleact}).
\end{remark}

Our next is to introduce the Jacobi identity for the double bracket $\ldb-,-\rdb_M $.
For this we need to define the following expressions:
$\ldbg a , \ldb b,m \rdb_M \rdbg_{L}, \ldbg b,\ldb m,a\rdb_M \rdbg_{L}$,  $ \ldbg m, \ldb b,a\rdb \rdbg_{L}$.
First, we define
\begin{equation}
\label{doublemb}
\ldb m,b \rdb_M  :=  - (\ldb b,m \rdb_M)^{\circ} \, ,
\end{equation}
that is, if $\ldb b,m \rdb_M = (b_1 \otimes m_1) \oplus (m_2 \otimes b_2)$, then
$\ldb m,b \rdb_M  = - (b_2 \otimes m_2) \oplus (m_1 \otimes b_1)$.
Then
 \begin{eqnarray}
 \label{doublemb2}
  \ldbg a , \ldb b,m \rdb_M \rdbg_{L} := \big( \ldbg a, b_1 \rdbg \otimes m_1 \big)
 \oplus \big( \ldbg a, m_2\rdbg_M \otimes b_2 \big) \, ,
 \end{eqnarray}
 which is in $ ( A \otimes A \otimes M) \oplus (A \otimes M \otimes A)$. Using \eqref{doublemb} and
  \eqref{doublemb2} , we can define $\ldbg b,\ldb m,a\rdb_M \rdbg_{L}$.
Finally, if  $\ldb a, b\rdb= \ldb a, b\rdb' \otimes \ldb a, b\rdb''$, then
  $$ \ldbg m, \ldb a, b \rdb \rdbg_{L} :=  \ldbg m, \ldb a, b\rdb' \rdbg_M  \otimes \ldb a, b\rdb'' \in
  (A\otimes M  \otimes A) \oplus (M \otimes A \otimes A) \, .$$

\begin{definition}
\label{Defofdoubleii}
Let $A$ be a  double Poisson algebra with the double bracket $\ldb-,-\rdb$
and let $M$ be an $A$-bimodule with a double bracket $\ldb-,-\rdb_M$.
Then we say that $M$ is a {\it double Poisson $A$-bimodule} if
\begin{equation}
\label{Jacequiv}
 (iii)   \, \quad \ldbg a , \ldb b,m \rdb_M \rdbg_{L} + \sigma_{(123)}\ldbg b,\ldb m,a\rdb_M \rdbg_{L} +
 \sigma_{(132)} \ldbg m, \ldb a, b \rdb \rdbg_{L} =0 \, ,
\end{equation}
for all $a,b \in A$ and $m,n \in M$.
\end{definition}

\begin{remark}
It is clear that $A$ itself a double Poisson $A$-bimodule.
\end{remark}

Let $(M, \ldb-,-\rdb_M)$ be a double Poisson $A$-bimodule.
Then we define
$$ \{-,-\}_M= \mu_M  \circ \ldb -, - \rdb_M \, : \, A \times M \to M \, ,    $$
where $\mu_M: (A \otimes M) \oplus (M\otimes A) \to M $ is the bimodule action map.
And one can prove

\begin{proposition}
$\{- ,-\}_M$ induces well-defined maps $A_{\natural} \times M \to M$
and $A_{\natural} \times M_{\natural} \to M_{\natural}$.
\end{proposition}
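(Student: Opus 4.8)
The plan is to mimic, in the bimodule setting, the proof of Van den Bergh's Lemma 2.4.1 that underlies Corollary \ref{cor_VdB}. Two things must be checked: that $\{-,-\}_M$ is insensitive to commutators in its first (algebra) argument, so that it descends to $A_\natural\times M\to M$; and that, for fixed $\bar a\in A_\natural$, the operator $\{a,-\}_M$ carries the subspace $[A,M]$ into itself, so that the induced map further descends to $A_\natural\times M_\natural\to M_\natural$. The starting point is to apply the action map $\mu_M$ to the defining axioms $(i)$ and $(ii)$ of Definition \ref{Defofdouble}. Using the explicit left/right actions described in the Remark following that definition, applying $\mu_M$ to axiom $(i)$ yields the two Leibniz-type identities
\begin{equation*}
\{a,bm\}_M = \{a,b\}\,m + b\,\{a,m\}_M, \qquad \{a,mb\}_M = \{a,m\}_M\,b + m\,\{a,b\},
\end{equation*}
where $\{a,b\}=\mu\circ\ldb a,b\rdb$ is the Leibniz-Loday bracket on $A$; these are the precise analogues of the derivation property $\{a,bc\}=b\{a,c\}+\{a,b\}c$ for the algebra bracket.

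For the first argument I would apply $\mu_M$ to axiom $(ii)$. The key observation is that for every $X\in(A\otimes M)\oplus(M\otimes A)$ one has
\begin{equation*}
\mu_M(a\ast X) = \mu_M(X\ast a),
\end{equation*}
which follows from associativity of the bimodule action: on the summand $A\otimes M$ both sides compute $p\,a\,m$, and on $M\otimes A$ both compute $n\,a\,q$. Granting this, axiom $(ii)$ gives
\begin{equation*}
\{ab,m\}_M = \mu_M(a\ast\ldb b,m\rdb_M) + \mu_M(\ldb a,m\rdb_M\ast b) = \mu_M(\ldb b,m\rdb_M\ast a) + \mu_M(b\ast\ldb a,m\rdb_M) = \{ba,m\}_M,
\end{equation*}
so $\{-,-\}_M$ is symmetric under $ab\leftrightarrow ba$ and therefore factors through $A_\natural\times M\to M$ on the nose.

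Finally, for the second argument I would subtract the two Leibniz identities above to obtain
\begin{equation*}
\{a,\,bm-mb\}_M = \big(\{a,b\}\,m - m\,\{a,b\}\big) + \big(b\,\{a,m\}_M - \{a,m\}_M\,b\big),
\end{equation*}
which is manifestly a sum of two elements of $[A,M]$. Hence $\{a,-\}_M$ sends $[A,M]$ into itself, and combined with the first-argument descent this produces the well-defined map $A_\natural\times M_\natural\to M_\natural$. The only real obstacle is bookkeeping: one must carefully distinguish the inner and outer bimodule structures on the two summands $A\otimes M$ and $M\otimes A$ and track the Sweedler-type notation for $\ldb a,b\rdb$ and $\ldb a,m\rdb_M$ through $\mu_M$. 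Once the action conventions of Definition \ref{Defofdouble} are pinned down, each identity above is a short associativity computation; in particular, well-definedness never uses the double Jacobi identity $(iii)$, which is needed only for the subsequent Lie-theoretic statements.
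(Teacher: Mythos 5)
Your proposal is correct and is essentially the paper's own argument: the paper likewise uses axiom $(ii)$ together with the fact that $\mu_M$ identifies left and right inner actions (written out in Sweedler notation as $\ldb b,m\rdb_M'\,a\,\ldb b,m\rdb_M''$) to get $\{ab-ba,m\}_M=0$, and then uses the $\mu_M$-images of axiom $(i)$ to show $\{a,cm-mc\}_{M,\natural}=0$. Your observation that the double Jacobi identity $(iii)$ plays no role here also matches the paper, where it is only invoked for the subsequent Lie-module proposition.
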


\begin{proof}
(1) Let $a,b \in A$ and $m \in  M$. Then
\begin{eqnarray}
\{ab -ba , \,  m\}_M & = &\mu_M\, \ldb ab, \, m \rdb_M  - \mu_M\, \ldb ba, \, m \rdb_M  \nonumber \\
&=& \mu_M\,  ( a \ast \ldb b,\, m \rdb_M  + \ldb a,\, m \rdb_M \ast b) -
\mu_M\, (  b \ast \ldb a,\, m \rdb_M + \ldb b , \, m \rdb_M \ast a) \nonumber  \\
&=& \ldb b,\, m \rdb_M'\,  a \,  \ldb b,\, m \rdb_M''  + \ldb a,\, m \rdb_M' \, b \,  \ldb a,\, m \rdb_M'' \nonumber \\
&-&  \ldb a,\, m \rdb_M' \, b \,  \ldb a,\, m \rdb_M'' -  \ldb b,\, m \rdb_M' \,  a \,  \ldb b,\, m \rdb_M''  =0 \, .\nonumber
\end{eqnarray}

(2)
We let
\begin{equation}
\label{naturalbracket}
\{-,-\}_{M, \natural}:= \natural \circ \{-,-\}_M \,:  A \times M \to M_{\natural}   ,
\end{equation}
where   $\natural : M \to M_{\natural}$ is the projection map.  Then
$$ \{ a, c m -m c \}_{M,\natural}  = \natural \big ( \{a,c\} m- m\{a,c\} \big) +
 \natural  \big( c \{a, m \}_M - \{a, m\}_M c \big) = 0 \, , $$
and hence by $(1)$ we have $\{-,-\}_{M, \natural}: A_{\natural} \times M_{\natural} \to M_{\natural}$.
\end{proof}

Next, we establish the following statement

\begin{proposition}
The brackets $\{-,-\}_M$ and $\{-,-\}_{M,\natural}$ define on $M$ and $M_{\natural}$ respectively Lie module structures
over the Lie algebra $A_{\natural}$.
\end{proposition}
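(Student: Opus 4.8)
The plan is to show that the two required Lie-module axioms follow from the double Jacobi identity \eqref{Jacequiv} by applying the multiplication map $\mu$ and the bimodule action map $\mu_M$ to each of its three terms. The statement to prove has two halves: that $\{-,-\}_M$ makes $M$ into a Lie module over $A_\natural$, and that $\{-,-\}_{M,\natural}$ makes $M_\natural$ into a Lie module over $A_\natural$. Since the previous proposition already guarantees these brackets are well-defined on the quotients, it suffices to verify the Leibniz-type identity
$$
\{\bar a,\{\bar b,m\}_M\}_M = \{\{\bar a,\bar b\},m\}_M + \{\bar b,\{\bar a,m\}_M\}_M
$$
for $a,b\in A$ and $m\in M$, and then project via $\natural$ to obtain the corresponding identity on $M_\natural$.

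First I would apply $\mu_M$ (in the appropriate slots) together with $\mu$ to the three summands in \eqref{Jacequiv}. The key observation is that the left term $\ldbg a,\ldb b,m\rdb_M\rdbg_L$, upon multiplying out using the definitions \eqref{doublemb2} and the bimodule action, collapses to $\{\bar a,\{\bar b,m\}_M\}_M$; the third term $\ldbg m,\ldb a,b\rdb\rdbg_L$, after applying $\mu$ on the $A$-factors coming from $\ldb a,b\rdb$, yields the term $\{\{\bar a,\bar b\},m\}_M$ (here one uses that $\{\bar a,\bar b\}=\mu\ldb a,b\rdb$ is exactly the Crawley-Boevey/Van den Bergh Lie bracket on $A_\natural$ from Corollary \ref{cor_VdB}); and the middle term $\sigma_{(123)}\ldbg b,\ldb m,a\rdb_M\rdbg_L$ produces $\{\bar b,\{\bar a,m\}_M\}_M$, after unwinding the antisymmetry relation \eqref{doublemb} that defines $\ldb m,a\rdb_M$. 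The permutations $\sigma_{(123)}$ and $\sigma_{(132)}$ are precisely what is needed to align the tensor factors so that, after multiplying, every term lands in $M$ with the correct placement; tracking how these cyclic permutations interact with $\mu$ and $\mu_M$ is the computational heart of the argument.

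The main obstacle I anticipate is bookkeeping rather than conceptual: one must carefully track the Koszul signs introduced by the graded switch $\sigma$ and by \eqref{doublemb}, and verify that the inner versus outer bimodule actions (compare \eqref{inner_bimoduleact} and axiom $(ii)$ of Definition \ref{Defofdouble}) combine correctly when $\mu_M$ is applied to terms of mixed type in $(A\otimes A\otimes M)\oplus(A\otimes M\otimes A)\oplus(M\otimes A\otimes A)$. The cleanest route is to model the computation on Van den Bergh's proof that $A_\natural$ is a Lie algebra (Corollary \ref{cor_VdB}, via \cite[Lemmas 2.4.2 and 2.6.2]{VdB2}), replacing one copy of $A$ throughout by the bimodule $M$ and the bracket $\ldb-,-\rdb$ by $\ldb-,-\rdb_M$ in the relevant slot; the axioms $(i)$--$(iii)$ of the double Poisson bimodule structure are the exact analogues of \eqref{db2} and \eqref{dJ} needed for that argument to go through verbatim. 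Finally, the $M_\natural$ statement follows immediately by composing with the projection $\natural$, since by the preceding proposition $\{-,-\}_{M,\natural}$ descends to $A_\natural\times M_\natural\to M_\natural$, and antisymmetry of the induced $A_\natural$-bracket together with the projected Leibniz identity gives the genuine Lie-module structure on the quotient.
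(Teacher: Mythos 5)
Your proposal is correct and follows essentially the same route as the paper: the paper likewise reduces the statement to the Leibniz identity \eqref{Lieiden1} and derives it by expressing each of $\{\{a,b\},m\}_M$, $\{a,\{b,m\}_M\}_M$ and $\{b,\{a,m\}_M\}_M$ as $\mu_M$-compositions of expressions of the form $\ldbg -,\ldb -,-\rdb_M\rdbg_L$, then invoking the double Jacobi identity \eqref{Jacequiv}, with the quotient statement handled by projection exactly as you describe. The one refinement hidden in your phrase ``collapses to'' is that a single multiplied-out copy of \eqref{Jacequiv} does not produce all three brackets at once: the computation needs \eqref{Jacequiv} twice---once for $(a,b,m)$ composed with $\mu_M\circ(\mu_M\otimes 1)$ and once for $(b,a,m)$ composed with $\mu_M\circ(1\otimes\mu_M)$, with the antisymmetry \eqref{doublemb} mediating between the two---which is precisely the bookkeeping you flagged as the computational heart.
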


\begin{proof}
We need to show that
\begin{equation}
\label{Lieiden1}
 \{ \{a,b\}, m\}_{M} = \{a,\{b,m\}_M \}_M -\{b,\{a,m\}_M\}_M  \, .
 \end{equation}
Indeed, we have
\begin{eqnarray}
\{ \{a,b\}, m\}_{M} & = & \mu_M \big[ \ldbg \ldb a, b \rdb' \cdot  \ldb a, b \rdb'', m \rdbg_M \big] \nonumber \\[0.1cm]
&=& \mu_M \big[ \ldb a, b \rdb'  \ast   \ldbg  \ldb a, b \rdb'', m \rdbg_M +  \ldbg  \ldb a, b \rdb' , m \rdbg_M \ast
\ldb a, b \rdb'' \big]  \nonumber \\[0.1cm]
&=&  \mu_M \big[  (1\otimes \mu_M) \circ \sigma_{(132)} \ldbg m, \ldb b, a \rdb \rdbg_{L}   -
(\mu_M \otimes 1) \circ \sigma_{(132)} \ldbg m, \ldb a, b \rdb \rdbg_{L} \big], \nonumber\\
\{a,\{b,m\}_M \}_M &=& \mu_M \big[ \ldbg a, \ldb b, m \rdb'_M \cdot \ldb b, m \rdb''_M \rdbg_M \big] \nonumber \\[0.1cm]
&=&   \mu_M \big[ \ldbg a, \ldb b, m \rdb'_M \rdbg \cdot \ldb b, m \rdb''_M +
\ldb b, m \rdb'_M \cdot \ldbg a, \ldb b, m \rdb''_M \rdbg \big] \nonumber \\[0.1cm]
& =&  \mu_M \big[ ( \mu_M\otimes 1) \, \ldbg a , \ldb b,m \rdb_M \rdbg_{L} - (1 \otimes \mu_M) \circ \sigma_{(123)}
\ldbg a, \ldb m,b \rdb_M \rdbg_L \big], \nonumber\\
\{b,\{a,m\}_M\}_M &=& \mu_M \big[ \ldbg b, \ldb a, m \rdb_M' \cdot \ldb a,m \rdb_M'' \rdbg_M \big] \nonumber \\[0.1cm]
&=&  \mu_M \big[ \ldbg b, \ldb a, m \rdb_M'  \rdbg  \cdot  \ldb a,m \rdb_M'' +
\ldb a, m \rdb_M' \cdot  \ldbg b, \ldb a, m \rdb_M'' \rdbg \big] \nonumber \\[0.1cm]
&=&  \mu_M \big[  (1\otimes \mu_M) \ldbg b, \ldb a, m \rdb_M  \rdbg_L - ( \mu_M\otimes 1) \circ
\sigma_{(123)}   \ldbg b, \ldb m,a  \rdb_M  \rdbg_L \big]  \, .\nonumber
\end{eqnarray}
Using these identities and \eqref{Jacequiv} we obtain \eqref{Lieiden1}.
\end{proof}

\subsection{Double $n$-Poisson structures }

One can easily generalize the above notions to the $n$-graded case.
Indeed, let $A=\oplus_{i\in\mathbb{Z}} A_i $  be  a  $\mathbb{Z}$-graded
vector space. We denote by $A[n]$ the graded vector space with degree shifted by $n$, explicitly,
$A[n] = \oplus (A[n])_i$ with $ (A[n])_i=A_{i-n}$.  Then a \textit{double bracket}
of degree $n$ on $A$ is a bilinear map $\ldb-,-\rdb: A \times A \to (A \otimes A)[-n]$
satisfying
\begin{eqnarray}
\label{dbgrn2}
\ldb a,b\rdb &=& -(-1)^{(|a|+n)(|b|+n)}  \ldb b,a\rdb^\circ,  \\
\label{db22}
\ldb a,b \cdot c \rdb &=&  \ldb a,b \rdb \cdot c + (-1)^{|b|(|a|+n)}  b \cdot \ldb a,c \rdb \, .
\end{eqnarray}
The pair $(A,\ldb-,-\rdb)$ is a \textit{double n-Poisson} algebra if in addition it satisfies the $n$-graded
version of the Jacobi identity
$$ \ldbg a, \ldb b, c \rdb \rdbg_L + (-1)^{(|a|+n)(|b|+|c|)} \sigma_{(123)} \ldbg b, \ldb c, a \rdb \rdbg_L
 + (-1)^{(|c|+n) (|a|+|b|)} \sigma_{(132)} \ldbg c, \ldb a, b \rdb \rdbg_L = \, 0.$$
Similarly, we can define the notion of a double $n$-Poisson bimodule and $n$-graded version of
results of previous two sections can be summarized in the following proposition
\begin{proposition}
\label{mainprop}
Let $A$ be a double $n$-Poisson algebra and $M$ be a double $n$-Poisson $A$-bimodule.
Then
\begin{enumerate}
\item[$\mathrm{(i)}$] the bracket $\{-,-\}$ makes $A_{\natural}$ into an $n$-Lie algebra and $A$ into $n$-Lie
module;
\item[$\mathrm{(ii)}$] the brackets $\{-,-\}_M$ and  $\{-,-\}_{M,\natural}$ make $M$ and $M_{\natural}$
respectively into $n$-Lie module over $A_{\natural}$.
\end{enumerate}
\end{proposition}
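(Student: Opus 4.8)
The plan is to observe that Proposition \ref{mainprop} is nothing but the degree-$n$ analogue of the results already established in the ungraded setting, namely Lemma \ref{lemma:LL}, Corollary \ref{cor_VdB}, and the two preceding propositions on double Poisson bimodules. Each of those was proved by a direct computation using only the defining axioms of a (bimodule) double bracket together with the (bimodule) double Jacobi identity. The plan is therefore to replay every one of those computations verbatim, inserting at each step the Koszul signs now prescribed by the graded axioms \eqref{dbgrn2}, \eqref{db22} and the graded Jacobi and bimodule Jacobi identities. No new idea is needed; the content is entirely in the sign bookkeeping.

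For part (i), I would first set $\{-,-\} := \mu \circ \ldb-,-\rdb$ and verify that it descends to well-defined maps $A_\natural \times A \to A$ and $A_\natural \times A_\natural \to A_\natural$. This is the graded shadow of the ungraded descent computation: expanding $\ldb ab - (-1)^{|a||b|} ba,\, c\rdb$ by the inner-derivation rule implied by \eqref{dbgrn2} and \eqref{db22} and applying $\mu$, the two contributions cancel after a sign-careful rearrangement. Next, the graded Leibniz-Loday identity is obtained by applying the total multiplication $A^{\otimes 3} \to A$ to the three summands of the graded double Jacobi identity; the permutation operators $\sigma_{(123)}$ and $\sigma_{(132)}$, which now carry Koszul signs, supply precisely the signs that collapse the double Jacobi identity into the single Leibniz-Loday identity. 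Finally, graded antisymmetry of the induced bracket on $A_\natural$ follows from \eqref{dbgrn2} after projecting to the commutator quotient, yielding the $n$-Lie algebra structure on $A_\natural$ and, reading the Leibniz-Loday identity with its first slot taken in $A_\natural$, the $n$-Lie module structure on $A$.

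For part (ii), the plan is entirely parallel. I would set $\{-,-\}_M := \mu_M \circ \ldb-,-\rdb_M$ and check, exactly as in part (i), that it induces maps $A_\natural \times M \to M$ and $A_\natural \times M_\natural \to M_\natural$, using the graded forms of the two axioms of Definition \ref{Defofdouble}. The graded Lie-module identity, the analogue of \eqref{Lieiden1}, is then derived by applying $\mu_M$ precomposed with $\mu \otimes \mathrm{id}$, $\mathrm{id} \otimes \mu_M$ and the signed permutations to the three summands of the graded bimodule Jacobi identity \eqref{Jacequiv}; each of the three terms $\{\{a,b\},m\}_M$, $\{a,\{b,m\}_M\}_M$ and $\{b,\{a,m\}_M\}_M$ reproduces one summand once the permutation signs are taken into account, exactly as in the ungraded computation already displayed.

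The only genuine difficulty is the sign bookkeeping, and that is where I expect the main obstacle to lie: one must check that the Koszul signs produced by the graded permutations $\sigma_{(123)}, \sigma_{(132)}$, together with the signs picked up when moving $\mu$ (respectively $\mu_M$) past homogeneous elements of nonzero degree, combine to exactly the signs appearing in the graded Leibniz-Loday and Lie-module identities. Since each individual step is the sign-decorated image of an identity already verified in the ungraded case, it suffices to carry the degrees $|a|, |b|, |c|$ (and $|m|$) and the shift $n$ through the computation and confirm consistency with \eqref{dbgrn2} and \eqref{db22}; I do not anticipate any obstruction beyond this mechanical but error-prone verification.
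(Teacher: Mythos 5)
Your proposal is correct and coincides with the paper's own treatment: the paper gives no separate proof of Proposition \ref{mainprop}, stating only that it summarizes the $n$-graded versions of the preceding results, whose ungraded proofs (the descent computations and the derivation of the Leibniz--Loday and Lie-module identities by applying $\mu$, resp.\ $\mu_M$ with the permutation operators, to the double Jacobi identities) carry over with Koszul signs. That sign-decorated replay of the earlier computations is exactly what you describe.
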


\section{Double Poisson structure on the cobar construction}\label{Sect_double_poisson_on_cobar}

In this section, we will define a natural double bracket on the DG algebra $R$ which is
the cobar construction of a cyclic $A_\infty$ coalgebra.

\subsection{Cyclic algebras and coalgebras}\label{Subsect_cyclic}

The notion of {\it cyclic $A_\infty$ algebras} is first introduced by Kontsevich (\cite{Kontsevich}).
Recall that a \textit{symmetric bilinear form of degree} $-d$ on a
 graded vector space $V$ is a bilinear pairing  $ \langle \, - , \, - \rangle : V \otimes V
 \to k[d]$  such that
\begin{equation}
\label{bilform1}
\langle v , w \rangle=  (-1)^{|v||w|} \langle w, v \rangle \quad \mbox{for all} \, v, w \in V\, .
  \end{equation}
A cyclic $A_\infty$ algebra $A$ is an $A_\infty$ algebra with a non-degenerate symmetric bilinear form of degree $-d$
$$\langle -,-\rangle:A\otimes A\to k[d] $$
 such that
\begin{equation}\label{Ainfty_pairing}
\langle \mu_n(a_1,a_2,\cdots,a_n), a_{n+1}\rangle
=(-1)^{n+|a_{n+1}|(|a_1|+\cdots+|a_{n}|)}\langle \mu_n(a_{n+1},a_1,\cdots a_{n-1}),a_n\rangle
\end{equation}
for all $n\in\mathbb N$ and all $a_0,a_1,\cdots, a_n\in A$.
Similarly, one can define {\it cyclic $A_\infty$ coalgebras:}

\begin{definition}[Cyclic $A_\infty$ coalgebra]
Suppose $(C,\{\triangle_k\})$ is an $A_\infty$ coalgebra.
$C$ is called {\it cyclic of degree $-d$}
if there is a non-degenerate symmetric bilinear form of degree $-d$
$$\langle-,-\rangle:C\otimes C\to k[d] $$
such that for any $a, b\in C$,
\begin{equation}
\label{cyclic_pairing}
\langle a , b^1\rangle \cdot b^2\cdots b^{r}
=(-1)^{r+|b^1|(|a|+r)}\langle b, a^r\rangle  \cdot a^1 \cdots a^{r-1}\in
C^{\otimes (r-1)},
\end{equation}
where we write $\triangle_r(a)=a^1a^2\cdots a^r$ and $\triangle_r(b)=b^1b^2\cdots b^r$.
\end{definition}

The notion of cyclic $A_\infty$ coalgebras will be useful for our next discussion.
 We have the following lemma:

\begin{lemma}
\label{equivofalgcoal}
Suppose $C$ is of finite dimension. Then $C$ is a cyclic $A_\infty$ coalgebra if and only if
$A:=C^*=\mathrm{Hom}_k(C,k)$
is a cyclic $A_\infty$ algebra.
\end{lemma}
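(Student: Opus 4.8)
The plan is to establish the equivalence by exploiting finite-dimensional duality, which exchanges the coproducts $\triangle_k$ on $C$ with products $\mu_k$ on $A = C^*$, and to check that the cyclic symmetry condition \eqref{cyclic_pairing} for $C$ is precisely the dual of the cyclic symmetry condition \eqref{Ainfty_pairing} for $A$. First I would recall that when $C$ is finite dimensional, there are canonical isomorphisms $(C^{\otimes n})^* \cong (C^*)^{\otimes n} = A^{\otimes n}$, so each structure map $\triangle_n : C \to C^{\otimes n}$ dualizes to a map $\mu_n := (\triangle_n)^* : A^{\otimes n} \to A$, and \emph{vice versa}. I would verify (with care about the Koszul signs coming from the degree shifts, since the $\triangle_n$ and $\mu_n$ both have degree $n-2$) that the $A_\infty$ coalgebra relations for $\{\triangle_n\}$ transpose exactly to the $A_\infty$ algebra relations \eqref{Ainfty_2} for $\{\mu_n\}$. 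This is a formal but sign-sensitive bookkeeping step, essentially the statement that the bar and cobar constructions are dual: $\mathbf{B}_\infty(A)$ and $\mathbf{\Omega}_\infty(C)$ are graded dual to one another under the finiteness hypothesis.

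Next I would treat the pairing. A non-degenerate symmetric bilinear form $\langle-,-\rangle$ of degree $-d$ on $C$ is, by non-degeneracy and finite dimensionality, the same datum as a non-degenerate symmetric form of degree $-d$ on $A = C^*$; concretely the form on $C$ induces an isomorphism $C \cong C^*[d] = A[d]$, and transporting it gives the form on $A$, with symmetry \eqref{bilform1} preserved. The heart of the argument is then to show that the cyclic invariance \eqref{cyclic_pairing} on the coalgebra side is equivalent, under this dualization, to the cyclic invariance \eqref{Ainfty_pairing} on the algebra side. To do this I would pair both sides of \eqref{Ainfty_pairing} against arbitrary tensors and rewrite everything in terms of the coproducts using $\langle \mu_n(a_1,\dots,a_n), a_{n+1}\rangle = \sum \langle a_1, \cdot\rangle\cdots$ expansions dictated by $\triangle_n$; the symmetric nondegenerate pairing lets me move the form across the single output slot of $\mu_n$ and re-express the cyclic permutation of algebra inputs as a cyclic permutation of coalgebra outputs. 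The signs $(-1)^{n+|a_{n+1}|(|a_1|+\cdots+|a_n|)}$ in \eqref{Ainfty_pairing} and $(-1)^{r+|b^1|(|a|+r)}$ in \eqref{cyclic_pairing} should match up after this transposition, with $r = n+1$.

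The main obstacle I anticipate is purely the sign reconciliation: the degree-$(n-2)$ structure maps, the degree-$d$ pairing, and the graded transposition $(u\otimes v)^\circ = v\otimes u$ together produce several interacting Koszul signs, and it is easy to accumulate a spurious sign that breaks the symmetry. I would manage this by fixing once and for all the sign convention for the duality isomorphism $(C^{\otimes n})^* \cong A^{\otimes n}$ (including the reversal-of-order sign that appears when dualizing a tensor product of graded spaces) and then propagating it uniformly; the symmetry of the form \eqref{bilform1} is exactly what guarantees that the two directions of the argument close up consistently. Since the statement is an ``if and only if,'' both implications follow from the same dualization once the signs are pinned down, because finite-dimensional double duality $C^{**}\cong C$ makes the construction involutive; thus verifying one direction carefully and invoking symmetry of the setup suffices for the converse.
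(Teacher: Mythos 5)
Your proposal is correct and takes essentially the same route as the paper: the paper's proof dualizes via the isomorphism $c\mapsto c^*:=\langle -,c\rangle$ and evaluates both sides of \eqref{cyclic_pairing} against an arbitrary tensor $(x_1,\cdots,x_{r-1})\in A^{\otimes (r-1)}$, which converts the coalgebra condition for $\triangle_r$ into exactly the algebra condition \eqref{Ainfty_pairing} for $\mu_r$. One small correction to your bookkeeping: the index correspondence is $n=r$, not $r=n+1$ --- the coalgebra relation involves the two elements $a,b$ together with the $r-1$ test elements, a cyclic word of $r+1$ entries matching the $n+1$ entries $a_1,\cdots,a_{n+1}$ of \eqref{Ainfty_pairing} (the paper's evaluation lands on $\langle \mu_r(x_1,\cdots,x_{r-1},b^*),a^*\rangle$ versus $\langle \mu_r(a^*,x_1,\cdots,x_{r-1}),b^*\rangle$); with this fixed, your sign-matching step is precisely the paper's computation.
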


\begin{proof}
Denote the $A_\infty$ operators of $C$ and $A$ by $\triangle_1,\triangle_2,\cdots$ and $\mu_1,\mu_2,\cdots$, respectively.
Under the isomorphism of $k$-vector spaces
$$\begin{array}{ccl}
C&\stackrel{\cong}{\longrightarrow} & A\\
a&\longmapsto & a^*:=\langle-, a\rangle,
\end{array}
$$
consider the evaluation of both sides of (\ref{cyclic_pairing})
on any $(x_1, x_2, \cdots, x_{r-1})\in A^{\otimes (r-1)}$. We have:
\begin{eqnarray}(x_1,x_2,\cdots, x_{r-1})
\big(\langle b,a^r \rangle (a^1\cdots a^{r-1})\big)
&=&\langle b,a^r\rangle\cdot (x_1,x_2,\cdots, x_{r-1})(a^1\cdots a^{r-1})\nonumber\\
&=&(x_1,x_2, \cdots x_{r-1}, b^*)(a^1 a^2 \cdots a^r)\nonumber\\
&=&(x_1,x_2, \cdots, x_{r-1}, b^*)\triangle_r(a)\nonumber\\
&=&(\mu_r(x_1,x_2, \cdots, x_{r-1}, b^*)(a)\nonumber\\
&=&\langle\mu_r(x_1,x_2, \cdots, x_{r-1}, b^*),a^*\rangle.\label{CYC_I}
\end{eqnarray}
Similarly, the evaluation
\begin{eqnarray}(x_1,x_2,\cdots, x_{r-1})
\big(\langle a, b^1\rangle ( b^2 \cdots b^r)\big)
&=&(a^*, x_1, x_2, \cdots,  x_{r-1})(b^1\cdots  b^r)\nonumber\\
&=&(a^*, x_{1}, x_2, \cdots,  x_{r-1})\triangle_r(b)\nonumber\\
&=&\mu_r (a^*, x_1,x_2,  \cdots, x_{r-1})(b)\nonumber\\
&=&\langle\mu_r(a^*, x_1, x_2,\cdots, x_{r-1}), b^*\rangle.\label{CYC_II}
\end{eqnarray}
The most right hand side of (\ref{CYC_I}) equals the most right hand side of (\ref{CYC_II}) with
sign counted
if and only if (\ref{Ainfty_pairing}) holds, and thus the lemma follows.
\end{proof}

\begin{remark}
By repeatedly applying (\ref{Ainfty_pairing}) to (\ref{CYC_I}) or to (\ref{CYC_II}) one gets more identities like
\begin{equation}\label{general_cyclic_pairing}
\langle a^m, b \rangle \cdot a^{m+1}\cdots a^ra^1\cdots a^{r-1}=
\pm\langle a,b^{\ell}\rangle\cdot b^{1}\cdots b^{\ell-1}b^{\ell+1}\cdots b^r.
\end{equation}
We leave the check to the interested reader.
\end{remark}

\subsection{Double bracket  on the cobar construction.}

The following result has already appeared in \cite[Theorem 15]{BCER}
when $C$ is a cyclic coalgebra (not $A_{\infty}$).

\begin{lemma}
\label{NCP_Coalg}
Let $C$ be a cyclic $A_\infty$ coalgebra of degree $-d$
and let $R=\mathbf{\Omega}_\infty( \tilde C)$.
Then we define
$\ldb-,-\rdb: \bar{R} \times \bar{R}
\to \bar{R} \otimes \bar{R}$
by
\begin{eqnarray}
\label{def_bb}
\ldb r, q \rdb &:=& \displaystyle
\sum_{\binom{i=1,\cdots,k}{j=1,\cdots,m}}(-1)^{|v_i|+\epsilon} \langle v_i, w_j\rangle\cdot
(s^{-1}w_1\, \cdots \, s^{-1}w_{j-1}\, s^{-1}v_{i+1} \, \cdots \, s^{-1}v_k)  \nonumber \\
&&\quad\quad\quad\quad  \otimes  (s^{-1} v_1\, \cdots\,  s^{-1}v_{i-1} \, s^{-1}w_{j+1} \, \cdots \, s^{-1}w_m)\, ,
\end{eqnarray}
for  $ r = (s^{-1} v_1\, s^{-1} v_2 \, \cdots\,  s^{-1} v_k)$ and $ q = (s^{-1}w_1\, s^{-1} w_2 \, \cdots \, s^{-1}w_m)$,
where $\epsilon$ is
$$(|r|+d) (|s^{-1}w_1|+\cdots+|s^{-1}w_{j-1}|)+(|s^{-1}v_1|+
\cdots+|s^{-1}v_{i-1}|+|s^{-1} w_j|+d)(|s^{-1}v_{i+1}|+\cdots+|s^{-1}v_k|).$$
Then the bracket \eqref{def_bb} gives a DG double $n$-Poisson structure on $R$,
where $n=2-d$ .
\end{lemma}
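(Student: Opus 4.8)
The plan is to verify the four conditions packaged into the phrase ``DG double $n$-Poisson structure'' in turn: that $\ldb-,-\rdb$ has degree $n=2-d$, that it satisfies the graded skew-symmetry \eqref{dbgrn2} and the Leibniz rule \eqref{db22}, that it obeys the graded double Jacobi identity, and finally that the cobar differential $d$ on $R=\mathbf{\Omega}_\infty(\tilde C)$ is a double derivation for it. The organizing observation is that formula \eqref{def_bb} refers only to the pairing $\langle-,-\rangle$ and never to the cooperations $\triangle_n$; consequently the first three (purely algebraic) conditions can be checked exactly as in the strict case \cite[Theorem 15]{BCER}, the only input being the symmetry \eqref{bilform1} of the form, whereas the $A_\infty$ cooperations enter solely through $d$. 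Thus the genuinely new content is the compatibility of $d$ with $\ldb-,-\rdb$, and this is where cyclicity of $C$ is used in an essential way.

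For the degree, note that $\langle v_i,w_j\rangle$ is nonzero only when $|v_i|+|w_j|=d$; since the two paired generators $s^{-1}v_i,s^{-1}w_j$ (of degrees $|v_i|-1$ and $|w_j|-1$) are deleted, a short computation gives $|\ldb r,q\rdb|=|r|+|q|+2-d$, confirming $n=2-d$. Skew-symmetry \eqref{dbgrn2} follows by interchanging $r$ and $q$, applying $(-)^\circ$, and rewriting $\langle v_i,w_j\rangle$ via \eqref{bilform1}, the claimed global sign emerging after reorganizing the Koszul signs hidden in $\epsilon$. The Leibniz rule \eqref{db22} is immediate from the ``one letter from each word'' shape of \eqref{def_bb}: writing the second argument as a concatenation splits the sum over $j$ into the two required summands. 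For the graded double Jacobi identity I would invoke the graded analogue of Van den Bergh's reduction principle \cite{VdB2}, namely that the double Jacobiator is a triple derivation on the free algebra $R=T(s^{-1}\bar C)$ and hence vanishes as soon as it vanishes on generators. On generators the check is trivial: for single letters $\ldb s^{-1}w,s^{-1}u\rdb=\pm\langle w,u\rangle\,(1\otimes 1)$ is a scalar multiple of $1\otimes 1$, and since $\ldb-,1\rdb=\ldb 1,-\rdb=0$ (themselves consequences of \eqref{db22} and \eqref{dbgrn2}) each of the three iterated terms in the Jacobiator is zero. In other words the bracket is ``constant'', so the Jacobi identity holds for free.

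The main obstacle is the last step: showing $d\ldb r,q\rdb=\ldb dr,q\rdb\pm\ldb r,dq\rdb$. Both sides are sums indexed by a choice of one letter in $r$ and one in $q$ to be paired, together with a letter on which $d$ acts by replacing $s^{-1}c$ with $\sum(s^{-1})^{\otimes\ell}(\pi^{\otimes\ell})\triangle_\ell(c)$. The terms in which $d$ hits a \emph{spectator} letter (one not involved in the pairing) match term-by-term between the two sides, up to Koszul signs. The remaining \emph{collision} terms are those in which $d$ acts on the very letter that gets paired: in $\ldb dr,q\rdb$ one pairs a leg of $\triangle_\ell(v_i)$ against $w_j$, while in $\ldb r,dq\rdb$ one pairs $v_i$ against a leg of $\triangle_\ell(w_j)$. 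These have no counterpart in $d\ldb r,q\rdb$ (there the paired letters have already been removed before $d$ acts), so they must cancel against one another. This cancellation is precisely the content of the cyclic $A_\infty$ relation \eqref{cyclic_pairing}, in the rotated form \eqref{general_cyclic_pairing}, which equates $\langle(\text{a leg of }\triangle v_i),w_j\rangle$ times the cyclically rotated remaining legs with $\pm\langle v_i,(\text{a leg of }\triangle w_j)\rangle$ times the corresponding rotation. I expect the conceptual matching to be straightforward but the sign verification to be the real labor: one must reconcile the sign $\epsilon$ in \eqref{def_bb}, the Koszul signs produced when $d$ is commuted past the deleted letters, the signs in the $A_\infty$ differential, and the $\pm$ in \eqref{general_cyclic_pairing}. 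Once the signs are shown to agree, the collision terms cancel in pairs and $d$ is a double derivation, completing the proof that \eqref{def_bb} defines a DG double $(2-d)$-Poisson structure, after which Proposition \ref{mainprop} applies.
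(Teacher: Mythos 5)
Your proposal is correct and follows essentially the same route as the paper: the purely algebraic axioms (degree, skew-symmetry, Leibniz, double Jacobi) are deferred to the strict case of \cite[Theorem 15]{BCER}, since formula \eqref{def_bb} never involves the cooperations, and the only genuinely new verification is that the cobar differential is a double derivation, which both you and the paper establish by cancelling the ``collision'' terms --- where $d$ acts on a paired letter --- against each other via the cyclicity identity \eqref{cyclic_pairing} in its rotated form \eqref{general_cyclic_pairing}. The paper's proof is in fact terser than yours (it does not spell out the degree count or the reduction of the double Jacobi identity to generators), so your extra detail elaborates, rather than diverges from, its argument.
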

\begin{proof}
The proof is essentially the same as in \cite[Theorem 15]{BCER}.
We only need to show that the bracket commutes with the differential, which involves
the $A_\infty$ operators.

From the definition of the double bracket,
$$\ldb d(a_1a_2\cdots a_m), b_1b_2\cdots b_n\rdb+(-1)^{|a|}\ldb a_1a_2\cdots a_m, d(b_1b_2\cdots b_n)\rdb
-d\left(\ldb a_1a_2\cdots a_m, b_1b_2\cdots b_n\rdb\right)$$
contains summands whose coefficient
are pairings of components of $\triangle(a)$ with $b$, or pairings of $a$ with components of $\triangle(b)$, namely,
\begin{eqnarray*}
&&\sum_{i, j}\sum_{k}\langle a_i^k, b_j\rangle
b_1b_2\cdots b_{j-1}a_i^{k+1}\cdots a_i^ra_{i+1}\cdots a_m
\otimes a_1 a_2\cdots a_{i-1}a_i^1\cdots a_i^{k-1}b_{j+1}\cdots b_n\\
&-&
\sum_{i, j}\sum_{\ell}\langle a_i, b_j^{\ell}\rangle
b_1b_2\cdots b_{j-1}b_j^{1}\cdots b_j^{l-1}a_{i+1}\cdots a_m
\otimes a_1 a_2\cdots a_{i-1}b_j^{\ell+1}\cdots b_j^{s}b_{j+1}\cdots b_n,
\end{eqnarray*}
if we write $\triangle_{k}(a_i)=a_i^1a_i^2\cdots a_i^k$ and $\triangle_{\ell}(b_j)=b_j^1b_j^2\cdots b_j^\ell$.
However, from the cyclicity of the pairing (see (\ref{cyclic_pairing}) or its general form (\ref{general_cyclic_pairing})),
these two types of terms exactly cancel with each other.
Thus the double bracket commutes with the differential.
\end{proof}

\begin{remark}\label{bracket_ext}
One  can easily extend $\ldb-,-\rdb$ to $R$ by taking $\ldb r, 1\rdb =0$.
\end{remark}

\section{Brackets on the bimodule of one-forms}\label{Sect_bimod_of_1_form}

Let $C$ be a cyclic $A_\infty$ coalgebra and let $R:=\mathbf{\Omega}_\infty(\tilde C)$.
Using the double bracket on $R$ defined in Lemma~\ref{NCP_Coalg},
 we introduce a double bracket on $\Omega^1_{R}$ and the induced bracket on $\Omega^{1}_{R,\natural}$.
 Then we consider the corresponding Lie brackets on $\Omega^1_{R}$ and $\Omega^{1}_{R,\natural}$.

\subsection{Double bracket on $\Omega^1_{R}$.}

First we define a double bracket  
$\ldb-,-\rdb_{R \otimes R}: R \times (R\otimes R) \to R \otimes (R\otimes R)
\oplus (R\otimes R)\otimes R$
as follows
\begin{equation}
\label{doubletensor}
\ldb r, p\otimes q \rdb_{R \otimes R} \ := \ \ldb r, p\rdb \otimes q +(-1)^{|p|(|r|+n)} p\otimes \ldb r, q\rdb,
\end{equation}
where $\ldb - ,-\rdb$ is a double bracket on $R$ defined in Lemma~\ref{NCP_Coalg},
and in the right-hand side of \eqref{doubletensor},
the first summand lies in $R \otimes (R\otimes R)$ and the second
lies in $(R\otimes R)\otimes R$. Then one can show
\begin{lemma}
$(R \otimes R, \,  \ldb - , - \rdb_{R \otimes R})$ is a DG double $n$-Poisson $R$-bimodule.
\end{lemma}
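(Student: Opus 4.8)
The plan is to verify, in order, the three axioms of a double Poisson bimodule (Definitions~\ref{Defofdouble} and \ref{Defofdoubleii}, taken in their $n$-graded, Koszul-signed forms) together with compatibility with the differential. Throughout, $M=R\otimes R$ carries the \emph{outer} $R$-bimodule structure $r\cdot(p\otimes q)\cdot s = rp\otimes qs$. The first thing to check is that \eqref{doubletensor} actually takes values in $(R\otimes M)\oplus(M\otimes R)$: writing $\ldb r,p\rdb=\ldb r,p\rdb'\otimes\ldb r,p\rdb''$, the summand $\ldb r,p\rdb\otimes q = \ldb r,p\rdb'\otimes(\ldb r,p\rdb''\otimes q)$ lies in $R\otimes M$, while $p\otimes\ldb r,q\rdb=(p\otimes\ldb r,q\rdb')\otimes\ldb r,q\rdb''$ lies in $M\otimes R$. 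Axiom $(i)$, the two graded Leibniz rules in the second argument, then follows by expanding $\ldb a,b(p\otimes q)\rdb_M=\ldb a,bp\otimes q\rdb_M$ through \eqref{doubletensor}, applying the graded Leibniz rule \eqref{db22} for $\ldb-,-\rdb$ on $R$ to $\ldb a,bp\rdb$, and regrouping; the Koszul signs produced by \eqref{db22} combine with the sign $(-1)^{|p|(|a|+n)}$ in \eqref{doubletensor} to reproduce exactly the $n$-graded left Leibniz rule, and the right Leibniz rule is symmetric. Axiom $(ii)$ is obtained the same way: one applies the inner-derivation property $\ldb ab,p\rdb = a\ast\ldb b,p\rdb+\ldb a,p\rdb\ast b$ to each of the two pieces of \eqref{doubletensor}, reading $\ast$ as the inner action \eqref{inner_bimoduleact} on $R\otimes M$ and $M\otimes R$. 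Both are direct sign-bookkeeping verifications.

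The main work is axiom $(iii)$, the $n$-graded version of the bimodule Jacobi identity \eqref{Jacequiv}. Here the computation splits according to the two tensor slots of $m=p\otimes q$: because \eqref{doubletensor} acts on $p$ and $q$ separately, each of the three cyclic terms $\ldbg a,\ldb b,m\rdb_M\rdbg_L$, $\sigma_{(123)}\ldbg b,\ldb m,a\rdb_M\rdbg_L$, $\sigma_{(132)}\ldbg m,\ldb a,b\rdb\rdbg_L$ expands into contributions indexed by whether the inner and the outer bracket land on the $p$-factor or on the $q$-factor. The plan is to show that the contributions in which \emph{all} brackets act on $p$ assemble precisely into the $n$-graded double Jacobi identity for $\ldb-,-\rdb$ on $R$ evaluated at the triple $(a,b,p)$, tensored with the spectator $q$, and likewise for $q$; by Lemma~\ref{NCP_Coalg} these vanish. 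The remaining \emph{mixed} contributions, in which one bracket hits $p$ and the other hits $q$, must then cancel in pairs between the three cyclic summands, the matching being produced by the graded anti-symmetry \eqref{dbgrn2} together with the sign $(-1)^{|p|(|r|+n)}$ built into \eqref{doubletensor}. I expect this mixed-term cancellation to be the main obstacle: tracking the permutations $\sigma_{(123)},\sigma_{(132)}$ acting on the three tensor factors, the reshuffling produced by \eqref{doublemb}--\eqref{doublemb2}, and all the Koszul signs simultaneously is delicate, and the identity only closes once these are aligned correctly.

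Finally, the DG condition. By Lemma~\ref{NCP_Coalg} the bracket $\ldb-,-\rdb$ commutes with the differential $d$ of $R$, and the differential on $M=R\otimes R$ is $d\otimes 1+1\otimes d$ with the usual Koszul sign. Since \eqref{doubletensor} applies $\ldb-,-\rdb$ to a single tensor factor with a sign depending only on the degree of the untouched factor, the graded Leibniz compatibility $d\ldb r,m\rdb_M=\ldb dr,m\rdb_M+(-1)^{|r|}\ldb r,dm\rdb_M$ follows by the same sign computation as in the proof of Lemma~\ref{NCP_Coalg}, now carried across the two slots of $M$. Combining the four verifications shows that $(R\otimes R,\ldb-,-\rdb_{R\otimes R})$ is a DG double $n$-Poisson $R$-bimodule.
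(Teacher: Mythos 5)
Your proposal is correct and follows essentially the same route as the paper: axioms $(i)$ and $(ii)$ are verified by expanding \eqref{doubletensor} and invoking the Leibniz/derivation properties of $\ldb-,-\rdb$ on $R$, and the DG condition is reduced, slot by slot, to the fact from Lemma~\ref{NCP_Coalg} that $\ldb-,-\rdb$ commutes with $d_R$. For the Jacobi identity the paper itself only writes ``Similarly, we can prove the Jacobi identity $(iii)$,'' so your decomposition into pure-$p$ and pure-$q$ contributions (which assemble into the double Jacobi identity on $R$ with a spectator tensor factor) and mixed contributions (which do cancel pairwise --- in fact only between the first two cyclic summands, since the term $\sigma_{(132)}\ldbg m,\ldb a,b\rdb\rdbg_L$ produces no mixed terms) is, if anything, more detailed than the paper's treatment.
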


\begin{proof}
 We need to verify axioms $(i)-(iii)$ for the double bimodule.
We first verify
$$ (i) \quad  \ldb r, q \cdot (p_1 \otimes p_2)\rdb_{R\otimes R} = \ldb r, q \rdb (p_1 \otimes p_2) +(-1)^{|q|(|r|+n)}
q\, \ldb r, p_1 \otimes p_2 \rdb_{R\otimes R} \, .$$
By definition the left hand side (LHS) is
\begin{eqnarray}
 \ldb r, \, q \cdot p_1 \otimes p_2 \rdb_{R\otimes R} &=& \ldb r, q \rdb  \cdot (p_1 \otimes p_2) +
 (-1)^{|q|(|r|+n)} q \cdot \ldb r, p_1 \rdb \otimes p_2  \nonumber \\[0.1cm]
 &+&   (-1)^{(|q|+|p_1|)(|r|+n)} q \cdot p_1 \otimes  \ldb r, p_2 \rdb    \nonumber
\end{eqnarray}
which is identically equal to the right hand side (RHS). Next we verify
$$ (ii) \quad  \ldb r \cdot q, p_1\otimes p_2 \rdb_{R\otimes R} = r \ast \ldb q, p_1\otimes p_2 \rdb_{R\otimes R}
+(-1)^{|q|(|p_1|+|p_2|+n)} \ldb r, p_1\otimes p_2 \rdb_{R\otimes R} \ast q\, . $$
Indeed, the LHS is equal to
\begin{eqnarray}
 && \ldb r \cdot q, p_1\rdb \otimes p_2 + (-1)^{|p_1|(|r|+|q|+n)}  p_1 \otimes \ldb r \cdot q, p_2 \rdb  \nonumber \\[0.2cm]
 &=&    r \ast \ldb q, p_1 \rdb \otimes p_2 + (-1)^{|q|(|p_1|+n)}  \ldb r, p_1 \rdb \ast q \otimes p_2  \nonumber  \\[0.2cm]
 &+&  (-1)^{|p_1|(|r|+|q|+n)}  p_1 \otimes \big( r \ast \ldb q, p_2 \rdb  + (-1)^{|q|(|p_2|+n)} \ldb r, p_2\rdb\ast q\big) \nonumber
\end{eqnarray}
and the RHS is
\begin{eqnarray}
&& r \ast \ldb q, p_1 \rdb \otimes p_2 + (-1)^{|p_1|(|r|+|q|+n)}  p_1 \otimes  r \ast \ldb q, p_2 \rdb \nonumber \\[0.2cm]
&+&  (-1)^{|q|(|p_1|+|p_2|+n) +|q||p_2|} \ldb r, p_1\rdb \ast q \otimes p_2  \nonumber\\[0.2cm]
&+& (-1)^{|q|(|p_1|+|p_2|+n)+ |p_1|(|r|+n)}
 p_1 \otimes \ldb r, p_2\rdb\ast q \nonumber
\end{eqnarray}
and hence they are equal. Similarly, we can prove the Jacobi identity $(iii)$.

Next we show that $\ldb-,-\rdb_{R \otimes R} $ commutes with the differential. Recall
that $d_{R\otimes R}= d_{R} \otimes 1 + 1 \otimes d_R$. So we need to prove
$$  d_R \, \ldb r, p \otimes q \rdb_{R \otimes R}  =  \ldb d_R(r), p \otimes q \rdb_{R \otimes R}
+(-1)^{|r|+n} \ldb r, d_R( p \otimes q ) \rdb_{R \otimes R} . $$
Indeed, using  \eqref{doubletensor}  one has
\begin{eqnarray}
d_R \, \ldb r, p \otimes q \rdb_{R \otimes R} &= & d_R \ldb r, p \rdb \otimes q + (-1)^{|r|+|p|+n}
\ldb r, p \rdb \otimes d_R(q)\nonumber \\[0.2cm]
&+ & (-1)^{|p|(|r|+n)} d_R(p) \otimes \ldb r, q\rdb + (-1)^{|p|(|r|+n+1)} p \otimes d_R  \ldb r, q\rdb   \, ;  \nonumber \\[0.4cm]
\ldb d_R(r), p \otimes q \rdb_{R \otimes R} &=&  \ldb d_R(r), p  \rdb \otimes q + (-1)^{|p|(|r|+n+1)}
 p\otimes \ldb d_R(r), q \rdb  \, ;  \nonumber\\[0.4cm]
 \ldb r, d_R( p \otimes q ) \rdb_{R \otimes R} &=& \ldb r, d_R(p) \otimes q \rdb +(-1)^{|p|} \ldb r,  p\otimes d_R(q) \rdb \nonumber\\[0.2cm]
 &=& \ldb r, d_R(p) \rdb \otimes q +(-1)^{(|p|+1)(|r|+n)} d_R(p) \otimes \ldb r, q \rdb \nonumber\\[0.2cm]
&+&  (-1)^{|p|} \ldb r,  p \rdb \otimes d_R(q)  + (-1)^{|p|(|r|+n+1)}  p  \otimes  \ldb r, d_R(q) \rdb . \nonumber
\end{eqnarray}
Combining the RHS of these identities we get
\begin{eqnarray}
&&  \big( d_R \ldb r, p \rdb - \ldb d_R(r), p  \rdb - (-1)^{|r|+n} \ldb r, d_R(p) \rdb \big) \otimes q \nonumber \\[0.2cm]
&& + (-1)^{|p|(|r|+n+1)} p \otimes \big( d_R  \ldb r, q\rdb - \ldb d_R(r), q \rdb - (-1)^{|r|+n} \ldb r, d_R(q) \rdb \big)
\end{eqnarray}
and this expression is equal to $0$, since by Lemma~\ref{NCP_Coalg} $\ldb -,-\rdb$ commutes with $d_R$.
This finishes our proof.
\end{proof}

We claim that the bracket in \eqref{doubletensor} can be restricted to $\Omega^{1}_{R}$.
Recall that $\Omega^{1}_{R} \cong R \, \otimes \, s^{-1}C \,  \otimes\, R$
and $\Omega^{1}_{R,\natural} \cong s^{-1}C  \otimes R$, where identifications are
given by the map $\mathrm{I}$ defined in \eqref{mapI}.
Indeed, let
$$ \omega= (s^{-1}v_1\, ... \,s^{-1}v_{p-1}) \otimes s^{-1}v_p \otimes (s^{-1}v_{p+1}\,  ...\, s^{-1}v_m)
\in \Omega^{1}_{R} \,.$$
Then ${\rm I}(\omega)=b \cdot s^{-1}v_p \otimes c - b\otimes s^{-1}v_p \cdot c  $, where
$b=(s^{-1}v_1\, ...\, s^{-1}v_{p-1})$ and $c=(s^{-1}v_{p+1} \, ... \, s^{-1}v_m)$ and one has
\begin{eqnarray}
 \ldb a, {\rm I}(\omega)  \rdb_{R \otimes R}  &= & \ldb a, b \cdot s^{-1}v_p \rdb \otimes c  +
(-1)^{(|b|+|s^{-1}v_p)(|a|+n)} b \cdot s^{-1}v_p \otimes  \ldb a, c \rdb   \nonumber \\[0.2cm]
&  -& \ldb a, b\rdb \otimes  s^{-1}v_p \cdot c -(-1)^{|b|(|a|+n)} b \otimes  \ldb a, s^{-1}v_p \cdot c \rdb \nonumber \\[0.2cm]
\label{rest1}
& = &\ldb a, b \rdb^{(1)} \otimes  \ldb a, b \rdb^{(2)} \cdot s^{-1}v_p \otimes c  \\[0.2cm]
\label{rest2}
& + &(-1)^{|b|(|a|+n)} b\cdot \ldb a, s^{-1}v_p \rdb^{(1)}
\otimes  \ldb a, s^{-1}v_p \rdb^{(2)} \otimes c \\[0.2cm]
\label{rest3}
& + &(-1)^{(|b|+|s^{-1}v_p|)(|a|+n)} b \cdot s^{-1}v_p \otimes  \ldb a, c \rdb^{(1)} \otimes  \ldb a, c \rdb^{(2)} \\[0.2cm]
\label{rest4}
&  -  &\ldb a, b\rdb^{(1)} \otimes \ldb a, b\rdb^{(2)} \otimes s^{-1}v_p \cdot c  \\[0.2cm]
\label{rest5}
& - &(-1)^{|b|(|a|+n)}\, b \otimes  \ldb a, s^{-1}v_p \rdb^{(1)} \otimes  \ldb a, s^{-1}v_p \rdb^{(2)} \cdot c \\[0.2cm]
\label{rest6}
& - & (-1)^{(|b|+|s^{-1}v_p|)(|a|+n)} \, b \otimes s^{-1}v_p \cdot  \ldb a, c \rdb^{(1)} \otimes  \ldb a, c \rdb^{(2)}.
\end{eqnarray}
Now \eqref{rest1}+\eqref{rest4} $ \in R \otimes  \Omega^{1}_{R}$ and \eqref{rest3}+ \eqref{rest6}
$ \in \Omega^{1}_{R} \otimes R$. On the other hand
\begin{eqnarray}
\eqref{rest2} - (-1)^{|b|(|a|+n)} b \otimes \ldb a, s^{-1}v_p \rdb^{(1)} \cdot  \ldb a, s^{-1}v_p \rdb^{(2)} \otimes c
  &\in & \Omega^{1}_{R} \otimes R , \label{doublemodulestr_1}\\[0.1cm]
 \eqref{rest5} +   (-1)^{|b|(|a|+n)} b \otimes \ldb a, s^{-1}v_p \rdb^{(1)} \cdot  \ldb a, s^{-1}v_p \rdb^{(2)} \otimes c
 &\in & R \otimes  \Omega^{1}_{R}.\label{doublemodulestr_2}
 \end{eqnarray}
These two formulas hold since, 
for example in \eqref{doublemodulestr_1},
by taking out the third common component $c$ in the tensor,
the first two components exactly lie in $\Omega_R^1$.
Thus,  we define $\ldb - , - \rdb_{\Omega^{1}_{R}} : R \times \Omega^{1}_{R} \to
(R \otimes  \Omega^{1}_{R}) \oplus ( \Omega^{1}_{R} \otimes R) $ as
\begin{equation}
\label{defOmegabracket}
\ldb a, \omega \rdb_{\Omega^{1}_{R}}\, := \, \ldb a, {\rm I}(\omega) \rdb_{R \otimes R}
\end{equation}
and we have proved
\begin{corollary}
\label{cor22}
$(\Omega^{1}_{R}, \ldb - , - \rdb_{\Omega^{1}_{R}})$ is a DG double $n$-Poisson sub-bimodule
of $R\otimes R$.
\end{corollary}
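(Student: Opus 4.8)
The plan is to deduce the corollary from the Lemma immediately preceding it, which has already established that $(R\otimes R,\ldb-,-\rdb_{R\otimes R})$ is a DG double $n$-Poisson $R$-bimodule. Once I know that $\Omega^1_R$ sits inside $R\otimes R$ as an $R$-sub-bimodule that is preserved by the differential and closed under the double bracket in the appropriate sense, axioms $(i)$--$(iii)$ of Definitions~\ref{Defofdouble} and~\ref{Defofdoubleii} together with the compatibility with $d_{R\otimes R}$ are inherited verbatim by restriction. Hence the entire content of the statement reduces to verifying these three closure properties, and the definition \eqref{defOmegabracket} of $\ldb-,-\rdb_{\Omega^1_R}$ is legitimate precisely when the bracket takes values in the smaller target $(R\otimes\Omega^1_R)\oplus(\Omega^1_R\otimes R)$.

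First I would record the two easy closure facts. Since $\Omega^1_R=\ker(\mu\colon R\otimes R\to R)$ carries the outer bimodule structure, it is an $R$-sub-bimodule of $R\otimes R$ (if $\mu(x\otimes y)=0$ then $\mu(b(x\otimes y)c)=b\,\mu(x\otimes y)\,c=0$), and because $d_R$ is a derivation the differential $d_R\otimes1+1\otimes d_R$ preserves $\ker\mu$, so $\Omega^1_R$ is a DG sub-bimodule. Under the bijection $\mathrm I$ of \eqref{mapI} it is identified with $R\otimes s^{-1}C\otimes R$, and $\mathrm I$ intertwines the differentials. The essential step is therefore the third one: for $a\in R$ and a generator $\omega=(s^{-1}v_1\cdots s^{-1}v_{p-1})\otimes s^{-1}v_p\otimes(s^{-1}v_{p+1}\cdots s^{-1}v_m)$, I must show that $\ldb a,\mathrm I(\omega)\rdb_{R\otimes R}$ actually lies in $(R\otimes\Omega^1_R)\oplus(\Omega^1_R\otimes R)$, not merely in $R\otimes(R\otimes R)\oplus(R\otimes R)\otimes R$.

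To carry this out I would write $\mathrm I(\omega)=b\,s^{-1}v_p\otimes c-b\otimes s^{-1}v_p\,c$ with $b=(s^{-1}v_1\cdots s^{-1}v_{p-1})$ and $c=(s^{-1}v_{p+1}\cdots s^{-1}v_m)$, expand $\ldb a,\mathrm I(\omega)\rdb_{R\otimes R}$ by the Leibniz rule \eqref{doubletensor}, and collect the six terms \eqref{rest1}--\eqref{rest6}. Two of the pairings are immediate: \eqref{rest1}$+$\eqref{rest4} shares a common factor placing it in $R\otimes\Omega^1_R$, and \eqref{rest3}$+$\eqref{rest6} lies in $\Omega^1_R\otimes R$. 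The terms \eqref{rest2} and \eqref{rest5} are not individually in the sub-bimodule; the trick is to add and subtract the middle expression $b\otimes\ldb a,s^{-1}v_p\rdb^{(1)}\cdot\ldb a,s^{-1}v_p\rdb^{(2)}\otimes c$, recasting them as \eqref{doublemodulestr_1} and \eqref{doublemodulestr_2}. Each of these, after factoring out the common outer tensor factor ($c$ in the first case, $b$ in the second), has its remaining two slots in $\Omega^1_R$, so \eqref{doublemodulestr_1}$\in\Omega^1_R\otimes R$ and \eqref{doublemodulestr_2}$\in R\otimes\Omega^1_R$.

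The hard part will be exactly this regrouping: membership in the correct sub-bimodule is not visible term-by-term but emerges only after pairing \eqref{rest1}/\eqref{rest4} and \eqref{rest3}/\eqref{rest6} and after the add-and-subtract adjustment of \eqref{rest2}/\eqref{rest5}, so the Koszul signs attached to each piece must be tracked carefully to confirm that the compensating middle terms genuinely cancel. Granting this, the restricted bracket $\ldb-,-\rdb_{\Omega^1_R}$ indeed takes values in $(R\otimes\Omega^1_R)\oplus(\Omega^1_R\otimes R)$. Since $R\otimes R$ satisfies $(i)$--$(iii)$ and $\eqref{Jacequiv}$ and its bracket commutes with $d_{R\otimes R}$, and since $\Omega^1_R$ is closed under the $R$-action, the differential, and the double bracket, all these identities descend to $\Omega^1_R$ without change. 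This establishes that $(\Omega^1_R,\ldb-,-\rdb_{\Omega^1_R})$ is a DG double $n$-Poisson sub-bimodule of $R\otimes R$, as claimed.
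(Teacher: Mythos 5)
Your proposal is correct and follows essentially the same route as the paper: the paper likewise establishes the ambient structure on $R\otimes R$ first, expands $\ldb a,\mathrm I(\omega)\rdb_{R\otimes R}$ into the six terms \eqref{rest1}--\eqref{rest6}, pairs \eqref{rest1} with \eqref{rest4} and \eqref{rest3} with \eqref{rest6}, and handles \eqref{rest2}, \eqref{rest5} by adding and subtracting the middle term $b\otimes\ldb a,s^{-1}v_p\rdb^{(1)}\cdot\ldb a,s^{-1}v_p\rdb^{(2)}\otimes c$, after which the double $n$-Poisson axioms and compatibility with the differential restrict verbatim. The only remark is that your worry about sign-tracking in the compensating terms is unnecessary: one adds and subtracts the literally identical expression, so the cancellation is automatic, and membership of each regrouped piece in $\Omega^1_R=\ker\mu$ is immediate since $\mu(xy\otimes z-x\otimes yz)=0$.
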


\subsection{Lie brackets on $\Omega^1_{R}$ and $\Omega^1_{R,\natural}$}

Let us recall some notations from the previous section. First, $\{-,-\}:= \mu_{R} \circ \ldb-,-\rdb$,
where $\mu_{R}$ is the multiplication map on $R$. Second,
$$ \{-,-\}_{\Omega^1_{R}}:=
\mu_{ \Omega^1_{R}} \circ \ldb - , - \rdb_{\Omega^{1}_{R}} \, , \quad
\{-,-\}_{\Omega^1_{R, \natural}} := \natural \circ  \{-,-\}_{\Omega^1_{R}} \, , $$
where $\mu_{ \Omega^1_{R}}$
is the bimodule action map.
\begin{theorem}
\label{bracksmain2}
Let $C$ be a cyclic $A_\infty$ coalgebra of degree $-d$
and let $R=\overline{\mathbf{\Omega}}_\infty( \tilde C)$.
Let $n:= 2-d$.
Then
\begin{enumerate}
\item[($a$)]
$\{-,-\}$ induces a DG $n$-Lie algebra structure on  $R_{\natural}$
and a DG $n$-Lie module on $R$.
\item[($b$)] $\{-,-\}_{\Omega^1_{R}}$ and $\{-,-\}_{\Omega^1_{R, \natural}}$
make $\Omega^1_{R}$ and $\Omega^1_{R, \natural}$ into
DG $n$-Lie modules over   $R_{\natural}$.
\end{enumerate}
In particular, $\mathrm{H}_{\bullet}(R_{\natural})$ is a graded $n$-Lie algebra
and  $\mathrm{H}_{\bullet}(R)$, $\mathrm{H}_{\bullet}( \Omega^1_{R})$
and $\mathrm{H}_{\bullet}( \Omega^1_{R, \natural})$ are $n$-Lie modules.
\end{theorem}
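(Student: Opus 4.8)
The plan is to obtain the theorem as a direct application of the $n$-graded machinery of Proposition~\ref{mainprop} to the double Poisson data already assembled, followed by a passage to homology. Almost all of the substantive work---the construction of the double brackets and their compatibility with the differential---has been done in Lemma~\ref{NCP_Coalg} and Corollary~\ref{cor22}, so the proof should amount to checking that the hypotheses of Proposition~\ref{mainprop} are in place and that the resulting brackets are chain maps.

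For part $(a)$, I would take $A=R$ in Proposition~\ref{mainprop}. By Lemma~\ref{NCP_Coalg} (extended to all of $R$ via Remark~\ref{bracket_ext}, so that the bracket is supported on the augmentation ideal $\overline{\mathbf\Omega}_\infty(\tilde C)$), the pair $(R,\ldb-,-\rdb)$ is a DG double $n$-Poisson algebra with $n=2-d$. Proposition~\ref{mainprop}$\mathrm{(i)}$ then gives that $\{-,-\}=\mu_R\circ\ldb-,-\rdb$ descends to a well-defined bracket on $R_\natural=R/[R,R]$ making it an $n$-Lie algebra, and makes $R$ an $n$-Lie module over $R_\natural$; the well-definedness on the commutator quotient is the graded analogue of the computation establishing that $\{-,-\}_M$ descends.

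For part $(b)$, I would instead take $A=R$ and $M=\Omega^1_R$. By Corollary~\ref{cor22} the pair $(\Omega^1_R,\ldb-,-\rdb_{\Omega^1_R})$ is a DG double $n$-Poisson $R$-sub-bimodule of $R\otimes R$, so Proposition~\ref{mainprop}$\mathrm{(ii)}$ applies and shows that $\{-,-\}_{\Omega^1_R}=\mu_{\Omega^1_R}\circ\ldb-,-\rdb_{\Omega^1_R}$ and $\{-,-\}_{\Omega^1_{R,\natural}}=\natural\circ\{-,-\}_{\Omega^1_R}$ equip $\Omega^1_R$ and $\Omega^1_{R,\natural}$ with $n$-Lie module structures over $R_\natural$.

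For the final ``in particular'' assertion I would note that each of these brackets is a morphism of complexes: for $\{-,-\}$ this is the differential-compatibility proved in the last paragraph of Lemma~\ref{NCP_Coalg}, and for the bimodule brackets it follows from the compatibility of $\ldb-,-\rdb_{R\otimes R}$ with $d_{R\otimes R}$ checked before Corollary~\ref{cor22}, together with the fact that $\mu$ and $\natural$ commute with the differentials. A DG $n$-Lie algebra therefore induces a graded $n$-Lie algebra on its homology, and a DG $n$-Lie module induces a graded $n$-Lie module; this yields the $n$-Lie algebra structure on $\mathrm H_\bullet(R_\natural)$ and the module structures on $\mathrm H_\bullet(R)$, $\mathrm H_\bullet(\Omega^1_R)$ and $\mathrm H_\bullet(\Omega^1_{R,\natural})$. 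I expect the only real difficulty to be bookkeeping rather than conceptual: keeping track of the Koszul signs through the identification $\mathrm I:\Omega^1_R\cong R\otimes s^{-1}C\otimes R$ when verifying that the induced brackets on the quotients $R_\natural$ and $\Omega^1_{R,\natural}$ are simultaneously well-defined and differential-compatible.
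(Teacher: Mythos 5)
Your proposal is correct and follows essentially the same route as the paper: the paper's proof of Theorem~\ref{bracksmain2} is precisely an appeal to Proposition~\ref{mainprop} applied to the double $n$-Poisson structure on $R$ from Lemma~\ref{NCP_Coalg} and the double $n$-Poisson bimodule structure on $\Omega^1_R$ from Corollary~\ref{cor22}. Your additional remarks on differential-compatibility (so that the structures descend to homology) make explicit what the paper leaves implicit, but introduce no new ideas beyond its argument.
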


\begin{proof}
Follows from Proposition~\ref{mainprop} for the double bracket on $R$ defined
in Lemma~\ref{NCP_Coalg} and the double bracket on $\Omega^1_{R}$
defined in Corollary~\ref{cor22}.
\end{proof}

Now we turn our attention to the $2$-periodic sequence of complexes

$$\xymatrix{  \ar[r]^{\bar{\partial}}& \Omega^1_{R,\natural}  \ar[r]^{-\beta}& \overline{R} \ar[r]^{\bar{\partial}}&
 \Omega^1_{R,\natural}  \ar[r]^{-\beta}& } $$
which  by Proposition~\ref{propbicomp} is identical to the cyclic bicomplex of $C$.  Our claim is

\begin{theorem}
\label{Liemorph1}
$\bar{\partial}$  and  $\beta$ are morphisms of Lie modules, that is,
\begin{equation}
\label{Liemorph}
 \mathrm{I} ( \bar{\partial} \{ r, q \} ) = \{r, \bar{\partial}(q)\}_{\Omega^1_{R,\natural}}
\, , \quad \beta \{r,\omega\}_{\natural}= \{r,\beta(\omega)\}.
\end{equation}
\end{theorem}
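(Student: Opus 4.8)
The plan is to prove the two identities in \eqref{Liemorph} by reducing each to a short computation on the algebra generators $V:=s^{-1}\overline C$ of $R$, the reduction being powered by the observation that for fixed $r$ the single bracket $\theta_r:=\{r,-\}=\mu_R\circ\ldb r,-\rdb$ is a \emph{graded derivation} of $R$ of degree $|r|+n$. Indeed, applying $\mu_R$ to the Leibniz rule \eqref{db22} yields $\{r,bc\}=\{r,b\}\,c+(-1)^{|b|(|r|+n)}b\,\{r,c\}$. In the same way, axiom $(i)$ of Definition~\ref{Defofdouble} (in its $n$-graded form) for the double Poisson $R$-bimodule $\Omega^1_R$ of Corollary~\ref{cor22} gives, after applying the action map $\mu_{\Omega^1_R}$, a Leibniz rule for $\{r,-\}_{\Omega^1_R}$ over $\theta_r$ relative to the $R$-bimodule structure, namely $\{r,a\,\omega\,b\}_{\Omega^1_R}=\theta_r(a)\,\omega\,b\pm a\,\{r,\omega\}_{\Omega^1_R}\,b\pm a\,\omega\,\theta_r(b)$ with the evident Koszul signs. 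Since $\Omega^1_R\cong R\otimes V\otimes R$ is generated as a bimodule by $\partial V$, the operator $\{r,-\}_{\Omega^1_R}$ is completely determined by $\theta_r$ together with its values on $\partial v$, $v\in V$.

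For the first identity I would first prove the un-quotiented statement $\partial\circ\theta_r=\{r,-\}_{\Omega^1_R}\circ\partial$, and then apply the projection $\natural$. Both sides of this equation, viewed as functions of the argument, are derivations over $\theta_r$ valued in $\Omega^1_R$ (using the bimodule Leibniz rule above together with $\partial(q_1q_2)=\partial q_1\cdot q_2\pm q_1\cdot\partial q_2$), so it suffices to check it on generators. The heart is thus the generator identity $\{r,\partial v\}_{\Omega^1_R}=\partial\{r,v\}$, the non-commutative avatar of the Cartan relation $\mathscr{L}_X\circ d=d\circ\mathscr{L}_X$ with $\partial$ in the role of the de Rham differential. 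I would compute the left-hand side from $\mathrm I(\partial v)=v\otimes1-1\otimes v$ (formula \eqref{Ipartial}), the definition \eqref{doubletensor}, the convention $\ldb r,1\rdb=0$ (Remark~\ref{bracket_ext}) and \eqref{defOmegabracket}, and compare it with the right-hand side obtained by applying the explicit universal derivation to the word $\{r,v\}=\mu_R\ldb r,v\rdb$. These match term by term because $\ldb r,v\rdb$ contracts, via the pairing $\langle v_i,w_j\rangle$ of \eqref{def_bb}, exactly one letter of $r$ against $v$, so that $\partial$ merely records the unique marked position in the resulting word. Passing to $\natural$ then promotes $\partial\circ\theta_r=\{r,-\}_{\Omega^1_R}\circ\partial$ to $\bar\partial\{r,q\}=\{r,\bar\partial q\}_{\Omega^1_{R,\natural}}$, which under the identifications $\Omega^1_R\cong R\otimes V\otimes R$, $\Omega^1_{R,\natural}\cong R\otimes V$ and the bijection $\mathrm I$ of \eqref{mapI} is precisely the first equation of \eqref{Liemorph}.

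For the second identity I would exploit the commutative square \eqref{comdiag}, which identifies $\beta\circ\natural$ on $\Omega^1_R\hookrightarrow R\otimes R$ with $\mu_R\circ\sigma$. Thus $\beta\{r,\omega\}_\natural=(\mu_R\circ\sigma)\{r,\omega\}_{\Omega^1_R}$, while the right-hand side is $\{r,\beta\omega\}=\mu_R\ldb r,\beta\omega\rdb$. Writing a general class in $\Omega^1_{R,\natural}\cong R\otimes V$ as $\natural(a\,\partial v)$ and using once more the bimodule Leibniz rule to reduce the $a$-part to generators (with base case supplied by $\beta\bar\partial=0$, noted after \eqref{betadef}), the check reduces to a finite computation on $\natural(\partial v)$ in which the explicit formula \eqref{betadef} for $\beta$ is matched against $\mu_R\ldb r,-\rdb$. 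Here the antisymmetry \eqref{dbgrn2} of the double bracket and the cyclic invariance of the pairing (\eqref{cyclic_pairing} and its consequence \eqref{general_cyclic_pairing}) are exactly what force the two sides, both of which are ``cyclic closings'' of $\ldb r,\omega\rdb$, to agree for the map $\beta$ (equivalently $-\beta$) of the periodic complex.

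The conceptual content of both identities is classical: the Lie action of a Hamiltonian commutes with the de Rham differential $\bar\partial$ and is compatible with the cyclic closing $\beta$, so I expect no structural difficulty. The main obstacle is the sign bookkeeping in the generator computations. One must track the Koszul signs coming from the suspension $s^{-1}$, the shift $n=2-d$, the distinction between the inner and outer bimodule actions \eqref{inner_bimoduleact}, and the separation of $\ldb r,-\rdb_{\Omega^1_R}$ into its $R\otimes\Omega^1_R$ and $\Omega^1_R\otimes R$ components as isolated in \eqref{rest1}--\eqref{rest6}, and then reconcile these with the signs already built into \eqref{barpartial} and \eqref{betadef}. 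This is where essentially all of the effort lies.
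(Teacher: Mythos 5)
Your proposal is correct and its computational core is the same as the paper's, but the packaging differs in a way worth noting. The paper never reduces to generators. For the first identity, since \eqref{Ipartial} gives $\mathrm{I}(\partial q)=q\otimes 1-1\otimes q$ for \emph{every} $q\in R$, the single computation
\begin{equation*}
\{r,\partial q\}_{\Omega^1_R}
=\mu_{\Omega^1_R}\ldb r,\mathrm{I}(\partial q)\rdb_{R\otimes R}
=\mu_{\Omega^1_R}\bigl[\ldb r,q\rdb\otimes 1-1\otimes\ldb r,q\rdb\bigr]
=\{r,q\}\otimes 1-1\otimes\{r,q\}
=\mathrm{I}(\partial\{r,q\})
\end{equation*}
(which uses only \eqref{doubletensor}, \eqref{defOmegabracket} and $\ldb r,1\rdb=0$) settles the unquotiented statement for all $q$ at once; the generator identity you isolate is literally this computation for a one-letter word, so your derivation-over-$\{r,-\}$ reduction, though valid (the difference of the two sides is a derivation into $\Omega^1_R$ annihilating generators and $1$), is an unnecessary layer. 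For the second identity the paper likewise takes an arbitrary $\omega=q\otimes s^{-1}v$, proves the explicit formula of Lemma~\ref{bracketidentononeform} for $\{r,\omega\}_{\Omega^1_{R,\natural}}$, and then compares $\beta(\{r,\omega\}_{\natural})=\mu\circ\sigma(\mathrm{I}\,\{r,\omega\}_{\Omega^1_R})$ --- your same use of \eqref{comdiag} --- with the Leibniz expansion of $\{r,\beta(\omega)\}$; no induction on $q$ and no base case are needed. The one genuine inaccuracy in your plan is the prediction that the antisymmetry \eqref{dbgrn2} and the cyclic invariance \eqref{cyclic_pairing}, \eqref{general_cyclic_pairing} of the pairing are what force the second identity to hold. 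They are used nowhere in this theorem: both identities of \eqref{Liemorph} are formal consequences of the Leibniz axioms for the double brackets, the convention $\ldb r,1\rdb=0$, and the square \eqref{comdiag}, so they hold for \emph{any} DG double Poisson structure on $R$, not just the one defined by \eqref{def_bb}. Cyclicity is consumed earlier, in Lemma~\ref{NCP_Coalg}, where it makes the double bracket a chain map so that the Lie module structures descend to homology. If you carry out your plan you will find that the $\beta$-check on $\natural(\partial v)$ reduces to $0=0$ and that the general case closes by pure sign bookkeeping; there is no pairing-dependent cancellation to find.
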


We need the following auxiliary statement

\begin{lemma}
\label{bracketidentononeform}
Let  $r \in R$ and $ \omega=q\otimes s^{-1}v \in \Omega^1_{R,\natural}$. Then
\begin{eqnarray}
 \{r, \omega\}_{\Omega^1_{R,\natural}} &=& \natural \circ  \bigg[ \{r,q\}s^{-1}v \otimes 1 - \{ r,q \} \otimes s^{-1}v
 \nonumber\\
 \label{lastexp}
&+ & (-1)^{|q|(|r|+n)} \big( q \cdot \{r,s^{-1}v\} \otimes 1 -  q \otimes \{r,s^{-1}v\} \big) \bigg].
\end{eqnarray}
\end{lemma}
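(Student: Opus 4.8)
The plan is to reduce everything to the single bracket $\{-,-\}=\mu_R\circ\ldb-,-\rdb$ on $R$ by unwinding the three layers of definition, $\{r,\omega\}_{\Omega^1_{R,\natural}}=\natural\,\{r,\omega\}_{\Omega^1_R}$, then $\{r,\omega\}_{\Omega^1_R}=\mu_{\Omega^1_R}\ldb r,\omega\rdb_{\Omega^1_R}$, and finally $\ldb r,\omega\rdb_{\Omega^1_R}=\ldb r,\mathrm I(\omega)\rdb_{R\otimes R}$ as in \eqref{defOmegabracket}. Since $\{r,-\}_{\Omega^1_{R,\natural}}$ is well defined on $\Omega^1_{R,\natural}$, I am free to pick the lift $\omega=q\otimes s^{-1}v\otimes 1\in R\otimes s^{-1}C\otimes R\cong\Omega^1_R$ of $q\otimes s^{-1}v$, for which \eqref{mapI} gives $\mathrm I(\omega)=q\,s^{-1}v\otimes 1-q\otimes s^{-1}v$. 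First I would feed this into the double bracket $\ldb-,-\rdb_{R\otimes R}$ of \eqref{doubletensor}; because $\ldb r,1\rdb=0$ by Remark~\ref{bracket_ext}, only three summands survive, namely
\[
\ldb r,\mathrm I(\omega)\rdb_{R\otimes R}=\ldb r,q\,s^{-1}v\rdb\otimes 1-\ldb r,q\rdb\otimes s^{-1}v-(-1)^{|q|(|r|+n)}\,q\otimes\ldb r,s^{-1}v\rdb .
\]

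Next I would expand $\ldb r,q\,s^{-1}v\rdb$ with the Leibniz rule \eqref{db22} and then split the whole expression into its $R\otimes\Omega^1_R$ and $\Omega^1_R\otimes R$ components exactly as in the computation \eqref{rest1}--\eqref{rest6}, together with the two correction identities \eqref{doublemodulestr_1}--\eqref{doublemodulestr_2} preceding Corollary~\ref{cor22}, specialised to $b=q$, $s^{-1}v_p=s^{-1}v$, $c=1$ (so that every term carrying the vanishing factor $\ldb r,c\rdb=\ldb r,1\rdb=0$ drops out). Applying $\mu_{\Omega^1_R}$ then just multiplies the stray $R$-factor into the appropriate side of $\Omega^1_R$: by left multiplication and \eqref{mapI} the $\ldb r,q\rdb$-terms recombine into $\mathrm I(\{r,q\}\otimes s^{-1}v\otimes 1)=\{r,q\}\,s^{-1}v\otimes1-\{r,q\}\otimes s^{-1}v$, where $\{r,q\}=\mu_R\ldb r,q\rdb$. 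For the remaining $s^{-1}v$-terms the crucial point is that the leftover factor is the unit $1$, so the right action is trivial; adding the contribution from \eqref{doublemodulestr_1} to that of \eqref{doublemodulestr_2} makes the mixed term $q\,\ldb r,s^{-1}v\rdb^{(1)}\otimes\ldb r,s^{-1}v\rdb^{(2)}$ cancel, leaving exactly $(-1)^{|q|(|r|+n)}\big(q\,\{r,s^{-1}v\}\otimes1-q\otimes\{r,s^{-1}v\}\big)$ with $\{r,s^{-1}v\}=\mu_R\ldb r,s^{-1}v\rdb$.

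Finally I would apply $\natural$ to the resulting element of $\Omega^1_R$, which reproduces the right-hand side of \eqref{lastexp} verbatim. I expect the main obstacle to be the bookkeeping in the middle step: correctly resolving $\ldb r,\mathrm I(\omega)\rdb_{R\otimes R}$ into its two bimodule summands (the correction terms \eqref{doublemodulestr_1}--\eqref{doublemodulestr_2} are essential, since the naive pieces are not individually in $\Omega^1_R$, though each does lie in $\ker\mu$ after the cross-terms are grouped), tracking the Koszul signs through \eqref{doubletensor} and \eqref{db22}, and checking that under $\mu_{\Omega^1_R}$ the components $\ldb r,q\rdb^{(1)}\otimes\ldb r,q\rdb^{(2)}$ and $\ldb r,s^{-1}v\rdb^{(1)}\otimes\ldb r,s^{-1}v\rdb^{(2)}$ collapse into the multiplied brackets $\{r,q\}$ and $\{r,s^{-1}v\}$ rather than surviving separately. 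Once the cancellation in the $s^{-1}v$-terms is observed, the identity \eqref{lastexp} follows immediately.
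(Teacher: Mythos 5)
Your proposal is correct and follows essentially the same route as the paper's proof: both unwind the definitions down to $\ldb r,\mathrm I(\omega)\rdb_{R\otimes R}$ with the lift $\mathrm I(q\otimes s^{-1}v\otimes 1)=q\,s^{-1}v\otimes 1-q\otimes s^{-1}v$, kill the terms involving $\ldb r,1\rdb$, apply the Leibniz rule \eqref{db22}, insert the add-and-subtract correction terms of \eqref{doublemodulestr_1}--\eqref{doublemodulestr_2} to resolve the bimodule components, and observe that the mixed terms $q\,\ldb r,s^{-1}v\rdb^{(1)}\otimes\ldb r,s^{-1}v\rdb^{(2)}$ cancel after applying $\mu_{\Omega^1_R}$. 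The intermediate expressions you write down coincide with those in the paper's computation, so no gap remains.
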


\begin{proof}
It follows directly from the definition of the double bracket on $\Omega^1_R$. Indeed, we have
\begin{eqnarray*}
&& \{r, \omega\}_{\Omega^1_R}\\
&=  &\mu \circ \ldb r,\ {\rm I}(\omega)\rdb_{R \otimes R}    \nonumber \\[0.2cm]
&= &\mu_{\Omega^1_{R}}  \circ \big[  \ldb r, q\cdot s^{-1}v \otimes 1  \rdb_{R \otimes R}  -
\ldb r, q \otimes s^{-1}v \rdb_{R \otimes R} \big] \nonumber \\[0.2cm]
& = &\mu_{\Omega^1_{R}} \circ \big [ \ldb r, q \rdb \cdot s^{-1}v  \otimes 1
- \ldb r,q \rdb \otimes s^{-1}v \big] \nonumber\\[0.2cm]
& + & (-1)^{|q|(|r|+n)} \mu_{\Omega^1_{R}} \circ \big [ q \cdot \ldb r, s^{-1}v\rdb \otimes 1
 - q \otimes \ldb r, s^{-1}v\rdb \big] \nonumber\\[0.2cm]
&=& \{r,q\}s^{-1}v \otimes 1 - \{ r,q \} \otimes s^{-1}v \nonumber \\[0.2cm]
&+&(-1)^{|q|(|r|+n)} \mu_{\Omega^1_{R}} \circ
\big[ q \cdot \ldb r, s^{-1}v\rdb^{(1)} \otimes   \ldb r, s^{-1}v\rdb^{(2)} \otimes 1
 - q \otimes \{r,s^{-1}v\} \otimes 1\big  ]  \nonumber\\[0.2cm]
 & +&(-1)^{|q|(|r|+n)} \mu_{\Omega^1_{R}} \circ \big[  q \otimes \{r,s^{-1}v\} \otimes 1 -
 q\otimes  \ldb r, s^{-1}v\rdb^{(1)} \otimes   \ldb r, s^{-1}v\rdb^{(2)} \big] \nonumber\\[0.2cm]
 & = & \{r,q\}s^{-1}v \otimes 1 - \{ r,q \} \otimes s^{-1}v \nonumber    \\[0.2cm]
 &+&(-1)^{|q|(|r|+n)} \big( q \cdot \ldb r,s^{-1}v\rdb^{(1)}  \otimes
 \ldb r, s^{-1}v\rdb^{(2)}  - q \otimes \{r,s^{-1}v\} \big)\nonumber\\[0.2cm]
 &+ &(-1)^{|q|(|r|+n)} \big( q \cdot \{r,s^{-1}v\} \otimes 1 -  q \cdot \ldb r,s^{-1}v\rdb^{(1)}
 \otimes \ldb r, s^{-1}v\rdb^{(2)}  \big) \nonumber \\[0.2cm]
 &= &\{r,q\} \cdot s^{-1}v \otimes 1 - \{ r,q \} \otimes s^{-1}v
 + (-1)^{|q|(|r|+n)} \big( q \cdot \{r,s^{-1}v\} \otimes 1 -  q \otimes \{r,s^{-1}v\} \big). \nonumber\qedhere
\end{eqnarray*}
\end{proof}

\begin{proof}[Proof of Theorem~\ref{Liemorph1}]
To prove the first identity of \eqref{Liemorph}, it suffices to show
$$ \mathrm{I}( \partial \{r, q\}) = \{r, \partial (q)\}_{\Omega^1_R} \, .$$
By \eqref{Ipartial}, we have  $ \mathrm{I}( \partial \{r, q\})= \{r, q\} \otimes 1 -1 \otimes \{r, q\}$.
On the other hand
\begin{eqnarray}
\{r, \partial (q)\}_{\Omega^1_R} &=& \mu_{\Omega^1_R} \ldb r, \mathrm{I}( \partial (q)) \rdb_{R \otimes R}
=  \mu_{\Omega^1_R} \ldb r,  q \otimes 1 - 1 \otimes q \rdb_{R\otimes R}  \nonumber \\[0.1cm]
&= & \mu_{\Omega^1_R} \big[ \ldb r,  q \rdb  \otimes 1 - 1 \otimes \ldb r, q \rdb \big]
= \{r, q\} \otimes 1 -1 \otimes \{r, q\} .\nonumber
\end{eqnarray}

Let $\omega=q\otimes s^{-1}v$ for some $q \in R$ and $v \in C$. Then
$$\beta(\omega)=q\cdot s^{-1}v - (-1)^{|s^{-1}v||q|} s^{-1}v \cdot q\, , \quad
{\rm I} (\omega) = q\cdot s^{-1}v \otimes 1 - q \otimes s^{-1}v  \, , $$
 and
\begin{eqnarray}
 \{r,\beta(\omega)\} &=& \{ r,  q\cdot s^{-1}v\} -  (-1)^{|s^{-1}v||q|} \{r, s^{-1}v \cdot q\}  \nonumber\\[0.1cm]
& =& \{r,q\} \cdot s^{-1}v  + (-1)^{|q| (|r|+n)} q\cdot \{r,s^{-1}v\} \nonumber \\[0.1cm]
\label{bracbetaomega}
& -& (-1)^{|s^{-1}v||q|} \{r, s^{-1}v\} \cdot q - (-1)^{|s^{-1}v|(|q|+|r|+n)} s^{-1}v  \cdot \{r,q\} \, .
 \end{eqnarray}
On the other hand, by the commutative diagram \eqref{comdiag} for $R$, we  have
\begin{equation}
\beta( \{r, \omega\}_{\natural})= \mu\circ \sigma ({\rm I}\, \{r, \omega\}_{\Omega^1_R}) \, .
\end{equation}
Using \eqref{lastexp} , one has
\begin{eqnarray}
\beta( \{r, \omega\}_{\natural}) &= &\{r,q\} \cdot s^{-1}v - (-1)^{|s^{-1}v|(|r|+|q|+n)} s^{-1}v \cdot \{ r,q \} \nonumber\\[0.2cm]
\label{betaromega2}
& + &(-1)^{|q|(|r|+n)} q \cdot \{r,s^{-1}v\}  - (-1)^{|s^{-1}v||q|} \{r,s^{-1}v\} \cdot q.
\end{eqnarray}
We finish our proof by comparing  \eqref{bracbetaomega}  and \eqref{betaromega2}.
\end{proof}

\section{$N$-Koszul Calabi-Yau algebras}\label{Sect_N_Kosuzl_CY}

\begin{definition}[Ginzburg \cite{Ginzburg}]
An associative algebra $A$ is said to be a \textit{Calabi-Yau algebra} of dimension $d$
if $A$ is homologically smooth and there exists an isomorphism
$$
\eta:\mathrm{RHom}_{A\otimes A^{\mathrm{op}}} (A, A\otimes A)\rightarrow A[-d]
$$
in the derived category of $A\otimes A^{\mathrm{op}}$-modules.
\end{definition}

In the above definition,
an algebra $A$ is said to be {\it homologically smooth}
if $A$ is a perfect $A\otimes A^{\mathrm{op}}$-module,
i.e. it has a finitely-generated projective resolution of finite length.

Ever since they are first introduced by Ginzburg, Calabi-Yau algebras have been widely
studied by mathematicians from various fields.

So far, most of Calabi-Yau algebras appeared in literature are Koszul or $N$-Koszul,
(the most general case is due to Van den Bergh \cite{VdB3}),
and hence they are all of the form
$$\mathbf{\Omega}_\infty(A^{\ac}) \stackrel{\simeq}{\twoheadrightarrow}A,$$
where $A^{\ac}$ is the Koszul dual $A_\infty$ coalgebra of $A$,
which is a cyclic $A_\infty$ coalgebra.

We now recall the definition of $N$-Koszul algebras.
Let $V$ be a finite dimensional vector space over $k$,
and $S$ be a subspace of $V^{\otimes N}$,
where $N\geq 2$ is an integer.
Let $TV$ be the tensor algebra of $V$, and $\langle S\rangle$ be the two-sided ideal of $TV$ generated by $S$.
The quotient algebra $A:=TV/\langle S\rangle$ is called an \emph{$N$-homogeneous algebra},
and is denoted by $A=A(V,S)$.
The $N$-homogenous algebra $A^{\vee}:=TV^{\ast}/\langle S^{\bot} \rangle$ is
called \emph{$N$-homogeneous dual algebra},
where $V^{\ast}$ is the dual space of $V$ and $S^{\bot}\subset (V^{\ast})^{\otimes N}$
is the orthogonal complement of $S$ in $(V^{\ast})^{\otimes N}$.
The \textit{$N$-Koszul dual algebra} $A^{!}$ of the $N$-homogenous algebra $A$ is defined as follows: set
$$
N(i)=
\left\{
\begin{array}{ll}Nj,&\mbox{if}\ i=2j,\\
Nj+1,&\mbox{if}\ i=2j+1,
\end{array}
\right.
$$
and define $A^{!}:=\bigoplus_{i} A^{\vee}_{N(i)}$. We also denote
$A^{\ac}:=(A^{!})^{\ast}\subset TV$, which is called \emph{$N$-Koszul dual coalgebra} of $A$.
Here we use the convention that the dual of a graded space
with finite-dimensional component is the direct sum of component-wise duals.

\begin{definition}[Berger \cite{Berger}, $N$-Koszul algebra]
An $N$-homogeneous algebra $A$ is (left) {\it $N$-Koszul} if the trivial left $A$-module
$_Ak$ admits a linear projective resolution
$$
\xymatrix@C=0.5cm{
 \cdots \ar[r] & P_i \ar[r]^{b} & P_{i-1} \ar[r]^{b} & \cdots \ar[r]^{} & P_1 \ar[r]^{b} & P_0 \ar[r]^{b} & _Ak }
$$
with $P_i=A\otimes_k A^{\ac}_i$, and if we choose a basis $\{e_i\}$ for $V$
and let $\{e^*\}$ be the dual basis, then the differential $b=\sum_i^{\dim V}e_i\otimes e_i^*$.
\end{definition}

One can define the right $N$-Koszul algebra similarly. In \cite{Berger}, it is proved that an $N$-homogeneous
algebra is left $N$-Koszul if and only if it is right $N$-Koszul.
Note that $2$-Koszul algebras are Koszul algebras in the usual sense.
For a $2$-Koszul algebra $A$, it is known that the Yoneda algebra $\mathrm{Ext}^\bullet_A(k, k)$
is an associative algebra. The $A_\infty$ algebra structure on $\mathrm{Ext}^\bullet_A(k,k)$ is started by
Lu, Palmieri, Wu and Zhang in papers \cite{LPWZ1,LPWZ2}.

\begin{proposition}[Berger-Marconnet; He-Lu]
Let $A$ be an $N$-Koszul algebra. Then its Yoneda algebra $\mathrm{Ext}^\bullet_A(k, k)$
is isomorphic to its $N$-Koszul dual algebra $A^{!}$ as unital, augmented $A_\infty$ algebras.
\end{proposition}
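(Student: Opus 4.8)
The plan is to read off $\mathrm{Ext}^\bullet_A(k,k)$ from the canonical linear resolution of ${}_A k$, transport the natural $A_\infty$-structure onto its cohomology, and then exploit the internal (or Adams) grading carried by the $N$-homogeneous algebra $A$ to kill every higher product except $m_2$ and $m_N$, which I then match with the multiplication of $A^!$.

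First I would compute the underlying graded vector space. By $N$-Koszulity the trivial module ${}_A k$ has the linear resolution $P_\bullet$ with $P_i=A\otimes_k A^{\ac}_i$ and differential $b=\sum_i e_i\otimes e_i^*$ given in the definition above. Since this resolution is minimal, the complex $\mathrm{Hom}_A(P_\bullet,k)$ has zero differential, so that $\mathrm{Ext}^i_A(k,k)\cong\mathrm{Hom}_A(A\otimes_k A^{\ac}_i,k)\cong(A^{\ac}_i)^*$. As $A^!=\bigoplus_i A^\vee_{N(i)}$ and $A^{\ac}=(A^!)^*$ by definition, and all graded pieces are finite dimensional, this gives a canonical isomorphism of graded vector spaces $E:=\mathrm{Ext}^\bullet_A(k,k)\cong A^!$. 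The decisive structural input is that the resolution is \emph{pure}: $\mathrm{Ext}^i_A(k,k)$ sits in a single internal degree, namely $N(i)$.

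Next I would endow $E$ with an $A_\infty$-structure. Choosing a DG-algebra model for $\mathrm{RHom}_A(k,k)$ — for instance the endomorphism DG algebra $\mathrm{End}_A(P_\bullet)$, whose cohomology is the Yoneda algebra — Kadeishvili's homotopy transfer theorem yields a minimal $A_\infty$-structure $(E,\{m_n\})$ with $m_1=0$ and $m_2$ the Yoneda product; it is unique up to $A_\infty$-isomorphism and is unital and augmented, with augmentation the projection onto $E^0=k$. Each transferred operation $m_n\colon E^{\otimes n}\to E$ raises cohomological degree by $2-n$ while \emph{preserving the internal degree exactly}. Writing $N(i)=\lfloor i/2\rfloor N+(i\bmod 2)$, the purity of $E$ then forces the component of $m_n$ on $E^{i_1}\otimes\cdots\otimes E^{i_n}$ to vanish unless
$$
\sum_{j=1}^{n} N(i_j)=N\!\Big(i_1+\cdots+i_n+2-n\Big).
$$
Analyzing this Diophantine identity is the crux: the test input $i_1=\cdots=i_n=1$ gives $n=N(2)=N$, while components landing among even cohomological degrees force $n=2$, and one checks that for every $n\notin\{2,N\}$ there is no admissible tuple at all. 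Hence $m_n=0$ for $n\notin\{2,N\}$ — precisely the pattern dual to Assumption~\ref{Assumpt_coalg}, where $A^{\ac}$ carries only $\triangle_2$ and $\triangle_N$.

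It then remains to identify the two surviving operations with the multiplication of $A^!$. Under $E\cong A^!=\bigoplus_i A^\vee_{N(i)}$ the product $m_2$ is matched with the multiplication of the $N$-homogeneous dual $A^\vee$, restricted to those bidegrees for which $N(i)+N(j)=N(i+j)$ so that the result again lands in $A^!$; and $m_N\colon (E^1)^{\otimes N}\to E^2$ is matched with the degree-$N$ multiplication $(V^*)^{\otimes N}\twoheadrightarrow (V^*)^{\otimes N}/S^\perp=A^\vee_N=A^!_2$, i.e. the quotient by the relation space $S^\perp$ dual to $S$. Checking these identifications, together with unitality and augmentation, delivers the isomorphism of unital, augmented $A_\infty$-algebras $\mathrm{Ext}^\bullet_A(k,k)\cong A^!$. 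I expect the main obstacle to be the sign bookkeeping in the transferred $m_N$ and its precise comparison with the multiplication on $A^!$; by contrast the vanishing of all other $m_n$ is forced cleanly and without computation by the purity/grading argument above.
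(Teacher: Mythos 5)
The paper offers no proof of this proposition at all: its ``proof'' is the citation to Berger--Marconnet \cite[Proposition 3.1]{BM} and He--Lu \cite[Theorem 6.5]{HL}. So the relevant comparison is with those sources, and your sketch is essentially a reconstruction of the He--Lu argument: minimality of the linear resolution $P_i=A\otimes_k A^{\ac}_i$ gives $\mathrm{Ext}^i_A(k,k)\cong (A^{\ac}_i)^*$ concentrated in internal degree $N(i)$; Kadeishvili transfer produces a strictly unital minimal model; and the bigrading kills all higher products except $m_2$ and $m_N$. The crux arithmetic is indeed correct: writing $e$ for the number of inputs of odd cohomological degree, internal-degree preservation forces $2e=N(e+2-n)$ when the output cohomological degree is even and $2(e-1)=N(e+1-n)$ when it is odd, and together with the parity constraint and $e\le n$ the only solutions (for $N>2$) are $n=2$ (with $e\in\{0,1\}$) and $n=N$ (with $e=N$, i.e.\ \emph{all} inputs of odd degree). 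One caveat: your parenthetical claim that ``components landing among even cohomological degrees force $n=2$'' is false as literally stated, since the $m_N$-component on $N$ degree-one inputs also lands in the even degree $2$; what is true, and what you presumably meant, is that tuples consisting only of even-degree inputs force $n=2$.

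The genuine incompleteness is in the last step. The grading argument shows that $m_N$ can be nonzero on \emph{every} tensor $E^{i_1}\otimes\cdots\otimes E^{i_N}$ with all $i_j$ odd, yet you identify it only on $(E^1)^{\otimes N}\to E^2$ with the projection $(V^*)^{\otimes N}\twoheadrightarrow (V^*)^{\otimes N}/S^{\perp}$. An isomorphism of $A_\infty$-algebras with $A^!$ --- whose structure, as the paper uses it (see the Yang--Mills example in \S\ref{Sect_N_Kosuzl_CY}), has $m_2$ and $m_N$ given by the full $A^\vee$-multiplication followed by the projection onto $\bigoplus_i A^\vee_{N(i)}$ --- requires identifying \emph{all} of these components, and purity alone cannot do this: it constrains where $m_N$ may live but does not compute it. That computation (carried out in \cite{HL} by explicit comparison morphisms between the Koszul and bar resolutions, signs included) is the actual substance of the theorem, so your sketch, while following the right route and getting the vanishing pattern for free from the grading, defers precisely the part that cannot be obtained formally.
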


\begin{proof}
See Berger-Marconnet \cite[Proposition 3.1]{BM}, or He-Lu \cite[Theorem 6.5]{HL}.
\end{proof}

More precisely, the authors cited above showed that for $N$-Koszul algebras,
all the $A_\infty$ operators but $m_2, m_N$ vanish.
Now if $A$ is an $N$-Koszul algebra,
then $A^{\ac}$ is a co-unital, co-augmented $A_\infty$ coalgebra.

\begin{theorem}[Dotsenko-Vallette]\label{LVDV}
Let $A$ be an $N$-Koszul algebra, and $A^{\ac}$ be its $N$-Koszul dual $A_\infty$
coalgebra. Then
the morphism of DG algebras $\mathbf{\Omega}_\infty( A^{\ac})\stackrel{\simeq}{\twoheadrightarrow} A$ is a quasi-isomorphism.
\end{theorem}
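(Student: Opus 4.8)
The plan is to realize $\pi:\mathbf{\Omega}_\infty(A^{\ac})\twoheadrightarrow A$ as the cobar adjoint of the canonical Koszul twisting morphism and then reduce the quasi-isomorphism, via the bar--cobar machinery, to the acyclicity of the $N$-Koszul resolution $P_\bullet$, which is exactly the defining property of $N$-Koszulity. First I would fix the twisting morphism $\kappa\colon A^{\ac}\to A$ obtained by composing the inclusion $A^{\ac}\subset TV$ with the projection onto the weight-one part $V=A_1$ (together with the desuspension used in the cobar differential). The map $\pi$ is then the unique morphism of DG algebras whose restriction to the generators $s^{-1}\overline{A^{\ac}}$ is $\kappa$; it is a map of augmented DG algebras, it is surjective because $V$ generates $A$, and by Assumption~\ref{Assumpt_coalg} all the tensor expansions occurring below are finite, so the differentials and weight filtrations are well defined.

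Next I would reduce the statement using the bar--cobar comparison. The canonical Koszul morphism $A^{\ac}\to\mathbf{B}(A)$ (the inclusion of the Koszul dual into the bar construction, carrying the $A_\infty$ cooperations $\triangle_2,\triangle_N$ to the reduced coproduct and differential of $\mathbf{B}(A)$) induces a map $\mathbf{\Omega}_\infty(A^{\ac})\to\mathbf{\Omega}(\mathbf{B}(A))$ of DG algebras, and composing with the counit $\mathbf{\Omega}(\mathbf{B}(A))\xrightarrow{\simeq}A$, which is always a quasi-isomorphism, recovers $\pi$. Since $A^{\ac}$ and $\mathbf{B}(A)$ are connected, coaugmented, and locally finite, the cobar functor reflects quasi-isomorphisms here; filtering both sides by internal weight produces, in each fixed weight, a finite complex, so there are no convergence issues and the spectral sequences degenerate to a weight-by-weight comparison. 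Thus $\pi$ is a quasi-isomorphism if and only if the Koszul morphism $A^{\ac}\to\mathbf{B}(A)$ is one, which in turn holds if and only if $\mathrm{Tor}^{A}_\bullet(k,k)$ is computed by $A^{\ac}$, i.e. if and only if the free left $A$-module complex $P_\bullet=A\otimes_k A^{\ac}_\bullet$ is a resolution of ${}_A k$.

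Finally I would identify the twisted complex $A\otimes^{\kappa}A^{\ac}$ with $P_\bullet$ and invoke the definition. Because $A^{\ac}_i$ sits in internal weight $N(i)$, the weights $0,1,N,N+1,2N,\dots$ jump alternately by $1$ and by $N-1$; correspondingly the twist by $\kappa$ produces a differential $P_i\to P_{i-1}$ whose two alternating pieces come respectively from $\triangle_2$ (the short, weight-$1$ step) and from $\triangle_N$ (the long, weight-$(N-1)$ step), and these assemble into the differential $b$ of the definition. Hence $A\otimes^{\kappa}A^{\ac}\cong P_\bullet$ as complexes of free left $A$-modules, and the exactness of $P_\bullet\to{}_A k$ is by hypothesis exactly the statement that $A$ is $N$-Koszul; therefore the Koszul complex is acyclic and $\pi$ is a quasi-isomorphism. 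The main obstacle is precisely this last identification: one must verify that the two $A_\infty$ cooperations $\triangle_2$ and $\triangle_N$ combine into the single Koszul differential with the correct signs and with the correct alternating weight jumps dictated by $N(i)$, and one must confirm that the finiteness in Assumption~\ref{Assumpt_coalg} genuinely guarantees the degeneration of the weight spectral sequence so that homology can be read off in each weight separately.
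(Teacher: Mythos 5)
Your proposal is correct in outline, but note that the paper does not actually prove this theorem: its ``proof'' consists of citations, to Loday--Vallette \cite[Theorem 3.4.6]{LV} for $N=2$ and to Dotsenko--Vallette \cite[Theorem 5.1]{DV} for general $N$. What you have written is a faithful reconstruction of the argument in those references --- the weight-one twisting morphism $\kappa\colon A^{\ac}\to A$, the bar--cobar comparison in the connected weight-graded setting (where every fixed weight is a finite complex, so there are no convergence issues and the comparison can be made weight by weight), and the identification of the twisted tensor product $A\otimes^{\kappa}A^{\ac}$ with Berger's complex $P_\bullet=A\otimes_k A^{\ac}_\bullet$, whose acyclicity is precisely the definition of $N$-Koszulity. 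So you are taking essentially the same route as the proof the paper defers to, and your description of the alternating weight jumps ($1$ and $N-1$, matching $\triangle_2$ and $\triangle_N$) is accurate.

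One step is stated too loosely, and it is exactly where the ``higher'' in higher Koszul duality enters. For $N\ge 3$ the map $A^{\ac}\to\mathbf{B}(A)$ cannot be an inclusion of DG coalgebras: an element of $A^{\ac}_i$ lives in $V^{\otimes N(i)}$, so its natural image in the bar construction has bar degree $N(i)$, whereas for the comparison it must sit in homological degree $i$, and $N(i)>i$ as soon as $i\ge 2$. The canonical Koszul morphism is therefore only an $A_\infty$-morphism of $A_\infty$-coalgebras (equivalently, it is the twisting morphism $\kappa$ repackaged), with components spread over several bar weights. Your architecture survives this correction --- the cobar construction still turns such a morphism into a map of DG algebras, and the weight-graded comparison lemma extends --- but the equivalence ``$\pi$ is a quasi-isomorphism if and only if $A^{\ac}\to\mathbf{B}(A)$ is one'' must be formulated and proved for $A_\infty$-morphisms; that, together with the sign bookkeeping you yourself flag in matching the two pieces of the twisted differential with Berger's $b$, is where the real content of \cite{DV} lies.
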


\begin{proof}The $N=2$ case is standard, see, for example, Loday-Vallette \cite[Theorem 3.4.6]{LV}.
For the $N$-Koszul case, see Dotsenko-Vallette \cite[Theorem 5.1]{DV}.
\end{proof}

\begin{proposition}[He-Van Oystaeyen-Zhang; Wu-Zhu]\label{Thm_HL}
Let $A$ be an $N$-Koszul algebra ($N\geq 3$). Then the following conditions are equivalent:
\begin{enumerate}
  \item $A$ is a $d$-Calabi-Yau algebra;
  \item $\mathrm{Ext}^\bullet_A(k,k)$ is a unital and augmented cyclic $A_{\infty}$ algebra of degree $-d$.
\end{enumerate}
\end{proposition}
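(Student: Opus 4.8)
The plan is to realize both conditions through the self-duality of the $N$-Koszul bimodule resolution of $A$, and then to pass between the coalgebra $A^{\ac}$ and the algebra $A^!$ via Lemma~\ref{equivofalgcoal}. First I would recall the Berger--Marconnet bimodule Koszul complex
$$ K_\bullet:\quad \cdots \to A\otimes A^{\ac}_i\otimes A \to \cdots \to A\otimes A^{\ac}_1\otimes A\to A\otimes A\to A\to 0, $$
which is a minimal free resolution of $A$ over $A^e=A\otimes A^{\mathrm{op}}$ precisely when $A$ is $N$-Koszul; its differentials are built solely from the co-operations $\triangle_2$ and $\triangle_N$ of $A^{\ac}$, and homological smoothness is equivalent to $K_\bullet$ being finite, with top term in some homological degree $d$.

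Next, to compute $\mathrm{RHom}_{A^e}(A,A\otimes A)$ I would apply $\mathrm{Hom}_{A^e}(-,A^e)$ to $K_\bullet$. Using the identification $\mathrm{Hom}_{A^e}(A\otimes A^{\ac}_i\otimes A, A^e)\cong A\otimes (A^{\ac}_i)^{*}\otimes A = A\otimes (A^!)_i\otimes A$, the result is the dual complex $K_\bullet^{\vee}$, whose differentials are the transposes of those of $K_\bullet$. The Calabi--Yau condition asserts that $K_\bullet^{\vee}$ is quasi-isomorphic to $A[-d]$; since $A[-d]$ is concentrated in a single degree, the reversed complex of $K_\bullet^{\vee}$ is itself a minimal free bimodule resolution of $A$, its minimality coming from the linearity of the differentials. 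By uniqueness of minimal resolutions it must be isomorphic to $K_\bullet$, so the derived statement upgrades to an honest isomorphism of complexes of free bimodules. This is the crucial step that converts the abstract derived-category duality into chain-level data.

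That chain isomorphism reduces, on the coefficient spaces, to a family of linear isomorphisms $\phi_i:(A^{\ac}_i)^{*}\xrightarrow{\ \cong\ }A^{\ac}_{d-i}$ intertwining the dualized differentials with the original ones. I would then repackage $\{\phi_i\}$ as a single non-degenerate bilinear form $\langle-,-\rangle: A^{\ac}\otimes A^{\ac}\to k[d]$ pairing homological degree $i$ with degree $d-i$; the fact that the Calabi--Yau duality can be normalized to be symmetric makes this form symmetric in the sense of \eqref{bilform1}, and the compatibility of $\{\phi_i\}$ with the two differentials translates, component by component, into the cyclic invariance \eqref{cyclic_pairing} for $\triangle_2$ and $\triangle_N$. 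Thus $A^{\ac}$ becomes a cyclic $A_\infty$ coalgebra of degree $-d$, and by Lemma~\ref{equivofalgcoal} its finite-dimensional dual $A^!=\mathrm{Ext}^\bullet_A(k,k)$ is a cyclic $A_\infty$ algebra of degree $-d$, necessarily unital and augmented since $A^!$ is connected graded with $(A^!)_0=k$. Running the correspondence backwards gives the converse: a cyclic structure on $A^!$ yields the symmetric form on $A^{\ac}$, hence the duality isomorphism $K_\bullet^{\vee}\cong K_\bullet[d]$, hence the Calabi--Yau isomorphism, with homological smoothness forced by the non-degeneracy of a degree $-d$ form on a connected graded algebra.

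The main obstacle I anticipate lies in the last step: verifying that ``commutes with the dualized differential'' is \emph{exactly} the cyclic condition \eqref{cyclic_pairing}, with the correct signs, and that the homological grading of $K_\bullet$ matches the internal grading through the function $N(i)$ so that a genuine degree $-d$ pairing appears rather than some other shift. Relatedly, one must confirm that the Calabi--Yau isomorphism can be chosen symmetric; this is routine for ordinary Koszul algebras but demands care in the $N$-Koszul setting, where the differentials jump between weight $1$ and weight $N-1$ and the sign conventions in \eqref{cyclic_pairing} must be tracked through both types of map.
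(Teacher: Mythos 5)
The paper itself offers no proof of this proposition: it is quoted from the literature, and the paper's entire ``proof'' is the citation to Wu--Zhu \cite[Corollary 4.11]{WZ1} and He--Van Oystaeyen--Zhang \cite[Proposition 3.3]{HVZ11}. Your plan --- dualize the Berger--Marconnet bimodule Koszul complex, upgrade the derived Calabi--Yau isomorphism to a chain-level isomorphism by uniqueness of minimal resolutions, and read off a non-degenerate pairing on $A^{\ac}$ --- is the same strategy those references use, so there is no divergence of route to discuss. The issue is that your sketch does not close the argument: the step you yourself flag as ``the main obstacle'' is a genuine gap, and it is exactly where the mathematical content of the cited papers lies.

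Concretely, intertwining the maps $\phi_i\colon (A^{\ac}_i)^{*}\to A^{\ac}_{d-i}$ with the Koszul differentials is \emph{not} ``exactly'' the cyclic condition \eqref{cyclic_pairing}. The bimodule Koszul differentials encode only the generator components of the co-operations: the weight-one differentials see the components of $\triangle_2$ lying in $V\otimes A^{\ac}$ and $A^{\ac}\otimes V$, and the weight-$(N-1)$ differentials see the components of $\triangle_N$ lying in $V^{\otimes p}\otimes A^{\ac}\otimes V^{\otimes q}$ with $p+q=N-1$. Hence the chain isomorphism yields only those instances of \eqref{cyclic_pairing} in which the unpaired tensor factors all lie in $V$; dually, this is invariance of the pairing on $A^{!}$ under multiplication by elements of $A^{!}_1$ only. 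The full cyclic conditions for $m_2$ and $m_N$ involve arbitrary homogeneous arguments, and passing from the generator-level identities to full cyclicity requires an induction on internal weight using the $A_\infty$-relations and the joint generation of $A^{!}$ in degree one by $m_2$ and $m_N$ (note that for $N\ge 3$ the product $m_2$ vanishes on $A^{!}_1\otimes A^{!}_1$ for weight reasons, so the two operations interlock and the bootstrap is genuinely subtler than in the quadratic case, where it is the familiar induction $\langle ab,c\rangle=\langle a,bc\rangle$); in general one must even allow modifying the $A_\infty$-structure by an $A_\infty$-isomorphism to strictify a homotopy-cyclic pairing, as in \cite{VdB3}. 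Without this step the implication $(1)\Rightarrow(2)$ is unproved (the converse $(2)\Rightarrow(1)$ is the easy direction, since full cyclicity implies the generator-level identities needed to build the self-duality of $K_\bullet$). Two further assertions also need justification rather than assertion: the symmetric normalization of the Calabi--Yau isomorphism is a theorem of Van den Bergh (the appendix ``The signs of Serre functor'' to \cite{Bocklandt}), not a choice one can simply make; and your minimal-resolution argument works in the graded category while the Calabi--Yau isomorphism in the definition is ungraded, so you must first argue that an invertible graded bimodule ungraded-isomorphic to $A$ is graded-isomorphic to some shift $A(\ell)$ (using that the units of a connected graded algebra are scalars). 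Incidentally, carrying the internal weights through $N(i)$ shows the two gradings can only match when $d$ is odd for $N\ge 3$ --- a constraint your last step would have surfaced had it been carried out, and a useful check that the bookkeeping there is not cosmetic.
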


\begin{proof}
See Wu-Zhu \cite[Corollary 4.11]{WZ1} and He-Van Oystaeyen-Zhang \cite[Proposition 3.3]{HVZ11}.\end{proof}

In view of Lemma~\ref{equivofalgcoal},
we can summarize the above three results in terms of {\it $A_\infty$ coalgebras} in the following form (due to the works
of Berger, Dotsenko-Vallette, He-Lu, He-Van Oystaeyen-Zhang, Van den Bergh and Wu-Zhu cited above):

\begin{theorem}\label{Thm_VdB}
Let $A$ be an $N$-Koszul algebra.
Then $A$ is $d$-Calabi-Yau if and only if
the Koszul dual $A_\infty$ coalgebra $A^{\ac}$ is cyclic of degree $-d$.
Moreover, in this case $\mathbf{\Omega}_\infty(A^{\ac})$ is a cofibrant resolution of $A$.
\end{theorem}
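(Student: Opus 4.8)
The plan is to assemble the cited results into the two claims. \emph{First}, I would recall that by the Berger--Marconnet/He--Lu proposition the Yoneda algebra $\mathrm{Ext}^\bullet_A(k,k)$ is isomorphic, as a unital augmented $A_\infty$ algebra, to the $N$-Koszul dual algebra $A^!$, and that by construction $A^{\ac}=(A^!)^\ast$ is its graded (component-wise) dual. Because $A$ is $N$-Koszul, $A^{\ac}$ is finite-dimensional in each degree, so Lemma~\ref{equivofalgcoal} applies and tells us that $A^!$ carries a cyclic $A_\infty$ algebra structure of degree $-d$ if and only if $A^{\ac}$ carries a cyclic $A_\infty$ coalgebra structure of degree $-d$.

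\emph{Second}, for the link to the Calabi--Yau property I would invoke Proposition~\ref{Thm_HL} (He--Van Oystaeyen--Zhang; Wu--Zhu) in the range $N\geq 3$: the algebra $A$ is $d$-Calabi--Yau if and only if $\mathrm{Ext}^\bullet_A(k,k)$ is a unital augmented cyclic $A_\infty$ algebra of degree $-d$. Combining this with the isomorphism $\mathrm{Ext}^\bullet_A(k,k)\cong A^!$ and with Lemma~\ref{equivofalgcoal} yields the chain of equivalences: $A$ is $d$-Calabi--Yau $\Longleftrightarrow$ $A^!$ is cyclic of degree $-d$ $\Longleftrightarrow$ $A^{\ac}$ is cyclic of degree $-d$. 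The remaining case $N=2$ (ordinary Koszul) is classical and goes back to Ginzburg's original observation that the Koszul dual of a Koszul Calabi--Yau algebra is cyclic; there the $A_\infty$ structure reduces to $\triangle_2$ alone and the same dualization via Lemma~\ref{equivofalgcoal} gives the statement.

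\emph{Finally}, for the ``moreover'' clause I would appeal to the Dotsenko--Vallette Theorem~\ref{LVDV}: the canonical map $\mathbf{\Omega}_\infty(A^{\ac})\stackrel{\simeq}{\twoheadrightarrow} A$ is a quasi-isomorphism of DG algebras. Since $\mathbf{\Omega}_\infty(A^{\ac})=T(s^{-1}\overline{A^{\ac}})$ is by construction a quasi-free, hence cofibrant, DG algebra, and the map is a surjective quasi-isomorphism, this exhibits $\mathbf{\Omega}_\infty(A^{\ac})$ as a cofibrant resolution of $A$; note this part is unconditional and does not require the Calabi--Yau hypothesis.

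The main obstacle is the bookkeeping of signs and degree shifts when transporting the cyclic pairing across the three identifications: Yoneda $\cong A^!$, the dualization $A^!\leftrightarrow A^{\ac}$, and the passage between the algebra condition \eqref{Ainfty_pairing} and the coalgebra condition \eqref{cyclic_pairing}. Concretely one must check that a non-degenerate symmetric form of degree $-d$ on $A^!$ dualizes to a non-degenerate symmetric form of the same degree on $A^{\ac}$, and that cyclic symmetry of the higher products $\mu_n$ corresponds to cyclic symmetry of the higher coproducts $\triangle_n$. This is exactly the content of Lemma~\ref{equivofalgcoal}, so once that lemma is granted the remainder of the argument is a routine concatenation of the quoted equivalences.
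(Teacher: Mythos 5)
Your proposal is correct and takes essentially the same route as the paper: the paper presents this theorem precisely as the combination, via Lemma~\ref{equivofalgcoal}, of the Berger--Marconnet/He--Lu identification $\mathrm{Ext}^\bullet_A(k,k)\cong A^!$, the He--Van Oystaeyen--Zhang/Wu--Zhu criterion (Proposition~\ref{Thm_HL}), and the Dotsenko--Vallette quasi-isomorphism $\mathbf{\Omega}_\infty(A^{\ac})\stackrel{\simeq}{\twoheadrightarrow}A$ (Theorem~\ref{LVDV}), with quasi-freeness of the cobar construction giving cofibrancy. One small point to tighten: Lemma~\ref{equivofalgcoal} is stated for $C$ of finite total dimension, not merely locally finite as you invoke it, but this is harmless here because a non-degenerate pairing of degree $-d$ on the non-negatively graded, locally finite $A^!$ (equivalently $A^{\ac}$) forces vanishing above degree $d$ and hence global finite-dimensionality on whichever side of the equivalence one starts from.
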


The most general form of this theorem is due to Van den Bergh given in \cite[cf. Theorem 11.1]{VdB3}.
Van den Bergh does not use the terminology ``$N$-Koszul'',
but it is clear that all $N$-Koszul algebras are Koszul in the sense of \cite{VdB3},
where the latter is the linear dual of the bar construction.
Since the finite dimensionality $\mathrm{Ext}^\bullet_{A}(k, k)$ is essentially used in the following construction,
we focus only on the $N$-Koszul case.

\subsection{Examples}
In this subsection, we list several known examples of $N$-Koszul Calabi-Yau
algebras, and therefore they all admit a derived non-commutative Poisson structure.
Note that by the above theorem, we only need to describe the cyclic $A_\infty$ algebra
structure on their Koszul dual.

First let us remind that
all {\it graded} $2$- and $3$-Calabi-Yau algebras are $N$-Koszul.
This fact is proved by Berger-Marconnet in \cite[Proposition 5.2]{BM}.
Being graded is important here,
since from Davison's result there exist $3$-Calabi-Yau algebras which
are not superpotential algebras and hence not $N$-Koszul (\cite{Davison}). %

\subsubsection{Three dimensional Sklyanin algebras}
Let $a,b,c\in k$.
The three dimensional Sklyanin algebra $A=A(a,b,c)$
is the graded $k$-algebra with generators $x, y, z$ of degree one, and relations
$$
\begin{array}{c}
f_1=cx^2+bzy+ayz=0,\\
f_2=azx+cy^2+bxz=0,\\
f_3=byx+axy+cz^2=0.
\end{array}
$$
This Sklyanin algebra is one of the important examples of Ginzburg's
Calabi-Yau algebras (see \cite[Example 1.3.8]{Ginzburg}).

Smith showed in \cite[Example 10.1]{Smith}
that $A$ is Koszul, whose dual algebra $A^!$
is generated by $\xi_1,\xi_2,\xi_3$ with relations
\begin{eqnarray*}
c\xi_2\xi_3-b\xi_3\xi_2,&& b\xi_1^2-a\xi_2\xi_3,\\
c\xi_3\xi_1-b\xi_1\xi_3,&& b\xi_2^2-a\xi_3\xi_1,\\
c\xi_1\xi_2-b\xi_2\xi_1,&& b\xi_3^2-a\xi_1\xi_2.
\end{eqnarray*}
To describe the non-degenerate pairing, choose a basis for $A^!$:
$$\begin{array}{cl}
A_0^!: & 1\\
A_1^!: & \xi_1,\xi_2,\xi_3\\
A_2^!: &\xi_1^2,\xi_2^2,\xi_3^2\\
A_3^!:&\xi_1\xi_2\xi_3
\end{array}
$$
The pairing $\langle u,v\rangle$ for homogeneous $u,v\in A^!$
is given by the scalar of $uv$ with respect to $\xi_1\xi_2\xi_3$.
This pairing is cyclically invariant, and therefore
we obtain a derived non-commutative Poisson structure on $A$.

\subsubsection{Four dimensional Sklyanin algebras}

Let $\alpha, \beta,\gamma\in k$ such that
$$\alpha+\beta+\gamma+\alpha\beta\gamma=0,\quad \{\alpha,\beta,\gamma\}\cap\{0,\pm 1\}=\emptyset.$$
The four dimensional Sklyanin algebra $A=A(\alpha,\beta,\gamma)$
is the graded $k$-algebra with generators $x_0,x_1,x_2,x_3$ of
degree one, and relations $f_i=0$, where
\begin{eqnarray*}
f_1=x_0x_1-x_1x_0-\alpha(x_2x_3+x_3x_2),&& f_2=x_0x_1+x_1x_0-(x_2x_3-x_3x_2),\\
f_3=x_0x_2-x_2x_0-\beta(x_3x_1+x_1x_3),&& f_4=x_0x_2+x_2x_0-(x_3x_1-x_1x_3),\\
f_5=x_0x_3-x_3x_0-\gamma(x_1x_2+x_2x_1),&& f_6=x_0x_3+x_3x_0-(x_1x_2-x_2x_1).
\end{eqnarray*}
As proved by Smith and Stafford (\cite[Propositions 4.3-4.9]{SS}),
$A$ is Koszul, whose Koszul dual algebra $A^!$
is generated by $\xi_0,\xi_1,\xi_2,\xi_3$ with the following relations:
\begin{eqnarray*}
&&  \xi_0^2=\xi_1^2=\xi_2^2=\xi_3^2=0,\\
&&  2\xi_2\xi_3+(\alpha+1)\xi_0\xi_1-(\alpha-1)\xi_1\xi_0=0,\\
&&  2\xi_3\xi_2+(\alpha-1)\xi_0\xi_1-(\alpha+1)\xi_1\xi_0=0,\\
&&  2\xi_3\xi_1+(\beta+1)\xi_0\xi_2-(\beta-1)\xi_2\xi_0=0,\\
&&  2\xi_1\xi_3+(\beta-1)\xi_0\xi_2-(\beta+1)\xi_2\xi_0=0,\\
&&  2\xi_1\xi_2+(\gamma+1)\xi_0\xi_3-(\gamma-1)\xi_3\xi_0=0,\\
&&  2\xi_2\xi_1+(\gamma-1)\xi_0\xi_3-(\gamma+1)\xi_3\xi_0=0.
\end{eqnarray*}
Smith and Stafford also showed that $A^!$ admits a non-degenerate symmetric pairing.
To see this, $A^!$ is spanned by
the following elements:
$$\begin{array}{cl}
A_0^!: & 1\\
A_1^!: &\xi_0,\xi_1,\xi_2,\xi_3\\
A_2^!: &\xi_0\xi_1,\xi_0\xi_2,\xi_0\xi_3,\xi_1\xi_0,\xi_2\xi_0,\xi_3\xi_0\\
A_3^!:&\xi_0\xi_1\xi_0,\xi_0\xi_2\xi_0,\xi_0\xi_3\xi_0,\xi_1\xi_0\xi_1\\
A_4^!:&\xi_0\xi_1\xi_0\xi_1
\end{array}
$$
and all other degree components are zero. Also, in degree 4, we have the following identities
$$\begin{array}{ll}
\xi_0\xi_j\xi_0\xi_j=-\xi_j\xi_0\xi_j\xi_0\quad\mbox{for}\quad 1\le j\le 3,&\xi_0\xi_i\xi_0\xi_j=0\quad\mbox{for}\quad i\ne j,
\displaystyle\frac{}{}\\
\xi_0\xi_3\xi_0\xi_3=\displaystyle\frac{1+\alpha}{1-\gamma}\xi_0\xi_1\xi_0\xi_1,&\\
\xi_0\xi_2\xi_0\xi_2=\displaystyle\frac{1+\gamma}{1-\beta}\frac{1+\alpha}{1-\gamma}\xi_0\xi_1\xi_0\xi_1.&
\end{array}$$
The pairing $\langle a,b\rangle$ for homogeneous $a, b\in A^!$ is defined to be the scalar
of $ab$ with respect to $\xi_0\xi_1\xi_0\xi_1$.
One easily sees the pairing such defined is graded symmetric and is cyclic.
Therefore $A(\alpha,\beta,\gamma)$ is Koszul Calabi-Yau of dimension 4, and there is
derived non-commutative Poisson structure on it.

\subsubsection{Universal enveloping algebras}
Let us first say a bit about {\it linear-quadratic} Koszul algebras.
Suppose $V$ is a finite dimensional vector space. A {\it linear quadratic relation} is a subspace
$S\subset V\oplus V^{\otimes 2}$.
And we may define the linear quadratic algebra $A(V, S)$ as before.

In this subsection, we assume $S$ satisfies the following two conditions:
\begin{eqnarray}
S\cap V&=&0,\label{ql1}\\
\{S\otimes V+V\otimes S\}\cap V^{\otimes 2}&=&S\cap V^{\otimes 2}.\label{ql2}
\end{eqnarray}
Let $qS: S\to V^{\otimes 2}$ be the projection.

\begin{definition}[Linear quadratic Koszul algebra]
A linear quadratic algebra $A=A(V, S)$ is said to
be {\it Koszul} if it satisfies conditions
(\ref{ql1}) and (\ref{ql2}) and if the  quadratic
algebra $A(V, qS)$ is Koszul.
\end{definition}

Since $qS$ is the image of the projection of $S$ to $V^{\otimes 2}$,
we in fact have a map
$$
\phi: qS\to V
$$
such that $S=\{X-\phi(X)|X\in qS\}$.
Denote by $(qA)^{\ac}$ the quadratic dual coalgebra of $T(V)/(qS)$,
then this $\phi$ gives a map
$$
d_\phi: (qA)^{\ac}\twoheadrightarrow qS\to V,
$$
which extends to a coderivation $d_\phi: (qA)^{\ac}\to T(V)$.

Now if
\begin{equation*}
\{S\otimes V+V\otimes S\}\cap S^{\otimes 2}\subset qS,
\end{equation*}
then the images of $d_\phi$ lie in $(qA)^{\ac}$. We in fact get
a co-derivation
$$
d_\phi: (qA)^{\ac}\to (qA)^{\ac}.
$$
And if furthermore,
\begin{equation}\label{linear-quadratic}
\{S\otimes V+V\otimes S\}\cap S^{\otimes 2}\subset S\otimes V^{\otimes 2},
\end{equation}
then $(d_\phi)^2=0$,
and we obtain a differential graded coalgebra $((qA)^{\ac}, d_\phi)$,
which is the {\it Koszul dual coalgebra} of $A(V,S)$,
and is denoted by $(A(V,S)^{\ac},d)$. Its linear dual is a differential graded
algebra, and is denoted by $A(V,S)^!$.
For more details, see \cite[\S3.6]{LV}.

\begin{lemma}\label{KtoLK}
Suppose $A(V,qS)$ is a Calabi-Yau algebra of dimension $n$.
Then $A(V, S)$ is a Calabi-Yau algebra of the same dimension
if and only if any nonzero element in $A(V,S)^{\ac}_n$ is a cycle.
\end{lemma}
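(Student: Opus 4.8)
The plan is to reduce the assertion to a single compatibility condition between the linear differential $d_\phi$ and the cyclic pairing supplied by the quadratic part, and then to localize that condition in top degree. Throughout write $C:=A(V,S)^{\ac}$, whose underlying graded coalgebra is the quadratic dual coalgebra of $A(V,qS)$. By hypothesis $A(V,qS)$ is Calabi-Yau of dimension $n$, so Theorem~\ref{Thm_VdB} together with Lemma~\ref{equivofalgcoal} equips $C$ with a non-degenerate symmetric cyclic pairing $\langle-,-\rangle$ of degree $-n$; in particular $C_n$ is one-dimensional and the induced maps $C_i\otimes C_{n-i}\to C_n\cong k$ are perfect.

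First I would show that $A(V,S)$ is Calabi-Yau of dimension $n$ if and only if the DG coalgebra $(C,d_\phi)=A(V,S)^{\ac}$ is cyclic of degree $-n$. Under the conditions \eqref{ql1}--\eqref{ql2} one has $\mathrm{gr}\,A(V,S)\cong A(V,qS)$, so the deformed Koszul bimodule complex $A\otimes C_\bullet\otimes A$ of $A(V,S)$---with differential given by the quadratic Koszul part together with the $\phi$-twist---remains a projective bimodule resolution, since its associated graded is the Koszul resolution of the quadratic algebra and is therefore exact. The Calabi-Yau property is the self-duality of this resolution under $\mathrm{Hom}_{A^e}(-,A^e)$; the volume form spanning $C_n$ and the perfect pairings above already identify the $i$-th term with the dual of the $(n-i)$-th term, and the quadratic part of the differential matches its transpose exactly because $A(V,qS)$ is Calabi-Yau. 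The only surviving requirement is that the coderivation $d_\phi$ be compatible with the pairing, namely
\[
\langle d_\phi a,b\rangle+(-1)^{|a|}\langle a,d_\phi b\rangle=0\qquad\text{for all }a,b\in C,
\]
which is precisely the cyclicity of $(C,d_\phi)$. Alternatively, this equivalence may be quoted from Van den Bergh's general duality \cite{VdB3}, which does not assume gradedness.

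Next I would pass to the dual DG algebra $A(V,S)^!=(C^*,\partial)$, where $\partial$ is the derivation dual to $d_\phi$ and, under $C^*\cong A^!$, the pairing becomes the Frobenius form $\langle x,y\rangle=(xy)_n$ given by the top-degree component of the product, exactly as in the Sklyanin examples. Since $\partial$ is a derivation,
\[
\langle\partial x,y\rangle+(-1)^{|x|}\langle x,\partial y\rangle=\bigl(\partial(xy)\bigr)_n,
\]
so the compatibility above is equivalent to $(\partial z)_n=0$ for every $z$ in the image of multiplication into $A^!_{n-1}$. As $A^!$ is generated in degrees $0$ and $1$, such products span all of $A^!_{n-1}$ (the finitely many low-degree cases being immediate), so the condition reads $\partial|_{A^!_{n-1}}=0$. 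Dualizing, this is exactly $d_\phi|_{C_n}=0$, i.e.\ every element of $A(V,S)^{\ac}_n$ is a cycle, which combined with the previous equivalence yields the lemma.

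The step I expect to be the main obstacle is the first equivalence: justifying that the Calabi-Yau property of the filtered, non-graded algebra $A(V,S)$ is detected exactly by cyclicity of the DG coalgebra $(C,d_\phi)$. The structural results quoted earlier are stated for graded $N$-Koszul algebras, so one must either run the self-duality argument for the deformed bimodule resolution by hand---tracking the lower-order term $d_\phi$ through the duality isomorphism induced by the volume form and verifying that its transpose is again $d_\phi$ precisely under the displayed compatibility---or invoke Van den Bergh's filtered Koszul duality. Once this is in place, the reduction to top degree is a purely formal manipulation.
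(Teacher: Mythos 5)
Your proposal is correct and follows essentially the same route as the paper's own proof: identify $A(V,S)^{!}$ with $A(V,qS)^{!}$ as graded spaces, transfer the cyclic non-degenerate pairing coming from the Calabi-Yau property of $A(V,qS)$, observe that $A(V,S)$ is Calabi-Yau precisely when this pairing respects the differential (the paper, like you, ultimately leans on Theorem~\ref{Thm_VdB} and its general form in \cite{VdB3} for this equivalence), and then note that compatibility of the pairing with the differential is exactly the condition that the top chain of $A(V,S)^{\ac}$ be a cycle. Your Frobenius-form computation $\langle\partial x,y\rangle+(-1)^{|x|}\langle x,\partial y\rangle=\bigl(\partial(xy)\bigr)_n$ simply makes explicit the last equivalence, which the paper asserts without detail (and in fact you can skip the generation-in-degrees-$0,1$ remark, since taking $x=1$ already gives all of $A^{!}_{n-1}$).
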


\begin{proof}
As underlying spaces, $A(V,S)^{!}$ is isomorphic to $A(V, qS)^{!}$.
Since $A(V,qS)$ is Calabi-Yau, there is a cyclically invariant non-degenerate
pairing on $A(V,qS)^{!}$, which then gives the same pairing on
$A(V,S)^{!}$.
Thus to show $A(V,S)$
is Calabi-Yau, it is equivalent to show such pairing respects
the differential, which is again equivalent to show the
fundamental chain (the top chain in $A(V,S)^{\ac}$)
is a cycle. This completes the proof.
\end{proof}

Suppose $\mathfrak g$ is a Lie algebra, then
the universal enveloping algebra $U(\mathfrak g)$
has Koszul dual differential graded coalgebra which is exactly the Chevalley-Eilenberg complex
$(\mathrm{CE}_\bullet(\mathfrak g), d)$.
It is also known ({\it cf}. \cite[Chapter V]{GHV}) that the top chain of the Chevalley-Eilenberg complex
is a cycle if and only if $\mathrm{Tr}(\mathrm{ad}(g))=0$ for all $g\in\mathfrak g$. A Lie algebra satisfying
this property is called {\it unimodular}.
Examples of unimodular Lie algebras are abelian Lie algebras, semi-simple Lie algebras, Heisenberg Lie algebras, and
the Lie algebra of compact groups.
Thus as a corollary to Lemma \ref{KtoLK},
we have the following statement, which is due to He, Van Oystaeyen and Zhang
(\cite[Theorem 5.3]{HVZ}).

\begin{theorem}[He, Van Oystaeyen and Zhang]
Let $\mathfrak g$ be a Lie algebra of dimension $n$.
Then the universal enveloping algebra $U(\mathfrak g)$ is $n$-Calabi-Yau
if and only if $\mathfrak g$ is unimodular.
\end{theorem}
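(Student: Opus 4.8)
The plan is to realize $U(\mathfrak g)$ as a linear quadratic algebra and then read off the conclusion from Lemma~\ref{KtoLK}. First I would set $V=\mathfrak g$ and let $S\subset V\oplus V^{\otimes 2}$ be the span of the elements $x\otimes y-y\otimes x-[x,y]$ for $x,y\in\mathfrak g$; by the very definition of the enveloping algebra, $A(V,S)=U(\mathfrak g)$. I would then verify that $S$ satisfies the three compatibility conditions \eqref{ql1}, \eqref{ql2} and \eqref{linear-quadratic}. The condition $S\cap V=0$ is immediate, since every nonzero element of $S$ has nonzero quadratic component $x\otimes y-y\otimes x$, while \eqref{ql2} and \eqref{linear-quadratic} are precisely the reformulations of the Jacobi identity on $\mathfrak g$ that guarantee that the induced map $\phi\colon qS\to V$, $x\wedge y\mapsto[x,y]$, lifts to a coderivation squaring to zero. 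This exhibits $U(\mathfrak g)$ as a linear quadratic Koszul algebra with associated quadratic algebra $A(V,qS)$.

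Next I would pin down the inputs required by Lemma~\ref{KtoLK}. The quadratic part is $qS=\mathrm{span}\{x\otimes y-y\otimes x\}$, so $A(V,qS)$ is the symmetric algebra $S(\mathfrak g)\cong k[x_1,\dots,x_n]$, the polynomial algebra in $n=\dim\mathfrak g$ variables. This algebra is Koszul, and it is $n$-Calabi-Yau: its Koszul dual coalgebra is the exterior coalgebra $\Lambda\mathfrak g$ with zero differential, whose top component $\Lambda^n\mathfrak g$ is one-dimensional and furnishes the non-degenerate cyclic pairing of degree $-n$ required by Theorem~\ref{Thm_VdB}. Hence the hypothesis of Lemma~\ref{KtoLK}, namely that $A(V,qS)$ be Calabi-Yau of dimension $n$, is in force.

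With these pieces assembled, Lemma~\ref{KtoLK} asserts that $A(V,S)=U(\mathfrak g)$ is $n$-Calabi-Yau if and only if every nonzero element of the top component $A(V,S)^{\ac}_n$ is a cycle. Here I would invoke the identification, recalled just before the theorem, of the Koszul dual differential graded coalgebra $(A(V,S)^{\ac},d)$ with the Chevalley-Eilenberg complex $(\mathrm{CE}_\bullet(\mathfrak g),d)$: the differential on $\Lambda\mathfrak g$ induced by $\phi(x\wedge y)=[x,y]$ is exactly the Chevalley-Eilenberg differential, and under this identification $A(V,S)^{\ac}_n=\Lambda^n\mathfrak g$ is the top (fundamental) chain. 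By the classical computation cited from \cite{GHV}, this top chain is a cycle if and only if $\mathrm{Tr}(\mathrm{ad}(g))=0$ for all $g\in\mathfrak g$, that is, if and only if $\mathfrak g$ is unimodular. Chaining the two equivalences gives the theorem.

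I expect the only genuinely delicate point to be the verification that $S$ meets \eqref{ql1}--\eqref{linear-quadratic}, so that $U(\mathfrak g)$ legitimately sits inside the linear quadratic Koszul framework and that its Koszul dual is the Chevalley-Eilenberg complex with the stated differential; once this and the $n$-Calabi-Yau property of the polynomial algebra are secured, the conclusion is a direct reading of Lemma~\ref{KtoLK} through the Chevalley-Eilenberg picture.
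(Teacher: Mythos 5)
Your proposal is correct and follows essentially the same route as the paper: both realize $U(\mathfrak g)$ as a linear quadratic Koszul algebra whose quadratic part is $\mathrm{Sym}(\mathfrak g)$, identify its Koszul dual DG coalgebra with the Chevalley--Eilenberg complex $(\mathrm{CE}_\bullet(\mathfrak g),d)$, and then apply Lemma~\ref{KtoLK} together with the classical fact from \cite{GHV} that the top chain is a cycle if and only if $\mathfrak g$ is unimodular. The only difference is that you spell out the verification of conditions \eqref{ql1}--\eqref{linear-quadratic} and the $n$-Calabi--Yau property of $\mathrm{Sym}(\mathfrak g)$, which the paper leaves implicit.
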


The derived non-commutative Poisson structure on $U(\mathfrak g)$ is highly nontrivial, even
in the polynomial case ({\it i.e.} the case where $\mathfrak g$ is abelian);
for more details see \S\ref{Sect_example_poly_alg}.

\subsubsection{Yang-Mills algebras}
Yang-Mills algebras are introduced by Connes and Dubois-Violette (\cite{CDV}).
An algebra $A$ is called a \emph{Yang-Mills algebra} if $A$ is generated by
the elements $x_i$ ($i\in\{ 1,\cdots,n\}$) with the following relations:
$$
g^{ij}[x_i,[x_j,x_l] ] =0, \, l\in \{ 1,2,\cdots,n\},
$$
where $(g^{ij})$ is a symmetric invertible $n\times n$-matrix.
Equivalently, $A=A(V,S)$ is a $3$-homogenous algebra,
with $V:=\bigoplus_{i}k x_i$, and $S\subset V^{\otimes 3}$ spanned by
$$
g^{ij}(x_i\otimes x_j\otimes x_l+x_l\otimes x_i\otimes x_i-2 x_i\otimes x_l\otimes x_j).
$$
In literature, the above $A$ is also denoted by $\mathrm{YM}(n)$.

Yang-Mills algebras are $3$-Koszul $3$-Calabi-Yau.
In fact Connes and Dubois-Violette (\cite[Theorem 1]{CDV}) proved that Yang-Mills algebras are $3$-Koszul, and
Berger-Taillefer (\cite[Proposition 4.5]{BT}) proved they are $3$-Calabi-Yau.

We give a brief description of the $A_\infty$ operators and the
pairing on $\mathrm{YM}(n)^!$.
For simplicity, we take $(g^{ij})$ to be the identity matrix.
Denote $V=\mathrm{Span}_k\{x_1,x_2,\cdots,x_n\}$,
and
$$
S=\mathrm{Span}_k\left\{
\sum_{i=1}^n[x_i,[x_i,x_j]]:1\le j\le n\right\}\subset V^{\otimes 3}.
$$
Then
$$
\mathrm{YM}(n)=TV/\langle S\rangle.
$$
And
the homogeneous
dual algebra of $\mathrm{YM}(n)$ is given as follows (see Connes and Dubois-Violette \cite[Proposition 1]{CDV} and also
Herscovich and Solotar \cite[Proposition 2.3]{HS2}):
$$
\begin{array}{lll}
\mathrm{YM}(n)^{\vee}_0=k1,& \mathrm{YM}(n)^{\vee}_1=V^*,&\mathrm{YM}(n)^{\vee}_2=\bigoplus_{i,j=1}^n kx_i^*x_j^*,\\
\mathrm{YM}(n)^{\vee}_3=\bigoplus_{i=1}^n kx_i^*z,
& \mathrm{YM}(n)^{\vee}_4=k z^2,&\mathrm{YM}(n)^{\vee}_i=0 \quad(i>4),
\end{array}
$$
where
$z=\sum_{i=1}^n (x_i^*)^2$.
The Koszul dual algebra of $\mathrm{YM}(n)$ is thus given taking
\begin{eqnarray*}
\mathrm{YM}(n)^{!}_0=\mathrm{YM}(n)^{\vee}_0,&&
\mathrm{YM}(n)^{!}_1=\mathrm{YM}(n)^{\vee}_1,\\
\mathrm{YM}(n)^{!}_2=\mathrm{YM}(n)^{\vee}_3,&&
\mathrm{YM}(n)^{!}_3=\mathrm{YM}(n)^{\vee}_4,
\end{eqnarray*}
and the $A_\infty$ operators (recall that we only have two nontrivial
operators $\mu_2$ and $\mu_3$ in this case) are given as follows:
denoting
the above identification by $$\phi:
\mathrm{YM}(n)^{!}_i\stackrel{\cong}{\to} \mathrm{YM}(n)^{\vee}_j,$$
then
for $f_1, f_2, f_3\in\mathrm{YM}(n)^!$,
\begin{equation}
\mu_2(f_1,f_2)=
\left\{
\begin{array}{cl}
\phi^{-1}(\phi(f_1)\cdot\phi(f_2)),
&\mbox{if}\; \phi(f_1)\cdot\phi(f_2)\notin \mathrm{YM}(n)_2^{\vee},\\
0,&\mbox{otherwise,}
\end{array}
\right.
\end{equation}
and
\begin{equation}
\quad\mu_3(f_1,f_2,f_3)=
\left\{
\begin{array}{cl}
\phi^{-1}(\phi(f_1)\cdot\phi(f_2)\cdot\phi(f_3)),&\mbox{if}\;  \phi(f_1)\cdot\phi(f_2)\cdot\phi(f_3)\notin \mathrm{YM}(n)_2^{\vee},\\
0,&\mbox{otherwise.}
\end{array}
\right.
\end{equation}
There is a pairing on $\mathrm{YM}(n)^{\vee}$, which is again
the scaler of the corresponding product with respect to $z^2$,
and hence induces the pairing on $\mathrm{YM}(n)^{!}$ via $\phi$.

\section{Derived representation schemes}\label{Sect_DRep}

As we mentioned in \S\ref{Sect_Intro},
according to the Kontsevich-Rosenberg Principle,
any meaningful non-commutative geometric structure on
an associative algebra $A$ should naturally induce its classical counterpart
on the representative scheme $\mathrm{Rep}_V(A)$,
for all $k$-vector space $V$.
Cuntz and Quillen in \cite{Cuntz_Quillen1} first studied the smoothness problem for associative algebras,
and introduced the notion of {\it smooth algebras}. If an algebra is smooth (e.g. cofibrant),
then its representation scheme is smooth, too. However, in practice,
an algebra is rarely smooth, and therefore it is really difficult to study
the geometry on the representation scheme.

In the following, we first briefly recall the derived representation scheme of a DG algebra,
and then discuss its relations to cyclic homology; most materials are taken from \cite{BFR,BKR}.
After that, we recall several results on
derived non-commutative Poisson structures from \cite{BCER}.

\subsection{Derived representation scheme and cyclic homology}

We start with the representation scheme of associative algebras.
Suppose $A$ is an associative algebra, and $V$ is a $k$-vector space.
Consider the following functor
$$\mathrm{Rep}_V(A): \mathsf{CommAlg}_k\to\mathsf{Sets}, \;
B\mapsto \mathrm{Hom}_{\mathsf{Alg}_k}(A, \mathrm{End}(V)\otimes B),$$
where $\mathsf{CommAlg}_k$ is the category of commutative, unital $k$-algebras
and $\mathsf{Alg}_k$ is the category of unital $k$-algebras.
This functor is representable, which means there exists a commutative algebra, which we denote by
$k[\mathrm{Rep}_V(A)]$, such that
$$\mathrm{Hom}_{\mathsf{Alg}_k}(A, \mathrm{End}(V)\otimes B)
\cong
\mathrm{Hom}_{\mathsf{CommAlg}_k}(k[\mathrm{Rep}_V(A)], B).$$
Keeping $V$ fixed, we in fact get a functor
$$(-)_V: \mathsf{Alg}_k\to\mathsf{CommAlg}_k,\; A\mapsto k[\mathrm{Rep}_V(A)]$$
which we call the {\it representation functor} in $V$,
and the corresponding scheme is called the {\it representation scheme}.
The representation functor can be extended
to the category of DG algebras, $\mathsf{DGA}_k$,
which has a model structure in the sense of Quillen \cite{Quillen}.
In \cite{BKR}
the authors showed that $(-)_V$ defines a left Quillen functor on $\mathsf{DGA}_k$
and therefore it has a total derived functor
$$\mathbf L(-)_V: \mathsf{Ho}(\mathsf{DGA}_k)\to\mathsf{Ho}(\mathsf{CDGA}_k)$$
from the homotopy category of DG algebras to the homotopy category of commutative DG algebras.
When applied to $A$,  $\mathbf L(A)_V$ is called the {\it derived representation scheme} of $A$, and is
represented by a commutative DG algebra, which we denote by
$\mathrm{DRep}_V(A)$.
The homology of $\mathrm{DRep}_V(A)$, namely $\mathrm H_\bullet(\mathrm{DRep}_V(A))$, only depends
on $A$ and $V$, and is called the {\it representation homology} of $A$.
We have that $\mathrm H_0(\mathrm{DRep}_V(A))$ is exactly
$k[\mathrm{Rep}_V(A)]$.

Now let us remind some results in model category theory
(we recommend the excellent survey of Dwyer and Spalinski \cite{DS} for
an introduction to model categories).
Suppose $\mathcal A$ is a model category, then the homotopy category $\mathsf{Ho}(\mathcal A)$
is a category where the objects remain the same as $\mathcal A$,
and the morphisms for two objects, say $A$ and $B$,
are given by
$$\mathrm{Hom}_{\mathsf{Ho}(\mathcal A)}(A,B)
:=\mathrm{Hom}_{\mathcal A}(QA,QB)/\mbox{quasi-equivalences},$$
where $QA$ and $QB$ are the {\it cofibrant resolutions} of $A$ and $B$ respectively
(for more details see \cite[\S5]{DS}).
In the category of non-negatively (or non-positively) graded DG algebras, any quasi-free resolution is a cofibrant resolution.

Now suppose $A$ is a DG algebra, and $R$ is its cofibrant resolution. Then by definition
$\mathbf L(A)_V$ can be represented by
$R\mapsto \mathrm{DRep}_V(A)$.
A key result in \cite{BKR} is the construction of the higher trace map
$$\mathrm{Tr}: \mathrm H_\bullet(R_{\natural})\to \mathrm H_\bullet(\mathrm{DRep}_V(A)).$$
Via Feigin-Tsygan's result
$\mathrm{H}_{\bullet}(\bar R_{\natural})\cong\overline{\mathrm{HC}}_\bullet(A)$
(see \cite{FT} and also \S\ref{Sect_H_C_of_coalg}, Proposition \ref{propresol1})
and by the fact there is a surjective map
$\mathrm{HC}_\bullet(A)\to\overline{\mathrm{HC}}_\bullet(A)$,
we in fact get a map
\begin{equation}
\label{derived_tracemap}
\mathrm{Tr}: \mathrm{HC}_{\bullet}(A)\to\mathrm{H}_\bullet(\mathrm{DRep}_V(A)),
\end{equation}
called the {\it derived trace map},
which, when restricting to degree zero,
is exactly the usual trace map $A_{\natural}=A/[A,A]\to k[\mathrm{Rep}_V(A)]$.

For an associative algebra $A$, $\mathrm{Rep}_V(A)$ has a natural $\mathrm{GL}(V)$ action,
and the covariant space $\mathrm{Rep}_V(A)^{\mathrm{GL}(V)}$ classifies the
isomorphism classes
of representations of $A$ on $V$.
In the derived representation case, there is an analogous result, and moreover,
the authors of \cite{BKR} showed that
there is an isomorphism
$$
\mathrm H_\bullet(\mathrm{DRep}_V(A)^{\mathrm{GL}(V)})\cong\mathrm
H_\bullet(\mathrm{DRep}_V(A))^{\mathrm{GL}(V)}
$$
and the derived trace map \eqref{derived_tracemap} is $\mathrm{GL}(V)$ invariant,
and hence is a map
$$\mathrm{Tr}: \mathrm{HC}_\bullet(A)\to \mathrm H_\bullet(\mathrm{DRep}_V(A))^{\mathrm{GL}(V)}.$$

\subsection{Derived non-commutative Poisson structures}
The above general setting is successfully applied to the study of the derived
non-commutative Poisson structures (\cite{BCER}).

Suppose $A$ is an associative algebra, or more generally a DG algebra.
A {\it derived non-commutative Poisson structure} on $A$,
following the Kontsevich-Rosenberg Principle
in a naive way, is such a structure on its cofibrant resolution $QA$ that naturally
induces a DG Poisson structure on its derived representation scheme
$\mathrm{DRep}_V(A)$ (or more precisely on $\mathrm{DRep}_V(A)^{\mathrm{GL}(V)}$),
for all $V$.
In \cite{BCER}, the authors proposed the following:

\begin{definition}[Derived non-commutative Poisson structure; \cite{BCER}]
Let $A$ be a DG algebra.
A {\it derived non-commutative Poisson bracket} of degree $n$ on $A$
is a DG non-commutative Poisson bracket of degree $n$ in the sense of Crawley-Boevey
on its cofibrant resolution $QA$, namely, it is a differential graded Lie bracket of degree $n$
on $(QA)_\natural$ such that
$$[\bar a,-]:(QA)_\natural\to (QA)_\natural$$
is induced by a derivation $d_a:QA\to QA$, which commutes with the differential.
\end{definition}

The derived non-commutative Poisson bracket
does not depend on the choice of resolutions $QA$ up to homotopy, and hence
is well defined on the homotopy category of DG algebras.
The following is the main result proved in \cite{BCER}:

\begin{theorem}[\cite{BCER} Theorem 9]
Suppose $A$ is an associative (DG) algebra equipped with a derived $n$-Poisson structure.
Then there exists a unique graded $n$-Poisson algebra structure on
$\mathrm H_\bullet(\mathrm{DRep}_V(A))^{\mathrm{GL}(V)}$ for all $V$
such that the derived trace map is a map of graded Lie algebras.
\end{theorem}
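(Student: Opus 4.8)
The plan is to reproduce Crawley-Boevey's classical construction of an induced Poisson structure on a representation scheme, but now carried out at the level of a cofibrant model and then descended to homology. First I would fix a cofibrant (quasi-free) resolution $QA$ of $A$, so that the derived $n$-Poisson structure is concretely a DG Lie bracket of degree $n$ on $(QA)_\natural$ for which each operator $[\bar a,-]$ is induced by a derivation $d_a\colon QA\to QA$ commuting with the differential. I would then apply the representation functor $(-)_V$, using the standard fact (from \cite{BFR,BKR}) that it carries derivations to vector fields: a derivation $\theta$ of $QA$ is an infinitesimal automorphism, acting on representations by $\rho\mapsto \rho\circ\theta$, and hence determines a derivation $\theta_V$ of the commutative DG algebra $(QA)_V$ representing $\mathrm{DRep}_V(A)$. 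Crucially, $\theta\mapsto\theta_V$ is a morphism of DG Lie algebras, and since each $d_a$ commutes with the differential, so does each $(d_a)_V$.

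With this in hand I would define the bracket by declaring the Hamiltonian vector field of $\mathrm{Tr}(\bar a)$ to be $(d_a)_V$, that is, $\{\mathrm{Tr}(\bar a),f\}:=(d_a)_V(f)$ for every invariant $f$, extended to all of $((QA)_V)^{\mathrm{GL}(V)}$ by the Leibniz rule; together with the ambient graded-commutative product this is the candidate $n$-Poisson structure. The one computation I would record is the compatibility obtained by unwinding the trace map,
$$(d_a)_V\bigl(\mathrm{Tr}(\bar b)\bigr)=\mathrm{Tr}\bigl(\overline{d_a(b)}\bigr)=\mathrm{Tr}([\bar a,\bar b]),$$
which says exactly that on generators the bracket reproduces, through $\mathrm{Tr}$, the Lie bracket on $(QA)_\natural$, i.e.\ that $\mathrm{Tr}$ is a morphism of (graded) Lie algebras.

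The substance of the argument is then a sequence of well-definedness checks. The derivation $d_a$ is pinned down by $[\bar a,-]$ only up to a derivation $d'$ with $d'(QA)\subset[QA,QA]$; any such $d'$ satisfies $(d')_V(\mathrm{Tr}(\bar b))=\mathrm{Tr}(\overline{d'(b)})=0$, and since by the first fundamental theorem of invariant theory for $\mathrm{GL}(V)$ in characteristic zero the invariant subalgebra $((QA)_V)^{\mathrm{GL}(V)}$ is generated by the traces $\mathrm{Tr}(\bar b)$, the derivation $(d')_V$ annihilates all invariants. Hence $(d_a)_V$ restricted to invariants depends only on $\bar a\in(QA)_\natural$, and the same generation statement forces \emph{uniqueness}, since a biderivation is determined by its values on the generating traces. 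Graded antisymmetry and the Jacobi identity I would deduce from the corresponding properties of the bracket on $(QA)_\natural$ together with the Lie morphism property $[(d_a)_V,(d_b)_V]=(d_{[\bar a,\bar b]})_V$ on invariants, which holds because both sides represent $[[\bar a,\bar b],-]$ and so differ by a derivation killing invariants.

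The hard part will be precisely this interplay between the non-uniqueness of $d_a$ and the invariant-theoretic generation statement: one must verify that every step descends cleanly from $(QA)_V$ to its invariants and then to homology. For the final descent I would use that each $(d_a)_V$ is a chain map, so the bracket passes to $\mathrm H_\bullet(\mathrm{DRep}_V(A))^{\mathrm{GL}(V)}\cong \mathrm H_\bullet\bigl(\mathrm{DRep}_V(A)^{\mathrm{GL}(V)}\bigr)$ (the isomorphism available because $\mathrm{GL}(V)$ is linearly reductive in characteristic zero), a Hamiltonian cycle that is a boundary inducing a vector field that vanishes on homology. Lastly, independence of the resolution $QA$ up to homotopy — already part of the definition of the derived non-commutative Poisson structure — combined with the fact that $(-)_V$ is a left Quillen functor, guarantees that the resulting graded $n$-Poisson structure on the representation homology is canonical, completing the proof.
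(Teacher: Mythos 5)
A preliminary remark: the paper you were given never proves this statement --- it is imported verbatim from \cite{BCER} (Theorem 9 there) and used as a black box in \S\ref{Sect_DRep}. So the only meaningful comparison is with the argument of the cited source, which is the graded/DG version of Crawley-Boevey's construction in \cite{CB}. Your strategy coincides with that argument in outline: pass to a cofibrant model $QA$, use functoriality of $(-)_V$ on derivations, use the graded first fundamental theorem (from \cite{BKR}) that $((QA)_V)^{\mathrm{GL}(V)}$ is generated by traces, and descend to homology using linear reductivity. The lift-ambiguity check (a derivation $d'$ with $d'(QA)\subset[QA,QA]$ kills traces, hence all invariants) and the chain-level uniqueness argument are also correct.

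There is, however, a genuine gap at the central step, namely the assertion that $\{\mathrm{Tr}(\bar a),f\}:=(d_a)_V(f)$, ``extended by the Leibniz rule,'' is a well-defined bracket on $((QA)_V)^{\mathrm{GL}(V)}$. Your only well-definedness check concerns the choice of lift $d_a$ for a \emph{fixed} $\bar a\in (QA)_\natural$. Two harder issues are untouched, and they are the actual content of the proof: (i) $\mathrm{Tr}:(QA)_\natural\to ((QA)_V)^{\mathrm{GL}(V)}$ is far from injective, so one must show that $(d_a)_V$ restricted to invariants depends only on $\mathrm{Tr}(\bar a)$, not on $\bar a$; (ii) the invariant algebra is generated but \emph{not freely} generated by traces (there are Cayley--Hamilton-type relations), so a biderivation prescribed on generators exists only if it annihilates all relations --- ``extend by Leibniz'' is not automatic. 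Both points are resolved by the antisymmetry trick that is the heart of Crawley-Boevey's proof and is missing from yours: if $\mathrm{Tr}(\bar a)=0$, then for every $\bar b$ one has $(d_a)_V(\mathrm{Tr}(\bar b))=\mathrm{Tr}([\bar a,\bar b])=\pm\,\mathrm{Tr}([\bar b,\bar a])=\pm\,(d_b)_V(\mathrm{Tr}(\bar a))=0$, so $(d_a)_V$ kills every trace and hence, by generation, every invariant; applying the same computation to a polynomial relation $r(\mathrm{Tr}(\bar a_1),\dots,\mathrm{Tr}(\bar a_k))=0$ shows that the derivation $\sum_i (\partial r/\partial x_i)\cdot (d_{a_i})_V$ kills all traces and therefore vanishes on invariants, which is exactly the compatibility needed for the Leibniz extension. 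Without this step the existence of your candidate bracket is unproven; the ``hard part'' you flag is mischaracterized as the lift ambiguity, which you had already disposed of. A secondary caution: your uniqueness argument invokes generation by traces, which is a chain-level fact; since the theorem asserts uniqueness on $\mathrm H_\bullet(\mathrm{DRep}_V(A))^{\mathrm{GL}(V)}$, one should pin the structure down on the DG invariant algebra and then transport it to homology, because generation by trace \emph{classes} need not survive passage to homology.
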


\section{Proof of main theorems}\label{Sect_proof_of_main_thm}

\subsection{Proof of Theorems \ref{maintheorem1} and \ref{maintheorem2}}

Let $A$ be an $N$-Koszul $d$-Calabi-Yau algebra. Then by Theorem \ref{Thm_VdB}
there is a cofibrant resolution $p: QA  \stackrel{\simeq}{\twoheadrightarrow}A$, where $QA=\mathbf{\Omega}_\infty(A^{\ac})$
and $A^{\ac}$ is a cyclic $A_\infty$ coalgebra of degree $-d$. Next, by Lemma~\ref{lemmaident} and Corollary~\ref{corimp1},
we get
$$ \mathrm{HC}_{\bullet}(A) \cong \mathrm{HC}_{\bullet}(A^{\ac}) \cong
 \mathrm{H}_{\bullet}[{\bar R_{\natural}}], $$
where $R=\mathbf{\Omega}_\infty(\tilde A^{\ac})$.  And by Corollary~\ref{corimp2}, one has
$$ \mathrm{HH}_{\bullet}(A) \cong \mathrm{H}_{\bullet}[\Omega^1_{R,\natural}] \, .$$
\begin{proof}[Proof of Theorem~\ref{maintheorem1}]
By Theorem~\ref{bracksmain2}, there is a bracket $\{-,-\}$ making $\mathrm{H}_{\bullet}[{\bar R_{\natural}}] $
a $(2-d)$-Lie  algebra and a bracket $\{-,-\}_{\Omega^1_{R,\natural}}$ making
$\mathrm{H}_{\bullet}[\Omega^1_{R,\natural}]$ a Lie module over it.
Since, the first one can be identified with $ \mathrm{HC}_{\bullet}(A)$ and
the second one with $ \mathrm{HH}_{\bullet}(A)$, we have proved this theorem.
\end{proof}
\begin{proof}[Proof of Theorem~\ref{maintheorem2}]
We recall (see  \eqref{KerCoker})  that $\mathrm{CC}_{\bullet}(C)=\mathrm{Ker}(1-T) \cong \mathrm{Coker}(1-T)$.
Hence from the $2$-periodic complex \eqref{bicompbij} one obtains the following exact sequence of complexes
\begin{equation}
\label{exactseqcom1}
 \xymatrix{ 0 \ar[r]& \mathrm{CC}_{\bullet}(C) \ar[r]^i & \Omega^1_{R,\natural} \ar[r]^{-\beta} &  \bar{R} \ar[r]^{\pi} &
 \mathrm{CC}_{\bullet}(C) \ar[r] & 0,}
\end{equation}
since $-\beta=1-T$.  Note we can identify $ \mathrm{CC}_{\bullet}(C)$ with $\bar R_{\natural}$
which is a Lie module and the inclusion $i$ is a DG Lie module homorphism.
$\beta$ is a DG Lie module homorphism by Theorem~\ref{Liemorph1}
and so is the projection $\pi$.

From \eqref{exactseqcom1} we get the Connes long exact sequence for the coalgebra $C$
$$ \xymatrix{ \ar[r] & \mathrm{HH}_{\bullet}(C) \ar[r] & \mathrm{HC}_{\bullet}(C) \ar[r]&
 \mathrm{HC}_{\bullet-2}(C) \ar[r]& \mathrm{HH}_{\bullet-1}(C) \ar[r]&} \,  $$
By the above discussion the maps between homologies are Lie module homomorphisms.
Finally, by Corollary~\ref{corimp1} and Lemma~\ref{lemmaident1},
this sequence can be identified with the Connes
long sequence for the algebra $A$.
\end{proof}

\subsection{Proof of Theorem \ref{maintheorem_DNCP}}
Let us first remind the definition of Hochschild cohomology groups.

\begin{definition}[Hochschild cohomology]\label{Def_HochCo}
Let $A$ be an associative algebra, and $M$ be an $A$-bimodule.
The {\it Hochschild cochain complex} $\mathrm{CH}^{\bullet} (A; M)$ of $A$ with value in $M$
is the complex whose underlying space is
$$
\bigoplus_{n\ge 0}\mathrm{Hom}(A^{\otimes n}, M)
$$
with coboundary $\delta:\mathrm{Hom}(A^{\otimes n}, M)\to \mathrm{Hom}(A^{\otimes n+1}, M)$ defined by
\begin{eqnarray}%
 && (\delta f)(a_0, a_1,a_2,\cdots,a_n) \nonumber\\
 &=& a_0 f(a_1,\cdots,a_n)+
 \sum_{i=0}^{n-1}(-1)^{i+1} f(a_0,\cdots, a_ia_{i+1},\cdots,a_n) 
  +(-1)^{n}f(a_0,\cdots,a_{n-1})a_n.\,\,\,\,\,\,\,\,\,\,\,\label{Hoch_diff}
\end{eqnarray}
The associated cohomology is called the {\it Hochschild cohomology} of $A$ with value in $M$,
and is denoted by $\mathrm{HH}^{\bullet}(A; M)$.
In particular, if $M=A$,
then $\mathrm{HH}^{\bullet}(A; A)$  is called the {\it Hochschild cohomology}
of $A$.
\end{definition}

\begin{definition}
Let $A$ be an associative algebra and let $\mathrm{CH}^{\bullet}(A;A)$
be its Hochschild cochain complex.
\begin{enumerate}
\item[$(1)$]
The {\it Gerstenhaber cup product} on
$\mathrm{CH}^{\bullet}(A;A)$
is defined as follows:
for any $f\in \mathrm{CH}^{n}(A;A)$,
$g \in \mathrm{CH}^{m}(A;A)$,
and $a_1,\ldots,a_{n+m}\in A$,
$$
f\cup g(a_1,\ldots,a_{n+m}):=(-1)^{nm}f(a_1,\ldots,a_{n})g(a_{n+1},\ldots,a_{n+m}).
$$

\item[$(2)$]
For any $f\in \mathrm{CH}^{n}(A;A)$, $g \in \mathrm{CH}^{m}(A;A)$,
and $a_1,\ldots,a_{n+m-1}\in A$,
let
\begin{eqnarray*}
&&
f\circ g (a_1,\ldots,a_{n+m-1})\\
&:=&\sum^{n-1}_{i=1}(-1)^{(m-1)(i-1)}f(a_1,\ldots,a_{i-1},g(a_{i},\ldots,a_{i+m-1}),a_{i+m},\ldots,a_{n+m-1}).
\end{eqnarray*}
The {\it Gerstenhaber bracket} on $\mathrm{CH}^{\bullet}(A;A)$
is defined to be
$$
\{f,g\}_{\mathrm{G}}:=f\circ g-(-1)^{(n-1)(m-1)}g\circ f.
$$
\item[$(3)$]
For any homogeneous elements
$f\in  \mathrm{CH}^{n}(A;A)$ and $\alpha=(a_0, {a}_1, \ldots,  {a}_m)\in  \mathrm{CH}_{m}(A,A)$,
define the \textit{cap product}
$
\cap:  {\mathrm{CH}}_{m}(A;A) \times{\mathrm{CH}}^{n}(A;A) \to {\mathrm{CH}}_{m-n}(A;A)
$
by
$$
\alpha\cap f:=(a_0f( {a}_1,\ldots, {a}_n), {a}_{n+1},\ldots, {a}_m),
$$ for
$m\geq n$,
and zero otherwise.
\end{enumerate}
\end{definition}

Both the Gerstenhaber product and the Gerstenhaber
bracket induce a well-defined product and bracket on Hochschild cohomology
$\mathrm{HH}^{\bullet}(A;A)$, which makes $\mathrm{HH}^{\bullet}(A;A)$
into a Gerstenhaber algebra,
and the cap product makes $\mathrm{HH}_\bullet(A)$ into an $(\mathrm{HH}^{\bullet}(A),\cup)$
module.
Recall that a {\it Gerstenhaber algebra} is
a graded commutative algebra together with a degree $-1$
Lie bracket $\{-,-\}$ such that
$$
a\mapsto\{a,b\}
$$
are derivations with respect to the product.

\begin{theorem}[Gerstenhaber]\label{G-algebra}
Let $A$ be an algebra.
Then the Hochschild cohomology $\mathrm{HH}^{{\bullet}}(A;A)$ of $A$ equipped with the
Gerstenhaber cup product and the Gerstenhaber bracket forms a Gerstenhaber algebra.
\end{theorem}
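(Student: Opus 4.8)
The plan is to prove everything first at the level of the Hochschild cochain complex $\mathrm{CH}^\bullet(A;A)$, working simultaneously with the cup product $\cup$, the insertion product $\circ$ and the differential $\delta$ from the definitions above, and only afterwards to pass to cohomology; throughout I keep the sign convention of the definition, in which $\cup$ already carries the factor $(-1)^{nm}$. I would begin with the purely associative part. A direct computation shows that $\cup$ is associative on cochains (the Koszul signs $(-1)^{nm}$ recombine correctly) and that $\delta$ is a graded derivation of $\cup$, i.e.
\[
\delta(f\cup g)=\delta f\cup g\pm f\cup\delta g
\]
for $f\in\mathrm{CH}^n(A;A)$, $g\in\mathrm{CH}^m(A;A)$, the sign being dictated by the convention. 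Hence $(\mathrm{CH}^\bullet(A;A),\cup,\delta)$ is a DG associative algebra, and $\cup$ descends to an associative product on $\mathrm{HH}^\bullet(A;A)$.

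Next I would treat the Lie structure. Writing $m\in\mathrm{CH}^2(A;A)$ for the multiplication cochain $m(a_1,a_2)=a_1a_2$, the two structural facts to record are that associativity of $A$ is equivalent to $m\circ m=0$, and that the Hochschild differential is, up to an overall sign, the operator $\{m,-\}_{\mathrm G}$. The heart of the Lie part is the graded (right) pre-Lie identity for $\circ$: for $f\in\mathrm{CH}^n$, $g\in\mathrm{CH}^m$, $h\in\mathrm{CH}^p$,
\[
(f\circ g)\circ h-f\circ(g\circ h)=(-1)^{(m-1)(p-1)}\bigl[(f\circ h)\circ g-f\circ(h\circ g)\bigr].
\]
This is a finite, if lengthy, bookkeeping of nested insertions. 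Granting it, the graded Jacobi identity for $\{-,-\}_{\mathrm G}$ is a purely formal consequence, so $\{-,-\}_{\mathrm G}$ is a graded Lie bracket of degree $-1$ on cochains. From $\delta=\pm\{m,-\}_{\mathrm G}$ together with the Jacobi identity, $\delta$ is a graded derivation of the bracket, and therefore $\{-,-\}_{\mathrm G}$ descends to a graded Lie bracket of degree $-1$ on $\mathrm{HH}^\bullet(A;A)$.

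The two remaining statements fail on the nose at the cochain level and must be handled by explicit homotopies. For graded commutativity of $\cup$ I would produce Gerstenhaber's homotopy formula expressing the commutator $f\cup g-(-1)^{nm}g\cup f$ as $\pm\delta(f\circ g)$ modulo terms proportional to $\delta f$ and to $\delta g$; evaluated on cocycles the right-hand side is a coboundary, which yields graded commutativity of $\cup$ on $\mathrm{HH}^\bullet(A;A)$. For the Poisson compatibility I would exhibit the analogous distributivity-up-to-homotopy identity, showing that $\{f,-\}_{\mathrm G}$ is a derivation of $\cup$ up to a $\delta$-coboundary, so that on cohomology
\[
\{f,g\cup h\}_{\mathrm G}=\{f,g\}_{\mathrm G}\cup h+(-1)^{(n-1)m}g\cup\{f,h\}_{\mathrm G}.
\]
Combining the three parts gives exactly the structure of a Gerstenhaber algebra recalled before the statement, completing the proof.

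The main obstacle I expect is precisely the construction and sign-accurate verification of these two homotopy formulas, and above all the distributivity homotopy underlying the Poisson compatibility. Unlike the graded Jacobi identity, which follows formally from the pre-Lie identity, the interaction between $\cup$ and $\{-,-\}_{\mathrm G}$ has no formal source and genuinely requires an explicit chain homotopy, with the additional nuisance of tracking the Koszul signs introduced by the $(-1)^{nm}$ factor built into $\cup$ in this paper's convention.
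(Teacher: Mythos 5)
Your proposal is correct and is essentially the same argument the paper relies on: the paper does not prove this theorem itself but simply cites Gerstenhaber's original paper (\cite[Theorems 3--5]{Gerstenhaber}), and your outline---the pre-Lie identity for $\circ$ yielding the graded Jacobi identity, the identification $\delta=\pm\{m,-\}_{\mathrm G}$, and the two explicit homotopies giving graded commutativity of $\cup$ and the Leibniz compatibility on cohomology---is precisely a reconstruction of those cited theorems. The only caveat is that the two homotopy formulas, which you rightly identify as where all the real work lies, are announced rather than written out, exactly as in the source the paper defers to.
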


\begin{proof}
For a proof, see Gerstenhaber \cite[Theorems 3-5]{Gerstenhaber}.
\end{proof}

The Gerstenhaber algebra structure is even more interesting in the case of cyclic $A_\infty$ algebra case.
Tradler, in his paper \cite{Tradler}, showed that for $A_\infty$ algebras,
one may similarly define the Gerstenhaber product and bracket on the Hochschild
cochain complex, where
the Gerstenhaber product is associative up to homotopy, and hence is well-defined
on the cohomology level.
Moreover he showed the following (see \cite[Theorem 2]{Tradler}):

\begin{lemma}[Tradler]
Suppose $C$ is a cyclic $A_\infty$ coalgebra of degree $-d$,
and denote by $A$ its dual $A_\infty$ algebra.
Then there is an isomorphism of graded vector spaces
$$
\Phi:\mathrm{HH}_\bullet(C)\cong\mathrm{HH}^{d-\bullet}(A).
$$
\end{lemma}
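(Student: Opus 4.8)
The plan is to identify the Hochschild chain complex of $C$ with a Hochschild cochain complex of $A$, and then to transport the coefficients along the isomorphism produced by the cyclic pairing. By Lemma~\ref{equivofalgcoal} the dual $A=C^{*}$ is a cyclic $A_\infty$ algebra of degree $-d$, and by Assumption~\ref{Assumpt_coalg} everything is locally finite dimensional, so that $A^{*}=C$ and linear duality is an involution on the objects involved. The first step is the observation that, under local finiteness, the normalized Hochschild chains of the coalgebra $C$ are tautologically the normalized Hochschild cochains of the algebra $A$ with coefficients in the dual bimodule $A^{*}$,
\[
\mathrm{CH}_\bullet(C)\;\cong\;\mathrm{CH}^\bullet(A;A^{*}),
\]
because $\mathrm{Hom}((\bar A)^{\otimes n},A^{*})=(\bar C)^{\otimes n}\otimes C$ and the coalgebra differential assembled from the $\triangle_k$ is exactly dual to the Hochschild coboundary \eqref{Hoch_diff} assembled from $\mu_k=\triangle_k^{*}$. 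No choices and no pairing are needed for this identification.

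The cyclic pairing enters only through the coefficients. Non-degeneracy gives a degree-$d$ isomorphism $C\cong A[d]$, that is $A^{*}\cong A[d]$, and the content of cyclic invariance \eqref{Ainfty_pairing} (equivalently \eqref{cyclic_pairing} on the coalgebra side) is precisely that this isomorphism is compatible with the left and right actions, i.e.\ is a morphism of $A_\infty$-bimodules from $A^{*}$ to $A[d]$. Transporting coefficients along it yields
\[
\mathrm{HH}_\bullet(C)\;\cong\;\mathrm{HH}^\bullet(A;A^{*})\;\cong\;\mathrm{HH}^{d-\bullet}(A).
\]
Concretely the resulting map $\Phi$ sends a chain $c_0\otimes c_1\otimes\cdots\otimes c_m$ to the cochain $(a_1,\dots,a_m)\mapsto\pm\,a_1(c_1)\cdots a_m(c_m)\,\hat c_0$, where the inner slots are contracted with the arguments by the tautological evaluation of $a_i\in A=C^{*}$ on $c_i\in C$, and the base slot $c_0$ is turned into the value $\hat c_0\in A$ by $C\cong A[d]$. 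The shift by $d$ comes from the degree of the pairing, while the reflection $\bullet\mapsto d-\bullet$ comes from the opposite grading conventions of the cobar construction (which desuspends, $s^{-1}$) computing $\mathrm{HH}_\bullet(C)$ and the bar construction (which suspends, $s$) computing $\mathrm{HH}^\bullet(A)$.

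The only substantial point, and the step I expect to be the main obstacle, is verifying that $C\cong A[d]$ is genuinely a map of $A_\infty$-bimodules, i.e.\ that cyclic invariance intertwines the coefficient actions with all of the higher operations at once. In the chain-map language this is the statement that, under $\Phi$, the ``outer'' terms $a_0\,f(a_1,\dots)$ and $\pm\,f(\dots)\,a_m$ of the coboundary \eqref{Hoch_diff}, which multiply the value by an argument on the left or the right, correspond exactly to the cyclic terms of the coalgebra differential acting on the base slot $c_0$; the identification of these terms is governed by \eqref{cyclic_pairing} and its rotated forms \eqref{general_cyclic_pairing}, which allow a coproduct component of $c_0$ to be moved between the output role and an adjacent input role without altering the contracted value. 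In the strict associative case this reduces to the classical Poincar\'e duality for a finite-dimensional symmetric Frobenius algebra; the new feature is that cyclic invariance is imposed on every $\mu_n$, so the same contraction argument runs uniformly in $n$. What remains is the careful bookkeeping of Koszul signs and of the degree shifts contributed by the pairing and by each $s^{\pm1}$; once this is done, $\Phi$ is a chain isomorphism and descends to the asserted isomorphism $\mathrm{HH}_\bullet(C)\cong\mathrm{HH}^{d-\bullet}(A)$.
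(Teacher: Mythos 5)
Your proof is correct and takes essentially the same route as the paper: the paper likewise uses local finiteness to identify $\mathrm{Hom}(A^{\otimes n}, A)$ with $C^{\otimes n}\otimes A$ and uses the cyclic pairing to produce the bimodule isomorphism $\phi: C\to A[d]$, $c\mapsto\langle c,-\rangle$, arriving at exactly your formula for $\Phi$. Your intermediate step through $\mathrm{CH}^{\bullet}(A;A^{*})$ simply makes explicit the coefficient-transport that the paper compresses into a single identification, and your flagged verification (that cyclicity makes $\phi$ compatible with all the $A_\infty$ operations) is precisely the point the paper asserts without further detail.
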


\begin{proof}

Since $C$ and hence $A$ are locally finite dimensional,
we have an isomorphism
$$
\begin{array}{cccl}
\phi:&C&\longrightarrow & A[d]\\
&c&\longmapsto&\langle c,-\rangle
\end{array}
$$
of $A$-bimodules,
from which we obtain an isomorphism of chain complexes
$$
\begin{array}{cccl}
\Phi:&\mathrm{CH}_\bullet(C)&\longrightarrow & \mathrm{CH}^{\bullet}(A)\\
&(c_1,c_1,\cdots,c_{n+1})&\longmapsto&\{(a_1,\cdots,a_n)\mapsto (c_1,a_1)\cdots(c_n,a_n)\cdot\phi(c_{n+1})\}.
\end{array}
$$
In other words the isomorphism is given by
$$
\Phi:(c_1,c_2,\cdots, c_{n+1})\longmapsto(c_1,c_2,\cdots, c_{n})\otimes \phi(c_n)
$$
where the right hand side is viewed as an element in $\mathrm{CH}^{n}(A)$
via the isomorphism
\begin{equation*}\mathrm{Hom}(A^{\otimes n}, A)=\big(A^{\otimes n}\big)^{*}\otimes A=
\big(A^*\big)^{\otimes n}\otimes A=C^{\otimes n}\otimes A.\qedhere
\end{equation*}
\end{proof}

\begin{proof}[Proof of Theorem~\ref{maintheorem_DNCP}]
Since $A^{\ac}$ is finite dimensional, from the above lemma
any homogeneous element
$f\in\mathrm{CH}^\bullet(A^!)$
is a linear combination of elements
in the form
$$(u_1,u_2,\cdots,u_n)\otimes \bar u,\quad\mbox{where}\; u_i\in A^{\ac}, \bar u\in A^!.$$
For two elements, say
$f=(u_1,u_2,\cdots,u_n)\otimes \bar u$, $g=(v_1,v_2,\cdots,v_m)\otimes \bar v,$
\begin{eqnarray*}
&&
\{f,g\}_{\mathrm{G}}(x_1,x_2,\cdots,x_{n+m-1})\\
&=&\sum_i (-1)^{\eta_i}f(x_1,\cdots,x_{i-1}, g(x_i,\cdots, x_{i+m-1}),
x_{i+m},\cdots, x_{n+m-1})\\
&-&\sum_j (-1)^{(|f|-1)\cdot(|g|-1)+\eta_j}g(x_1,\cdots, x_{j-1}, f(x_j,\cdots, x_{j+n-1}),x_{j+n},\cdots,
x_{n+m-1})\\
&=&\sum_i(-1)^{\eta_i}
u_1(x_1)\cdots u_{i-1}(x_{i-1})v_1(x_i)\cdots v_m(x_{i+m-1})u_i(\bar v) u_{i+1}(x_{i+m})\cdots u_n (x_{n+m-1})
\bar u\\
&-&\sum_j(-1)^{\eta+\eta_j}
v_1(x_1)\cdots v_{j-1}(x_{j-1})u_1(x_j)\cdots u_n(x_{j+n-1})v_j(\bar u)v_{j+1}(x_{j+n})\cdots v_m(x_{n+m-1})
\bar v,
\end{eqnarray*}
for any homogeneous element $(x_1,x_2,\cdots, x_{n+m-1})\in (A^{!})^{\otimes n+m-1}$,
where $\eta_i=|g|(|x_1|+\cdots+|x_{i-1}|)$, $\eta_j=|f|(|x_1|+\cdots+|x_{j-1}|)$ and $\eta=(|f|-1)(|g|-1)$.
In other words,
\begin{eqnarray}\label{Gersten_br}
\{f,g\}_{\mathrm{G}}&=&\sum_i(-1)^{\eta_i}
u_i(\bar v)(u_1,\cdots, u_{i-1}, v_1,\cdots, v_m, u_{i+1},\cdots, u_n)\otimes \bar u\nonumber\\
&-&\sum_j (-1)^{\eta+\eta_j}v_j(\bar u)(v_1,\cdots, v_{j-1}, u_1,\cdots, u_n, v_{j+1},\cdots, v_m)\otimes \bar v.
\end{eqnarray}

Now let $u=[u_1|u_2|\cdots|u_n]$, $v=[v_1|v_2|\cdots|v_m]\in\mathbf\Omega_\infty(A^{\ac})$ be two homogeneous elements where
$u_i$'s and $v_j$'s are generically different from each other.
Viewing them as elements in $\mathrm{CH}_\bullet(A^{\ac})$ via the embedding (\ref{CobartoHoch})
or (\ref{Cobar_to_Hoch_Ainfty}),
we have
\begin{equation*}
B(u)=\sum_{i=1}^n(-1)^{\varepsilon_i} (u_{i+1}, \cdots, u_n, u_1,\cdots, u_i), \quad
B(v)=\sum_{j=1}^m (-1)^{\varepsilon_j}(v_{j+1}, \cdots, v_m, v_1,\cdots, v_j),
\end{equation*}
where $\varepsilon_i=(|u_1|+\cdots+|u_i|)(|u_{i+1}|+\cdots+|u_n|)$ and
$\varepsilon_j=(|v_1|+\cdots+|v_j|)(|v_{j+1}|+\cdots+|v_m|)$.
Applying the above lemma to $A^{\ac}$, we have
\begin{eqnarray}
\Phi\circ B(u)&=&\sum_{i=1}^n(-1)^{\varepsilon_i} (u_{i+1}, \cdots, u_n, u_1,\cdots, u_{i-1})\otimes \bar u_i,
\label{PhiB(u)}\\
\Phi\circ B(v)&=&\sum_{j=1}^m (-1)^{\varepsilon_j}(v_{j+1}, \cdots, v_m, v_1,\cdots, v_{j-1})\otimes \bar v_j,
\label{PhiB(v)}
\end{eqnarray}
where $\bar u_i=\phi(u_i)$ and $\bar v_j=\phi(v_j)$.
Thus for two summands in $\Phi\circ B(u)$ and $\Phi\circ B(v)$ respectively,
say
$$(u_1,\cdots, u_{n-1})\otimes\bar u_n,\quad
(v_1,\cdots, v_{m-1})\otimes \bar v_m,$$
by (\ref{Gersten_br}) we have
\begin{eqnarray}\label{Gersten_bracket}
&&
\{(u_1,\cdots, u_{n-1})\otimes\bar u_n,(v_1,\cdots, v_{m-1})\otimes \bar v_m\}_{\mathrm{G}}\\
&=&\sum_i(-1)^{\mu_i}
\langle sv_m,su_i\rangle (u_1,\cdots, u_{i-1}, v_1,\cdots, v_{m-1}, u_{i+1},\cdots, u_{n-1})\otimes \bar u_n\nonumber\\
&-&\sum_j (-1)^{\nu_j}\langle su_n,sv_j\rangle
(v_1,\cdots, v_{j-1}, u_1,\cdots, u_{n-1}, v_{j+1},\cdots, v_{m-1})\otimes \bar v_m,\nonumber
\end{eqnarray}
where $\mu_i=|v_m|+(|v|+d)(|u_{i+1}|+\cdots+|u_n|+d)$,
$\nu_j=|u_n|+(|u|+d)(|v_1|+\cdots+|v_{j-1}|)$,
and $\langle -,-\rangle$ is the graded symmetric pairing on $C$.

On the other hand, by (\ref{def_bb}) we have
\begin{eqnarray}
&&\Phi\circ B\{(u_1,\cdots u_n),(v_1,\cdots, v_m)\}_{\mathrm{DNCP}}\nonumber\\
&=&\Phi\circ B
\Big(\sum_{i,j}(-1)^{|u_i|+\epsilon}\langle su_i, sv_j\rangle
\cdot(v_1,\cdots, v_{j-1}, u_{i+1},\cdots, u_n,u_1,\cdots, u_{i-1}, v_{j+1},\cdots, v_m)\Big)
\,\,\,\,\,\,\,\,\,\,\,\,\, \label{DNCP_bracket}
\end{eqnarray}
where $\epsilon$ is given as in (\ref{def_bb}).
From this one sees that
(\ref{Gersten_bracket}) are summands in (\ref{DNCP_bracket}).
For other summands in $\Phi\circ B(u)$ and $\Phi\circ B(v)$, by the same argument,
their Gerstenhaber brackets also lie in (\ref{DNCP_bracket}) as summands.

Also, from (\ref{Gersten_bracket}) one sees that
in the expression
$$
\{\Phi\circ B(u),\Phi\circ B(v)\}_{\mathrm{G}}
$$
none of the summands literally cancels with any other,
and therefore to show
$$
\{\Phi\circ B(u),\Phi\circ B(v)\}_{\mathrm{G}}=\Phi\circ B\{u,v\}_{\mathrm{DNCP}},$$
it is enough to show both sides have the same number of summands.

In fact, on one hand, from (\ref{PhiB(u)}) and (\ref{PhiB(v)}) we see that
$\Phi\circ B(u)$ and $\Phi\circ B(v)$ have $n$ and $m$ summands respectively,
and therefore
$$
\{\Phi\circ B(u),\Phi\circ B(v)\}_{\mathrm{G}}
$$
has $mn(m+n-2)$ summands.
On the other hand,
from (\ref{DNCP_bracket}) we know that
$\{u,v\}_{\mathrm{DNCP}}$ has $mn$ summands, and each summand contains $m+n-2$  components, and
therefore
$\Phi\circ B\{u,v\}_{\mathrm{DNCP}}$ has $mn(m+n-2)$ summands.
The numbers of summands are equal, which proves the theorem.
\end{proof}

Let us now recall a theorem of Keller \cite{Keller0} (see also Herscovich \cite{Herscovich}):

\begin{theorem}[Keller]\label{thm_keller}
Let $A$ be an $N$-Koszul algebra, and denote by $A^{!}$ its Koszul dual algebra.
Then we have isomorphism
$$
\Xi:\mathrm{HH}^\bullet(A^!)\to\mathrm{HH}^\bullet(A)
$$
of Gerstenhaber algebras.
\end{theorem}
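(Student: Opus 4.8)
The plan is to compute both Hochschild cohomologies by means of the small Koszul bimodule resolution rather than the unwieldy bar resolution, to observe that the two resulting complexes are interchanged by the involutivity of Koszul duality, and finally to promote this identification to the level of Gerstenhaber algebras by appealing to the derived invariance of the $B_\infty$-structure on the Hochschild complex. Recall that $\mathrm{HH}^\bullet(A)$ is the cohomology of $\mathrm{RHom}_{A^e}(A,A)$, and that this complex carries a $B_\infty$-algebra structure (a homotopy $E_2$-structure) whose first two induced operations on cohomology are precisely the cup product and the Gerstenhaber bracket of Theorem \ref{G-algebra}.

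First I would replace the bar complex by the Koszul bimodule complex $A\otimes A^{\ac}_\bullet\otimes A\twoheadrightarrow A$, which resolves the diagonal over $A^e$ exactly because $A$ is $N$-Koszul (this is the bimodule counterpart of Theorem \ref{LVDV}); applying $\mathrm{Hom}_{A^e}(-,A)$ then identifies the Hochschild cochains with $\bigoplus_i\mathrm{Hom}(A^{\ac}_i,A)$, that is, with $A^!\otimes A$ after using local finite-dimensionality. Running the identical construction for $A^!$ and using that Koszul duality is involutive, so that $(A^!)^{\ac}$ is assembled from $A$ exactly as $A^{\ac}$ is assembled from $A^!$, one obtains $A\otimes A^!$. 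The evident flip $A^!\otimes A\to A\otimes A^!$ then supplies an isomorphism of the underlying complexes, compatibility with the differentials being an instance of the self-duality of the Koszul complex under the interchange $A\leftrightarrow A^!$.

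Next I would upgrade this vector-space isomorphism to an isomorphism of Gerstenhaber algebras. The cup product transports immediately, since on the Koszul complex it is assembled from the coproduct of $A^{\ac}$ paired against the product of $A$, and the flip interchanges this with the corresponding structure for $A^!$. The genuinely delicate step is the Gerstenhaber bracket, which is not determined by the associative product and instead requires the higher braces of the $B_\infty$-structure. Here I would invoke Keller's theorem \cite{Keller0}, by which the $B_\infty$-quasi-isomorphism type of the Hochschild complex is a Koszul-duality invariant: the identification above lifts to a $B_\infty$-quasi-isomorphism, and passing to cohomology yields the asserted isomorphism $\Xi\colon\mathrm{HH}^\bullet(A^!)\to\mathrm{HH}^\bullet(A)$ of Gerstenhaber algebras. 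For the passage from the quadratic case $N=2$ to general $N$ one replaces the strict Koszul dual by the $A_\infty$-Koszul dual and argues with $A_\infty$-bimodules, following Herscovich \cite{Herscovich}; the local finite-dimensionality from Assumption \ref{Assumpt_coalg} is what keeps all the dualizations well behaved throughout.

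The hard part is the bracket. Matching the cup products across Koszul duality is essentially formal, but the Gerstenhaber bracket sees the full $B_\infty$-structure, so any direct term-by-term comparison would be forced to propagate all of the higher brace operations through the duality. Delegating this to Keller's derived-invariance theorem, which certifies the comparison at the level of $B_\infty$-quasi-isomorphisms rather than on the nose, is exactly what renders the bracket tractable.
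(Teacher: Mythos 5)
Your proposal ultimately rests on exactly the same pillars as the paper's own proof, which is nothing more than a citation: Keller \cite[Theorem 3.5]{Keller0} for the quadratic case $N=2$ and Herscovich \cite{Herscovich} for general $N$, precisely the two references you delegate the Gerstenhaber bracket (and the $A_\infty$-Koszul-dual passage) to. The extra scaffolding you supply---computing both sides via the Koszul bimodule resolution and matching them by the flip---is a reasonable sketch of what those references do, so the approach is essentially the same as the paper's.
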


\begin{proof}
For $N=2$, this is proved in Keller \cite[Theorem 3.5]{Keller0}.
For the general case, it is proved by Herscovich in \cite{Herscovich}.
\end{proof}

\begin{corollary}
\label{maincor1}
Let $A$ be an $N$-Koszul Calabi-Yau algebra.
Then $\Xi\circ\Phi\circ B$ maps
the Leibniz-Loday bracket $\{-,-\}_{\mathrm{DNCP}}$ on $\tilde R=\mathbf\Omega_{\infty}(\tilde A^{\ac})$ to the Gerstenhaber
bracket $\{-,-\}_{\mathrm G}$ on $\mathrm{HH}^\bullet(A)$.
\end{corollary}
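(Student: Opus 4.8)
The plan is to obtain this statement formally, by composing the chain-level identity already established in Theorem~\ref{maintheorem_DNCP} with the Gerstenhaber-algebra isomorphism of Keller (Theorem~\ref{thm_keller}); essentially no new computation should be needed, since all of the analytic work has been carried out in the proof of Theorem~\ref{maintheorem_DNCP}. The whole point is that $\Phi\circ B$ intertwines the Leibniz--Loday bracket with the Gerstenhaber bracket on $\mathrm{CH}^\bullet(A^!)$, and that $\Xi$ transports the latter bracket from $\mathrm{HH}^\bullet(A^!)$ to $\mathrm{HH}^\bullet(A)$.

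First I would record that Theorem~\ref{maintheorem_DNCP} gives, for all homogeneous $u,v\in\mathbf\Omega_\infty(A^{\ac})$, the identity
$$\Phi\circ B\{u,v\}_{\mathrm{DNCP}}=\{\Phi\circ B(u),\Phi\circ B(v)\}_{\mathrm G}$$
inside the Hochschild cochain complex $\mathrm{CH}^\bullet(A^!)$. Since $\Phi$ is the Tradler quasi-isomorphism $\mathrm{CH}_\bullet(A^{\ac})\to\mathrm{CH}^\bullet(A^!)$ and $B$ anticommutes with the Hochschild differential of $A^{\ac}$, the composite $\Phi\circ B$ descends to a well-defined map with image in $\mathrm{HH}^\bullet(A^!)$; moreover the cochain-level Gerstenhaber bracket of the $A_\infty$ algebra $A^!$, although only homotopy-Jacobi at the chain level, induces the genuine Gerstenhaber bracket on $\mathrm{HH}^\bullet(A^!)$. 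Passing to cohomology therefore upgrades the displayed identity to the statement that $\Phi\circ B$ carries $\{-,-\}_{\mathrm{DNCP}}$ to the Gerstenhaber bracket on $\mathrm{HH}^\bullet(A^!)$.

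Next I would invoke Theorem~\ref{thm_keller}: the Keller isomorphism $\Xi\colon\mathrm{HH}^\bullet(A^!)\to\mathrm{HH}^\bullet(A)$ preserves the Gerstenhaber structure, so in particular $\Xi\{x,y\}_{\mathrm G}=\{\Xi x,\Xi y\}_{\mathrm G}$. Applying $\Xi$ to the cohomological identity from the previous step yields
$$\Xi\circ\Phi\circ B\{u,v\}_{\mathrm{DNCP}}=\Xi\{\Phi B(u),\Phi B(v)\}_{\mathrm G}=\{\Xi\Phi B(u),\Xi\Phi B(v)\}_{\mathrm G},$$
which is exactly the assertion that $\Xi\circ\Phi\circ B$ sends the Leibniz--Loday bracket on $\tilde R=\mathbf\Omega_\infty(\tilde A^{\ac})$ to the Gerstenhaber bracket on $\mathrm{HH}^\bullet(A)$. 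The one step I expect to require genuine care — and hence the main obstacle — is the passage from the chain-level equality of Theorem~\ref{maintheorem_DNCP} to a statement about the \emph{honest} Gerstenhaber brackets on the two cohomology rings: one must verify that $\Phi\circ B$ respects differentials well enough to descend, that the cochain bracket on $\mathrm{CH}^\bullet(A^!)$ induces precisely the cohomology bracket with which Keller's $\Xi$ is compatible, and that all sign and degree conventions (the degree shift by $d$ in Tradler's duality, the degree $2-d$ of the $\{-,-\}_{\mathrm{DNCP}}$, and the Connes operator $B$) match across the three identifications. Once this bookkeeping is confirmed, the corollary is a formal consequence of the two cited theorems.
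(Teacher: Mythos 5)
Your proposal is correct and follows exactly the paper's own route: the paper proves Corollary~\ref{maincor1} simply by combining the chain-level identity of Theorem~\ref{maintheorem_DNCP} with Keller's Gerstenhaber-algebra isomorphism $\Xi$ of Theorem~\ref{thm_keller}, which is precisely your argument. The only difference is that you spell out the bookkeeping (descent of $\Phi\circ B$ to cohomology, compatibility of the cochain-level bracket with the cohomological one), which the paper leaves implicit.
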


\begin{proof}
The claim follows from a combination of Theorem~\ref{maintheorem_DNCP} and
Theorem \ref{thm_keller} of Keller above.
\end{proof}

Roughly speaking, the above corollary says that the derived non-commutative Poisson bracket
of a Koszul Calabi-Yau algebra is naturally mapped to the Gerstenhaber bracket on its Hochschild cohomology.

\subsection{Proof of Theorem \ref{maintheorem_dVV}}\label{proof_of_maintheorem_dVV}

Before we proceed to the proof of Theorem~\ref{maintheorem_dVV},
let us recall a Batalin-Vilkovisky algebra structure on the Hochshcild cohomology
$\mathrm{HH}^{\bullet}(A)$.
In order to do that, let us first observe that, for an associative algebra $A$, we have the following two structures:
\begin{enumerate}
\item[$-$] there is the Connes cyclic operator
$B: \mathrm{HH}_{\bullet}(A) \to \mathrm{HH}_{\bullet+1}(A)$ whose square is zero; and
\item[$-$] there is a Gerstenhaber algebra structure on $(\mathrm{HH}^{\bullet}(A), \cup)$,
and $\mathrm{HH}_\bullet(A)$ is a module over $\mathrm{HH}^{\bullet}(A)$. Denote the action
by $\cap$.
\end{enumerate}
For a $d$-Calabi-Yau algebra $A$, these two structures are perfectly matched.
Note that there is an element $\omega\in\mathrm{HH}_d(A)$ such that the following map
\begin{equation}\label{PD_isomorphism}
\begin{array}{cccl}
\Psi:&\mathrm{HH}^{\bullet}(A)& \stackrel{\simeq}\longrightarrow & \mathrm{HH}_{d-\bullet}(A)\\
&f&\longmapsto& \iota_{f}\omega
\end{array}
\end{equation}
is an isomorphism,
called the {\it non-commutative Van den Bergh-Poincar\'e duality};
for a proof of this result see \cite[Proposition 5.5]{dTdVVdB}.

\begin{definition}[Batalin-Vilkovisky algebra]
Let $(V, \hdot)$ be a graded commutative algebra.
A {\it Batalin-Vilkovisky operator} on $V$
is a degree $-1$ differential operator
$$
\Delta:V_{i}\to V_{i-1}, \quad\mbox{for all}\; i
$$
such that the {\it deviation from being a derivation}
\begin{equation}\label{deviation}
\{a, b\}:=(-1)^{|a|}(\Delta(a \hdot b) - \Delta(a) \hdot b-(-1)^{|a|} a\hdot \Delta(b)),
\quad\mbox{for all}\,\, a, b\in V
\end{equation}
is a derivation for each component, {\it i.e.}
$$
\{a, b \hdot c\}=\{a,b\} \hdot c+(-1)^{|b|(|a|-1)}b \hdot \{a, c\},\quad\mbox{for all}\,\, a, b,c\in V.
$$
The triple $(V, \hdot, \Delta)$ is called a {\it Batalin-Vilkoviksy algebra}.
\end{definition}

If $(V, \hdot, \Delta)$ is a Batalin-Vilkoviksy algebra,
then by definition $(V, \hdot, \{-,-\})$, where $\{-,-\}$ is given by (\ref{deviation}),
is a Gerstenhaber algebra. In other words, a Batalin-Vilkovisky algebra is a special
class of Gerstenhaber algebras.

Let $A$ be a $d$-Calabi-Yau algebra and let
$$
\Delta:= \Psi^{-1} \circ B \circ \Psi:
\mathrm{HH}^{\bullet}(A) \to \mathrm{HH}^{\bullet-1}(A)  \, ,
$$
we have
\begin{theorem}[Ginzburg \cite{Ginzburg}, Theorem 3.4.3]\label{BV_CY}
Suppose that $A$ is an $d$-Calabi-Yau algebra.
Then $(\mathrm{HH}^{\bullet}(A;A), \cup , \Delta) $ is a Batalin-Vilkovisky algebra.
Moreover, the bracket \eqref{deviation} for this triple
is exactly the classical Gerstenhaber bracket $\{-,-\}_{\mathrm G}$.
\end{theorem}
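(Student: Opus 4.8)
The plan is to split the statement into its two assertions and observe that the second subsumes the first. Writing $\{-,-\}_\Delta$ for the deviation bracket defined by \eqref{deviation} applied to the triple $(\mathrm{HH}^\bullet(A),\cup,\Delta)$, the theorem amounts to (a) $\Delta^2=0$ and (b) $\{f,g\}_\Delta=\{f,g\}_{\mathrm G}$ for all $f,g\in\mathrm{HH}^\bullet(A)$. Assertion (a) is immediate, since $\Delta^2=\Psi^{-1}B^2\Psi=0$ because $B^2=0$. Granting (b), the deviation is then a biderivation of $(\mathrm{HH}^\bullet(A),\cup)$ by Gerstenhaber's Theorem~\ref{G-algebra}, so $\Delta$ is automatically a Batalin--Vilkovisky operator. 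Thus the whole theorem reduces to the single identity (b), and that is what I would concentrate on.

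The main tool will be the noncommutative calculus carried by the pair $(\mathrm{HH}^\bullet(A),\mathrm{HH}_\bullet(A))$. Besides the contraction $\iota_f$ (the cap product, $\iota_f\alpha=\alpha\cap f$) and the Connes operator $B$, I would introduce the Lie derivative $L_f:=B\iota_f-(-1)^{|f|}\iota_f B$ and invoke the three standard calculus identities
\begin{equation*}
\iota_{f\cup g}=\iota_g\iota_f,\qquad \iota_{\{f,g\}_{\mathrm G}}=[L_f,\iota_g],\qquad L_{f\cup g}=L_f\iota_g+(-1)^{|f|}\iota_f L_g,
\end{equation*}
whose verifications at the chain level are those of Ginzburg \cite{Ginzburg} (see also \cite{dTdVVdB}). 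I would also record that the volume class is $B$-closed: since $\omega=\Psi(1)=\omega\cap 1$ is the image under $\Psi$ of the unit of $(\mathrm{HH}^\bullet(A),\cup)$, one has $B\omega=0$, so that the Cartan formula collapses on $\omega$ to $B\iota_f\omega=L_f\omega$.

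Next I would transport the deviation through Poincar\'e duality. Applying $\Psi$ to \eqref{deviation} and using $\Psi\Delta=B\Psi$, the relation $\Psi(f\cup g)=\omega\cap(f\cup g)=\iota_g\iota_f\omega$, and graded commutativity of $\cup$, the three summands of $\Psi(\{f,g\}_\Delta)$ become $B\iota_g\iota_f\omega$, $\iota_g B\iota_f\omega$ and $(-1)^{|f||g|}\iota_f B\iota_g\omega$. Using $B\omega=0$ to replace each $B\iota_{(-)}\omega$ by $L_{(-)}\omega$ gives
\begin{equation*}
\Psi(\{f,g\}_\Delta)=(-1)^{|f|}\big(L_{f\cup g}\omega-\iota_g L_f\omega-(-1)^{|f||g|}\iota_f L_g\omega\big).
\end{equation*}
Expanding $L_{f\cup g}\omega$ by the Leibniz identity, the terms proportional to $\iota_f L_g\omega$ should cancel, leaving $\pm(L_f\iota_g\omega-\iota_g L_f\omega)=\pm[L_f,\iota_g]\omega=\pm\iota_{\{f,g\}_{\mathrm G}}\omega=\pm\Psi(\{f,g\}_{\mathrm G})$. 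As $\Psi$ is an isomorphism, this yields $\{f,g\}_\Delta=\{f,g\}_{\mathrm G}$ and completes the proof.

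The hard part is precisely the consistent choice of Koszul signs. The three calculus identities, the definition of $L_f$, and the deviation \eqref{deviation} each carry signs that must be normalized simultaneously so that the two $\iota_f L_g\omega$ contributions above genuinely cancel and the surviving term matches the sign appearing in $\iota_{\{f,g\}_{\mathrm G}}=[L_f,\iota_g]$; a naive assignment of signs fails this test, which is the whole subtlety. I would therefore fix all conventions at the chain level on the normalized Hochschild and cyclic complexes, verify the three calculus identities there together with $B\omega=0$, and only then pass to homology, where the identity $\{f,g\}_\Delta=\{f,g\}_{\mathrm G}$ follows formally from the displayed computation.
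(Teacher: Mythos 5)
The paper never proves this statement: it is imported wholesale from Ginzburg \cite{Ginzburg} (his Theorem 3.4.3) and then used as a black box in \S\ref{proof_of_maintheorem_dVV}, so the only meaningful comparison is with Ginzburg's own argument. Your outline is essentially that argument: reduce the whole theorem to the single identity ``deviation $=$ Gerstenhaber bracket'' (using $\Delta^{2}=\Psi^{-1}B^{2}\Psi=0$ and Theorem~\ref{G-algebra} to get the biderivation property for free), then prove the identity by transporting everything through the Van den Bergh--Poincar\'e duality $\Psi$ and invoking the noncommutative Cartan calculus ($\iota_{f\cup g}=\iota_{g}\iota_{f}$, $L_{f}=[B,\iota_{f}]$, $\iota_{\{f,g\}_{\mathrm G}}=[L_{f},\iota_{g}]$) together with $B\omega=0$. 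That is exactly Ginzburg's proof, with the calculus identities likewise quoted rather than verified.

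Two points in your write-up need repair. First, your justification of $B\omega=0$ is a non sequitur: the equation $\omega=\Psi(1)=\omega\cap 1$ is a tautology about the cap product with the unit and by itself says nothing about the Connes operator. The correct reason is Van den Bergh duality itself: $B\omega$ lives in $\mathrm{HH}_{d+1}(A)\cong\mathrm{HH}^{-1}(A)=0$ (equivalently, $\Psi^{-1}(B\omega)=\Delta(1)$ sits in cohomological degree $-1$). This is not pedantry; it is precisely the step that breaks down for differential graded Calabi--Yau algebras, where $\mathrm{HH}_{d+1}$ need not vanish and one must instead \emph{assume} that the fundamental class lifts to negative cyclic homology --- which is exactly why de Thanhoffer de V\"{o}lcsey--Van den Bergh \cite{dTdVVdB} work with $\mathrm{HC}^{-}_{\bullet}(A)$. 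Second, as you concede yourself, your displayed signs are inconsistent: with your Leibniz rule $L_{f\cup g}=L_{f}\iota_{g}+(-1)^{|f|}\iota_{f}L_{g}$, the two $\iota_{f}L_{g}\omega$ terms cancel only when $(-1)^{|f|}=(-1)^{|f||g|}$. So as written the proposal is a correct reduction to the Daletski--Gelfand--Tsygan/Tamarkin--Tsygan identities plus an unfinished sign normalization; fixing the $B\omega=0$ step as above and carrying out that normalization amounts to reproducing Ginzburg's proof, which is what the paper implicitly relies on.
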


Recently the first author with the fourth author and G. D. Zhou
proved in \cite[Theorem A]{CYZ} that for Koszul (and more generally, $N$-Koszul) Calabi-Yau algebras,
the isomorphism in Theorem \ref{thm_keller} is in fact an isomorphism of
Batalin-Vilkovisky algebras. The key point in the proof
is that there is a canonical complex (the Koszul complex equipped with an appropriate differential)
which computes both $\mathrm{HH}_\bullet(A)$ and $\mathrm{HH}_\bullet(A^{\ac})$, and we have
the following commutative diagram
\begin{equation}\label{diag_BViso}
\xymatrixcolsep{4pc}
\xymatrix{
\mathrm{HH}^\bullet(A^!)\ar[r]^{\Xi}&\mathrm{HH}^{\bullet}(A)\ar[d]^{\Psi}\\
\mathrm{HH}_{d-\bullet}(A^{\ac})\ar@{=}[r]\ar[u]_{\Phi}&\mathrm{HH}_{d-\bullet}(A).
}
\end{equation}

\begin{proof}[Proof of Theorem~\ref{maintheorem_dVV}]
Let $\alpha, \beta \in \mathrm{HC}_{\bullet}(A)$. Then
\begin{eqnarray}\label{eqmth4}
\;\;\;\;\;\;\;\;\;\;\;\;\;\;
 \big\{B(\alpha), B(\beta)\big\}_{\mathrm{dVV}}
 &=& (-1)^{|\alpha|+d+1} B \circ \Psi \big(  \Psi^{-1} ( \pi( B(\alpha))  \cup \Psi^{-1}( \pi(B(\beta)) \big)  \\[0.1cm]
 &=& (-1)^{|\alpha|+d+1} \Psi \circ \Delta \big(  \Psi^{-1} ( \pi( B(\alpha))  \cup \Psi^{-1}( \pi(B(\beta)) \big) \nonumber \\[0.1cm]
 &=& (-1)^{|\alpha|+d+1} \Psi \circ \Delta \big(  \Delta( \Psi^{-1}(\alpha)) \cup  \Delta(\Psi^{-1}(\beta))\big)  \nonumber \\[0.1cm]
 &=&  (-1)^{|\alpha|+d+1+d-|\alpha|-1}\Psi \big\{ \Delta( \Psi^{-1}(\alpha)) ,  \Delta(\Psi^{-1}(\beta))\big\}_{\mathrm G} \nonumber\\[0.1cm]
 &=& \Psi \big\{ \Psi^{-1}(B(\alpha)), \Psi^{-1}(B(\beta)) \big\}_{\mathrm G} \nonumber\\[0.1cm]
 &=& \Psi \Xi \big\{ \Xi^{-1} (\Psi^{-1}(B(\alpha))),  \Xi^{-1}(\Psi^{-1}(B(\beta)))\big\}_{\mathrm G} \nonumber\\[0.1cm]
 &=& \Phi^{-1} \big\{\Phi\circ B(\alpha), \Phi\circ B(\beta)\big\}_{\mathrm{G}} \nonumber\\[0.1cm]
 &=& B \big\{\alpha, \beta\big\}_{\mathrm{DNCP}} ,\nonumber
 \end{eqnarray}
where the first two equalities follow definitions of $\{-,-\}_{\mathrm{dVV}}$ and $\Delta$ respectively,
the third one follows from \eqref{comdiag1}
and the definition of $\Delta$, the fourth equality follows from applying
\eqref{deviation},
the fifth equality follows again from the definition of $\Delta$,
the sixth equality holds due to Theorem \ref{thm_keller},
the seventh equality follows from \eqref{diag_BViso},
and finally the eighth equality follows from Theorem~\ref{maintheorem_DNCP}.
\end{proof}
\begin{proof}[Proof of Corollary~\ref{corexpbracket}]
It follows from \eqref{eqmth4} that
$$
B \{\alpha, \beta\}_{\mathrm{DNCP}} = (-1)^{|\alpha|+d+1}B \circ \Psi \big( \Psi^{-1} ( B(\alpha))  \cup \Psi^{-1} (B(\beta))\big).
$$
Since $A$ is a graded algebra the Connes operator $B$ is injective (see e.g. \cite[Theorem 9.9.1]{Weibel})
and hence
$$
\{\alpha, \beta\}_{\mathrm{DNCP}} =  (-1)^{|\alpha|+d+1}\Psi \big(  \Psi^{-1} ( B(\alpha))  \cup \Psi^{-1} (B(\beta))\big).
$$
Note that by \eqref{PD_isomorphism} the RHS of the above identity
\begin{eqnarray*}
\Psi \big( \Psi^{-1} ( B(\alpha))  \cup \Psi^{-1} (B(\beta))\big)
&=& \iota_{\Psi^{-1} ( B(\alpha))  \cup \Psi^{-1} (B(\beta))} \omega\\
&=& \iota_{\Psi^{-1} (B(\alpha))} \iota_{\Psi^{-1} ( B(\beta))}\omega\\
&=&  \iota_{\Psi^{-1} ( B(\alpha))} B(\beta),
\end{eqnarray*}
and the result follows.
\end{proof}

\section{Example: the polynomial algebra}\label{Sect_example_poly_alg}

Let $A$  be the symmetric algebra  $\mathrm{Sym}(V)$  where $V$ is a
vector space of dimension $m \ge 1$ concentrated in homological degree $0$.
This is a quadratic Koszul, Calabi-Yau algebra of dimension $m$.
By the well-known result of Hochschild-Kostant-Rosenberg, we
can identify $\mathrm{HH}_{\bullet}(A) \cong \Omega^{\bullet}(V)$
and $\mathrm{HH}^{\bullet}(A) \cong \Theta_{\bullet}(V)$,
where $\Omega^{\bullet}(V)$ is the  vector spaces of differential forms
and $ \Theta_{\bullet}(V)$ is that  of polyvector  fields on $V$.
Moreover, $\mathrm{HC}_{\bullet}(A)$ can be  identified
with $\oplus_{n} \Omega^{n}(V) / d(\Omega^{n-1}(V))$, where
$d$ is the de Rham differential.
Using Corollary~\ref{maincor1}
we will  give an geometric description of $\{-,-\}_{\mathrm{DNCP}}$ (to simplify the notation
in the following we write $\{-,-\}_{\mathrm{DNCP}}$ as $\{-,-\}$).

\subsection{Koszul dual coalgebra}

We recall that $A$ is a quadratic Koszul algebra defined by the quadratic data $(V, S)$
with $ S \subset V \otimes V$ spanned by the vectors of the form
$ v \otimes u - u \otimes v$, which has a minimal semi-free resolution
$R:=\mathbf{\Omega}(C)$ given by the cobar construction of the Koszul dual coalgebra
$C:=A^{\ac}=C(sV,s^2S)$. The coalgebra $C$ is an exterior coalgebra
whose degree $n$ elements we denote by
$$ \mu (v_1,v_2, \cdots, v_n):=(sv_1 \wedge sv_2 \wedge \cdots \wedge sv_n) \, .$$
We can always assume that $v_1,\cdots,v_n$ are linearly independent.
The coproduct $\triangle : C \to C \otimes C$ is given by
$$ \triangle ( \mu (v_1,v_2, \cdots, v_n)) := \sum_{p=0}^n \,  \sum_{\sigma \in Sh_{p,n-p}}
\mathrm{sgn}(\sigma) \, \mu(v_{\sigma(1)}, \cdots , v_{\sigma(p)}) \otimes
 \mu(v_{\sigma(p+1)}, \cdots , v_{\sigma(n)}),$$
where $Sh_{p,n-p} \subset S_n$ is  the subset of $(p,n-p)$ shuffles. We denote counit
in $C$ by $e$. For example,  we have
 $$
 \triangle ( \mu (v_1,v_2)) = e \otimes \mu (v_1,v_2) + \mu (v_1,v_2)   \otimes e
 + \mu(v_1)  \otimes \mu(v_2) -  \mu(v_2)  \otimes \mu(v_1) \, .
 $$
We recall that   a symmetric bilinear form of degree $-d$ on  $C$ is a
pairing $ \langle \,  , \,  \rangle : C \otimes C \to k[d]$  such that
\begin{equation}
\label{bilform}
   \langle a, b \rangle =  (-1)^{|a||b|} \langle b, a \rangle \quad \mbox{for all} \, a,b \in C \ .
\end{equation}
It is \textit{cyclic} if
\begin{equation}
\label{def_cocyc}
\langle a, b^2 \rangle\cdot b^1 \, = \,  \langle a^1, b \rangle\cdot a^2, \quad\mbox{for all}\, a,b\in C \, ,
\end{equation}
where $\triangle(a)=\sum\, a^1 \otimes a^2 $ and $\triangle (b) = \sum\, b^1 \otimes b^2$ using Sweedler's notation.

\subsection{Cyclic forms on $C$}

First we show
\begin{proposition}
\label{prop1}
Let  $\langle -, - \rangle$  be a cyclic form on $C$. Then
\begin{enumerate}
\item[$(1)$]
$\langle e , \mu (v_1,v_2, \cdots, v_n) \rangle = 0 $ for $n \le m-1$.
\item[$(2)$]
$\langle  \mu (v_1,v_2, \cdots, v_n),  \mu (w_1,  \cdots, w_l) \rangle = 0 $  for $l \le m-1$  if each $v_i \in \Sp \{ w_1,\cdots,w_l\} $.
\end{enumerate}
\end{proposition}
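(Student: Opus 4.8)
The plan is to treat both statements through the degree $-d$ property of the form together with the cocyclicity identity \eqref{def_cocyc}, where $d=m$ is the Calabi--Yau dimension. Since $\langle -,-\rangle$ takes values in $k[d]$, the pairing $\langle a,b\rangle$ vanishes unless $|a|+|b|=d=m$. Statement $(1)$ is then immediate: $|e|+|\mu(v_1,\dots,v_n)|=n\le m-1<m$, so $\langle e,\mu(v_1,\dots,v_n)\rangle=0$.

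For $(2)$, the same degree count lets me assume $n+l=m$ (otherwise the pairing is already zero), whence $n\ge 1$ because $l\le m-1$. I may also assume $w_1,\dots,w_l$ linearly independent, for otherwise $\mu(w_1,\dots,w_l)=0$; write $W=\Sp\{w_1,\dots,w_l\}$, a subspace of dimension $l<m$, and recall that each $v_i\in W$ by hypothesis. The key idea is to exploit that $W$ is proper: I choose a vector $z\in V\setminus W$, so that $sz$ is linearly independent from $sW=\Sp\{sw_1,\dots,sw_l\}$ inside $C_1=sV$.

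I then apply \eqref{def_cocyc} to $a=\mu(v_1,\dots,v_n)$ and $b=\mu(w_1,\dots,w_l,z)$; both sides are elements of $C$ of degree $(n)+(l+1)-m=1$, that is, of $C_1=sV$. On the left-hand side $\langle a,b^2\rangle\,b^1$, the degree constraint forces $|b^2|=l$ and $|b^1|=1$, so the only term of $\triangle(b)$ with $b^1=\mu(z)$, i.e. the only one contributing an $sz$-component, is $\mu(z)\otimes\mu(w_1,\dots,w_l)$; it yields a nonzero scalar multiple of $\langle a,\mu(w_1,\dots,w_l)\rangle\,sz$. On the right-hand side $\langle a^1,b\rangle\,a^2$, the surviving terms of $\triangle(a)$ all have $a^2=\mu(v_k)$ with $v_k\in W$, so the entire right-hand side lies in $sW$ and carries no $sz$-component. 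Comparing the coefficients of $sz$ (legitimate because $sz\notin sW$) forces $\langle\mu(v_1,\dots,v_n),\mu(w_1,\dots,w_l)\rangle=0$, proving $(2)$.

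The main obstacle is that cocyclicity is vacuous when applied directly to $a$ and $b_0=\mu(w_1,\dots,w_l)$: at total degree exactly $m$ both sides of \eqref{def_cocyc} collapse to $\langle a,b_0\rangle\,e$, a tautology. The decisive step is therefore to break this degree balance by appending a vector $z$ outside $W$ --- available precisely because $l<m$ --- which turns \eqref{def_cocyc} into a nontrivial linear relation in $sV$ whose $sz$-component isolates the desired pairing. Tracking the shuffle signs in $\triangle(a)$ and $\triangle(b)$ is routine and does not affect the vanishing conclusion, since the coefficient of $sz$ on the left is a unit.
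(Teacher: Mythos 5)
Your proof rests on an assumption that is not among the hypotheses: that the pairing vanishes unless $|a|+|b|=m$, i.e.\ that the cyclic form has degree $-m$. The proposition is stated for an \emph{arbitrary} cyclic form on $C$ --- in this section a cyclic form is a graded symmetric pairing of some unspecified degree $-d$ satisfying \eqref{def_cocyc} --- and the degree is precisely what this subsection is driving at: the summarizing proposition at the end asserts that the cyclic form on $C$ \emph{is} of degree $-m$, so feeding $d=m$ in as an input is circular. Nor is the assumption harmless, because cocyclicity alone does not force the degree to be $-m$. For instance, when $m=2$ the pairing defined on the basis $\{e,\mu(v_1),\mu(v_2),\mu(v_1,v_2)\}$ by $\langle \mu(v_1,v_2),\mu(v_1,v_2)\rangle =1$ and zero on all other pairs of basis elements is graded symmetric and satisfies \eqref{def_cocyc} (both sides of \eqref{def_cocyc} vanish unless $a=b=\mu(v_1,v_2)$, in which case both equal $e$), yet it has degree $-2m$. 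Consequently your argument for part $(1)$, which is nothing but the degree count, proves nothing in the stated generality. The paper instead proves $(1)$ by applying \eqref{def_cocyc} to $a=\mu(v)$ and $b=\mu(v_1,\cdots,v_n)$ with $v\notin \Sp\{v_1,\cdots,v_n\}$ (such $v$ exists since $n\le m-1$): the left-hand side $\langle a,b^2\rangle\, b^1$ has no $\mu(v)$-component, while the right-hand side contains the single term $\langle e,b\rangle\,\mu(v)$, forcing $\langle e,\mu(v_1,\cdots,v_n)\rangle=0$.

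Your part $(2)$, by contrast, contains the correct key idea, and it is exactly the paper's: adjoin a vector $z\notin W=\Sp\{w_1,\cdots,w_l\}$ to the second argument and compare the $\mu(z)$-coefficients of the two sides of \eqref{def_cocyc}. Moreover this step can be repaired so as not to use the degree hypothesis at all: instead of invoking degree to force $n+l=m$ and $|b^1|=1$, simply compare the degree-one components of the identity. On the left, the only term of $\triangle\bigl(\mu(w_1,\cdots,w_l,z)\bigr)$ whose first factor has a nonzero $\mu(z)$-component is $\pm\,\mu(z)\otimes\mu(w_1,\cdots,w_l)$, contributing $\pm\,\langle a,\mu(w_1,\cdots,w_l)\rangle\,\mu(z)$; on the right, every degree-one output $a^2$ is some $\mu(v_k)$ with $v_k\in W$, hence lies in $sW$, which does not contain $\mu(z)$. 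This yields $\langle \mu(v_1,\cdots,v_n),\mu(w_1,\cdots,w_l)\rangle=0$ for a cyclic form of any degree (the paper achieves the same thing after first reducing, by linearity, to the case where each $v_i$ is one of the $w_j$'s). So part $(2)$ is essentially right once the unwarranted degree reduction is excised, but part $(1)$ as written has a genuine gap and needs the cocyclicity argument above.
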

\begin{proof}
(1)
Since $n \le m-1$ we can choose $v \in V$ such that $v$ is not in  $\Sp \{ v_1,\cdots,v_n\}$.
Let $a=\mu(v)$  and $b= \mu (v_1,v_2, \cdots, v_k)$. Then
$$ \triangle (a) = e\otimes \mu(v)+ \mu(v) \otimes e \, ,
\triangle (b) = e\otimes \mu(v_1,\cdots,v_n)+ \mu(v_1,\cdots,v_n) \otimes e +\cdots \, .$$
Using \eqref{def_cocyc} we get
\begin{eqnarray}
&&  \langle \mu(v),
\mu (v_1, \cdots, v_n) \rangle\cdot e +  \langle \mu(v) ,e\rangle\cdot \mu (v_1, \cdots, v_n)\, +\cdots\nonumber \\
& =& \langle \mu(v) ,  \mu (v_1,\cdots, v_n) \rangle\cdot e + \langle e,  \mu (v_1,\cdots, v_n)\rangle\cdot \mu(v).  \nonumber
\end{eqnarray}
Since the left  hand side does not contain the term with $\mu(v)$ we can conclude this statement.

(2)
Due to linearity of the form it suffices to show it in the case when each $v_i$ is of one $w_j$'s and
after reordering we can also assume that $v_1=w_1,\cdots,v_n=w_n$. So we only need to show
$$\langle  \mu (w_1,w_2,\cdots, w_n),  \mu (w_1,\cdots, w_l) \rangle = 0 \, . $$
First, we choose $v \in V$ such that $v \notin \Sp \{ w_1,\cdots,w_l\}$.
Second, let $a= \mu (w_1, \cdots, w_n)$ and $b=\mu (w_1, \cdots, w_l,v)$. Then
\begin{eqnarray}
&& \triangle (a) = e\otimes \mu(w_1,\cdots,w_n)+ \mu(w_1,\cdots,w_n) \otimes e +\cdots \, ,   \nonumber \\
&& \triangle (b) = \cdots+  \mu(w_1,\cdots,w_l) \otimes
 \mu(v) +  (-1)^{ l} \mu(v) \otimes \mu(w_1,\cdots,w_l)+ \cdots \, . \nonumber
\end{eqnarray}
Now using  \eqref{def_cocyc} we get
\begin{eqnarray}
&&  (-1)^{l} \langle \mu(w_1,\cdots,w_n),\mu(v) \rangle\cdot  \mu (w_1,\cdots, w_l)+
\langle \mu(w_1,\cdots,w_n),
\mu (w_1, \cdots, w_l) \rangle\cdot \mu(v) + \cdots  \nonumber  \\
&=& \langle e,  \mu (w_1,\cdots, w_l,v)\rangle\cdot \mu(w_1,\cdots,w_n) + \langle \mu(w_1,\cdots,w_n) ,  \mu (w_1,\cdots, w_l,v) \rangle\cdot e + \cdots
 \nonumber
\end{eqnarray}
Left side has no term with $\mu(v)$ while right side has only one term with $\mu(v)$
and therefore coefficient in front of this
term must vanish.
\end{proof}

Next, we prove the following statement
\begin{proposition}
\label{prop2}
If $ v \notin  \Sp \{v_1,\cdots,v_n, w_1,\cdots,w_l\}$ then
$$\langle  \mu ( v_1,v_2, \cdots, v_n, v),  \mu (w_1,\cdots, w_l) \rangle
=\langle  \mu ( v_1,v_2,\cdots, v_n),  \mu (v, w_1, \cdots, w_l) \rangle \,. $$
\end{proposition}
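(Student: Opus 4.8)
The plan is to obtain the identity from a single, well-chosen application of the cocyclicity relation \eqref{def_cocyc}, followed by extraction of one coefficient, exactly in the spirit of the proofs of Proposition~\ref{prop1}. Concretely, I would apply \eqref{def_cocyc}, which in Sweedler notation reads $\langle a,b^2\rangle\cdot b^1=\langle a^1,b\rangle\cdot a^2$, to the pair
$$a=\mu(v_1,\ldots,v_n,v),\qquad b=\mu(v,w_1,\ldots,w_l).$$
Both sides are single elements of $C$, so the strategy is to expand $\triangle(a)$ and $\triangle(b)$ via the shuffle formula and then to compare the coefficients of one fixed degree-one basis vector, namely $\mu(v)$.

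On the left-hand side, the monomial $\mu(v)$ can appear as $b^1$ only from the unique $(1,l)$-shuffle of $b$ whose first block is $\{v\}$, i.e.\ the first entry of $b$; this shuffle is the identity permutation, hence has sign $+1$, and leaves $b^2=\mu(w_1,\ldots,w_l)$. Thus the coefficient of $\mu(v)$ on the left equals $\langle \mu(v_1,\ldots,v_n,v),\mu(w_1,\ldots,w_l)\rangle$. On the right-hand side, $\mu(v)$ can appear as $a^2$ only from the unique $(n,1)$-shuffle of $a$ whose second block is $\{v\}$, i.e.\ the last entry of $a$; this too is the identity permutation, of sign $+1$, leaving $a^1=\mu(v_1,\ldots,v_n)$. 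Hence the coefficient of $\mu(v)$ on the right equals $\langle \mu(v_1,\ldots,v_n),\mu(v,w_1,\ldots,w_l)\rangle$. Equating the two coefficients gives precisely the asserted identity.

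The hypothesis $v\notin\Sp\{v_1,\ldots,v_n,w_1,\ldots,w_l\}$ is exactly what makes this argument rigorous: it guarantees that $sv$ is linearly independent from every $sw_i$ and every $sv_i$ in $C_1$, so that ``reading off the coefficient of $\mu(v)$'' is a well-defined linear operation that selects precisely the two shuffle terms above. No other term in $\triangle(b)$ can produce $\mu(v)$ in the surviving slot, since $v$ does not lie in the span of the $w_i$'s; likewise no other term in $\triangle(a)$ can produce it, since $v$ does not lie in the span of the $v_i$'s, and $v$ occurs exactly once in each word. As a bookkeeping remark, both pairings are nonzero only when $n+l+1=d$, which is consistent with the degree $-d$ of $\langle-,-\rangle$; if $n+l+1\neq d$ both sides vanish and the identity is trivial.

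I expect the only genuinely delicate point—and the step I would verify most carefully—to be the shuffle-sign computation, since an erroneous sign there would be the natural place for the proof to fail. But in each case the relevant block (the singleton $\{v\}$ placed first on the left, respectively last on the right) is already in position, so the contributing shuffle is the identity and its sign is $+1$ on both sides. In particular the signs match without any appeal to the symmetry relation \eqref{bilform}, and the two extracted coefficients coincide on the nose.
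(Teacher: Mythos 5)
Your proof is correct and is essentially identical to the paper's own argument: the paper also applies the cocyclicity relation \eqref{def_cocyc} to the pair $a=\mu(v_1,\dots,v_n,v)$, $b=\mu(v,w_1,\dots,w_l)$ and equates the coefficients of $\mu(v)$ on the two sides, using the hypothesis $v\notin\Sp\{v_1,\dots,v_n,w_1,\dots,w_l\}$ to guarantee that the displayed shuffle terms are the only ones producing $\mu(v)$. Your extra remarks on the shuffle signs and degree bookkeeping agree with the signs appearing in the paper's expansion of $\triangle(a)$ and $\triangle(b)$.
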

\begin{proof}
For $a= \mu ( v_1,v_2, \cdots, v_n, v)$ and $b= \mu (v, w_1, \cdots, w_l)$:
\begin{eqnarray}
&&  \triangle (a) =   \mu ( v_1, \cdots, v_n) \otimes \mu(v) + (-1)^n \mu(v) \otimes  \mu ( v_1,\cdots, v_n)+\cdots  \nonumber \\
&&  \triangle (b) = \mu(v) \otimes \mu (w_1, \cdots, w_l) + (-1)^l \mu (w_1,\cdots, w_l)\otimes \mu(v) +\cdots \nonumber
\end{eqnarray}
Since $v\notin \Sp \{ v_1,\cdots,v_n, w_1,\cdots,w_l\}$,  the above terms in coproducts of $a$ and $b$ are
the only terms containing $\mu(v)$.
By  \eqref{def_cocyc} we have
\begin{equation}
\langle  \mu ( v_1,v_2, \cdots, v_n, v),  \mu (w_1,  \cdots, w_l) \rangle \mu(v) +\cdots =
\langle  \mu ( v_1,v_2, \cdots, v_n),  \mu (v, w_1,  \cdots, w_l) \rangle  \mu (v) + \cdots  \, \nonumber
\end{equation}
This finishes our proof.
\end{proof}

The next corollary follows immediately from Propositions \ref{prop1} and \ref{prop2}.

\begin{corollary}
\label{cor1}
We have
$$  \langle  \mu (v_1,v_2, \cdots, v_n),  \mu (w_1, \cdots, w_l) \rangle = 0  $$
unless $W:=\Sp \{ v_1,v_2,\cdots, v_n,w_1,\cdots,w_l\} =V$.
\end{corollary}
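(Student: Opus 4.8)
The plan is to prove the statement directly: assuming $W:=\Sp\{v_1,\dots,v_n,w_1,\dots,w_l\}\neq V$, I will show the pairing vanishes. The guiding idea is to combine the ``transfer'' identity of Proposition~\ref{prop2}, which moves a generator from one slot of the pairing into the other, with the vanishing criteria of Proposition~\ref{prop1}. The subtle point is that Proposition~\ref{prop2} can only transfer a generator $v$ that is linearly independent from \emph{all} the remaining generators, and for an arbitrary family this can fail for every generator simultaneously: if the combined family carries a single linear dependence (a ``circuit'') with neither slot spanning $W$, then no transfer is available and no orientation of Proposition~\ref{prop1}(2) applies either. To bypass this obstruction I would first reduce to the case of basis vectors.

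Concretely, I would choose a basis $e_1,\dots,e_k$ of $W$, where $k=\dim W\le m-1$ because $W\neq V$, and extend it to a basis $e_1,\dots,e_m$ of $V$. Expanding each $v_i$ and each $w_j$ in $e_1,\dots,e_k$ and using that both $\mu$ and the bilinear form $\langle-,-\rangle$ are multilinear, the pairing $\langle\mu(v_1,\dots,v_n),\mu(w_1,\dots,w_l)\rangle$ becomes a linear combination of terms $\langle\mu(e_{i_1},\dots,e_{i_n}),\mu(e_{j_1},\dots,e_{j_l})\rangle$ with all indices in $\{1,\dots,k\}$. It thus suffices to prove the vanishing of each such basis-vector term. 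The distinct basis vectors occurring in it form a set $S\subseteq\{e_1,\dots,e_k\}$ with $|S|\le k\le m-1$, and within either slot the basis vectors are necessarily pairwise distinct, since otherwise that $\mu$-term already vanishes.

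For a basis-vector term the transfer rule becomes tractable: a basis vector in the first slot is linearly independent from all remaining generators exactly when it does not also occur in the second slot. I would therefore apply Proposition~\ref{prop2} repeatedly to move every basis vector occurring \emph{only} in the first slot into the second slot, the required reorderings costing only signs, which are irrelevant for vanishing. After these transfers the second slot consists precisely of the vectors of $S$ and hence spans $W$, while the first slot retains only the basis vectors common to both original slots. Since $|S|\le m-1$ and every surviving first-slot vector lies in $\Sp(S)$, Proposition~\ref{prop1}(2) applies and the term vanishes; in the degenerate case where the first slot becomes empty, Proposition~\ref{prop1}(1) gives the same conclusion. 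Summing over all terms of the multilinear expansion yields Corollary~\ref{cor1}.

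The main obstacle, and the step I would treat most carefully, is precisely this reduction to basis vectors: without it the transfer identity of Proposition~\ref{prop2} genuinely stalls on circuit-type configurations, and it is the multilinear expansion that legitimizes the ``consolidate everything into one slot'' strategy. Everything past that reduction is sign bookkeeping together with a direct appeal to Proposition~\ref{prop1}.
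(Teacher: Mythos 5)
Your proof is correct, and it takes a genuinely more careful route than the paper's own argument. Both rest on the same two ingredients --- repeated transfers via Proposition~\ref{prop2}, then the vanishing criteria of Proposition~\ref{prop1} --- but the paper applies the transfers directly to the given vectors, splitting into cases according to whether the $v_i$'s lie in $\Sp\{w_1,\dots,w_l\}$ (the ``$W$'' in its proof has to be read as this span, not as the $W$ of the statement), and moving those $v_i$'s outside this span to the right. The obstruction you identify is real: Proposition~\ref{prop2} needs the transferred vector to be independent of \emph{all} remaining vectors in both slots, and in circuit-type configurations this fails for every candidate at once. Concretely, for $m=3$, $v_1=e_1$, $v_2=e_2$, $w_1=e_1+e_2$, one has $W=\Sp\{e_1,e_2\}\neq V$ and neither $v_i$ lies in $\Sp\{w_1\}$, yet neither $v_1$ nor $v_2$ satisfies the hypothesis of Proposition~\ref{prop2}, so the transfers the paper invokes are not justified in this configuration (the corollary of course still holds there, as your argument shows). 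Your preliminary reduction --- expanding everything in a basis of $W$ using multilinearity of $\mu$ and of the pairing --- removes exactly this obstruction: for pairwise-distinct basis vectors the transfer hypothesis becomes ``does not occur in the other slot,'' after which consolidating into the second slot and the count $|S|\le\dim W\le m-1$ let Proposition~\ref{prop1} finish. (One harmless slip: after the transfers the second slot spans $\Sp(S)$, not necessarily all of $W$; what you actually use --- that every surviving first-slot vector lies in $\Sp(S)$ and that $|S|\le m-1$ --- is correct.) In short, the paper's route buys brevity, while yours supplies the missing step that makes the transfer strategy work in all configurations.
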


\begin{proof}
Let $ W \neq V$. First, we consider the case when none of $v_i$'s in $W$.
 Then using Proposition~\ref{prop2}
we obtain
$$  \langle  \mu (v_1,v_2,\cdots, v_n),  \mu (w_1,\cdots, w_l) \rangle \, = \, \langle  e,
 \mu(v_1,v_2,\cdots, v_n, w_1,\cdots,w_l) \rangle  \, .$$
The RHS is $0$ by Proposition~\ref{prop1} part $(1)$.

Second, we can assume that $v_i \in W$ for all $i=1,\cdots,n$. Otherwise, we can repeatedly
use Proposition~\ref{prop2} to move those $v_i$'s which are not in $W$ to the right.
Finally, by Proposition~\ref{prop1} part $(2)$, we have vanishing of the bilinear form in this case.
\end{proof}
The above results can be summarized into the following statement
\begin{proposition}
Let $V$ be an $m$-dimensional vector space and let $A:=\mathrm{Sym}(V)$. Then  the Koszul
dual coalgebra $C$ has a unique cyclic form and it is of degree $-m$. This form
is completely determined by its value $ \langle e,  \mu(v_1,v_2,\cdots,v_m) \rangle$, where
$ \{v_1,v_2,\cdots,v_m\}$ is a basis for $V$.
\end{proposition}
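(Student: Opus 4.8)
The plan is to derive everything from the three results already in hand, Corollary~\ref{cor1}, Proposition~\ref{prop1} and Proposition~\ref{prop2}. Since $C$ is the exterior coalgebra on $sV$, its underlying space in homological degree $k$ is $\Lambda^kV$, spanned by the generators $\mu(v_1,\ldots,v_k)$ with $v_1,\ldots,v_k$ linearly independent; in particular $C_k=0$ for $k>m$ and $C_m$ is one-dimensional. By bilinearity any cyclic form is determined by its values $\langle \mu(v_1,\ldots,v_n),\mu(w_1,\ldots,w_l)\rangle$ on such generators, allowing $n=0$ or $l=0$ (the counit $e$). Hence it suffices to express every one of these numbers in terms of the single value $\langle e,\mu(v_1,\ldots,v_m)\rangle$; existence of a nonzero such form is guaranteed by Theorem~\ref{Thm_VdB}, because $A=\mathrm{Sym}(V)$ is $m$-Calabi-Yau, so the content is uniqueness, the degree, and the explicit description.

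First I would pin down the degree. A homogeneous form of degree $-d$ satisfies $\langle a,b\rangle=0$ unless $|a|+|b|=d$. By Corollary~\ref{cor1} a generator value vanishes unless $\Sp\{v_1,\ldots,v_n,w_1,\ldots,w_l\}=V$, which forces $n+l\ge m$; so a nonzero form has $d\ge m$, ruling out $d<m$. To exclude $d>m$ I would show that every generator value with $n+l>m$ vanishes. The mechanism is Proposition~\ref{prop2}: if some vector of one argument lies outside the span of all the remaining vectors, transporting it across the pairing shortens one factor while preserving $n+l$, and iterating collapses the value to $\langle e,\mu(u_1,\ldots,u_{n+l})\rangle$, which is zero since $\Lambda^{n+l}V=0$ for $n+l>m$. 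Thus, modulo the degenerate configurations discussed below, all nonzero values satisfy $n+l=m$, the degree is $-m$, and the only surviving base case is $\langle e,\mu(u_1,\ldots,u_m)\rangle$.

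For uniqueness and the explicit formula I would run the same transport in the generic situation $n+l=m$ with the combined vectors forming a basis. Here every vector is independent of the rest, so repeated application of Proposition~\ref{prop2} moves each vector of the first argument into the second, giving $\langle \mu(v_1,\ldots,v_n),\mu(w_1,\ldots,w_l)\rangle=\langle e,\mu(v_1,\ldots,v_n,w_1,\ldots,w_l)\rangle$; a reordering of this wedge inside $C_m$ then rewrites it as $\pm\langle e,\mu(v_1,\ldots,v_m)\rangle$ for the fixed basis, the sign being the determinant of the change of basis. Every generator value is therefore a fixed scalar multiple of $\langle e,\mu(v_1,\ldots,v_m)\rangle$, so two cyclic forms agreeing on this one number coincide; this yields both uniqueness and the assertion that the form is completely determined by $\langle e,\mu(v_1,\ldots,v_m)\rangle$.

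The main obstacle is precisely the degenerate values left aside above: those with $n+l>m$ whose combined list is dependent, and those with a vector repeated on the two sides. In such configurations no vector need lie outside the span of the rest, so Proposition~\ref{prop2} cannot be applied directly and the transport argument stalls. I would resolve these stalled cases by first expanding the offending vector in a basis adapted to the remaining vectors via bilinearity, which either produces a movable summand (reducing to the cases already handled) or a generator in which a single vector appears on both sides. For the latter I would invoke the graded symmetry \eqref{bilform}: such a generator equals, up to sign, its own transpose, and for the relevant parity this forces it to vanish. Carrying the wedge signs correctly through all the reorderings and transports, and organizing the whole elimination as an induction on $n+l$ (or on the defect $n+l-\dim\Sp$), is the part that will demand the most bookkeeping.
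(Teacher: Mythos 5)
Your handling of the generic case is sound and is exactly the mechanism the paper intends (the paper gives no separate proof of this proposition; it presents it as a direct summary of Propositions~\ref{prop1}, \ref{prop2} and Corollary~\ref{cor1}): when $n+l=m$ and the combined vectors form a basis, repeated transport via Proposition~\ref{prop2} plus one-dimensionality of the top component reduces every value to $\pm\det\cdot\langle e,\mu(v_1,\dots,v_m)\rangle$, and Corollary~\ref{cor1} kills everything whose combined span is not $V$. The genuine gap is in your exclusion of degrees $d>m$. There you need every value $\langle \mu(v_1,\dots,v_n),\mu(w_1,\dots,w_l)\rangle$ with $n+l>m$ to vanish, and, as you yourself note, Proposition~\ref{prop2} never applies in that range (with $n+l>m$ the combined list is always dependent). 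Your fallback --- expand by bilinearity and kill generators sharing a vector by the graded symmetry \eqref{bilform} --- does not work: symmetry relates $\langle a,b\rangle$ to $(-1)^{|a||b|}\langle b,a\rangle$, which is a \emph{different} value unless $a=b$, and even for $a=b$ the sign is $(-1)^{|a|^2}=(-1)^{|a|}$, which gives no information when $|a|$ is even. Worse, the statement you are trying to prove is actually false in this generality: for $m$ even, the pairing defined by $\langle\omega,\omega\rangle=1$, where $\omega=\mu(v_1,\dots,v_m)$, and all other values zero, is a symmetric form of degree $-2m$ satisfying the cyclicity \eqref{def_cocyc} (in that identity only the extreme coproduct terms $e\otimes\omega$ and $\omega\otimes e$ can contribute, and they match), yet it is nonzero with $n+l=2m>m$. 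So no amount of sign bookkeeping along your proposed lines can close this case.

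What is missing is non-degeneracy, which is part of the paper's definition of a cyclic coalgebra in \S\ref{Subsect_cyclic} and is precisely what Theorem~\ref{Thm_VdB} supplies; it excludes the degenerate counterexample above. With non-degeneracy the degree claim is immediate and your problematic step becomes unnecessary: a non-degenerate form of degree $-d$ restricts to a perfect pairing $C_n\times C_{d-n}\to k$ for every $n$; taking $n=0$ forces $\dim C_d=\dim C_0=1$, i.e.\ $d=0$ or $d=m$, and $d=0$ is impossible because then $C_1\cong sV\neq 0$ would have to pair with $C_{-1}=0$. (Alternatively: non-degeneracy produces some $x$ with $\langle e,x\rangle\neq 0$, and Proposition~\ref{prop1} part $(1)$ forces $x$ to lie in the top degree.) Once $d=m$ is known, homogeneity makes all values with $n+l\neq m$ vanish for free, and your transport argument for $n+l=m$ finishes the proof. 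In short, the skeleton of your argument is the right one, but you must build non-degeneracy into the hypothesis (as the paper implicitly does) and use it to pin down the degree, rather than attempt a purely formal elimination of the high-degree values.
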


\subsection{Lie bracket on $\mathrm{HC}_{\bullet}(A)$}

We have an isomorphism $\Psi \, : \, \Theta_{k} (V) \, \rightarrow \,\Omega^{m-k} (V)$
such that $\Psi (\xi) = \iota_{\xi} \omega$.
The space $\Theta_* (V)$ equipped with BV operator $\Delta : \Theta _{*} (V) \rightarrow \Theta_{*-1} (V)$
such that the following diagram commutes:
$$
\xymatrixcolsep{4pc}
\xymatrix{
\Theta_{k} (V) \,  \ar[d]^{\Psi} \ar[r]^{\Delta} & \, \Theta_{k-1} (V) \ar[d]^{\Psi} \\
\Omega^{n-k} (V) \, \ar[r]^{d} &\, \Omega^{m-k+1} (V)}
$$
By Corollary~\ref{maincor1},  the bracket $\{- ,- \}$ on $\Omega^*(V)$ is given by
$$
d \{ \alpha, \beta\} \, = \,  \Psi \, \{ \Psi^{-1} (d \alpha) , \Psi^{-1} (d \beta)\}_{\mathrm G}
$$
where $\alpha, \beta \in \Omega^*(V)$ and $\{ -,-\}_{\mathrm G}$ is the Gerstenbaher bracket of poly-vector fields.

\begin{lemma}
For given forms $\alpha$ and $\beta$, their bracket is given by
$$
d \, \{\alpha , \beta\} = (-1)^{m-|\alpha|-1}d \, \iota_{\xi} d \beta = (-1)^{(m-|\alpha|-1)(m-|\beta|)}d \, \iota_{\eta} d \alpha
$$
where $\Psi^{-1} (d\alpha) = \xi$ and $\Psi^{-1} (d \beta)= \eta$.
\end{lemma}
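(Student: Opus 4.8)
The plan is to convert the Gerstenhaber (Schouten--Nijenhuis) bracket occurring in Corollary~\ref{maincor1} into an honest contraction, exploiting that under the Van den Bergh--Poincar\'e duality $\Psi$ the BV operator $\Delta$ on polyvector fields corresponds to the de Rham differential. Write $\xi=\Psi^{-1}(d\alpha)$ and $\eta=\Psi^{-1}(d\beta)$. By Corollary~\ref{maincor1} one has $d\{\alpha,\beta\}=\Psi\{\xi,\eta\}_{\mathrm G}$, so everything reduces to computing $\Psi\{\xi,\eta\}_{\mathrm G}$ explicitly.

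First I would rewrite the bracket through the BV operator. Under the HKR identifications, $(\Theta_\bullet(V),\wedge,\Delta)$ is precisely the Batalin--Vilkovisky algebra $(\mathrm{HH}^\bullet(A),\cup,\Delta)$ of Theorem~\ref{BV_CY}, so the Gerstenhaber bracket is the deviation of $\Delta$ from being a derivation; that is, applying \eqref{deviation} with $\hdot=\wedge$,
\[
\{\xi,\eta\}_{\mathrm G}=(-1)^{|\xi|}\big(\Delta(\xi\wedge\eta)-\Delta(\xi)\wedge\eta-(-1)^{|\xi|}\,\xi\wedge\Delta(\eta)\big).
\]
The \emph{crucial} simplification is that the last two terms vanish: from the commuting square preceding the lemma one reads off $\Delta=\Psi^{-1}\circ d\circ\Psi$, whence $\Delta(\xi)=\Psi^{-1}(d\,d\alpha)=0$ and $\Delta(\eta)=\Psi^{-1}(d\,d\beta)=0$, simply because $\xi$ and $\eta$ are $\Psi^{-1}$ of the \emph{exact} forms $d\alpha$ and $d\beta$. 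Thus $\{\xi,\eta\}_{\mathrm G}=(-1)^{|\xi|}\Delta(\xi\wedge\eta)$.

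Next I would apply $\Psi$ and use the two identities $\Psi\Delta=d\Psi$ and the contraction rule $\Psi(\xi\wedge\eta)=\iota_{\xi\wedge\eta}\omega=\iota_\xi(\iota_\eta\omega)=\iota_\xi\Psi(\eta)=\iota_\xi\,d\beta$. This gives
\[
d\{\alpha,\beta\}=\Psi\{\xi,\eta\}_{\mathrm G}=(-1)^{|\xi|}\,d\,\iota_\xi\,d\beta.
\]
Since $\Psi\colon\Theta_k(V)\to\Omega^{m-k}(V)$ is an isomorphism and $d\alpha\in\Omega^{|\alpha|+1}(V)$, the polyvector $\xi$ sits in degree $|\xi|=m-|\alpha|-1$, which yields the first asserted equality. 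For the second equality I would invoke the graded antisymmetry of the Schouten bracket, $\{\xi,\eta\}_{\mathrm G}=-(-1)^{(|\xi|-1)(|\eta|-1)}\{\eta,\xi\}_{\mathrm G}$, together with the first equality applied after interchanging $\alpha$ and $\beta$; substituting $|\xi|=m-|\alpha|-1$, $|\eta|=m-|\beta|-1$ and reducing the combined exponent $(|\xi|-1)(|\eta|-1)+|\eta|+1$ modulo $2$ collapses it to $(m-|\alpha|-1)(m-|\beta|)$.

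The main obstacle is sign bookkeeping rather than conceptual: one must fix the contraction convention $\iota_{\xi\wedge\eta}=\iota_\xi\iota_\eta$ (which determines whether $\iota_\xi\,d\beta$ or $\iota_\eta\,d\alpha$ emerges first, the other following by antisymmetry) and confirm that the BV sign $(-1)^{|\xi|}$ and the antisymmetry sign combine to exactly the stated exponents. Once the vanishing $\Delta(\xi)=\Delta(\eta)=0$ is recorded, the rest is a routine verification.
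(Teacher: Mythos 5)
Your proposal is correct and follows essentially the same route as the paper's proof: both reduce $\{\xi,\eta\}_{\mathrm G}$ to $(-1)^{m-|\alpha|-1}\Delta(\xi\wedge\eta)$ via the BV identity \eqref{deviation} together with the vanishing $\Delta(\xi)=\Psi^{-1}(dd\alpha)=0$, $\Delta(\eta)=\Psi^{-1}(dd\beta)=0$, then apply $\Psi\Delta=d\Psi$ and the contraction rule $\iota_{\xi\wedge\eta}\omega=\iota_\xi(\iota_\eta\omega)$. The only cosmetic difference is that you obtain the second equality from the graded antisymmetry of the Schouten bracket, while the paper uses $\iota_{\xi\wedge\eta}\omega=(-1)^{|\xi||\eta|}\iota_{\eta\wedge\xi}\omega$; both sign computations agree with the stated exponent $(m-|\alpha|-1)(m-|\beta|)$.
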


\begin{proof}
Recall for $a, b \in \Theta_* (V)$ their Gerstenbaher bracket can expressed in terms of $\Delta$
$$
\{a,b\}_{\mathrm{G}} = \, (-1)^{|a|} \big( \Delta(a \wedge b) - \Delta(a) \wedge b - (-1)^{|a|} a \wedge \Delta(b) \big).
$$
Hence using commutativity above diagram we have
$$
\{ \Psi^{-1} (d \alpha) , \Psi^{-1} (d \beta)\}_{\mathrm{G}}
= \, (-1)^{m-|\alpha|-1} \Delta ( \Psi^{-1}(d \alpha) \wedge \Psi^{-1}(d \beta) ),
$$
and applying $\Psi$ we have
\begin{eqnarray*}
  \Psi \{ \Psi^{-1} (d \alpha) , \Psi^{-1} (d \beta)\}_{\mathrm{G}}
  &=& (-1)^{m-|\alpha|-1} \Psi \, \Delta \big( \Psi^{-1}(d \alpha) \wedge \Psi^{-1}(d \beta) \big) \\
  &=& (-1)^{m-|\alpha|-1} d \circ \Psi \big( \Psi^{-1}(d \alpha) \wedge \Psi^{-1}(d \beta) \big).
\end{eqnarray*}
Now using well-known formula
$$
\iota_{\xi \wedge \eta} \omega = \, \iota_{\xi} \, ( \iota_{\eta} \omega) =(-1)^{|\xi||\eta|}\iota_{\eta\wedge \xi} \omega
$$
we obtain
$$
\Psi \big( \Psi^{-1}(d \alpha) \wedge \Psi^{-1}(d \beta) \big)
= \, \iota_{\Psi^{-1}(d \alpha) \wedge \Psi^{-1}(d \beta)} \, \omega
= \, \iota_{\Psi^{-1}(d \alpha) } \, (\iota_{\Psi^{-1}(d \beta)} \, \omega)
= \, \iota_{\Psi^{-1}(d \alpha) } \, d \beta ,
$$
from which the result immediately follows.
\end{proof}

Since $\mathrm{HH}_{\bullet}(A) \cong \Omega^{\bullet}(V)$ and
 $\mathrm{HC}_{\bullet}(A) \cong \oplus_n \Omega^{n}(V)/ d(\Omega^{n-1}(V))$, we have established
\begin{corollary}
The Lie bracket on $\mathrm{HC}_{\bullet}(A)$ and the Lie module
structure on $\mathrm{HH}_{\bullet}(A)$
in Theorem~\ref{maintheorem1} for $A=\mathrm{Sym}(V)$
is given as follows. For $\alpha, \beta \in \mathrm{HC}_{\bullet}(A)$  or
$\beta \in  \mathrm{HH}_{\bullet}(A)$,
we have
$$  \{ \alpha, \beta\}=(-1)^{(m-|\alpha|-1)(m-|\beta|)}\iota_{\eta} d \alpha,
$$
where  $\eta=\Psi^{-1} (d \beta)$.
\end{corollary}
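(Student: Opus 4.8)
The plan is to read off the formula directly from the explicit bracket of Corollary~\ref{corexpbracket}, together with the Hochschild--Kostant--Rosenberg identifications already recorded for $A=\mathrm{Sym}(V)$. Under those identifications the Connes operator $B$ becomes the de Rham differential $d$, the non-commutative Van den Bergh--Poincar\'e duality $\Psi$ becomes contraction with the volume form $\omega$, i.e. $\Psi(\zeta)=\iota_\zeta\omega$, and the Calabi--Yau dimension is $d=m$. So the first step is to specialize Corollary~\ref{corexpbracket}: writing $\xi:=\Psi^{-1}(B(\alpha))=\Psi^{-1}(d\alpha)$, that corollary reads $\{\alpha,\beta\}=(-1)^{m-|\alpha|-1}\,\iota_{\xi}\,d\beta$, and by its last sentence this holds both for the Lie bracket with $\alpha,\beta\in\mathrm{HC}_\bullet(A)$ and for the module action with $\beta\in\mathrm{HH}_\bullet(A)$.

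The remaining work is the bookkeeping that converts $\iota_{\xi}\,d\beta$ into $\iota_{\eta}\,d\alpha$ with $\eta:=\Psi^{-1}(d\beta)$. For this I would invoke the contraction identity already used in the preceding lemma, namely $\iota_{\xi\wedge\eta}\omega=\iota_{\xi}(\iota_{\eta}\omega)=(-1)^{|\xi||\eta|}\iota_{\eta}(\iota_{\xi}\omega)$, combined with $\iota_{\xi}\omega=\Psi(\xi)=d\alpha$ and $\iota_{\eta}\omega=\Psi(\eta)=d\beta$. This yields $\iota_{\xi}\,d\beta=\iota_{\xi}\iota_{\eta}\omega=(-1)^{|\xi||\eta|}\iota_{\eta}\,d\alpha$ \emph{as honest forms}. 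A short degree count, using $d\alpha\in\Omega^{|\alpha|+1}(V)$ and $\Psi:\Theta_k(V)\to\Omega^{m-k}(V)$, gives $|\xi|=m-|\alpha|-1$ and $|\eta|=m-|\beta|-1$; then the two signs $(-1)^{m-|\alpha|-1}$ and $(-1)^{(m-|\alpha|-1)(m-|\beta|-1)}$ multiply to $(-1)^{(m-|\alpha|-1)(m-|\beta|)}$, which is exactly the claimed exponent.

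Since the identity $\iota_{\xi}\,d\beta=(-1)^{|\xi||\eta|}\iota_{\eta}\,d\alpha$ lives at the level of differential forms, the module statement, where $\beta\in\mathrm{HH}_\bullet(A)\cong\Omega^\bullet(V)$, follows at once; for the Lie bracket on $\mathrm{HC}_\bullet(A)\cong\bigoplus_n\Omega^n(V)/d(\Omega^{n-1}(V))$ the same equation simply descends to the quotient. Alternatively, and more in keeping with the preceding lemma, one may instead start from its $d$-prefixed identity $d\{\alpha,\beta\}=(-1)^{(m-|\alpha|-1)(m-|\beta|)}\,d\,\iota_{\eta}\,d\alpha$ and cancel the outer $d$: because $A$ is graded, the Connes operator $B$, hence the de Rham $d$, is injective on $\mathrm{HC}_\bullet(A)$ (Weibel \cite[Theorem 9.9.1]{Weibel}, exactly as already used in the proof of Corollary~\ref{corexpbracket}), so the stated equality holds in $\mathrm{HC}_\bullet(A)$.

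The only points that require genuine care are the sign accounting and, in the cyclic case, the justification for stripping the leading $d$. The first is routine once the degrees $|\xi|$ and $|\eta|$ are fixed, and the second is made clean by the injectivity of $B$ on the cyclic homology of a graded algebra; I therefore anticipate no serious obstacle, the argument being essentially a repackaging of Corollary~\ref{corexpbracket} and the preceding lemma under the HKR dictionary.
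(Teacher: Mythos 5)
Your proposal is correct, and it reaches the formula by a genuinely different route from the paper's. The paper never invokes Corollary~\ref{corexpbracket} in \S\ref{Sect_example_poly_alg}: it starts instead from Corollary~\ref{maincor1}, which only gives the $d$-prefixed identity $d\{\alpha,\beta\}=\Psi\{\Psi^{-1}(d\alpha),\Psi^{-1}(d\beta)\}_{\mathrm G}$, and then proves a dedicated lemma that re-derives the explicit formula geometrically: writing $\xi=\Psi^{-1}(d\alpha)$ and $\eta=\Psi^{-1}(d\beta)$, the Gerstenhaber bracket is expanded through the Batalin--Vilkovisky operator $\Delta=\Psi^{-1}\circ d\circ\Psi$ on polyvector fields (using $\Delta\xi=0=\Delta\eta$), and $\Psi\circ\Delta=d\circ\Psi$ together with $\iota_{\xi\wedge\eta}\omega=\iota_\xi\iota_\eta\omega=(-1)^{|\xi||\eta|}\iota_\eta\iota_\xi\omega$ yields $d\{\alpha,\beta\}=(-1)^{m-|\alpha|-1}d\,\iota_\xi d\beta=(-1)^{(m-|\alpha|-1)(m-|\beta|)}d\,\iota_\eta d\alpha$; the corollary is then read off from $\mathrm{HH}_\bullet(A)\cong\Omega^\bullet(V)$ and $\mathrm{HC}_\bullet(A)\cong\oplus_n\Omega^n(V)/d\Omega^{n-1}(V)$, with the removal of the outer $d$ left implicit. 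You instead specialize the abstract formula of Corollary~\ref{corexpbracket}, in which the BV-type manipulation and the stripping of $B$ (via Weibel's injectivity theorem) have already been carried out once and for all; what remains is exactly the contraction swap and the exponent count $(m-|\alpha|-1)+(m-|\alpha|-1)(m-|\beta|-1)=(m-|\alpha|-1)(m-|\beta|)$, which you do correctly, including the degrees $|\xi|=m-|\alpha|-1$ and $|\eta|=m-|\beta|-1$. Your route is shorter and exhibits the example as a pure specialization of the general theorem; the paper's route buys a self-contained geometric derivation whose intermediate lemma (the $d$-prefixed identity, i.e.\ the bracket written as $\pm\Psi\Delta(\xi\wedge\eta)$) has independent illustrative value. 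You are also more explicit than the paper on the one delicate point---why the leading $d$ may be removed---and both of your justifications (descent to the quotient $\Omega^n(V)/d\Omega^{n-1}(V)$, or injectivity of $B$ for a graded algebra) are sound, up to the same reduced-versus-unreduced cyclic homology caveat that the paper itself glosses over in the proof of Corollary~\ref{corexpbracket}.
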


\begin{remark}
One can directly check that $\{- ,- \}$ is not a Lie bracket on $\Omega^*(V)$, since the Jacobi identity
fails.  Thus $\{- ,- \}$ defines only a Lie module structure on $\mathrm{HH}_{\bullet}(A)$ not Lie algebra.

More precisely, let
$$
\alpha = x^2 yz dx, \quad \beta = xyz dy, \quad \gamma = xz dz,
$$
and $\omega=dx\wedge dy\wedge dz$,
then
$$
\Psi^{-1}(d \alpha) = -x^2 z \frac{\partial}{\partial z} + x^2y  \frac{\partial}{\partial y},
 \quad \Psi^{-1}(d\beta) = yz \frac{\partial}{\partial z} - xy \frac{\partial}{\partial x},  \quad
 \Psi^{-1}(d\gamma) = -z \frac{\partial}{\partial y} ,
$$
and we have
\begin{eqnarray*}
 \{ \{\alpha, \beta\} , \gamma\} &=& x^2yz^2 dx - x^3yz dz,\\
 \{\alpha, \{\beta, \gamma\} \} &=& -x^3 z^2 dy - x^3 yz dz,\\
 \{\beta, \{\alpha, \gamma\}  \} &=& 2x^2yz^2 dx + 2x^3yzdz.
\end{eqnarray*}
Thus we have
$$
\{\{\alpha, \beta\} , \gamma\} - \{\alpha, \{\beta, \gamma\}\}
+ \{\beta, \{\alpha, \gamma\}\}
= 3x^2yz^2 dx +x^3 z^2 dy+ 2x^3yzdz = d(x^3 y z^2)\,,
$$
which is not zero, and therefore $\mathrm{HH}_{\bullet}(A)$ with $\{-,-\}$
above does not form a Lie algebra.
\end{remark}



\end{document}